%% file: main.tex
\documentclass[11pt]{amsart}
\usepackage{extarrows}
\usepackage[colorlinks, citecolor=blue]{hyperref}
\usepackage{amsthm,amsmath,amssymb}
\usepackage{mathrsfs}
\usepackage{bold-extra}
\usepackage{algpseudocode}
\usepackage[T1]{fontenc}
 \usepackage[ruled,vlined,linesnumbered]{algorithm2e}
\usepackage{tikz}
\usepackage{tikz-cd}

\newtheorem{theorem}{Theorem}[section]
\newtheorem{lemma}[theorem]{Lemma}
\newtheorem{mainthm}{Theorem}
\newtheorem{maincor}[mainthm]{Corollary}
\newtheorem{mainlem}[mainthm]{Lemma}

\theoremstyle{definition}
\newtheorem{definition}[theorem]{Definition}
\newtheorem{question}[theorem]{Question}
\newtheorem{example}[theorem]{Example}

\newtheorem{proposition}[theorem]{Proposition}
\newtheorem{corollary}[theorem]{Corollary}
\newtheorem{remark}[theorem]{Remark}
\newtheorem{conjecture}[theorem]{Conjecture}
\newtheorem{notation}[theorem]{Notation convention}
\theoremstyle{remark}

\usepackage[backend=biber, style=alphabetic,sorting=nyt,maxnames=99,maxalphanames=99]{biblatex}
\numberwithin{equation}{section}

\DeclareMathOperator{\chowpoly}{CH}

\begin{document}

\title{Inequalities for Chow Polynomials and Chern Numbers of Matroids}

\makeatletter
\def\shorttitle{Inequalities for Chow Polynomials and Chern Numbers of Matroids}
\makeatother

\author{Ronnie Cheng and Wangyang Lin}

\address{Department of Mathematics, Stanford University, USA}

\email{rtcheng@stanford.edu}

\address{School of Mathematical Sciences, Fudan University, Shanghai 200433, China}

\email{wylin23@m.fudan.edu.cn, wylin\_math@outlook.com}

\subjclass[2020]{05B35, 14C17, 05A20, 60C05}


\keywords{Chow polynomials, Matroids, Central moment inequalities, Chern number inequality}

\begin{abstract}
The Chow polynomial of a matroid is a fundamental invariant whose coefficients exhibit strong positivity properties, including $\gamma$-positivity. We interpret the normalized Chow coefficients as a probability distribution and establish new inequalities for its central moments. As consequences, we obtain bounds on the number of flags of flats and inequalities on the roots of the Chow polynomial.

We further relate these moment inequalities to algebraic geometry via the Hirzebruch $\chi_y$-genus. This yields new inequalities for matroidal Chern numbers. In particular, for any matroid of rank $d+1$, we prove that $c_1c_{d-1}\le c_d$, with equality if and only if $d=1$ or the simplification of the matroid is Boolean.
\end{abstract}

\maketitle

\tableofcontents

\input{1-introduction}

\input{2-preliminaries}

\input{3-Chow}

\input{4-c_top}

\input{5-appendix}
\printbibliography

\end{document}

%% file: 1-introduction.tex
\section{Introduction}

The Chow ring of a matroid was introduced (in the language of lattices) by Feichtner and Yuzvinsky \cite{FY_Chow}, who showed that it can be realized as the cohomology ring of a specific toric variety. This ring has attracted considerable attention in recent years, particularly following the proof of the Heron--Rota--Welsh conjecture on the log-concavity of the coefficients of the characteristic polynomial by Adiprasito, Huh, and Katz \cite{AHK18}. Their breakthrough relied on demonstrating that the Chow ring of a matroid exhibits the K\"ahler package, mirroring the formal properties enjoyed by the cohomology ring of a smooth projective variety. We refer to \cite{Ox} for background on matroid theory. All matroids in this paper are assumed to be \emph{loopless}.

For a matroid $\mathrm{M}$, we denote its Chow ring by $A^*(\mathrm{M})$. The \emph{Chow polynomial} of $\mathrm{M}$ is the Hilbert--Poincar\'e series of $A^*(\mathrm{M})$:
\[
\chowpoly_{\mathrm{M}}(x)\;:=\;\sum_{i\ge 0}\dim_{\mathbb{Q}}A^i(\mathrm{M})\,x^i,
\]
where $A^i(\mathrm{M})$ denotes the $i$-th graded piece. The polynomial $\chowpoly_{\mathrm{M}}(x)$ is palindromic, and by the Hard Lefschetz theorem \cite{AHK18}, its coefficients are unimodal.

Several positivity properties of Chow polynomials have recently been established. The property of $\gamma$-positivity for the Chow polynomial was first proved in \cite{Chowpoly} using the semi-small decomposition. Subsequently, an interpretation of the $\gamma$-coefficients using face enumerations was provided in \cite{ferroni2024chowfunctionspartiallyordered}. Building on this perspective, Stump \cite{Rlabel} proved $\gamma$-positivity by realizing Chow polynomials as evaluations of the Poincar\'e-extended $\mathbf{ab}$-index. 

Furthermore, the real-rootedness of the Chow polynomial, which implies both $\gamma$-positivity and coefficient log-concavity, has been conjectured independently by Ferroni--Schr\"oter \cite{FS_valuative_24} and by Huh--Stevens \cite{Stevens_realrooted}. Special cases have already been settled: Boolean matroids produce the Eulerian polynomials, and uniform matroids $\mathrm{U}_{d, d+1}$ are connected to the derangement polynomials (see \cite{Chowpoly, branden2025chowpolynomialsuniformmatroids}). Beyond matroids, Chow polynomial analogues and real-rootedness questions have been studied for bounded posets; see, for example, \cite{ferroni2024chowfunctionspartiallyordered, hoster2025chowpolynomialssimplicialposets, CFLchowpoly, branden2025chowpolynomialstotallynonnegative}.

The paper addresses a different question: how concentrated should the Chow coefficients of a matroid be? We establish a new family of inequalities for the coefficients of Chow polynomials of arbitrary matroids. Specifically, for any nonnegative integer $k$, we provide bounds on the ratio
\begin{equation}\label{eq:centralmomenttocoeff}
    \frac{\sum_{i=0}^d\left(i-\frac{d}{2}\right)^k\dim_{\mathbb{Q}}A^i(\mathrm{M})}{\sum_{i=0}^d\dim_{\mathbb{Q}}A^i(\mathrm{M})}.
\end{equation}

We formulate these bounds and their proofs in a probabilistic language. For a matroid $\mathrm{M}$ of rank $d+1$, we define a random variable $\mathcal{X}_{\mathrm{M}}$ by the probability mass function
\[
\Pr\!\big(\mathcal{X}_{\mathrm{M}}=p\big)\;=\;\frac{\dim_{\mathbb{Q}}A^p(\mathrm{M})}{\chowpoly_{\mathrm{M}}(1)}\qquad(p\ge0).
\]

For readers less familiar with probabilistic terminology, the $k$-th \emph{moment} of $\mathcal{X}_{\mathrm{M}}$ is the weighted sum $\mathbb{E}[\mathcal{X}_{\mathrm{M}}^k] = \sum_p p^k \Pr(\mathcal{X}_{\mathrm{M}}=p)$. Since the Chow polynomial is palindromic, the distribution has mean $\mu = d/2$. The $k$-th \emph{central moment} shifts this sum to the mean, giving
\[
\mathbb{E}\!\left[\big(\mathcal{X}_{\mathrm M} - d/2\big)^k\right]
\;=\;
\sum_p \left(p - \frac d2\right)^k \Pr(\mathcal{X}_{\mathrm{M}}=p).
\]
Under this interpretation, the ratio in \eqref{eq:centralmomenttocoeff} is exactly the $k$-th central moment. Bounding these central moments controls the dispersion and concentration of the Chow coefficients. These inequalities were first suggested by explicit computations of low-degree Chern numbers, specifically the inequality $c_1c_{d-1}\le c_d$.

To prepare for our main theorem, we introduce the concept of a \emph{central moment function sequence} (CMFS). This is a sequence of functions $\{f_0, \ldots, f_t\}$ from $\mathbb{Z}_{\geq 0}$ to $\mathbb{R}$ satisfying the convolution-like inequality
\[
f_k(a+b+2)\;\ge\;\sum_{i+j=k}\binom{k}{i}\,f_i(a)\,f_j(b),
\]
for all integers $a,b\ge0$ and all $0\le k\le t$. With this framework, our first main result shows that moment bounds for the Boolean matroid extend naturally to all matroids. 

\begin{mainthm}[Theorem~\ref{thm:generalineq}]\label{thm:A}
Let $\mathcal{X}_d$ denote the random variable associated with the Boolean matroid $\mathrm{U}_{d+1}=\mathrm{U}_{d+1,d+1}$. Suppose that a sequence of functions $\{f_0,\dots,f_t\}$ forms a CMFS such that for all $d\ge0$ and $0\le k\le t$,
\[
\mathbb{E}\!\Big[\big(\mathcal{X}_d-\tfrac{d}{2}\big)^k\Big]\;\le\; f_k(d).
\]
Then for any matroid $\mathrm{M}$ of rank $d+1$,
\[
\mathbb{E}\!\Big[\big(\mathcal{X}_{\mathrm{M}}-\tfrac{d}{2}\big)^t\Big]\;\le\; f_t(d).
\]
\end{mainthm}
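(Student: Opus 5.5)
The plan is to induct on the size of the ground set, using the semi-small decomposition of $A^*(\mathrm{M})$ from \cite{Chowpoly}. Since $A^*(\mathrm{M})$ depends only on the lattice of flats, we may assume $\mathrm{M}$ is simple. If every element of $\mathrm{M}$ is a coloop, then $\mathrm{M}$ is Boolean and the bound is the hypothesis. Otherwise choose a non-coloop $i$. The semi-small decomposition gives an isomorphism of graded vector spaces
\[
A^*(\mathrm{M}) \;\cong\; A^*(\mathrm{M}\setminus i)\;\oplus\;\bigoplus_{F} A^*(\mathrm{M}_{F\cup i})\otimes A^*(\mathrm{M}^{F})[-1].
\]
Here $F$ ranges over the flats of $\mathrm{M}\setminus i$ for which $F$ and $F\cup i$ are both flats of $\mathrm{M}$, and $\mathrm{M}_{F\cup i}$, $\mathrm{M}^F$ denote the contraction and restriction matroids appearing there. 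If $\mathrm{M}^F$ has rank $a+1$ and $\mathrm{M}_{F\cup i}$ has rank $b+1$, then $a+b+2=d$. I will take the precise statement of this decomposition as known; I expect the indexing and rank bookkeeping to be the main point to verify. The key numerical fact to check is that every summand is palindromic with center $d/2$. For $\mathrm{M}\setminus i$ this holds because deleting a non-coloop keeps the rank $d+1$. For each shifted tensor summand the center is $1+a/2+b/2=d/2$.

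On the probabilistic side, this decomposition says that $\mathcal{X}_{\mathrm{M}}$ is a mixture of the following random variables:
\begin{itemize}
\item $\mathcal{X}_{\mathrm{M}\setminus i}$;
\item for each $F$, the variable $1+\mathcal{Y}+\mathcal{Z}$, where $\mathcal{Y}\sim\mathcal{X}_{\mathrm{M}^F}$ and $\mathcal{Z}\sim\mathcal{X}_{\mathrm{M}_{F\cup i}}$ are independent.
\end{itemize}
The mixture weights are proportional to the total dimensions of the summands. All components have the same mean $d/2$. Hence the $t$-th central moment of $\mathcal{X}_{\mathrm{M}}$ is the weighted average of the $t$-th central moments of the components about $d/2$, and it suffices to bound each component by $f_t(d)$.

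For $\mathcal{X}_{\mathrm{M}\setminus i}$ the bound follows from the induction hypothesis. For a tensor component, write $\mathcal{Y}'=\mathcal{Y}-a/2$ and $\mathcal{Z}'=\mathcal{Z}-b/2$. These are independent, and $1+\mathcal{Y}+\mathcal{Z}-d/2=\mathcal{Y}'+\mathcal{Z}'$. Therefore
\[
\mathbb{E}\big[(\mathcal{Y}'+\mathcal{Z}')^k\big]=\sum_{i+j=k}\binom{k}{i}\mathbb{E}[\mathcal{Y}'^i]\,\mathbb{E}[\mathcal{Z}'^j].
\]
To compare this termwise with $\sum\binom{k}{i}f_i(a)f_j(b)$, I will carry the induction for all $k\le t$ simultaneously. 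This is legitimate because the hypothesis gives a CMFS bound for every $k\le t$, and $\mathrm{M}^F$ and $\mathrm{M}_{F\cup i}$ have smaller ground sets.

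The remaining subtlety is signs: replacing each factor by its upper bound is only valid if everything is nonnegative. Palindromicity handles this. Odd central moments vanish, and even central moments are nonnegative, so $\mathbb{E}[\mathcal{Y}'^i]\ge0$ for all $i$. Applying the hypothesis to Boolean matroids gives $f_i(a)\ge \mathbb{E}[(\mathcal{X}_a-a/2)^i]\ge 0$ for every $i$ and $a$. Then
\[
0\le \mathbb{E}[\mathcal{Y}'^i]\,\mathbb{E}[\mathcal{Z}'^j]\le f_i(a)f_j(b),
\]
and the CMFS inequality yields $\mathbb{E}[(\mathcal{Y}'+\mathcal{Z}')^k]\le f_k(a+b+2)=f_k(d)$. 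Averaging over the components completes the induction.
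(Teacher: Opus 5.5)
Your proposal is correct and follows essentially the same route as the paper: reduce to simple matroids, induct on the ground set via the deletion recursion $\chowpoly_{\mathrm{M}}=\chowpoly_{\mathrm{M}\setminus\{i\}}+x\sum_{F\in\underline{S}_i}\chowpoly_{\mathrm{M}/(F\cup\{i\})}\chowpoly_{\mathrm{M}|F}$ (where $F$ must be nonempty and proper, so that $a,b\ge 0$), expand the central moment of each tensor summand binomially, and apply the CMFS inequality, carrying all $k\le t$ along (the paper does this with an outer induction on $t$). Your sign argument is a useful addition: odd central moments vanish, $0\le \mathbb{E}[(\mathcal{Y}')^i]\le f_i(a)$, and $f_i(a)\ge 0$ follows from the Boolean hypothesis, which makes explicit why the termwise bound $\mathbb{E}[(\mathcal{Y}')^i]\,\mathbb{E}[(\mathcal{Z}')^j]\le f_i(a)f_j(b)$ is valid, a point the paper leaves implicit.
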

We prove Theorem~\ref{thm:A} by induction, and the underlying recursion comes from the semi-small decomposition of matroids, which motivates the definition of CMFS. We can construct a CMFS inductively. In Section~\ref{sec:sharpness}, we propose an inductive procedure to find upper bounds for the $k$-th central moment. The bound in Corollary~\ref{cor:ineq_chow_coefficient} is obtained as the first nontrivial instance of this framework.

\begin{maincor}[Corollary~\ref{cor:ineq_chow_coefficient}]\label{cor:B}
Let $\mathrm{M}$ be a matroid of rank $d+1$. Then
\[
\mathbb{E}\!\Big[\big(\mathcal{X}_{\mathrm{M}}-\tfrac{d}{2}\big)^2\Big]\;\leq \;\frac{d+2}{12}.
\]
That is,
\[
\sum_{i=0}^d\left(i-\frac{d}{2}\right)^2\dim_{\mathbb{Q}}A^i(\mathrm{M})
    \;\le\; \frac{d + 2}{12}\sum_{i=0}^d \dim_{\mathbb{Q}}A^i(\mathrm{M}).
\]
Moreover, equality holds if and only if either $d = 1$ or the simplification of $\mathrm{M}$ is $\mathrm{U}_{d+1}$ with $d > 0$.
\end{maincor}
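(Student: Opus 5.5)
The plan is to apply Theorem~\ref{thm:A} with $t=2$ and the explicit sequence
\[
f_0(d)=1,\qquad f_1(d)=0,\qquad f_2(d)=\frac{d+2}{12}.
\]
This requires three things: verifying that this is a CMFS, verifying that the Boolean matroids satisfy the bounds $f_k$, and then handling the equality case separately.

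\textbf{CMFS inequalities.} For $k=0$ the condition reads $1\ge 1$. For $k=1$ it reads $0\ge 0$. For $k=2$ the right-hand side is
\[
f_2(a)f_0(b)+2f_1(a)f_1(b)+f_0(a)f_2(b)=\frac{a+2}{12}+\frac{b+2}{12}=\frac{a+b+4}{12}.
\]
The left-hand side is $f_2(a+b+2)=\frac{a+b+4}{12}$. So the $k=2$ inequality holds with equality. This exact equality in the recursion is what will make the equality analysis feasible.

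\textbf{Boolean case.} For $\mathrm{U}_{d+1}$ the Chow polynomial is the Eulerian polynomial $A_{d+1}(x)$, so $\mathcal{X}_d$ is the descent statistic on permutations of $[d+1]$. The zeroth and first central moments are immediate: the mean is $d/2$ by palindromicity. The variance of $\mathrm{des}$ on $\mathfrak S_n$ is classically $(n+1)/12$. One way to see this is to write $\mathrm{des}=\sum_{i=1}^{n-1}\mathbf 1[\sigma_i>\sigma_{i+1}]$. Each summand has variance $1/4$, adjacent pairs have covariance $-1/12$, and non-adjacent pairs are independent. This gives
\[
\frac{n-1}{4}-\frac{2(n-2)}{12}=\frac{n+1}{12}.
\]
With $n=d+1$ this equals $(d+2)/12$. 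So the Boolean case holds with equality for $d\ge1$, and also for $d=0$, where the variance is $0$ but $f_2(0)=1/6$, so only the inequality holds. Theorem~\ref{thm:A} then yields the inequality for every matroid.

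\textbf{Equality case.} This is the main obstacle. I would redo the induction behind Theorem~\ref{thm:A}, which comes from the semi-small decomposition. That decomposition expresses $\chowpoly_{\mathrm M}$ as $\chowpoly_{\mathrm M\setminus i}$ plus a sum of shifted products $x\,\chowpoly_{\mathrm M_F}\chowpoly_{\mathrm M^{F\cup i}}$; equivalently, one can use the Ferroni--Matherne--Schulze--Vecchi recursion $\chowpoly_{\mathrm M}=\sum_F \bar\chi$-type terms. Either way, $\mathcal X_{\mathrm M}$ is a mixture of distributions, each of which is a sum of independent smaller Chow variables shifted by one. I would track exactly where slack enters.

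First, a second central moment of a mixture of centered pieces is the weighted average of the pieces' second moments, since all the pieces have the same mean $d/2$ by palindromicity. So equality for $\mathrm M$ forces equality in every piece.

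Second, each product piece with a factor of rank $1$ ($a=0$ or $b=0$) introduces strict slack, because $f_2(0)=1/6$ exceeds the true value $0$. The exception is the base situation $d=1$: there every piece is trivial, and the moment $1/4=(1+2)/12$ holds for every loopless rank-$2$ matroid.

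For $d\ge2$, I would then argue the following. Equality forces the absence of any decomposition term involving a rank-one flat-interval factor other than those already present in the Boolean case. Combined with induction on the contraction $\mathrm M^{F}$ and restriction $\mathrm M_F$, each of which must itself be Boolean after simplification, this should force every flat to be Boolean. In particular, every rank-$2$ flat must contain exactly two parallel classes. That condition makes the simplification $\mathrm U_{d+1}$.

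Conversely, simplification does not change the lattice of flats, and hence does not change $\chowpoly$. Together with the Eulerian computation, this gives equality in those cases. The delicate bookkeeping I expect is in identifying precisely which recursion terms carry the $d=0$ slack. That is where I would focus first, by comparing the recursion for $\mathrm M$ term by term with the Boolean one.
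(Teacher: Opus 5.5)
Your proof of the inequality itself is correct and is the paper's argument: the CMFS $\{1,0,\tfrac{d+2}{12}\}$ fed into Theorem~\ref{thm:generalineq}, with Boolean variance $(d+2)/12$. Your descent-indicator covariance computation is a fine justification of what the paper calls standard.

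The gap is in the equality characterization for $d\ge2$, where two of your steps fail. First, the induction does not force restrictions to flats to be Boolean. A factor of degree $1$ is a rank-$2$ matroid, and it has variance $\tfrac14=f_2(1)$ for \emph{every} loopless rank-$2$ matroid, so such pieces never carry slack. In any case, only flats in $\underline S_i$ appear. Second, your concluding implication is false. $\mathrm U_{3,4}$ is simple and every rank-$2$ flat consists of two points, yet $\chowpoly_{\mathrm U_{3,4}}(x)=1+7x+x^2$ has variance $\tfrac29<\tfrac13=\tfrac{d+2}{12}$. The direction is in fact reversed. For simple $\mathrm M$, $\{j\}\in\underline S_i$ exactly when $\{i,j\}$ is closed, so the absence of rank-one factors means every line through $i$ has at least three points. (Also, the $\overline{\chi}$-type recursion you offer as an alternative has signed coefficients, so it does not exhibit $\mathcal X_{\mathrm M}$ as a mixture.)

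The missing idea is to use the deletion term and then produce a rank-one flat. Let $\mathrm M$ be simple and non-Boolean with $d\ge2$. Equality for $\mathrm M$ forces equality for $\mathrm M\setminus i$, which is simple of the same rank. So you may keep deleting non-coloops until you reach $\mathrm M'$ with a non-coloop $i$ such that $\mathrm M'\setminus i$ is Boolean. Strictness for $\mathrm M'$ then propagates back to $\mathrm M$, because each deletion term has positive weight.

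In $\mathrm M'$, the set $E'\setminus i$ is a basis, and the fundamental circuit $C\ni i$ is the only circuit, with $|C|\ge3$. If $|C|=3$, take $j\notin C$ (possible since $|E'|=d+2\ge4$); if $|C|>3$, take any $j\ne i$. In both cases $\operatorname{cl}\{i,j\}=\{i,j\}$, hence $\{j\}\in\underline S_i$. The corresponding restriction factor has degree $0$ and variance $0<f_2(0)=\tfrac16$, so the inequality for $\mathrm M'$ is strict. This is exactly the paper's Lemma~\ref{lem:flatinSi}, and it is the step your plan defers as ``delicate bookkeeping''.
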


This inequality can be equivalently stated in terms of the Chow polynomial evaluated at $x=1$ as
\[
    \chowpoly_{\mathrm{M}}''(1) \;\le\; \frac{3d^2-5d+2}{12}\chowpoly_{\mathrm{M}}(1).
\]

Notably, this bound is not a consequence of the real-rootedness conjecture: even if one assumes that all matroid Chow polynomials are real-rooted, our result imposes an independent, nontrivial constraint on the coefficients. We also note that this specific inequality fails for augmented Chow rings, although the probabilistic methods developed in this paper can be adapted to yield analogous bounds with weaker constants in the augmented setting (see Remark~\ref{rem:augmented}).

Using similar methods, we can also produce upper bounds for the ratio of the $k$-th derivative \[
    \chowpoly_{\mathrm{M}}^{(k)}(1) \;\le\; g_k(d)\chowpoly_{\mathrm{M}}(1),
\] for explicitly computable functions $g_k(d)$ (Remark~\ref{rem:boundsfordifferential}). 

We also compare the Chow coefficient distribution with a normal distribution and obtain universal upper bounds for all moments. By setting
\[
f_k(d)\;=\; \mathbb{E}\!\Big[\mathcal{N}\big(0,\tfrac{d+2}{12}\big)^k\Big]
\;=\;
\begin{cases}
    0 &\text{if $k$ is odd,} \\
    \left(\frac{d+2}{12}\right)^{\frac{k}{2}}(k-1)!! &\text{if $k$ is even,}
\end{cases}
\]
where $\mathcal{N}(0,\tfrac{d+2}{12})$ is the normal distribution centered at $0$ with variance $\tfrac{d+2}{12}$, we obtain the following.

\begin{mainthm}[Theorem~\ref{thm:main-theorem}]
Let $\mathrm{M}$ be a matroid of rank $d+1$, and let $k\ge0$. Then
\[
\mathbb{E}\!\Big[\big(\mathcal{X}_{\mathrm{M}}-\tfrac{d}{2}\big)^k\Big]\;\le\;\mathbb{E}\!\Big[\mathcal{N}\big(0,\tfrac{d+2}{12}\big)^k\Big].
\]
\end{mainthm}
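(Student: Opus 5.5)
The plan is to deduce this from Theorem~\ref{thm:A} with the choice $f_k(d)=\mathbb{E}[\mathcal{N}(0,\tfrac{d+2}{12})^k]$. Two things then need checking: that $\{f_0,\dots,f_t\}$ is a CMFS for every $t$, and that the Boolean central moments are dominated by the $f_k(d)$.

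\emph{CMFS property.} Let $Z_a\sim\mathcal{N}(0,\tfrac{a+2}{12})$ and $Z_b\sim\mathcal{N}(0,\tfrac{b+2}{12})$ be independent. The binomial theorem gives
\[
\sum_{i+j=k}\binom{k}{i}f_i(a)f_j(b)=\mathbb{E}\big[(Z_a+Z_b)^k\big],
\]
and $Z_a+Z_b\sim\mathcal{N}(0,\tfrac{a+b+4}{12})$. Since $\tfrac{a+b+4}{12}=\tfrac{(a+b+2)+2}{12}$, this sum equals $f_k(a+b+2)$ exactly. So the CMFS inequality holds with equality for every $k$. The additive constant $2$ in the variance is precisely what makes the shift $a+b+2$ in the CMFS definition match up.

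\emph{Boolean case.} We need
\[
\mathbb{E}\big[(\mathcal{X}_d-\tfrac d2)^k\big]\le \mathbb{E}\big[\mathcal{N}(0,\tfrac{d+2}{12})^k\big]
\]
for every $d\ge 0$. The Chow polynomial of $\mathrm{U}_{d+1}$ is the Eulerian polynomial $A_{d+1}(x)$, so $\mathcal{X}_d$ is the descent statistic of a uniformly random permutation of $[d+1]$.

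\emph{Odd $k$.} Palindromicity makes the distribution symmetric about $d/2$, so both sides vanish.

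\emph{Even $k$.} I would use the classical representation of descents via uniform random variables: the number of descents of a random permutation of $[n]$, with $n=d+1$, has the same law as $\lfloor U_1+\dots+U_n\rfloor$ for $U_i$ i.i.d.\ uniform on $[0,1]$. Equivalently, by real-rootedness of Eulerian polynomials, $\mathcal{X}_d$ is a sum of independent Bernoulli variables. The uniform-sum picture seems cleaner. Write $S=\sum_i (U_i-\tfrac12)$, and let $\mathcal{X}_d-\tfrac d2$ be a suitable rounding of $S$. The expected variance bookkeeping is: $\operatorname{Var}(S)=\tfrac{n}{12}=\tfrac{d+1}{12}$, and the rounding should contribute the extra $\tfrac1{12}$, giving total variance $\tfrac{d+2}{12}$, which matches Corollary~\ref{cor:B}.

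To compare higher even moments with the Gaussian, I would use sub-Gaussianity through cumulants. Each $U_i-\tfrac12$ has cumulant generating function $\log\frac{\sinh(s/2)}{s/2}$, whose higher cumulants are $\kappa_{2m}=B_{2m}/(2m)$ and alternate in sign. The mgf of $\mathcal{X}_d-\tfrac d2$ is the ratio $e^{-sd/2}A_{d+1}(e^s)/(d+1)!$, and the known identity $A_{n}(e^s)/n!$ relates to $\big((e^s-1)/s\big)^{n}$ times a periodic correction. The strategy is to show this mgf is dominated coefficientwise, as a power series in $s$, by $e^{\frac{d+2}{24}s^2}$. Because all odd coefficients vanish, coefficientwise domination of the even coefficients is exactly the moment inequality.

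An alternative is an induction on $d$ using the Boolean recursion of the semi-small decomposition. This would reduce everything to checking small $d$ directly, since equality $f_k(a+b+2)=\sum\binom ki f_if_j$ propagates bounds, though only if the Boolean recursion expresses $\mathcal{X}_d$ via mixtures of $\mathcal{X}_a+\mathcal{X}_b'+1$ with $a+b+2=d$, plus the base terms.

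The main obstacle is the Boolean base inequality for all even $k$. I would first attempt the mixture/recursion route. Here $\mathcal{X}_d$ is a mixture of $\mathcal{X}_{d-2-j}$ convolved with pieces from smaller Boolean lattices, which is exactly the shape used in the proof of Theorem~\ref{thm:A}. Its only non-recursive ingredient would be the small cases $d=0$ (point mass, bound trivially true) and $d=1$ (two-point symmetric distribution with variance $\tfrac14=\tfrac{3}{12}$, with $k$-th moment $2^{-k}\le (k-1)!!\,4^{-k/2}$). If that fails, I would fall back on the cumulant-sign argument above.
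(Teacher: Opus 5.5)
Your reduction is correct and matches the paper. The normal moments form a nice CMFS by additivity of variances, Theorem~\ref{thm:generalineq} reduces everything to $\mathrm{U}_{d+1}$, and odd $k$ is trivial. The gap is the Boolean inequality for even $k$, specifically in the regime $d<k-1$, where $\mathbb{E}[(\mathcal{X}_d-\tfrac d2)^k]$ is no longer the polynomial $E'_k(d)$. Neither of your routes handles it.

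\textbf{The recursion route fails.} The decomposition \eqref{eq:FMSV-3.24-chow} needs a non-coloop, and every element of $\mathrm{U}_{d+1}$ is a coloop. The Eulerian recursion you actually get, by the position of the largest letter, is $\chowpoly_{\mathrm{U}_{d+1}}(x)=(1+x)\chowpoly_{\mathrm{U}_{d}}(x)+x\sum_{k=1}^{d-1}\binom dk\chowpoly_{\mathrm{U}_{k}}(x)\chowpoly_{\mathrm{U}_{d-k}}(x)$.
\begin{itemize}
\item Its first component is $\mathcal{X}_{d-1}+\mathrm{Bernoulli}(\tfrac12)$. For $d\ge2$ this has variance $\tfrac{d+4}{12}>\tfrac{d+2}{12}$.
\item The components containing $\mathcal{X}_0$ have variance at most $\tfrac d{12}$.
\end{itemize}
The mixture reaches $\tfrac{d+2}{12}$ only through cancellation between these. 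So even the $k=2$ bound does not propagate componentwise, and the base cases $d=0,1$ buy nothing.

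\textbf{The cumulant route only restates the claim.} Coefficientwise domination of the mgf by $e^{(d+2)s^2/24}$ is exactly the inequality to be proved. Your uniform-sum picture, with the rounding treated as an independent uniform (Sheppard's correction), produces exactly $(\sinh(s/2)/(s/2))^{d+2}$. That is the generating function of the polynomials $E'_k(d)$. Comparing it termwise with $e^{s^2/24}$ is trivial, and this is precisely the paper's Case (1), which is valid only for $d\ge k-1$.

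For $d\le k-2$, the discreteness of the rounding contributes the periodic correction you allude to. The signs of the uniform cumulants $B_{2m}/(2m)$ say nothing about it, and it genuinely matters:
\begin{itemize}
\item $\mathbb{E}[(\mathcal{X}_2-1)^4]=\tfrac13>E'_4(2)=\tfrac3{10}$, so rounding is not dominated by adding an independent uniform.
\item $\tfrac13$ equals the Gaussian value $3(\tfrac{4}{12})^2$, so the inequality is tight at $(d,k)=(2,4)$. No soft argument with slack can work there.
\end{itemize}

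The paper closes this regime by brute force. It checks by computer all $(d,t)$ with $d<2t-1$ and $2t<200$. For $t\ge100$ it uses the crude estimate $A(d+1,i)\le(i+1)^{d+1}$ together with AM--GM and Stirling's bounds. Your proposal needs some quantitative estimate of this kind for the Boolean moments when $d<k-1$; as written, the Boolean base case is not proved.
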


Since the Chow polynomial is $\gamma$-positive, there are also binomial bounds for the Chow coefficients (see Corollary~\ref{cor:binomial_bound}). For matroids of fixed rank $d$, the normal distribution gives sharper upper bounds for low moments, though these bounds weaken as $k$ grows. Conversely, the lower bounds for the central moments of $\mathcal{X}_\mathrm{M}$ are naive but perfectly sharp (see Corollary~\ref{cor:naiveboundformatroids}).

These coefficient inequalities carry several immediate structural consequences for matroids: bounds on the counts of flags of flats (Section~\ref{sec:reversed}), constraints on the $\gamma$-coefficients of the Chow polynomial (Section~\ref{sec:Rlabelings}), and necessary constraints on the roots of the Chow polynomial (Section~\ref{sec:real_rootedness}).

While these combinatorial corollaries are of independent interest, the primary motivation for bounding the central moments is geometric. The Chern--Schwartz--MacPherson (CSM) cycles of a matroid were first defined and studied in \cite{CSM_MRS}. Deeper structural results for the total CSM class via the notion of tautological classes $[Q_\mathrm{M}]$ and $[S_{\mathrm{M}}]$ of a matroid were developed in \cite{BEST}. When a matroid $\mathrm{M}$ on a ground set $E$ is realizable by a linear subspace $L \subseteq \Bbbk^{E}$, its CSM class has a geometric interpretation in terms of the Chern class of the log-tangent bundle $T{W}_L(-D)$. Here ${W}_L$ denotes the De\,Concini--Procesi wonderful compactification \cite{DCP} of the corresponding hyperplane arrangement, and $D$ is the boundary divisor. For arbitrary matroids, one can formally define the tangent $K$-class $T_{\mathrm{M}}$ \cite{Tbundle}. The Chern classes of this tangent bundle, denoted $c(\mathrm{M}) = c(T_{\mathrm{M}})$, are intricately linked to the coefficients of the Chow polynomial of $\mathrm{M}$ via the Hirzebruch $\chi_y$-genus, which translates the moment bounds into Chern number inequalities.

This relationship stems from the fact that when $\mathrm{M}$ is realized by $L\subseteq \mathbb{C}^E$, the Hodge numbers $h^{p,q}$ vanish for $p \neq q$. Consequently, we have
\[
\dim_{\mathbb{Q}}A^p(\mathrm{M})\;=\;\dim_\mathbb{C} H^{2p}({W}_L,\mathbb{C})\;=\;h^{p,p}({W}_L) \;=\;(-1)^p\chi\big({W}_L,\Omega^p_{{W}_L}\big),
\]
where the left-hand side is the Chow coefficient, and the right-hand side connects directly to Chern classes. We extend this relationship to non-realizable matroids using a valuativity argument, which we introduce in Section~\ref{sec:prelim-valuative}.

For a smooth complex projective variety (or more generally, a compact almost complex manifold) $X$ of complex dimension $d$, the Hirzebruch $\chi_y$-genus is defined as
\[
\chi_y(X)\;:=\;\sum_{p=0}^d \chi\big(X,\Omega^p_X\big)\,y^p.
\]
For $k \geq 0$ (and setting $0^0=1$), we define the evaluation polynomials:
\[
h_k(X)\;:=\; \sum_{p=0}^d (-1)^p\,\chi\big(X,\Omega^p_X\big) \left(p - \frac{d}{2}\right)^k.
\]

It is a classical result that the numbers $h_k(X)$ can be expressed entirely in terms of the Chern numbers of $X$. We record the first few terms of this expansion below.

\begin{mainlem}[{Lemma~\ref{lem:hirzeformula}, \cite[Lemma 2.1]{LiPing}}]
Let $X$ be a compact K\"{a}hler manifold of complex dimension $d$. If $k$ is odd, $h_k(X) = 0$. For even $k$ (provided that $d \geq k$), the expressions are:
\[
\begin{aligned}
 h_0(X)\;&=\;c_d(X),\\[3pt]
 h_2(X)\;&=\;\frac{d}{12}c_d(X)\;+\;\frac{1}{6}c_1c_{d-1}(X),\\[3pt]
 h_4(X)\;&=\;\frac{d(5d-2)}{240}c_d(X)\,+\,\frac{5d-2}{60}c_1c_{d-1}(X) \,+\, \frac{c_1^2+3c_2}{30}c_{d-2}(X)\, -\, \frac{c_1^3-3c_1c_2+3c_3}{30}c_{d-3}(X),\\
 \dots
\end{aligned}
\]
\end{mainlem}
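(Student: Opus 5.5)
The plan is to express $h_k(X)$ through the Hirzebruch–Riemann–Roch theorem applied to the $\chi_y$-genus. By HRR,
\[
\chi_y(X)=\int_X \prod_{i=1}^d \frac{x_i(1+ye^{-x_i})}{1-e^{-x_i}},
\]
where $x_1,\dots,x_d$ are the Chern roots of $TX$. Substituting $y=-e^{t}$ gives
\[
\sum_p (-1)^p\chi(X,\Omega^p)\,e^{pt}=\chi_{-e^t}(X).
\]
Multiplying by $e^{-dt/2}$ yields the generating function $H(t):=\sum_k h_k(X)\,t^k/k!$. Each factor becomes
\[
e^{-t/2}\,\frac{x(1-e^{t-x})}{1-e^{-x}}=\frac{x\,\sinh\big((x-t)/2\big)}{\sinh(x/2)},
\]
using $e^{-t/2}(1-e^{t-x})=e^{-x/2}\cdot 2\sinh((x-t)/2)$ and $1-e^{-x}=e^{-x/2}\cdot 2\sinh(x/2)$. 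Hence
\[
H(t)=\int_X\prod_{i=1}^d Q(x_i,t),\qquad Q(x,t)=\frac{x\sinh((x-t)/2)}{\sinh(x/2)}.
\]
Note that $Q(-x,-t)=-Q(x,t)$. The degree-$d$ part in the $x$'s of $\prod_i Q(x_i,t)$ is therefore multiplied by $(-1)^d$ under $t\mapsto -t$ together with an overall sign $(-1)^d$. Comparing these signs shows $H(-t)=H(t)$, so $h_k=0$ for odd $k$. Equivalently, this is Serre duality $\chi(\Omega^p)=(-1)^d\chi(\Omega^{d-p})$.

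For the even cases, expand $Q$ in powers of $t$ and $x$:
\[
Q(x,t)=x\cosh(t/2)-x\coth(x/2)\sinh(t/2).
\]
Write $x\coth(x/2)=2+x^2/6-x^4/360+\cdots$. Then
\[
Q=-2\sinh(t/2)+x\cosh(t/2)-\tfrac{x^2}{6}\sinh(t/2)+\tfrac{x^4}{360}\sinh(t/2)+\cdots.
\]
The coefficient of $t^k$ in $\prod_i Q(x_i,t)$, restricted to total $x$-degree $d$, is a symmetric polynomial in the $x_i$. Each monomial arises from choosing, for all but a bounded number ($\le k$) of indices, the term $x\cosh(t/2)$ at order $t^0$, namely $x$. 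The remaining few indices take either the constant term $-2\sinh(t/2)$ or higher terms.

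The bookkeeping goes as follows. A choice uses $a$ factors of the constant type (degree $0$ in $x$) and $b$ factors of degree $\ge2$, with degrees balancing to total degree $d$, and the leftover factors are linear. Each such choice then reduces to a symmetric function like
\[
\sum x_{i_1}^{2}\cdots x_{i_b}^{2}\prod_{j\notin S}x_j.
\]
These are rewritten in terms of $c_d$, $c_1c_{d-1}$, $c_{d-2}(c_1^2+3c_2)$, and so on, via Newton-type identities. For example, $\sum_i x_i^2\prod_{j\ne i, j\ne i'}x_j$ summed appropriately gives combinations like $c_1c_{d-1}-dc_d$ after correcting for the removed factor.

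For $k=0$, only $x\cosh$ at order $t^0$ contributes, giving $\int\prod x_i=c_d$. For $k=2$, there are two contributions. The first is the $t^2/8$ term of $\cosh$ from one factor, giving $\tfrac{d}{8}c_d\cdot 2!$ in $h_2$. The second takes one $-2\sinh(t/2)$ paired with one $-\tfrac{x^2}{6}\sinh(t/2)$. That pair contributes
\[
\tfrac{1}{12}\sum_{i\ne j}x_i^2\prod_{l\ne i,j}x_l=\tfrac{1}{12}\big(c_1c_{d-1}-dc_d\big),
\]
up to the $2!$ normalization. Combining gives $\tfrac{d}{12}c_d+\tfrac16c_1c_{d-1}$.

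The $h_4$ case is the main obstacle. It involves many configurations: up to four nonlinear factors, including $x^4$ terms and products $x_i^2x_j^2$ with two constant factors. Each yields monomial symmetric sums $m_{(2,1^{d-1})}$, $m_{(2,2,1^{d-4})}$, $m_{(3,1^{d-2})}$ (with degrees adjusted by the constant factors) that must be converted into Chern monomials. I would organize this conversion by writing $\prod_{l\notin S}x_l$ as $c_d/\prod_{l\in S}x_l$ formally. Equivalently, I would use $\sum_i x_i^m\,\partial c_d/\partial x_i$-type identities, which produce the combinations $c_1^2+3c_2$ and $c_1^3-3c_1c_2+3c_3$, i.e. power sums $p_2$ and $p_3$ multiplied by $c_{d-2}$ and $c_{d-3}$. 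As a cross-check against the careful coefficient tracking, I would verify the resulting formula on $(\mathbb{P}^1)^d$ and on $\mathbb{P}^d$, as is done in \cite[Lemma 2.1]{LiPing}.
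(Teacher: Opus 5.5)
\paragraph{Comparison with the paper.}
Your route is sound, and it differs from the paper's. The paper does not prove this lemma; it only cites \cite[Lemma 2.1]{LiPing}. The one derivation it offers (Appendix~\ref{sec:Chernandchiy}) extracts the coefficient of $c_d$ in $h_k$ for every $k$ at once, as $E'_k(d-2)$. It does this via Cauchy's formula (Lemma~\ref{lem:coe_b_d}) and the polylogarithm/Eulerian expansion. The coefficients of mixed Chern numbers are left to multiplicativity of $\chi_y$ on products $X\times\mathbb{P}^1$, $X\times(\mathbb{P}^1)^2$, $X\times\mathbb{P}^2,\dots$.

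Your symmetrized generating function $H(t)=\int_X\prod_iQ(x_i,t)$, with $Q=x\cosh(t/2)-x\coth(x/2)\sinh(t/2)$, is more elementary and self-contained. Vanishing of odd $h_k$ falls out of $Q(-x,-t)=-Q(x,t)$, and all coefficients of a fixed $h_k$ come from a single expansion. The paper's route instead gives a closed form for the $c_d$-coefficient for every $k$, which is exactly what Corollary~\ref{cor:chiyproofCMRVS} needs, but reaches the mixed terms only indirectly. Your computations of $h_0$ and $h_2$ are correct.

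\paragraph{The $h_4$ case.}
What is missing is the $h_4$ computation itself, and your sketch of it is partly off.
\begin{itemize}
\item Because $x\coth(x/2)$ is even, $Q$ has no $x^3$ term, and every non-$\sinh$ factor is $x\cosh(t/2)$. The monomial types that actually occur are $m_{(2,1^{d-2})}$, $m_{(4,1^{d-4})}$ and $m_{(2,2,1^{d-4})}$, not $m_{(2,1^{d-1})}$ or $m_{(3,\dots)}$.
\item $c_1^2+3c_2$ equals $p_2+5c_2$, not $p_2$.
\item Neither ``$c_d/\prod x_l$'' nor ``$\sum_i x_i^m\,\partial c_d/\partial x_i$'' produces the identities you need.
\end{itemize}

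Sorting by the number ($0$, $2$ or $4$) of $\sinh$-type factors, the $t^4$-coefficient of the degree-$d$ part of $\prod_iQ(x_i,t)$ is
\[
\Bigl(\tfrac{d}{384}+\tfrac{d(d-1)}{128}\Bigr)c_d+\tfrac{3d-4}{288}\,m_{(2,1^{d-2})}-\tfrac{1}{720}\,m_{(4,1^{d-4})}+\tfrac{1}{144}\,m_{(2,2,1^{d-4})}.
\]
To convert to Chern numbers, use three identities:
\begin{itemize}
\item $m_{(2,1^{d-2})}=c_1c_{d-1}-dc_d$;
\item the telescoping identities $p_rc_{d-r}=m_{(r+1,1^{d-r-1})}+m_{(r,1^{d-r})}$, which give $m_{(4,1^{d-4})}=p_3c_{d-3}-p_2c_{d-2}+c_1c_{d-1}-dc_d$;
\item the overlap expansion $c_2c_{d-2}=m_{(2,2,1^{d-4})}+(d-2)\,m_{(2,1^{d-2})}+\binom{d}{2}c_d$.
\end{itemize}
Multiplying by $4!$ reproduces exactly the stated formula for $h_4$, with $p_2=c_1^2-2c_2$ and $p_3=c_1^3-3c_1c_2+3c_3$. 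The hypothesis $d\ge4$ is what guarantees that all of these configurations occur.

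Once this bookkeeping is written out, your proof is complete. The checks on $(\mathbb{P}^1)^d$ and $\mathbb{P}^d$ are consistent with the formula, but they cannot replace the computation.
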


By equating these geometric $h_k$ evaluations with our probabilistic central moments, coefficient inequalities for Chow polynomials immediately yield new inequalities for matroidal Chern numbers. For example, Corollary~\ref{cor:B} is turned into the following inequality.

\begin{mainthm}[Corollary~\ref{cor:ineq_of_chern_numbers}] \label{thm:intro_Chern_ineq}
Let $\mathrm{M}$ be a matroid of rank $d+1$. Then
\[
c_1 c_{d-1}(\mathrm{M}) - c_d(\mathrm{M}) \;\le\; 0,
\]
with equality precisely when $d=1$ or when the simplification of $\mathrm{M}$ is $\mathrm{U}_{d+1}$ with $d>0$.
\end{mainthm}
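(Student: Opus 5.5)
The idea is to translate Corollary~\ref{cor:B} into Chern numbers using the matroidal version of the $h_2$ formula from Lemma~\ref{lem:hirzeformula}. Everything rests on the identity
\[
\sum_{p=0}^d (-1)^p\chi(\mathrm{M},\Omega^p)\Big(p-\tfrac d2\Big)^k \;=\; \sum_{p=0}^d \dim_{\mathbb{Q}} A^p(\mathrm{M})\Big(p-\tfrac d2\Big)^k ,
\]
where the left-hand side is built formally from the tangent $K$-class $T_{\mathrm{M}}$ via Hirzebruch--Riemann--Roch. From it we get the equality
\[
h_2(\mathrm{M}) \;=\; \chowpoly_{\mathrm{M}}(1)\,\mathbb{E}\big[(\mathcal{X}_{\mathrm{M}}-\tfrac d2)^2\big].
\]

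For realizable $\mathrm{M}$ the identity holds by the Hodge-theoretic computation in the introduction, since $h^{p,q}(W_L)=0$ for $p\ne q$. For $k=0$ it also gives $c_d(\mathrm{M})=\chowpoly_{\mathrm{M}}(1)$. To cover arbitrary matroids I would argue by valuativity, as set up in Section~\ref{sec:prelim-valuative}.
\begin{itemize}
\item Both sides are valuative invariants of $\mathrm{M}$. On the right, the Chow polynomial is valuative. On the left, the Chern numbers $c_1c_{d-1}(\mathrm{M})$ and $c_d(\mathrm{M})$ are degrees of products of tautological classes, and these are valuative.
\item Valuative invariants are determined by their values on Schubert matroids, which are realizable over $\mathbb{C}$.
\item Hence the equality of the two sides on realizable matroids propagates to all matroids.
\end{itemize}
The identity $h_2=\frac{d}{12}c_d+\frac16c_1c_{d-1}$ is a formal consequence of Hirzebruch--Riemann--Roch applied to a $K$-class. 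So it holds verbatim for $T_{\mathrm{M}}$, or alternatively it too can be obtained by valuativity from the realizable case.

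With the identity in hand, Corollary~\ref{cor:B} gives
\[
\frac{d}{12}c_d+\frac16 c_1c_{d-1}\;\le\;\frac{d+2}{12}\,\chowpoly_{\mathrm{M}}(1)\;=\;\frac{d+2}{12}\,c_d .
\]
Subtracting $\frac{d}{12}c_d$ from both sides leaves $\frac16 c_1c_{d-1}\le\frac{2}{12}c_d$, that is, $c_1c_{d-1}\le c_d$. Every step in this chain is an equality except the application of Corollary~\ref{cor:B}. Therefore equality in the Chern inequality holds exactly when it holds in Corollary~\ref{cor:B}, namely when $d=1$ or the simplification of $\mathrm{M}$ is $\mathrm{U}_{d+1}$ with $d>0$.

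The one case to check separately is $d=1$, since Lemma~\ref{lem:hirzeformula} assumes $d\ge k=2$. Here I would verify the claim directly. The variety is $\mathbb{P}^1$ or a wonderful model of dimension one, so $c_0c_1=c_1$ and equality is trivial. The case $d=0$ is degenerate, and I would treat it as a convention.

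The main obstacle is the first step: making sure the $\chi_y$-genus formalism and the identity $c_d(\mathrm{M})=\chowpoly_{\mathrm{M}}(1)$ hold for non-realizable matroids. This needs the Chern numbers of $T_{\mathrm{M}}$ to be valuative, and needs enough realizable matroids, namely the Schubert matroids, to span the relevant valuative group.
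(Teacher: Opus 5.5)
Your proposal is correct and follows the paper's proof. The paper likewise transfers the identity $h_k=\chowpoly_{\mathrm{M}}(1)\,\mathbb{E}[(\mathcal{X}_{\mathrm{M}}-\tfrac d2)^k]$ (with $h_0=c_d$) from $\mathbb{C}$-realizable matroids to all matroids by valuativity, substitutes $h_2=\tfrac{d}{12}c_d+\tfrac16c_1c_{d-1}$ into Corollary~\ref{cor:B}, and inherits the equality cases directly. Your separate check of $d=1$ is harmless but unnecessary, since with $c_0=1$ the $h_2$ formula still holds there (both sides equal $\tfrac14c_1$).
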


Notably, the inequality $c_1 c_{d-1}(\mathrm{M}) \leq c_d(\mathrm{M})$ is opposite in direction to the Fulton--Lazarsfeld inequality for nef vector bundles (see Remark~\ref{rem:Fulton-Lazarsfeld}). For realizable matroids, the Chern classes of the log-tangent bundle are computed from a short exact sequence involving the tautological subbundle $S_L|_{W_L}$ \cite[Theorem 8.8]{BEST}; moreover, the dual bundle $S_L^\vee$ is nef. Some of these underlying Chern class inequalities extend to arbitrary matroids \cite[Theorem 9.13]{BEST}. Thus, at a conceptual level, the Chern number inequalities for the tangent bundle and those for the log-tangent bundle point in opposite directions.

Classical Miyaoka--Yau-type inequalities give fundamental constraints on Chern numbers of smooth projective varieties, and they have played a central role in the study of positivity phenomena in algebraic geometry (see Remark \ref{rem:myineq}). In Section~\ref{subsec:alpha_truncation}, we produce a family of inequalities after multiplying by powers of $\alpha$, reflecting the effect of the principal truncation on the tangent bundle. We translate these Chern number inequalities into counts of flags of flats, proving several general Chern class inequalities for arbitrary matroids $\mathrm{M}$ (e.g., Corollary~\ref{cor:weakmapineq}). In particular, Corollary~\ref{cor:miyaoka-yau-alpha} provides an inequality stronger than the classical Miyaoka--Yau inequality for the divisor class $\alpha$.

\subsection*{Organization}
Section \ref{sec:pre} recalls background material.

In Section \ref{sec:moment}, we introduce the notion of \emph{central moment function sequences} (CMFS) and provide a systematic method to generate inequalities on the central moments. We provide bounds from normal distributions, establish the lower bounds and translate these results into counts of flags of flats, and then apply these inequalities to the $\gamma$-coefficients and the roots of the Chow polynomial.

In Section \ref{sec:Chern}, we translate the moment inequalities into relations among matroidal Chern numbers via the $\chi_y$-genus. As an application, we deduce the Chern-number inequality $c_1c_{d-1}\le c_d$ for matroids of rank at least two. Finally, we analyze the geometric effect of intersecting with the divisor class $\alpha$, proving further inequalities regarding principal truncations and Chern classes.

\subsection*{Acknowledgments}
The authors are grateful to Ping Li for inspiring the research on Chern number inequalities of matroids. We also thank Matt Larson for his helpful comments and suggestions regarding the exposition of this paper.

%% file: 2-preliminaries.tex
\section{Preliminaries}
\label{sec:pre}

\subsection{The wonderful variety}

A matroid $(\mathrm{M},E)$ of rank $d+1$ is \emph{representable} over the field $\Bbbk$ if there exists a linear subspace $L\subseteq \Bbbk^E$ such that the set of bases of $\mathrm{M}$ is exactly the collection $\{B\in\binom{E}{d+1}:L\cap\bigcap_{i\in B}H_i=\{0\}\}$, where $H_i$ is the $i$-th coordinate hyperplane and the binomial notation $\binom{E}{d+1}$ is the collection of all $(d+1)$-element sets of $E$. In this case, $L$ is called a \emph{representation} (or \emph{realization}) of $\mathrm{M}$. The condition that $\mathrm{M}$ is loopless means $L$ is not contained in any coordinate hyperplane.

We denote by ${W}_L$ the wonderful compactification of the complement $L\setminus \bigcup_{i\in E} (H_i\cap L)$ by iteratively blowing up the strict transforms of $\{L\cap \bigcap_{i\in F} H_i:F \text{ is a proper flat of }\mathrm{M}\}$ in increasing order of dimension (see \cite{DCP} for more details). We call ${W}_L$ the \emph{wonderful variety associated with} $\mathrm{M}$ via the realization $L\subseteq \Bbbk^E$. It is a smooth projective variety of dimension $d$. If $\mathrm{M}$ is Boolean and $L=\Bbbk^E$, the wonderful variety ${W}_L$ is exactly the permutahedral variety, which we will denote as $X_E$.

\subsection{Chow ring and \texorpdfstring{$K$}{K}-ring of a matroid}
Every matroid $\mathrm{M}$ on the ground set $E$ has an associated Bergman fan $\Sigma_\mathrm{M}$ \cite{Bergmanfan} and the corresponding toric variety $X_{\Sigma_\mathrm{M}}$, which we will denote by $X_\mathrm{M}$. This $X_\mathrm{M}$ sits naturally inside $X_E$. 

We define $A^*(\mathrm{M}):=A^*(X_\mathrm{M})$ to be the Chow ring of $X_\mathrm{M}$. When $\mathrm{M}$ is realizable by $L\subseteq \Bbbk^E$, we have
\[
W_L \;\hookrightarrow\; X_\mathrm{M} \;\hookrightarrow\; X_E,
\] and the inclusion ${W}_L\hookrightarrow {X}_\mathrm{M}$ induces an isomorphism $A^*(\mathrm{M})=A^*({X}_\mathrm{M})\cong A^*({W}_L)$. (See \cite[Remark 2.13]{BHM}.)

The Chow ring $A^*(\mathrm{M})$ admits a combinatorial presentation \cite[Section 5.3]{AHK18}:

\[
A^*(\mathrm{M}) = \mathbb{Z}[x_F \mid F \text{ a nonempty proper flat of } \mathrm{M}] / (I + J),
\]

where:
\begin{itemize}
    \item $I$ is the ideal generated by $x_F x_G$ for incomparable flats $F$ and $G$,
    \item $J$ is the ideal generated by $\sum_{F \ni i} x_F - \sum_{F \ni j} x_F$ for each pair of elements $i, j \in E$.
\end{itemize}

In \cite{LLPP}, the authors defined the $K$-ring of a matroid $\mathrm{M}$ by $K(\mathrm{M}):=K({X}_{\mathrm{M}})$. Similarly, when $\mathrm{M}$ is realizable via $L\subseteq \Bbbk^E$, the inclusion ${W}_L\hookrightarrow {X}_\mathrm{M}$ induces an isomorphism $K(\mathrm{M})\;\cong\; K(W_L)$.

\subsection{Tangent class of a matroid}

In \cite[Section 3]{Tbundle}, the tangent class $T_\mathrm{M} \in K(\mathrm{M})$ for a general matroid $\mathrm{M}$ was introduced and several properties were established. In particular, when $\mathrm{M}$ is realizable via $L\subseteq \Bbbk^E$, the class $T_\mathrm{M}$ coincides with the class of the tangent bundle $T_{W_L}$ of the wonderful compactification $W_L$.

\begin{definition}[{\cite[Definition 3.2]{Tbundle}}] 
For a matroid $\mathrm{M}$ of rank $d+1$ and an integer $k > 0$, we define $$Z_{k}(\mathrm{M})\; =\; \sum_{F \text{ is a rank } k \text{ flat }} x_F \in A^1(\mathrm{M}).$$
\end{definition}

Note that $Z_{k}(\mathrm{M}) = S_{d+1-k, \mathrm{M}}$ in the definition of the original paper; we will write $Z_k$ when there is no confusion. 

The total Chern class of the tangent class $T_{\mathrm{M}}$ is given by the following theorem.
\begin{theorem}[{\cite[Theorem~3.4]{Tbundle}}] \label{thm:Tangentbundle}
For a matroid $\mathrm{M}$ of rank $d+1$, the total Chern class of its tangent class $T_\mathrm{M}$ is given by (cf. Definition \ref{def:alphaandbeta})
\begin{equation}
    \label{eq:Chern_class_of_tangent}
    c(T_\mathrm{M})\; =\; \left(\prod_{i=1}^{d}(1+Z_{i})\right)\cdot\left(\prod_{i=0}^{d}(1+\alpha-\sum_{j=1}^i Z_{d+1-j})\right).
\end{equation}
\end{theorem}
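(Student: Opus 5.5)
This is Theorem~\ref{thm:Tangentbundle}, quoted from \cite[Theorem~3.4]{Tbundle}. My plan is to compute the Chern class first in the realizable case and then extend to all matroids by valuativity.

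\textbf{Realizable case.} Suppose $\mathrm{M}$ is realized by $L\subseteq \Bbbk^E$, so that $T_\mathrm{M}=[T_{W_L}]$. Recall that $W_L$ is obtained from $\mathbb{P}(L)$ by blowing up the strict transforms of the subspaces $\mathbb{P}(L\cap\bigcap_{i\in F}H_i)$, in increasing order of dimension. Flats of rank $k$ give centers of projective dimension $d-k$, and their exceptional divisors have classes $x_F$.

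I would start from the Euler sequence on $\mathbb{P}(L)\cong\mathbb{P}^d$. The pullback of the hyperplane class is $\alpha$, so this contributes $(1+\alpha)^{d+1}$.

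At each blow-up along a smooth center of codimension $r$, I would apply Porteous' formula for the Chern class of a blow-up. It is cleaner, however, to use the description of $W_L$ as an iterated projective bundle. Alternatively, I would use the log tangent sequence
\[
0\to T_{W_L}(-\log D)\to T_{W_L}\to \bigoplus_F \mathcal{O}_{D_F}(D_F)\to 0,
\]
which gives
\[
c(T_{W_L})=c(T_{W_L}(-\log D))\prod_F(1+x_F).
\]
Two inputs then determine the answer:
\begin{itemize}
\item The class of $T(-\log D)$ is known from \cite[Theorem 8.8]{BEST} in terms of the tautological bundles $S_L,Q_L$.
\item The relations of $A^*(\mathrm{M})$, namely $x_Fx_G=0$ for incomparable $F,G$ and the linear relations, let me rewrite $\prod_F(1+x_F)$ and the $S_L$ contribution.
\end{itemize}

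The rewriting should produce the two products in the statement. The products $\prod_{F\text{ rank } i}(1+x_F)$ collapse to $1+Z_i$, because $x_Fx_G=0$ for distinct flats of equal rank. The factors $1+\alpha-\sum_{j\le i}Z_{d+1-j}$ should come from the Chern roots of $S_L$ (or equivalently of $\mathcal{O}(1)$-twists), restricted along successive strata.

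\textbf{General matroids.} The tangent class is defined formally so that it is valuative in $\mathrm{M}$. Both sides of the formula, pushed forward to the permutahedral variety $X_E$, are valuative invariants. The right-hand side is a polynomial in $\alpha$ and the $Z_i$, and these are valuative because they are built from flag/flat indicator sums. Valuative invariants are determined by their values on realizable (indeed Schubert) matroids, so the realizable case implies the general formula. This is the valuativity argument of Section~\ref{sec:prelim-valuative}.

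\textbf{Main obstacle.} The hardest step is the realizable Chern class bookkeeping: identifying exactly which combination of $\alpha$ and the $Z$'s each factor contributes. I would verify it first on the Boolean matroid, i.e.\ the permutahedral variety, where the tangent bundle is toric and its Chern class is $\prod_\rho(1+D_\rho)$. I would then check that the formula reduces correctly there before doing the general case.
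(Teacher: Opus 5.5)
The paper gives no proof of this statement: it is imported from \cite[Theorem~3.4]{Tbundle} and used as a black box, so your proposal has to stand on its own. It does not, because there is a genuine gap exactly at the step you call bookkeeping.

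The parts you carry out are correct. For the snc boundary $D=\sum_F D_F$ one has $c(T_{W_L})=c\bigl(T_{W_L}(-\log D)\bigr)\prod_F(1+x_F)$. Also $\prod_{\operatorname{rk}F=i}(1+x_F)=1+Z_i$, since distinct flats of equal rank are incomparable, and this gives the first product. Combined with $0\to\mathcal O\to S_L|_{W_L}\to T_{W_L}(-\log D)\to 0$ from \cite{BEST}, the realizable case becomes equivalent to
\[
c\bigl(S_L|_{W_L}\bigr)\;=\;\prod_{i=0}^{d}\Bigl(1+\alpha-\sum_{j=1}^{i}Z_{d+1-j}\Bigr),
\]
and this identity is the entire content of the theorem. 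You never prove it:
\begin{itemize}
\item ``should come from the Chern roots of $S_L$ restricted along successive strata'' names no mechanism;
\item the relations of $A^*(\mathrm M)$ can only rewrite a formula for $c(S_L|_{W_L})$ that you do not yet have;
\item you do not say which formula from \cite{BEST} you would use or how it becomes this product.
\end{itemize}
Your fallback routes do not help as stated. $(1+\alpha)^{d+1}$ is the Chern class of the pullback of $T_{\mathbb P^d}$, not of $T_{W_L}$. And $W_L$ is not an iterated projective bundle: the permutohedral surface, $\mathbb P^2$ blown up at three points, has Picard rank $4$.

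What is missing is an actual source for the factors. Here is one. Take the convention that the fiber of $S_L$ over a point $p$ of the open stratum is $p^{-1}L$; this is the convention for which the sequence above holds. Then $v\mapsto p^{-1}v$ is homogeneous of degree $-1$ in $p$, so it is a rational isomorphism $L\otimes\mathcal O\dashrightarrow S_L(-\alpha)$. Write $H_F:=\bigcap_{e\in F}H_e$. Near $D_F$, the section $p^{-1}v$ is regular exactly when $v\in L\cap H_F$, and otherwise it has a simple pole. Now work near the point $\bigcap_r D_{F_r}$ of a maximal flag $F_1\subsetneq\cdots\subsetneq F_d$, with $F_0=\varnothing$ and $F_{d+1}=E$. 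There, a vector spanning $(L\cap H_{F_{d-i}})/(L\cap H_{F_{d+1-i}})$ contributes the local root $\alpha-\sum_{r\ge d+1-i}x_{F_r}$. This is exactly the local shape of the $i$-th factor.

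Globalizing these local splittings still needs an argument. One option is torus-equivariant localization on the smooth toric variety $X_{\mathrm M}$: classes there are piecewise polynomials on $\Sigma_{\mathrm M}$ and can be checked cone by cone. Twisting the linearization of $S_L$ by a character to kill the diagonal $\mathbb G_m$ is what produces the $\alpha$'s.

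Your passage to non-realizable matroids is also underspecified. First, you never define $T_{\mathrm M}$. Second, the claim is an identity of classes in $A^*(\mathrm M)$, while Proposition~\ref{prop:valuativefunctions} gives valuativity only of the numbers $\deg_{\mathrm M}f(\alpha,Z_1,\dots,Z_d)$. Those numbers cannot detect a class: for $d\ge2$, the classes $\alpha,Z_1,\dots,Z_d$ do not even span $A^1(\mathrm M)$. Third, $X_{\mathrm M}\subset X_E$ is open, so there is no pushforward. You would need class-level valuativity of both sides, for instance after capping with $[\Sigma_{\mathrm M}]$. Products of the $\mathrm M$-dependent classes $Z_i(\mathrm M)$ do not automatically have this property. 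If $T_{\mathrm M}$ is expressed through the tautological class $[S_{\mathrm M}]$ of \cite{BEST}, it would be cleaner to run the cone-by-cone computation for all matroids at once and avoid valuativity altogether.
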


We denote by $c(\mathrm{M})$ the total Chern class of $T_\mathrm{M}$. For a partition $\lambda$ of $n$ (that is, $\lambda = (\lambda_1, \ldots, \lambda_k)$ with $\sum_{i=1}^k \lambda_i = n$), we define the partition Chern class $c_\lambda(\mathrm{M}) = \prod_{i=1}^k c_{\lambda_i}(\mathrm{M})$. We will simply write $c_{\lambda}$ when the underlying matroid is clear from context.

\subsection{Valuativeness of flag-counting functions and Chern classes} \label{sec:prelim-valuative}
When a matroid is realizable, especially over $\mathbb{C}$, geometric and combinatorial properties are easier to verify. The notion of valuativeness gives us a way to extend these properties to non-realizable matroids.

A function $f$ from the class of matroids over $E$ to an abelian group is called \emph{valuative} if for any matroids $\mathrm{M}_1,\ldots,\mathrm{M}_k$ and integers $a_1,\ldots,a_k$ such that $\sum a_i\mathbf{1}_{P(\mathrm{M}_i)}=0$, we have $\sum a_if(\mathrm{M}_i)=0$, where $P(\mathrm{M})$ denotes the base polytope of $\mathrm{M}$ and $\mathbf{1}_{P(\mathrm{M})}$ is the indicator function of $P(\mathrm{M})$. 
\begin{proposition}[{\cite[Corollary 7.9]{EHL_Stella} or \cite[Theorem 5.4]{DF_val}}]
    For an arbitrary matroid $\mathrm{M}$, the indicator function $\mathbf{1}_{P(\mathrm{M})}$ can be expressed as a linear combination of indicator functions of Schubert matroids of the same rank. Moreover, every Schubert matroid is realizable over any infinite field.
\end{proposition}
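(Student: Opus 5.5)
We would prove the two assertions separately; the second is elementary and the first is the substantive one.

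\emph{Realizability.} A Schubert matroid of rank $r$ on $E$ is determined by a chain of subsets $\emptyset\subsetneq S_1\subsetneq S_2\subsetneq\cdots\subsetneq S_m\subseteq E$ together with ranks $r_1<\cdots<r_m=r$. Its bases are the $r$-sets $B$ with $|B\cap S_j|\le r_j$ for all $j$. To realize it over an infinite field $\Bbbk$, we construct the row space $L$ of an $r\times E$ matrix. The rows $1,\dots,r_1$ have generic entries in the columns of $S_1$ and zeros elsewhere. The rows $r_1+1,\dots,r_2$ have generic entries in the columns of $S_2$ and zeros elsewhere. We continue in this way for the remaining blocks.

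For an $r$-set $B$, the maximal minor on the columns of $B$ vanishes identically whenever some $|B\cap S_j|>r_j$ (the relevant submatrix then has too small a rank), and otherwise it is a nonzero polynomial in the indeterminate entries. To see the latter, we exhibit one specialization where it is nonzero, namely a Hall-type matching of the rows to the columns of $B$ that respects the supports. Since $\Bbbk$ is infinite, we can choose entries avoiding the finitely many nonzero polynomials. This realizes the Schubert matroid.

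\emph{Expressing $\mathbf{1}_{P(\mathrm{M})}$.} We would follow the Derksen--Fink/Hampe route. For each chain of flats $\mathcal{C}:\emptyset\subsetneq F_1\subsetneq\cdots\subsetneq F_k\subsetneq E$ of $\mathrm{M}$, let $\mathrm{S}(\mathcal{C})$ be the Schubert matroid attached to the chain with ranks $\mathrm{rk}(F_j)$. Then $P(\mathrm{S}(\mathcal{C}))=\{x\in\Delta_{r,E}: x(F_j)\le \mathrm{rk}F_j \ \forall j\}$. Next we observe that
\[
P(\mathrm{M})=\bigcap_{F}\{x: x(F)\le \mathrm{rk}F\}\cap\Delta_{r,E},
\]
where the intersection may be restricted to cyclic flats, or simply to all flats. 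We would then prove the identity
\[
\mathbf{1}_{P(\mathrm{M})}=\sum_{\mathcal{C}}(-1)^{|\mathcal{C}|}\mu(\mathcal{C})\,\mathbf{1}_{P(\mathrm{S}(\mathcal{C}))},
\]
with Möbius-type coefficients from the lattice of (cyclic) flats. The check is pointwise: for $x\in\Delta_{r,E}$, the set of flats $F$ with $x(F)\le\mathrm{rk}F$ is closed under joins and meets in the appropriate sense, by submodularity of $\mathrm{rk}$ and modularity of $x$. So the right-hand side evaluated at $x$ becomes a reduced Euler characteristic of an order complex. That complex is contractible (a cone) unless all constraints hold, in which case the sum equals $1$.

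The main obstacle is exactly this pointwise topological/Möbius computation: identifying the violated-constraint subposet and showing it has a cone point. We would try first to use the closure of the violated flats under join, which follows from submodularity, so that their join gives a cone point. Failing a clean argument, we would reduce to the known statement that the valuative group is generated by Schubert indicators \cite[Theorem 5.4]{DF_val}.
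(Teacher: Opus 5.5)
The paper gives no proof of this proposition; it only cites Eur--Huh--Larson and Derksen--Fink, so your fallback in the last sentence is exactly what the paper does. As an independent argument, however, your sketch has two genuine problems.

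\textbf{Realizability.} The supports are reversed. If the rows of block $j$ are supported on $S_j$, then every column outside $S_j$ vanishes in rows $1,\dots,r_j$. So the minor on $B$ vanishes identically when $|B\setminus S_j|>r-r_j$, i.e.\ when $|B\cap S_j|<r_j$. Nothing forces vanishing when $|B\cap S_j|>r_j$, since the columns of $S_1$ are nonzero in every row. Concretely, take $E=\{1,2,3\}$, $S_1=\{1,2\}$, $r_1=1$, $S_2=E$, $r_2=2$. Your rows $(a,b,0)$ and $(c,d,e)$ realize $\mathrm{U}_{2,3}$, whereas the Schubert matroid has only the bases $\{1,3\}$ and $\{2,3\}$.

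The fix is to place the columns of $S_j\setminus S_{j-1}$ generically in the span of the first $r_j$ coordinate vectors; equivalently, the rows of block $j$ are supported on $E\setminus S_{j-1}$. Then the columns of $S_j$ span at most $r_j$ dimensions, Hall's condition for this support pattern is exactly $|B\cap S_j|\le r_j$ for all $j$, and your genericity argument over an infinite field goes through.

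\textbf{The pointwise step of the identity.} This is the heart of the matter. The coefficients $\mu(\mathcal C)$ are never specified, and the lattice property you invoke is false. Satisfied flats are not closed under joins: in rank $3$ on $\{1,2,3,4\}$ with $\{1,2,3\}$ a line and $x=(0.8,0.8,0.8,0.6)$, the flats $\{1\},\{2\}$ are satisfied but $\{1,2,3\}$ is violated. Violated flats are not closed under joins either: in rank $3$ on $\{1,2,3,4\}$ with $1,2$ parallel and $x=(0.8,0.8,0.7,0.7)$, the flats $\{1,2\},\{1,2,3\},\{1,2,4\}$ are violated, but $\{1,2,3\}\vee\{1,2,4\}=E$. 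So the join of the violated flats is not a cone point.

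What does work is the following. For $x\in\Delta_{r,E}$, the function $g_x(F)=x(F)-\operatorname{rk}(F)$ is supermodular on the lattice of flats (use $x\ge 0$ to pass from $F\cup G$ to its closure), with $g_x(\varnothing)=g_x(E)=0$. Suppose $V(x)=\{F: g_x(F)>0\}$ is nonempty and $A$ maximizes $g_x$. Then $g_x(F\vee A)\ge g_x(F)+g_x(A)-g_x(F\wedge A)\ge g_x(F)>0$. Hence $F\mapsto F\vee A$ preserves $V(x)$, and $F\le F\vee A\ge A$ makes $\Delta(V(x))$ contractible. The coefficients must then be
\[
a_{\mathcal C}=(-1)^{k+1}\prod_{i=0}^{k}\mu(F_i,F_{i+1}),\qquad \varnothing=F_0\subsetneq F_1\subsetneq\cdots\subsetneq F_k\subsetneq F_{k+1}=E.
\]
Let $\mathcal S(x)$ denote the satisfied proper flats. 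Philip Hall's chain formula for $\mu$ turns $\sum_{\mathcal C\subseteq\mathcal S(x)}a_{\mathcal C}$ into $\sum_{D}(-1)^{|D|}$, where $D$ runs over chains of $V(x)$ including the empty one. This sum equals $-\tilde\chi(\Delta(V(x)))$, which is $1$ if $V(x)=\varnothing$ and $0$ otherwise.

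It is the violated complex, not the satisfied one, that becomes a cone. With uniform signs one would need $\Delta(\mathcal S(x))$ to be acyclic off $P(\mathrm M)$. That already fails in rank $2$ for a parallel class $\{1,2\}$ plus points $3,4$ with $x_1+x_2>1$: the satisfied proper flats are two isolated points.
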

Thus, if a valuative function is zero on every realizable matroid over $\mathbb{C}$, then it is zero on all matroids. In the following, we introduce several valuative functions.
\begin{definition}[{\cite[Definition 3.11]{Tbundle}}] \label{def:NJflagsofflats}
Let $\mathrm{M}$ be a rank $d+1$ matroid on the ground set $E$. For a subset $I=\{i_1,\ldots,i_k\}\subseteq[d] := \{1, \ldots, d\}$ with $i_1 < \cdots <i_k$, we define $N_I(\mathrm{M})$ as the number of flags of flats $\mathcal{G}:G_1\subsetneq G_2\subsetneq\cdots\subsetneq G_k$ such that $\operatorname{rk}(G_j)=i_j$ for all $j$.
\end{definition}

\begin{proposition} [{\cite[Theorem 6.2]{FS_valuative_24}}]
Fix the ground set $E$ and the number $d$. For a subset $I \subseteq [d]$, the map 
\[
\widehat{N}_{I}\colon \{\text{Matroids on } E \text { of rank $d+1$}\} \;\longrightarrow\; \mathbb{Z}\,,\qquad \mathrm{M}\; \longmapsto \;N_I(\mathrm{M})
\]
is valuative.
\end{proposition}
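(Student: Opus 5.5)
The plan is to reduce the valuativity of $\widehat N_I$ to the valuativity of a single, simpler invariant, and then handle that one directly. Fix the ground set $E$ and the rank $d+1$. For $I=\{i_1<\cdots<i_k\}\subseteq[d]$, the number $N_I(\mathrm{M})$ counts chains of flats $G_1\subsetneq\cdots\subsetneq G_k$ with prescribed ranks. I will first rewrite this as a sum over chains of \emph{subsets} of $E$ rather than flats:
\[
N_I(\mathrm{M})\;=\;\sum_{S_1\subsetneq\cdots\subsetneq S_k\subseteq E}\ \prod_{j=1}^k\mathbf{1}\big[S_j\text{ is a flat of }\mathrm{M}\text{ of rank }i_j\big].
\]
Because the index set of this sum depends only on $E$, and a finite linear combination of valuative functions is valuative, it is enough to prove the following. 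For each fixed chain $S_1\subsetneq\cdots\subsetneq S_k$ of subsets of $E$ and each fixed rank vector, the function
\[
\mathrm{M}\mapsto\prod_j\mathbf{1}\big[S_j \text{ is a flat of rank } i_j\big]
\]
is valuative.

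For a single chain, I will express the indicator in terms of the rank function on the subsets $S_j$ and $S_j\cup\{e\}$. The set $S_j$ is a flat of rank $i_j$ exactly when $\operatorname{rk}(S_j)=i_j$ and $\operatorname{rk}(S_j\cup e)=i_j+1$ for every $e\notin S_j$. To see that such a condition is valuative, I will pass to flag matroids and their restriction/contraction structure: the condition "the chain $S_\bullet$ consists of flats of ranks $i_\bullet$" is equivalent to the matroid decomposing along the chain. More precisely, it says that the successive minors $\mathrm{M}|_{S_{j+1}}/S_j$ are loopless and of rank $i_{j+1}-i_j$, together with the condition that the final contraction $\mathrm{M}/S_k$ is loopless. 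The statement I would then invoke is the standard one (Derksen--Fink, Ardila--Fink--Rincón): the map $\mathrm{M}\mapsto\mathbf{1}_{P(\mathrm{M}|_S/T)}$, and more generally the map sending $\mathrm{M}$ to the tuple of indicator functions of its minors along a fixed chain, is valuative. This holds because the base polytope of each minor is a face of $P(\mathrm{M})$ cut out by the linear functionals $\sum_{e\in S_j}x_e$, provided those functionals attain the prescribed values $i_j$. Combining this with the fact that looplessness of a minor is a valuative condition, i.e.\ the number of loops is an evaluation of the Tutte polynomial, and the Tutte polynomial is valuative, gives valuativity of each summand. One can also argue via inclusion–exclusion over subsets of loops.

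The main obstacle is the second step: showing that a product of indicators of "flat of given rank" conditions along a chain is valuative. Products of valuative functions are not valuative in general, so this needs care. If the face argument turns out to be messy, I would instead use the characterization of valuative invariants as those factoring through Derksen's $\mathcal{G}$-invariant. The $\mathcal{G}$-invariant records, for each maximal chain of subsets $\emptyset=T_0\subsetneq T_1\subsetneq\cdots\subsetneq T_n=E$ with $|T_m|=m$, the sequence of rank jumps $\operatorname{rk}(T_m)-\operatorname{rk}(T_{m-1})$. I would show that $N_I$ is a linear function of the $\mathcal{G}$-invariant. Concretely, I would count pairs consisting of a flag of flats of type $I$ together with a maximal chain of sets that refines it and whose rank jumps occur in a prescribed pattern. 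Each such flag is recovered, with a multiplicity that depends only on the flag type, from maximal chains whose jump sequence has all jumps at the start of each block $S_{j+1}\setminus S_j$. This determination of the multiplicity is the delicate counting step, and I would carry it out by computing it on the uniform matroid.
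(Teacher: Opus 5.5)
Since the paper only quotes this result from Ferroni--Schr\"oter and gives no argument, your proposal has to stand on its own. It has a genuine gap at exactly the step you call the main obstacle. The reduction to a single chain $S_1\subsetneq\cdots\subsetneq S_k$ is fine, and so is the description of ``all $S_j$ are flats of rank $i_j$'' via looplessness of $\mathrm{M}|S_{j+1}/S_j$ and $\mathrm{M}/S_k$. What is missing is a proof that $\mathrm{M}\mapsto\prod_j g_j(\mathrm{M}|S_{j+1}/S_j)$ is valuative for your indicators $g_j$.

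Valuativity of the \emph{tuple} of minor indicators is a coordinatewise statement and says nothing about products. The minor polytopes are also not individually faces of $P(\mathrm{M})$; only their product is, namely the face maximizing $w=\sum_j\mathbf{1}_{S_j}$. To use that face you would need two facts you do not state: that $\mathbf{1}_P\mapsto\mathbf{1}_{P^w}$ is linear, and that $g_0\otimes\cdots\otimes g_k$ is well defined on the span of indicators of such products. Similarly, ``the number of loops is a Tutte evaluation'' does not make the \emph{indicator} of looplessness valuative; you need the inclusion--exclusion you mention.

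Your fallback through the $\mathcal{G}$-invariant fails as described. For a flag of flats $F_1\subsetneq\cdots\subsetneq F_k$, count the refining permutations with all rank jumps at the start of each block. This count is a product, over the intervals, of the number of ordered bases of $\mathrm{M}|F_{j+1}/F_j$ times a factorial. So it depends on these minors, not only on $I$. Moreover, a jump pattern cannot certify that a prefix is closed. In the rank-$2$ matroid on $\{a,b,c\}$ with $a,b$ parallel, the order $a,c,b$ has pattern $(1,1,0)$ although $\{a\}$ is not a flat. There is therefore no universal multiplicity to calibrate on a uniform matroid.

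Here is a short repair that avoids both tensor products and $\mathcal{G}$. By downward induction, a chain $T_1\subsetneq\cdots\subsetneq T_k$ consists of flats iff $T_k$ is closed and $\operatorname{cl}(T_j)\cap(T_{j+1}\setminus T_j)=\varnothing$ for $j<k$. Expand each condition as $\mathbf{1}[\operatorname{cl}(T)\cap W=\varnothing]=\sum_{U\subseteq W}(-1)^{|U|}\mathbf{1}[\operatorname{rk}(T\cup U)=\operatorname{rk}T]$. This gives
\[
N_I(\mathrm{M})\;=\sum_{A_1\subseteq B_1\subseteq A_2\subseteq\cdots\subseteq A_k\subseteq B_k\subseteq E}(-1)^{\sum_j|B_j\setminus A_j|}\,\prod_{j=1}^k\mathbf{1}\big[\operatorname{rk}A_j=\operatorname{rk}B_j=i_j\big].
\]
For any chain $C_1\subseteq\cdots\subseteq C_m$ there is a basis meeting every $C_l$ in a basis of $C_l$. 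Hence $\prod_l\mathbf{1}[\operatorname{rk}C_l\ge s_l]$ is the Euler characteristic of $P(\mathrm{M})\cap\{x:\ x(C_l)\ge s_l\ \forall l\}$, which is linear in $\mathbf{1}_{P(\mathrm{M})}$. Taking differences in the $s_l$ handles the equalities, so every term above is valuative. Counts of chains of subsets with prescribed sizes and ranks are also linear in $\mathcal{G}$, which is how your second route could be rescued.
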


\begin{definition} \label{def:alphaandbeta}
Let $\mathrm{M}$ be a matroid. Define $\alpha_\mathrm{M} = \alpha = \alpha_i = \sum_{F \ni i} x_F \in A^1(\mathrm{M})$ and $\beta_\mathrm{M} = \beta = \beta_i = \sum_{F \not\ni i} x_F \in A^1(\mathrm{M})$. We write $\alpha$ and $\beta$ if the underlying matroid $\mathrm{M}$ is obvious. Due to the presentation of the Chow ring, $\alpha$ and $\beta$ are independent of $i$ (immediate for $\alpha$, and $\beta = (\sum_F x_F) -\alpha$).
\end{definition}

\begin{proposition} [{\cite[Proposition 3.7]{Tbundle}}] \label{prop:valuativefunctions}
Fix the ground set $E$ and the number $d$. For any degree $d$ homogeneous polynomial $f(x, y_1, \ldots, y_d)$, the map 
\[
\Phi_{f}\colon \{\text{Matroids on } E \text { of rank $d+1$}\} \;\longrightarrow\; \mathbb{Z}\,,\quad \mathrm{M}\; \longmapsto \;\deg_\mathrm{M}\left(f\left(\alpha_{\mathrm{M}}, Z_1(\mathrm{M}), \ldots, Z_d(\mathrm{M})\right)\right)
\]
can be written as a linear combination of $N_I$, and thus is valuative. 
\end{proposition}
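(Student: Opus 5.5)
We want to show that $\Phi_f(\mathrm{M}) = \deg_\mathrm{M}(f(\alpha, Z_1,\dots,Z_d))$ is an integer linear combination of the flag counts $N_I(\mathrm{M})$, with coefficients depending only on $f$, $E$ and $d$. Valuativity then follows at once from the preceding proposition of Ferroni--Schr\"oter, since linear combinations of valuative functions are valuative. By linearity in $f$, it suffices to treat a single monomial $\alpha^{a} Z_{1}^{m_1}\cdots Z_d^{m_d}$ with $a+\sum m_i=d$.

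First expand each $Z_k^{m_k}=\bigl(\sum_{\operatorname{rk}F=k}x_F\bigr)^{m_k}$. Since $x_Fx_G=0$ for incomparable flats, and distinct flats of the same rank are incomparable, we get $Z_k^{m_k}=\sum_{\operatorname{rk}F=k}x_F^{m_k}$. Thus the product $\prod_k Z_k^{m_k}$ becomes a sum over flags $G_1\subsetneq\cdots\subsetneq G_s$ with ranks $I=\{k: m_k>0\}$ of the monomials $\prod_j x_{G_j}^{m_{i_j}}$. Chains of other shapes die for the same reason.

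Next I compute $\deg(\alpha^a\prod_j x_{G_j}^{e_j})$ for a fixed flag, where $e_j = m_{i_j}$. For this I use the standard local structure of the Chow ring. Write $G_0=\emptyset$, $G_{s+1}=E$ and minors $\mathrm{M}_j=\mathrm{M}|_{G_{j}}/G_{j-1}$ for $j=1,\dots,s+1$. Multiplication by $x_{G_1}\cdots x_{G_s}$ factors through $\bigotimes_j A^*(\mathrm{M}_j)$. Under this map, $x_{G_j}$ restricts to $-\alpha_{\mathrm{M}_{j+1}}+\beta_{\mathrm{M}_j}$ (up to the usual sign conventions, e.g. as in \cite{BHM} or \cite{AHK18}), while $\alpha$ restricts to $\alpha_{\mathrm{M}_{s+1}}$.

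So the degree becomes a polynomial in products of mixed degrees $\deg_{\mathrm{M}_j}(\alpha^p\beta^q)$ of the minors. The key combinatorial input is that these mixed degrees equal reduced Möbius/characteristic-type numbers of $\mathrm{M}_j$. The concern is that such numbers are not obviously flag counts. To avoid this, I would instead iterate: $\deg_{\mathrm{N}}(\alpha^{r}x_F\cdots)$ with $\alpha^{\mathrm{rk}-1}=$ point class gives $\deg(\alpha^{r})=1$ when $r=\operatorname{rk}\mathrm{N}-1$. I would also use the relation $\alpha x_F = (\sum_{H\ge F}x_H)x_F$ type identities (from $J$ with $i\in F$). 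Together these reduce everything to powers $x_F^{e}$ and $\alpha$.

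Concretely, I'd use that $x_F^{e}$ can be rewritten using $x_F = \alpha - \sum_{H\ni i,H\ne F}x_H$ for $i\in F$ minimal, choosing $i$ so that only flags comparable with $F$ survive. Each such substitution replaces a power of $x_F$ by sums over flags with one more flat, eventually producing square-free monomials $\alpha^{r}x_{H_1}\cdots x_{H_t}$ whose degree is $1$ when $r$ equals $d-t$ in the appropriate top minor, and $0$ otherwise. Summing gives a count of flags with prescribed rank sets, i.e. a combination of $N_J$.

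The main obstacle is verifying that the coefficients in this reduction depend only on ranks, not on the specific flats. They should, because at each step the surviving terms are sums over all flats of given rank lying between given flats; but making this precise requires an induction on $\sum e_j$. An alternative fallback is to prove the identity for realizable (Schubert) matroids by direct geometry and extend it. However, that is circular for valuativity, so the inductive rewriting must be done honestly; I expect this bookkeeping to be the hardest part.
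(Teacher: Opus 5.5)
There is a genuine gap. Your setup is correct, and its first half is essentially the route behind the cited result. The paper alludes to it in Section~\ref{subsec:alpha_truncation}, where the coefficients ``are determined by expanding the products of $\deg(\alpha^i\beta^j)$''. That route is: expand $\prod_k Z_k^{m_k}$ over flags $G_1\subsetneq\cdots\subsetneq G_s$ of rank type $I$, peel off $x_{G_1}\cdots x_{G_s}$, and push the remaining factors into $\bigotimes_j A^*(\mathrm{M}_j)$.

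You abandon this exactly at the step that carries the content, saying the mixed degrees are ``not obviously flag counts.'' They are signed flag counts. For a matroid $\mathrm N$ of rank $r+1$ and $0\le q\le r$,
\[
\deg_{\mathrm N}(\alpha^{r-q}\beta^{q})=\mu^q(\mathrm N)=(-1)^q\sum_{J\subseteq[q]}(-1)^{|J|}N_J(\mathrm N),
\]
where $\mu^q(\mathrm N)$ is the $q$-th unsigned coefficient of the reduced characteristic polynomial. This is \cite[Lemma 3.13]{Tbundle}, quoted in the paper. It follows because the coefficients of $\overline{\chi}_{\mathrm N}=\chi_{\mathrm N}/(q-1)$ are partial sums of $w_i=\sum_{\operatorname{rk}F=i}\mu(\varnothing,F)$, and Philip Hall's theorem writes $\mu(\varnothing,F)$ as the alternating count of chains $\varnothing=F_0\subsetneq\cdots\subsetneq F_\ell=F$.

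You also need a reassembly step. A flag of flats of $\mathrm M_j=\mathrm M|G_j/G_{j-1}$ is the same as a chain of flats of $\mathrm M$ strictly between $G_{j-1}$ and $G_j$. Hence summing $\prod_j N_{J_j}(\mathrm M_j)$ over all flags of type $I$ gives $N_J(\mathrm M)$ with $J=I\cup\bigcup_j(\operatorname{rk}G_{j-1}+J_j)$. Since the ranks of the minors are fixed by $I$, the coefficients depend only on $f$ and $d$.

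Your restriction formula is also off. It is $x_{G_j}\mapsto-\alpha_{\mathrm M_j}-\beta_{\mathrm M_{j+1}}$, with $\alpha$ on the lower minor and $\beta$ on the upper one, not $-\alpha_{\mathrm M_{j+1}}+\beta_{\mathrm M_j}$. For example, take a rank-$1$ flat $F$ of a rank-$3$ matroid. Then $\deg(x_F^2)=1-\#\{\text{rank-}2\text{ flats containing }F\}$, whereas your formula gives $-1$ regardless.

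Your fallback does not close the gap. Take $i\in G_j\setminus G_{j-1}$ and substitute $x_{G_j}=\alpha-\sum_{H\ni i,\,H\neq G_j}x_H$. Among the surviving terms is the sum over flats $H$ with $G_{j-1}\subsetneq H\subsetneq G_j$ and $i\in H$. This is $\alpha$ of the minor $\mathrm M|G_j/G_{j-1}$, not a sum over all flats of a given rank, and the number of such $H$ of a given rank depends on $i$. So your stated reason for rank-only coefficients is false. Making the rewriting honest brings you back to evaluating degrees of $\alpha$'s and $\beta$'s of minors, which is the step you skipped.

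Two smaller points on the fallback. First, the terms with $H\supsetneq G_j$ include the later flats $G_{j+1},\dots$ of the flag, so exponents shift upward rather than drop, and termination needs an argument. Second, a square-free $\alpha^{r}x_{H_1}\cdots x_{H_t}$ has degree $1$ exactly when $\operatorname{rk}H_j=j$ for all $j$, and $0$ otherwise; the condition $r=d-t$ is automatic by homogeneity.

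As you acknowledge, the rank-only dependence is left unproved, and that is the entire content of the statement.
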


By the formula of the total Chern class \eqref{eq:Chern_class_of_tangent}, the intersection $\alpha^a\beta^{b}c_{\lambda}(\mathrm{M})$ is a degree $d$ homogeneous polynomial of $\alpha,Z_1,\ldots,Z_d$ when $\lambda$ is a partition of $d-a-b$, and the intersection number is valuative on the set of rank $d+1$ matroids.

\subsection{Chow polynomial and its semi-small decomposition}

Semi-small decompositions of the Chow ring of a matroid were established in  \cite{BHM}, and the decomposition can be translated into a recursion of the Chow polynomial $\chowpoly_{\mathrm{M}}(x)$. 

\begin{theorem}[{\cite[Corollary 3.24]{Chowpoly}}] \label{cor:FMSV-3.24}
Let $\mathrm{M}$ be a matroid and let $i\in E$ be an element that is \emph{not} a coloop. Define $\underline{S}_i$ as the family of flats of $\mathrm{M}$ given by \begin{align*}
\underline{S}_i &= \{\,F \mid F \text{ is a flat},\ \varnothing\subsetneq F \subsetneq E\setminus\{i\},\ \text{and }F\cup\{i\}\text{ is a flat}\,\}.
\end{align*}

Then the Chow polynomial satisfies the identity
  \begin{equation}\label{eq:FMSV-3.24-chow}
    \chowpoly_{\mathrm{M}}(x)
    \;=\;
    \chowpoly_{\mathrm{M}\setminus\{i\}}(x)
    \;+\; x\sum_{F\in \underline{S}_i} \chowpoly_{\mathrm{M}/(F\cup\{i\})}(x)\cdot \chowpoly_{\mathrm{M}|F}(x).
  \end{equation} 
\end{theorem}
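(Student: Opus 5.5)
The plan is to read off \eqref{eq:FMSV-3.24-chow} as the Hilbert--Poincar\'e series of the semi-small decomposition of $A^*(\mathrm{M})$ from \cite{BHM}, so everything reduces to identifying each summand as a graded vector space. Since $i$ is not a coloop, $\mathrm{M}\setminus\{i\}$ is a loopless matroid of the same rank $d+1$. The deletion gives a graded ring map
\[
\theta_i\colon A^*(\mathrm{M}\setminus\{i\})\to A^*(\mathrm{M}),\qquad x_G\mapsto x_G+x_{G\cup\{i\}},
\]
where $x_{G\cup\{i\}}$ is interpreted as $0$ unless $G\cup\{i\}$ is a flat of $\mathrm{M}$. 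I would invoke the decomposition theorem of \cite{BHM}:
\[
A^*(\mathrm{M})\;=\;A^*_{(i)}\;\oplus\;\bigoplus_{F\in\underline{S}_i} x_{F\cup\{i\}}\,A^*_{(i)},
\]
a direct sum of graded $A^*_{(i)}$-modules. Here $A^*_{(i)}$ is the image of $\theta_i$, and $\theta_i$ is injective, so $A^*_{(i)}\cong A^*(\mathrm{M}\setminus\{i\})$ as graded rings.

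The first step gives the first term: the Hilbert series of $A^*_{(i)}$ is $\chowpoly_{\mathrm{M}\setminus\{i\}}(x)$.

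The second step handles each remaining summand. For $F\in\underline{S}_i$, set $G=F\cup\{i\}$. By \cite{BHM}, multiplication by $x_G$ identifies the summand $x_G A^*_{(i)}$, up to the degree shift by one, with
\[
A^*(\mathrm{M}/G)\otimes A^*(\mathrm{M}|F).
\]
This rests on two facts.
\begin{itemize}
\item The annihilator computation: $x_G A^*_{(i)}$ is a cyclic module isomorphic to the image of $A^*_{(i)}$ in the localization $A^*(\mathrm{M})/\mathrm{ann}(x_G)$.
\item The standard description of that quotient, $A^*(\mathrm{M})/\mathrm{ann}(x_G)\cong A^*(\mathrm{M}/G)\otimes A^*(\mathrm{M}|G)$. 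Restricting to $A^*_{(i)}$, the factor $A^*(\mathrm{M}|G)$ is replaced by the deletion Chow ring $A^*(\mathrm{M}|G\setminus\{i\})=A^*(\mathrm{M}|F)$. This uses that $i$ is a coloop of $\mathrm{M}|G$, because $F$ is a flat with $F\cup\{i\}=G$.
\end{itemize}
Since Hilbert series are multiplicative on tensor products, this summand contributes $x\,\chowpoly_{\mathrm{M}/G}(x)\,\chowpoly_{\mathrm{M}|F}(x)$.

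Summing the contributions over the direct sum gives \eqref{eq:FMSV-3.24-chow}.

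The main obstacle is the second step, specifically verifying that the summand is isomorphic to the tensor product with $\mathrm{M}|F$ in place of $\mathrm{M}|G$ and with exactly a degree-one shift. I would treat this as quoted from \cite{BHM}. If a self-contained check is needed, I would compare dimensions degree by degree: use the Gr\"obner/Feichtner--Yuzvinsky monomial basis to count basis monomials of $A^*(\mathrm{M})$ according to whether the chain of flats contains some $G\ni i$ with $G\setminus\{i\}$ a flat, and show that this count gives the same recursion.
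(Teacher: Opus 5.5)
Your proposal is correct and follows the paper's route: the paper gives no independent argument and cites \cite[Corollary 3.24]{Chowpoly}, which is exactly the Hilbert--Poincar\'e series of the semi-small decomposition of \cite{BHM} that you use, with summands $x_{F\cup\{i\}}A^*_{(i)}\cong A^*(\mathrm{M}/(F\cup\{i\}))\otimes A^*(\mathrm{M}|F)[-1]$ for $F\in\underline{S}_i$. One small fix: in $\theta_i$ the term $x_G$ must also be read as $0$ when $G$ is not a flat of $\mathrm{M}$, that is, when $i\in\operatorname{cl}_{\mathrm{M}}(G)$, in which case $\theta_i(x_G)=x_{G\cup\{i\}}$.
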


%% file: 3-Chow.tex
\section{Moment inequalities for Chow coefficients}
\label{sec:moment}
For a matroid $\mathrm{M}$ of rank $d+1$, we write
\[
  \chowpoly_{\mathrm{M}}(x)\;=\;\sum_{k=0}^d a_k(\mathrm{M})\,x^k,
  \qquad a_k(\mathrm{M})\;=\;\dim_{\mathbb{Q}} A^k(\mathrm{M}).
\]

\begin{definition} \label{def:rvmatroid}
For any matroid $\mathrm{M}$ of rank $d+1$, set $\mathcal{X}_\mathrm{M}$ as the random variable such that
\[
  \Pr(\mathcal{X}_\mathrm{M}=k) \;=\;\frac{a_k(\mathrm{M})}{\sum_{i=0}^d a_i(\mathrm{M})},\qquad k=0,\dots,d.
\]   
\end{definition}

The probability generating function (PGF) of $\mathcal{X}_{\mathrm{M}}$ is therefore $\chowpoly_{\mathrm{M}}(x)/\chowpoly_{\mathrm{M}}(1)$. Let $\mathrm{U}_{d+1}$ be the Boolean matroid on $n:=d+1$ elements; we denote by $\mathcal{X}_d$ the random variable $\mathcal{X}_{\mathrm{U}_{d+1}}$.

In this section, we provide a systematic way to produce inequalities regarding the coefficients of the Chow polynomial. The results and their proofs can be expressed cleanly in terms of the moments of the random variable $\mathcal{X}_\mathrm{M}$. 

\begin{definition}[(Nice) central moment functions / random variables]
\label{def:central_sequence}
Let $t\in\{0,1,2,\dots,\infty\}$.
A sequence of functions $\{f_0(d),\dots,f_t(d)\}$ from $\mathbb{Z}_{\geq 0}$ to $\mathbb{R}$ (with $f_0\equiv1$) is called a \emph{central moment function sequence} if for every integer $a,b \geq 0$ and every $0\le k\le t$ we have
\[
f_k(a+b+2)
\; \ge\;
\sum_{i+j=k}\binom{k}{i}\, f_i(a)\, f_j(b).
\]

Suppose that we have \emph{independent} random variables $\mathcal{Y}_a,\mathcal{Y}_b$ with moments $\mathbb{E}[\mathcal{Y}_a^r]=f_r(a)$ and $\mathbb{E}[\mathcal{Y}_b^r]=f_r(b)$ for all $r\ge0$; then the conditions of being a central moment function sequence are equivalent to
\[
\mathbb{E}\!\big[\mathcal{Y}_{a+b+2}^k\big]
\;\ge\;
\mathbb{E}\!\Big[\big(\mathcal{Y}_a+\mathcal{Y}_b\big)^k\Big].
\]

We say the sequence is \emph{nice} if equality holds for all nonnegative integers $a,b,k$, i.e.,
\[
f_k(a+b+2)
\; =\;
\sum_{i+j=k}\binom{k}{i}\, f_i(a)\, f_j(b).
\]

An infinite sequence of random variables $\{\mathcal{Y}_d\}_{d\geq 0}$ is called a \emph{(nice) central moment random variable sequence} if the sequence of functions $\{f_k\}_{k \geq 0}$ is a (nice) central moment function sequence, where $f_k(d) = \mathbb{E}[\mathcal{Y}_d^k]$.

We will abbreviate the notion of central moment function sequence to CMFS, and the notion of central moment random variable sequence to CMRVS.
\end{definition}

In particular, the sequence of random variables $\{\mathcal{Y}_d\}$ is a nice CMRVS if it satisfies $$\mathcal{Y}_{a+b+2} \; = \; \mathcal{Y}_a \; + \; \mathcal{Y}_b.$$

The following theorem is the main technique by which we establish inequalities, and the above definitions and names are motivated by it.
\begin{theorem} \label{thm:generalineq}
Suppose that the functions $\{f_0, \ldots, f_t\}$ form a CMFS. If for all $d \geq 0$ and $0 \leq k \leq t$, $\mathbb{E}[(\mathcal{X}_d - \frac{d}{2})^k] \leq f_k(d)$, then for any matroid $\mathrm{M}$ of rank $d+1$, we have the inequality $$\mathbb{E}\!\Big[\big(\mathcal{X}_{\mathrm{M}}-\frac{d}{2}\big)^t\Big] \; \leq \; f_t(d).$$
\end{theorem}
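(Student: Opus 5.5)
The plan is to prove the stronger statement that for every matroid $\mathrm{M}$ of rank $d+1$ and every $0\le k\le t$,
\[
S_k(\mathrm{M})\;:=\;\sum_{p=0}^d\Big(p-\tfrac d2\Big)^k a_p(\mathrm{M})\;\le\; f_k(d)\,\chowpoly_{\mathrm{M}}(1).
\]
I will prove this by induction on $|E|$, for all $k\le t$ simultaneously, using the recursion of Theorem~\ref{cor:FMSV-3.24}. Dividing by $\chowpoly_{\mathrm{M}}(1)$ then gives the theorem.

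\textbf{Preliminary observations.} Since $\chowpoly_{\mathrm{M}}$ is palindromic, $S_k(\mathrm{M})=0$ for odd $k$, and $S_k(\mathrm{M})\ge 0$ for even $k$. For the Boolean matroids, the hypothesis reads $0=\mathbb{E}[(\mathcal{X}_d-\frac d2)^k]\le f_k(d)$ for odd $k$. Hence $f_k(d)\ge 0$ for \emph{all} $k\le t$ and all $d$: for odd $k$ this is the Boolean hypothesis, and for even $k$ it follows from the Boolean hypothesis and $S_k\ge0$. This sign information is what I expect to be the main subtle point. Without it, multiplying the inductive upper bounds for two factors would not be legitimate, since the convolution of bounds might contain negative products $f_i(a)f_j(b)$ with $i,j$ odd. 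With all quantities nonnegative, $S_i(\mathrm{N}_1)S_j(\mathrm{N}_2)\le f_i(a)f_j(b)\chowpoly_{\mathrm{N}_1}(1)\chowpoly_{\mathrm{N}_2}(1)$ holds termwise.

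\textbf{Base case and induction step.} If every element of $E$ is a coloop, then $\mathrm{M}$ is Boolean, and the claim is exactly the hypothesis. Otherwise, pick a non-coloop $i$ and apply \eqref{eq:FMSV-3.24-chow}.
\begin{itemize}
\item The deletion $\mathrm{M}\setminus\{i\}$ is loopless of the same rank $d+1$ on a smaller ground set. So induction gives $S_k(\mathrm{M}\setminus\{i\})\le f_k(d)\chowpoly_{\mathrm{M}\setminus\{i\}}(1)$.
\item For $F\in\underline{S}_i$, write $\operatorname{rk}F=a+1$. Since $i\notin F$, we have $\operatorname{rk}(F\cup\{i\})=a+2$, so $\mathrm{M}/(F\cup\{i\})$ has rank $b+1$ with $a+b+2=d$. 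Both minors are loopless on smaller ground sets.
\item The polynomial $x\,\chowpoly_{\mathrm{M}/(F\cup\{i\})}\chowpoly_{\mathrm{M}|F}$ is supported in degrees whose center is $1+\frac a2+\frac b2=\frac d2$. Therefore its $k$-th moment about $\frac d2$ is the binomial convolution $\sum_{i'+j'=k}\binom{k}{i'}S_{i'}(\mathrm{M}|F)\,S_{j'}(\mathrm{M}/(F\cup\{i\}))$.
\item By induction and the nonnegativity above, this convolution is at most $\chowpoly_{\mathrm{M}|F}(1)\chowpoly_{\mathrm{M}/(F\cup\{i\})}(1)\sum\binom{k}{i'}f_{i'}(a)f_{j'}(b)$. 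By the CMFS inequality, that sum is at most $f_k(a+b+2)=f_k(d)$.
\end{itemize}

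\textbf{Conclusion.} The central moment about $\frac d2$ is additive over the decomposition \eqref{eq:FMSV-3.24-chow}, since every summand has the same center $\frac d2$. Summing the bounds from the induction step gives $S_k(\mathrm{M})\le f_k(d)\chowpoly_{\mathrm{M}}(1)$. Taking $k=t$ and dividing by $\chowpoly_{\mathrm{M}}(1)$ proves the theorem.
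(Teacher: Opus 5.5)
Your proposal is correct and follows essentially the same route as the paper. Like the paper, you induct on the size of the ground set via the recursion of Theorem~\ref{cor:FMSV-3.24}, bound each product term's binomial convolution with the CMFS inequality, and your simultaneous induction over all $k\le t$ is equivalent to the paper's nested induction on $t$ and then on $|E|$. Your observation that the Boolean hypothesis forces $f_k(d)\ge 0$ for every $k\le t$, including odd $k$, adds rigor the paper's proof lacks: it is exactly what justifies bounding each product of central moments of the two minors termwise by $f_i(a)f_j(b)$, a step the paper takes without comment.
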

\begin{proof}
We may assume $\mathrm{M}$ to be simple, as simplification would not affect its Chow polynomial $\chowpoly_{\mathrm{M}}$ and hence the random variable $\mathcal{X}_{\mathrm{M}}$. 

A prefix of a CMFS is still a CMFS. Applying induction on $t$, we may assume that we already know $$\mathbb{E}\!\Big[\big(\mathcal{X}_{\mathrm{M}}-\tfrac{d}{2}\big)^s\Big] \;\leq\; f_s(d), \quad s = 0, \ldots, t - 1.$$ 

We apply induction again on the number $n$ of elements of the ground set. For a matroid $\mathrm{M}$ of rank $d+1$, if $\mathrm{M}$ is Boolean, the inequality holds by the assumption about $\mathcal{X}_d$; otherwise, suppose $i$ is a non-coloop element. Recall \eqref{eq:FMSV-3.24-chow} that  
\begin{equation}
    \chowpoly_{\mathrm{M}}(x)
    \;=\;
    \chowpoly_{\mathrm{M}\setminus\{i\}}(x)
    \;+\; x\sum_{F\in \underline{S}_i} \chowpoly_{\mathrm{M}/(F\cup\{i\})}(x)\cdot \chowpoly_{\mathrm{M}|F}(x).
\end{equation} 
For a matroid $\mathrm{M}$, write $S(\mathrm M):=\sum_{k=0}^d a_k(\mathrm M)=\chowpoly_{\mathrm M}(1)$ for the sum of the coefficients. Therefore,
\begin{align*}
S(\mathrm M)\,\mathbb{E}\!\Big[\big(\mathcal{X}_{\mathrm{M}}-\tfrac{d}{2}\big)^t\Big]
&= S(\mathrm{M}\setminus\{i\})\,\mathbb{E}\!\Big[\big(\mathcal{X}_{\mathrm M\setminus\{i\}}-\tfrac{d}{2}\big)^t\Big] \\
&+ \sum_{F\in\underline S_i}\; 
  \sum_{u,v\ge0} (1+u+v-\tfrac{d}{2})^t \, a_u\big(\mathrm{M}/(F\cup\{i\})\big)\, a_v\big(\mathrm{M}|F\big).
\end{align*}
For a fixed $F\in\underline S_i$, let
\[
S_1\;:=\;S\big(\mathrm{M}/(F\cup\{i\})\big), \qquad
S_2\;:=\;S\big(\mathrm{M}|F\big), 
\]
and let $\mathcal X^{(1)} := \mathcal X_{\mathrm{M}/(F\cup\{i\})}$, $\mathcal X^{(2)} := \mathcal X_{\mathrm{M}|F}$ denote the corresponding random variables. The double sum over $u,v$ equals
$$S_1\,S_2\,\mathbb{E}\!\Big[\big(\mathcal{X}^{(1)}-\tfrac{a}{2}+\mathcal{X}^{(2)}-\tfrac{b}{2}\big)^{t}\Big],$$
where $a=\deg\chowpoly_{\mathrm{M}/(F\cup\{i\})}$, $b=\deg\chowpoly_{\mathrm{M}|F}$, and $a+b=d-2$. By the induction hypothesis on the number of elements, both matroids $\mathrm{M}/(F\cup\{i\})$ and $\mathrm{M}|F$ have strictly fewer elements than $\mathrm M$, hence for every $0\le s\le t$ their $s$-th central moments are bounded by the corresponding $f_s$ evaluated at their degrees.  Expanding
\begin{align*}
 \mathbb{E}\!\Big[\big(\mathcal X^{(1)} - \tfrac{a}{2}+\mathcal X^{(2)}- \tfrac{b}{2}\big)^t\Big]
\;&=\;\sum_{i+j=t}\binom{t}{i}\,
\mathbb{E}\!\Big[\big(\mathcal X^{(1)}- \tfrac{a}{2}\big)^i\Big]\,\mathbb{E}\!\Big[\big(\mathcal X^{(2)}- \tfrac{b}{2}\big)^j\Big]\\
&\le \;\sum_{i+j=t}\binom{t}{i}\, f_i(a)\, f_j(b) \leq f_t(a+b+2) \;= \;f_t(d),
\end{align*}
where we bound each $\mathbb{E}[(\mathcal X^{(1)}- \frac{a}{2})^i]\le f_i(a)$, $\mathbb{E}[(\mathcal X^{(2)}- \frac{b}{2})^j]\le f_j(b)$, and the second inequality follows from the defining property of a CMFS. Hence, for every $F\in\underline S_i$,
\[
\sum_{u,v\ge0} \big(1+u+v-\tfrac{d}{2}\big)^t \, a_u\big(\mathrm{M}/(F\cup\{i\})\big)\, a_v\big(\mathrm{M}|F\big)
\;\le\; S_1S_2\, f_t(d).
\]

Combining the above bounds over all $F$ and the deletion term yields
\[
S(\mathrm M)\,\mathbb{E}\!\Big[\big(\mathcal{X}_{\mathrm{M}}-\tfrac{d}{2}\big)^t\Big] \;\le\; S(\mathrm{M}\setminus\{i\})\,f_t(d) \;+\; \sum_{F\in\underline S_i} S_1S_2\, f_t(d) \;=\; S(\mathrm{M})f_t(d).
\]
Dividing by $S(\mathrm M)$ completes the induction.
\end{proof}

\begin{corollary}\label{cor:ineq_moments}
For any matroid $\mathrm{M}$ of rank $d+1$ we have
\[
\mathbb{E}\!\Big[\big(\mathcal{X}_{\mathrm{M}}-\tfrac{d}{2}\big)^2\Big]\;\le\; \frac{d+2}{12}.
\]
\end{corollary}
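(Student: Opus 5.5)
The plan is to apply Theorem~\ref{thm:generalineq} with $t=2$ and the explicit sequence
\[
f_0(d)\equiv 1,\qquad f_1(d)\equiv 0,\qquad f_2(d)=\frac{d+2}{12}.
\]
Two things need checking: that $\{f_0,f_1,f_2\}$ is a CMFS, and that it bounds the first two central moments of the Boolean random variables $\mathcal{X}_d$.

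The CMFS condition is immediate. For $k=0$ it reads $1\ge 1$. For $k=1$ it reads $0\ge f_1(a)+f_1(b)=0$. For $k=2$ the cross term $2f_1(a)f_1(b)$ vanishes, so the condition becomes
\[
\frac{a+b+4}{12}\ \ge\ \frac{a+2}{12}+\frac{b+2}{12},
\]
which holds with equality. So the sequence is in fact nice, and it corresponds to a sum of independent centered variables whose variances add up with the shift $+2$.

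For the Boolean input, the $k=0$ bound is trivial. For $k=1$, $\chowpoly_{\mathrm{U}_{d+1}}$ is palindromic of degree $d$, so $\mathbb{E}[\mathcal{X}_d]=d/2$ and the first central moment is $0$. For $k=2$ I will use the fact, recalled in the introduction, that the Chow polynomial of $\mathrm{U}_{n}$ with $n=d+1$ is the Eulerian polynomial $\sum_{\pi\in\mathfrak S_n}x^{\operatorname{des}(\pi)}$. Hence $\mathcal{X}_d$ is distributed as the number of descents of a uniformly random permutation in $\mathfrak S_n$.

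This is the one real computation. I write $\operatorname{des}=\sum_{j=1}^{n-1}I_j$ with $I_j=[\pi(j)>\pi(j+1)]$ and compute the pieces of the variance.
\begin{itemize}
\item Each $I_j$ is Bernoulli$(1/2)$, so $\operatorname{Var}(I_j)=1/4$.
\item For adjacent indices, $\mathbb{E}[I_jI_{j+1}]=\Pr(\pi(j)>\pi(j+1)>\pi(j+2))=1/6$, so $\operatorname{Cov}(I_j,I_{j+1})=1/6-1/4=-1/12$.
\item Indicators with $|j-j'|\ge2$ involve disjoint position pairs and are independent, so their covariance is $0$.
\end{itemize}
For $n\ge2$ this gives
\[
\operatorname{Var}(\mathcal{X}_d)=\frac{n-1}{4}-\frac{2(n-2)}{12}=\frac{n+1}{12}=\frac{d+2}{12},
\]
so the bound holds with equality. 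The degenerate case $d=0$ is checked directly: the variance is $0\le 2/12$.

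With both hypotheses of Theorem~\ref{thm:generalineq} verified for $t=2$, the theorem gives $\mathbb{E}[(\mathcal{X}_{\mathrm M}-d/2)^2]\le (d+2)/12$ for every matroid $\mathrm{M}$ of rank $d+1$. The only step with real content is identifying the Boolean Chow polynomial with the descent generating function and computing its variance; everything else is bookkeeping. If one wanted to avoid citing the Eulerian identification, the fallback would be to extract the Boolean variance from the recursion \eqref{eq:FMSV-3.24-chow} applied to $\mathrm{U}_{n}$ with $n=d+1$. That route is more laborious, so I would rely on the Eulerian description.
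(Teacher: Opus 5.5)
Your proposal is correct and follows the paper's proof. Both apply Theorem~\ref{thm:generalineq} to the CMFS $\{1,0,\tfrac{d+2}{12}\}$ and reduce the Boolean hypothesis to the variance of the descent number. The only difference is that you compute that variance explicitly from the covariances of adjacent descent indicators, whereas the paper calls the value $\tfrac{d+2}{12}$ standard and points to the generating function in Corollary~\ref{cor:equalwhendgeqk-1}.
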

\begin{proof}
One can verify that the sequence of functions $\{f_0, f_1, f_2\}$ defined by $f_0(d) = 1$, $f_1(d) = 0$, and $f_2(d) = (d+2)/12$ forms a CMFS. To apply Theorem \ref{thm:generalineq}, we need to check the inequality for Boolean matroids.

Only the case $k=2$ is nontrivial. In fact, it is standard that when $d\geq 1$ we have $\mathbb{E}[(\mathcal{X}_d - \frac{d}{2})^2] = \frac{d+2}{12}$ (this number is $0$ when $d=0$). We will provide a way to compute $\mathbb{E}[(\mathcal{X}_d - \frac{d}{2})^k]$ when $d \geq k-1$ in Corollary~\ref{cor:equalwhendgeqk-1}.
\end{proof}

We can find the condition for equality to hold by tracking through the induction hypothesis in Theorem \ref{thm:generalineq}. The following lemma provides a way to analyze the equality cases.
\begin{lemma} \label{lem:flatinSi}
Let $\mathrm{M}$ be a simple matroid of rank at least $3$ on ground set $E$, and let $i \in E$ be a non-coloop such that the deletion $\mathrm{M} \setminus \{i\}$ is a Boolean matroid. Then there exists a rank-$1$ flat $F \in \underline{S}_i$.
\end{lemma}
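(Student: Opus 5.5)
Since $i$ is not a coloop, the deletion has the same rank as $\mathrm{M}$. So $B:=E\setminus\{i\}$ is a basis of $\mathrm{M}$ with $|B|=r:=\operatorname{rk}(\mathrm{M})\ge 3$. The aim is to find $j\in B$ such that $\{j\}$ is a flat and $\{i,j\}$ is a flat. Then $F=\{j\}$ is a rank-$1$ flat with $\varnothing\subsetneq F\subsetneq E\setminus\{i\}$ (strict because $|B|\ge 3$), so $F\in\underline{S}_i$.

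The key tool is the fundamental circuit. Because $B$ is a basis and $i\notin B$, the set $B\cup\{i\}=E$ contains a \emph{unique} circuit $C$, and $i\in C$. Since $\mathrm{M}$ is simple it has no loops or parallel pairs, so $|C|\ge 3$. Every singleton is a flat by simplicity, so only the condition on $\{i,j\}$ needs checking.

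Suppose $\{i,j\}$ is not a flat. Then some $k\notin\{i,j\}$ lies in $\operatorname{cl}(\{i,j\})$, so $\{i,j,k\}$ is dependent and contains a circuit. That circuit cannot lie inside the independent set $B$, so it contains $i$. By uniqueness it equals $C$, which forces $C\subseteq\{i,j,k\}$, hence $|C|=3$ and $j\in C$. Consequently $\{i,j\}$ is a flat whenever $j\in B\setminus C$, and also for every $j\in B$ when $|C|\ge 4$.

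It remains to choose $j$ in two cases. If $|C|\ge 4$, any $j\in B$ works. If $|C|=3$, say $C=\{i,j_0,k_0\}$, then $|B\setminus C|=r-2\ge 1$, so we pick $j\in B\setminus\{j_0,k_0\}$. The step that actually uses the rank hypothesis is this $|C|=3$ case, where we need $|B|\ge 3$ to find an element of $B$ outside the circuit. This is the only delicate point; the rest is the standard uniqueness of fundamental circuits.
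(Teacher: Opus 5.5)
Your proof is correct and follows the paper's argument: take the fundamental circuit $C$ of $i$ with respect to the basis $B=E\setminus\{i\}$, and split into the cases $|C|=3$ (choose $j\in B\setminus C$) and $|C|\ge 4$ (any $j$ works). You also spell out two points the paper leaves terse: the closure claim, which you derive from uniqueness of the fundamental circuit, and the strictness $F\subsetneq E\setminus\{i\}$.
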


\begin{proof}
Since $\mathrm{M} \setminus \{i\}$ is Boolean, $B = E \setminus \{i\}$ is a basis of $\mathrm{M}$ with $|B| = \operatorname{rk}(\mathrm{M}) \ge 3$. 
Let $C$ be the fundamental circuit of $i$ with respect to $B$, so $C \subseteq B \cup \{i\} = E$ and $i \in C$. Because $\mathrm{M}$ is simple, $|C| > 2$. 

If $|C| = 3$, then $C = \{i, a, b\}$ and $E = C \cup (E \setminus C)$; choosing any $j \in E \setminus C$ gives $\operatorname{cl}(\{i, j\}) = \{i, j\}$, so $F = \{j\}$ works. 
If $|C| > 3$, then for any $j \in C \setminus \{i\}$ the pair $\{i, j\}$ is not contained in a smaller circuit, hence $\operatorname{cl}(\{i, j\}) = \{i, j\}$. 
Thus, in either case, there exists $j \ne i$ with $F = \{j\}$ satisfying that $F$ is a rank-$1$ flat and $F \cup \{i\}$ is a rank-$2$ flat.
\end{proof}
\begin{corollary} \label{cor:ineq_chow_coefficient}
Let $\mathrm{M}$ be a matroid of rank $d+1$. Then $\mathbb{E}[(\mathcal{X}_{\mathrm{M}}-\tfrac{d}{2})^2]\leq \frac{d+2}{12}$. That is,
  \begin{equation}\label{eq:ineq_chow_coefficient_repeat}
    \sum_{i=0}^d\left(i-\frac{d}{2}\right)^2a_i(\mathrm{M})
    \;\le\;
    \frac{d + 2}{12}\sum_{i=0}^d a_i(\mathrm{M}).
  \end{equation}
Moreover, equality holds if and only if either $d = 1$ or the simplification of $\mathrm{M}$ equals $\mathrm{U}_{d+1}$ with $d > 0$. 
\end{corollary}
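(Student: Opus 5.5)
The inequality itself is Corollary~\ref{cor:ineq_moments}, so only the equality characterization needs work. We may assume $\mathrm{M}$ is simple, since simplification does not change $\chowpoly_{\mathrm{M}}$. We use the explicit variance of the Boolean case: $\mathbb{E}[(\mathcal{X}_d-\tfrac d2)^2]=\tfrac{d+2}{12}$ for $d\ge1$, and $0$ for $d=0$.

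\emph{The ``if'' direction.} If $d=1$, then $\chowpoly_{\mathrm{M}}(x)=1+x$ for every matroid of rank $2$. The variance is then $1/4=(1+2)/12$, so equality holds. If the simplification is $\mathrm{U}_{d+1}$ with $d>0$, equality is exactly the Boolean computation quoted in the proof of Corollary~\ref{cor:ineq_moments}.

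\emph{The ``only if'' direction.} We track equality through the proof of Theorem~\ref{thm:generalineq} with $t=2$ and $f_0=1$, $f_1=0$, $f_2(d)=(d+2)/12$. Odd central moments vanish by palindromicity, so for each $F\in\underline S_i$ the cross term is zero. The $F$-contribution then equals $S_1S_2\bigl(\mathrm{Var}(\mathcal X^{(1)})+\mathrm{Var}(\mathcal X^{(2)})\bigr)$, where $\mathcal X^{(1)}$ and $\mathcal X^{(2)}$ have degrees $a$ and $b$ with $a+b=d-2$. The CMFS slack is
\[
f_2(a+b+2)-f_2(a)-f_2(b)=\frac{d+2}{12}-\frac{a+2}{12}-\frac{b+2}{12}=\frac{2}{12}>0 .
\]
Hence every $F\in\underline S_i$ contributes a strict inequality. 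Equality for $\mathrm{M}$ therefore forces $\underline S_i=\varnothing$ and equality for $\mathrm{M}\setminus\{i\}$. The degree-$0$ case also needs care: the bound $\mathrm{Var}(\mathcal X^{(j)})\le f_2$ holds strictly there ($0<2/12$), which only adds slack, so it causes no problem.

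\emph{Induction on $|E|$.} Suppose $\mathrm{M}$ is simple of rank $d+1\ge3$, is not Boolean, and attains equality. Choose a non-coloop $i$. By the above, $\underline S_i=\varnothing$ and $\mathrm{M}\setminus\{i\}$ attains equality. Since $i$ is a non-coloop, $\mathrm{M}\setminus\{i\}$ still has rank $d+1$ with $d\ge2$, so by induction its simplification is Boolean.

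It remains to reach a contradiction. If $\mathrm{M}\setminus\{i\}$ is itself simple, it is Boolean. Lemma~\ref{lem:flatinSi} then produces a rank-$1$ flat in $\underline S_i$, contradicting $\underline S_i=\varnothing$. The main obstacle is that deleting $i$ from a simple matroid keeps it simple, since deletion creates no parallel elements or loops; so this is in fact automatic, and $\mathrm{M}\setminus\{i\}$ is simple and hence Boolean. This closes the induction.

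\emph{Small ranks.} For $d=0$ we have $0<2/12$, so equality never holds there; this is consistent with the statement requiring $d>0$. For $d=1$ equality always holds. The remaining case is $d\ge2$, which is exactly where Lemma~\ref{lem:flatinSi}, with rank at least $3$, applies.
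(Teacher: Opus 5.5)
Your overall strategy is the paper's: track equality through the deletion recursion, then apply Lemma~\ref{lem:flatinSi} once $\mathrm{M}\setminus\{i\}$ is Boolean. However, the step that produces your contradiction rests on an arithmetic error. With $a+b=d-2$ one has
\[
f_2(a+b+2)-f_2(a)-f_2(b)=\frac{(d+2)-(a+2)-(b+2)}{12}=\frac{d-2-a-b}{12}=0,
\]
not $\tfrac{2}{12}$. Indeed $\{1,0,\tfrac{d+2}{12}\}$ is a prefix of the nice CMFS $\{E'_k\}$ of Corollary~\ref{cor:equalwhendgeqk-1}, so the CMFS step is an exact identity. Hence it is false that every $F\in\underline S_i$ contributes strictly. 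For example, in $\mathrm{U}_{5,6}$ any $2$-subset $F\not\ni i$ lies in $\underline S_i$. Both factors $\mathrm{M}|F$ and $\mathrm{M}/(F\cup\{i\})$ then have rank $2$ and attain equality ($\tfrac14=f_2(1)$), so that $F$-term equals exactly $S_1S_2f_2(4)$. Your conclusion $\underline S_i=\varnothing$ therefore does not follow, and your appeal to Lemma~\ref{lem:flatinSi} contradicts a statement you have not proved.

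The only source of strictness is the one you set aside as harmless extra slack: the degree-$0$ base case, where $\mathrm{Var}=0<\tfrac{2}{12}=f_2(0)$. The repair is short.
\begin{itemize}
\item Equality for $\mathrm{M}$ forces equality in the deletion term and in every $F$-term.
\item Since $S_1S_2>0$, equality in an $F$-term forces equality for both $\mathrm{M}/(F\cup\{i\})$ and $\mathrm{M}|F$. In particular, neither factor can have degree $0$.
\item By induction, the (automatically simple) matroid $\mathrm{M}\setminus\{i\}$ is Boolean.
\item Lemma~\ref{lem:flatinSi} then supplies a rank-$1$ flat $F\in\underline S_i$. For this $F$, $\mathrm{M}|F$ has degree $0$, so that $F$-term is strict, which is the desired contradiction.
\end{itemize}
This is exactly the paper's reasoning: it suffices to show that a rank-$1$ matroid appears in the sum over $\underline S_i$. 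Your ``if'' direction and your small-rank remarks are fine as written.
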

\begin{proof}
Without loss of generality, we assume that $\mathrm M$ is simple. To deduce the characterization of the equality case, note that \eqref{eq:ineq_chow_coefficient_repeat} does not reach equality when $d = 0$. Therefore, it suffices to prove that at least one rank 1 matroid would appear in the summation over $\underline{S}_i$ in the proof of Theorem~\ref{thm:generalineq}. 

For a simple non-Boolean matroid $\mathrm{M}$ of rank at least 3, iteratively deleting coloops must eventually reach a step where the deletion produces a Boolean matroid. When $\mathrm{M}\setminus\{i\}$ is Boolean, Lemma \ref{lem:flatinSi} guarantees the existence of a rank-$1$ flat $F\in\underline{S}_i$. This finishes the proof of Corollary \ref{cor:ineq_chow_coefficient}.
\end{proof}

\begin{corollary} \label{cor:generalineq}
Suppose that the sequence of random variables $\{\mathcal{Y}_d\}$ is a CMRVS. If for all $d, k \geq 0$, $\mathbb{E}[(\mathcal{X}_{d}-\frac{d}{2})^k] \leq \mathbb{E}[\mathcal{Y}_d^k]$, then for any matroid $\mathrm{M}$ of rank $d+1$, we have $$\mathbb{E}\!\Big[\big(\mathcal{X}_{\mathrm{M}}-\tfrac{d}{2}\big)^k\Big] \;\leq\; \mathbb{E}\!\big[\mathcal{Y}_d^k\big].$$
\end{corollary}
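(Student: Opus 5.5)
This is a direct specialization of Theorem~\ref{thm:generalineq}, so I will reduce to it rather than rerun the induction. Set $f_k(d) := \mathbb{E}[\mathcal{Y}_d^k]$ for all $k \geq 0$ and $d \geq 0$. By the definition of a CMRVS, the sequence $\{f_k\}_{k\ge0}$ is a CMFS with $t=\infty$. In particular, for each finite $t$ the prefix $\{f_0,\dots,f_t\}$ is a CMFS, since the defining inequality is imposed separately for each $k$.

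Now fix $k$ and apply Theorem~\ref{thm:generalineq} with $t=k$. Its hypothesis requires $\mathbb{E}[(\mathcal{X}_d-\frac d2)^s]\le f_s(d)$ for all $d\ge0$ and $0\le s\le k$, and this is exactly our assumption. The theorem then gives
\[
\mathbb{E}\!\Big[\big(\mathcal{X}_{\mathrm M}-\tfrac d2\big)^k\Big]\;\le\; f_k(d)\;=\;\mathbb{E}\!\big[\mathcal{Y}_d^k\big],
\]
as claimed.

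The only point needing care is the normalization $f_0\equiv1$ in the definition of a CMFS. This holds automatically because $\mathbb{E}[\mathcal{Y}_d^0]=1$, which matches $\mathbb{E}[(\mathcal{X}_d-\frac d2)^0]=1$. I expect no real obstacle; the corollary is a restatement of Theorem~\ref{thm:generalineq} in the language of random variables.
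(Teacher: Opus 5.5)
Your reduction is correct and matches the paper's intended route. The paper gives no separate proof of this corollary: it is the specialization of Theorem~\ref{thm:generalineq} to $f_k(d)=\mathbb{E}[\mathcal{Y}_d^k]$, using that a CMRVS yields a CMFS (with $f_0\equiv 1$ automatic) and that any finite prefix of a CMFS is again a CMFS.
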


\begin{remark}
When $d\geq 0$, the polynomial $x^k$ is a nonnegative linear combination of $(x-\frac{d}{2})^t$ with $0\leq t \leq k$. Therefore, inequalities for the central moments $\mathbb{E}[(\mathcal{X}_{\mathrm{M}}-\frac{d}{2})^t] \leq \mathbb{E}[\mathcal{Y}_d^t]$ for all $t \leq k$ imply $\mathbb{E}[\mathcal{X}_\mathrm{M} ^t] \leq \mathbb{E}[(\mathcal{Y}_d+\frac{d}{2})^t]$ for all $t \leq k$.

The hypothesis on central moments is particularly convenient in practice. $\mathcal{X}_d$ is symmetric in the sense that for every $s$ we have $$ \Pr(\mathcal{X}_d = s)\;=\;\Pr(\mathcal{X}_d = d-s).$$ Therefore, $\mathbb{E}[(\mathcal{X}_d-\tfrac d2)^k]=0$ for every odd $k$, and only the even central moments enter the comparison. 
\end{remark}

A natural continuation is to define the ``reversed'' version of the inequality and obtain a lower bound for $\mathbb{E}[(\mathcal{X}_\mathrm M-\frac{d}{2})^k]$. However, the best numeric bound is the trivial bound. We will discuss it in Section~\ref{sec:reversed}.

\begin{remark} \label{rem:augmented}
Theorem~\ref{thm:generalineq} can be generalized to the augmented Chow ring using essentially the same proof. However, because the corresponding semi-small decomposition involves terms from the non-augmented Chow ring, applying the inductive step requires the bound to hold for the Boolean case of both the standard and the augmented Chow polynomials simultaneously.
\end{remark}

\subsection{Properties of Eulerian numbers}

Theorem \ref{thm:generalineq} tells us that we can extend the inequalities on $\mathcal{X}_d$ to $\mathcal{X}_\mathrm{M}$ for arbitrary matroids $\mathrm{M}$. In this subsection, we provide methods to compute the Boolean case.

Let $\mathrm{U}_{d+1}$ be the Boolean matroid on $n:=d+1$ elements. It is standard that the graded dimensions of $A^*(\mathrm{U}_{d+1})$ are given by the Eulerian numbers (See \cite[Section 2.3]{Chowpoly}):
\[
  a_k(\mathrm{U}_{d+1}) \;=\; A(n,k) \qquad (0\le k\le d),
\]
where $A(n,k)$ is the number of permutations in $S_n$ with exactly $k$ descents. Thus,
\[
  \chowpoly_{\mathrm{U}_{d+1}}(x)\;=\;\sum_{k=0}^{d} A(d+1,k)\,x^k
\]
is the (classical) Eulerian polynomial of order $d+1$ (the degree equals $d$). Therefore, the random variable $\mathcal{X}_d$ is distributed identically to the number of descents of a uniformly random permutation in $S_{d+1}$.

For integers $n, k \geq 0$, we let $S(n, k)$, the Stirling number of the second kind, be the number of ways to partition a set of 
$n$ labelled objects into $k$ nonempty unlabelled subsets.
\begin{theorem} \label{thm:Ekpoly}
If we fix an integer $k\ge0$, the moment $\mathbb{E}[\mathcal{X}_d^k]$ is a polynomial in $d$ of degree $k$ for all $d\ge k-1$. Denote this polynomial by $E_k(d)$, and define $\overline{\mathcal{X}_d}$ to be the formal random variable with moments $\mathbb{E}[\overline{\mathcal{X}_d}^k]=E_k(d)$ for all $k$ (i.e., $\overline{\mathcal{X}_d}$ is an algebraic object treated as a random variable whose moments are defined by the sequence $\{E_k(d)\}_{k \ge 0}$). Then
\[
\mathbb{E}\!\big[e^{z\overline{\mathcal{X}_d}}\big]
\;=\; \sum_{k\ge0}\frac{E_k(d)}{k!}z^k
\;=\; \left(\frac{e^z-1}{z}\right)^{d+2} e^{-z}.
\]
In particular, the random variables $\{\overline{\mathcal{X}_d} - \frac{d}{2}\}_{d \geq 0}$ form a nice CMRVS.   
\end{theorem}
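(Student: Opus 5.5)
Since $\mathcal{X}_d$ is the number of descents of a uniformly random permutation in $S_{d+1}$, I will use the classical Worpitzky/Eulerian identity. Stanley's observation gives it in a form suited to moments: the descent count of a random permutation of $[n]$ has the same distribution as $\lfloor U_1+\cdots+U_n\rfloor$, where the $U_j$ are i.i.d. uniform on $[0,1]$.

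A cleaner route works purely at the level of generating functions. The Eulerian polynomial satisfies
\[
\sum_{m\ge0} m^k x^m=\frac{x A_k(x)}{(1-x)^{k+1}}
\]
(with the appropriate indexing). Equivalently, Worpitzky's identity expresses $m^{n}$ in terms of $\binom{m+j}{n}$ weighted by $A(n,j)$. From this I would derive the exponential formula
\[
\sum_{n\ge 0}\chowpoly_{\mathrm{U}_{n}}(x)\frac{u^n}{n!}=\frac{x-1}{x-e^{u(x-1)}}.
\]
The moments $\mathbb{E}[\mathcal{X}_d^k]$ are obtained by substituting $x=e^z$, dividing by $(d+1)!$, and extracting the coefficient of $z^k$.

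My concrete plan is to use the expansion
\[
\frac{A_n(x)}{(1-x)^{n+1}}=\sum_{m\ge0}(m+1)^n x^m.
\]
Put $x=e^z$ and use $(1-e^z)^{n+1}$ to get an exact identity
\[
A_n(e^z)=(1-e^z)^{n+1}\sum_{m\ge0}(m+1)^n e^{mz}.
\]
This identity holds as formal series after regularization, i.e. for $z<0$ with analytic continuation. Alternatively, I can compute via the uniform-sum model: $\mathbb{E}[e^{z\sum U_j}]=\left(\frac{e^z-1}{z}\right)^{n}$. The floor discrepancy $\{U_1+\cdots+U_n\}$ is uniform on $[0,1)$ and independent of the floor (a standard fact for $n\ge1$). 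Formally this gives
\[
\mathbb{E}[e^{z\lfloor\cdot\rfloor}]=\left(\frac{e^z-1}{z}\right)^{n}\Big/\frac{e^z-1}{z}\cdot(\text{correction}).
\]
However, dividing by the uniform MGF gives $\left(\frac{e^z-1}{z}\right)^{d}$, not the claimed formula. So the correct mechanism must come from the Worpitzky identity rather than this heuristic.

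I therefore prefer the following argument. By Worpitzky, for $m\ge0$ we have $m^{n}=\sum_j A(n,j)\binom{m+j}{n}$, suitably indexed. Inverting this via finite differences expresses $\mathbb{E}[\mathcal{X}_d^k]$ as a finite sum $\frac{1}{(d+1)!}\sum_j j^k A(d+1,j)$. Using the explicit formula
\[
A(n,j)=\sum_{i=0}^{j}(-1)^i\binom{n+1}{i}(j+1-i)^n,
\]
we get
\[
\sum_j j^kA(n,j)=\sum_{i}(-1)^i\binom{n+1}{i}\sum_{j}j^k(j+1-i)^n.
\]
When $k\le n$, i.e. $d\ge k-1$, the resulting alternating sum is an $(n+1)$-st finite difference of a polynomial of degree $n+k$. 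It therefore equals a polynomial in $n$, which can be read off as a coefficient in $\frac{(n+1)!}{?}[t^{n+1}]$ of $e^{-t}(e^t-1)^{n+1}$-type expressions. Concretely, $\Delta^{n+1}$ applied to a polynomial $p$ equals $(n+1)!\,[z^{n+1}]\,(e^z-1)^{n+1}\hat p(z)$. Tracking the shift by $1$ and the weight $j^k$ produces the factor $e^{-z}$ and the exponent $d+2=n+1$, yielding the displayed generating function coefficient by coefficient. The degree-$k$ polynomiality in $d$ then follows because $[z^k]\,g(z)^{d+2}e^{-z}$ is a polynomial in $d$ of degree $k$ when $g=1+z/2+\cdots$.

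The main obstacle is this bookkeeping: showing the truncation at $j\le d$ is harmless exactly when $k\le d+1$. The range restriction must not introduce boundary terms, and this is precisely where $d\ge k-1$ enters.

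The CMRVS claim is then immediate. The MGF of $\overline{\mathcal{X}_d}-\frac d2$ is $\left(\frac{e^z-1}{z}\right)^{d+2}e^{-z(d+2)/2}=\left(\frac{\sinh(z/2)}{z/2}\right)^{d+2}$, which is multiplicative under $d+2\mapsto(a+2)+(b+2)$. Comparing coefficients of $z^k/k!$ gives exactly the nice CMFS identity.
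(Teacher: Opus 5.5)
There is a genuine gap: the central identity is asserted but never derived. In your finite-difference step, after substituting $A(n,j)=\sum_{i=0}^{j}(-1)^i\binom{n+1}{i}(j+1-i)^n$ and swapping sums, the inner sum runs over $i\le j\le n-1$. So what you are differencing is $F_n(i)=\sum_{m=1}^{n-i}(m+i-1)^k m^n$, a polynomial in $i$ whose degree and coefficients depend on $n$. An $(n+1)$-st difference of such an object is not evidently a polynomial in $n$, so ``it therefore equals a polynomial in $n$'' does not follow. The identification with $k!\,[z^k]\bigl(\frac{e^z-1}{z}\bigr)^{d+2}e^{-z}$ (the formula with the literal ``?'', and ``tracking the shift produces $e^{-z}$'') is exactly the content of the theorem, and it is left undone.

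You also misplace where $d\ge k-1$ enters. Since $A(n,j)=0$ for $j\ge n$ automatically, truncating at $j\le d$ costs nothing. Separately, the ``standard fact'' that the fractional part of $U_1+\cdots+U_n$ is independent of its floor is false for $n\ge 2$. For $n=2$, the conditional densities of the fractional part given floor $0$ and floor $1$ are $2u$ and $2(1-u)$. That is why your heuristic produced $\bigl(\frac{e^z-1}{z}\bigr)^{d}$. The true relation is $\Pr(\lfloor U_1+\cdots+U_{d+1}\rfloor=j)=\rho_{d+2}(j+1)$, where $\rho_{d+2}$ is the density of $U_1+\cdots+U_{d+2}$: one \emph{more} uniform, not one fewer. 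Poisson summation then shows that sampling $\rho_{d+2}$ at the integers preserves moments of order $\le d+1$, which would also prove the theorem.

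The Eulerian-polynomial identity you wrote down already suffices, once you extract its singular part at $z=0$ instead of ``regularizing.'' As rational functions, $\frac{xA_n(x)}{(1-x)^{n+1}}=\sum_{m\ge1}m^nx^m=\bigl(x\frac{\mathrm{d}}{\mathrm{d}x}\bigr)^n\frac{x}{1-x}$. Putting $x=e^z$ turns $x\frac{\mathrm{d}}{\mathrm{d}x}$ into $\frac{\mathrm{d}}{\mathrm{d}z}$, and $\frac{e^z}{1-e^z}=-\frac1z+r(z)$ with $r$ holomorphic near $0$. Therefore
\[
e^{z}A_n(e^z)\;=\;(1-e^z)^{n+1}\Bigl(n!\,(-z)^{-n-1}+r^{(n)}(z)\Bigr),
\qquad
\frac{A_n(e^z)}{n!}\;=\;\Bigl(\frac{e^z-1}{z}\Bigr)^{n+1}e^{-z}+O\bigl(z^{n+1}\bigr).
\]
With $n=d+1$ and $\mathbb{E}[e^{z\mathcal{X}_d}]=A_{d+1}(e^z)/(d+1)!$, this gives $\mathbb{E}[\mathcal{X}_d^k]=k!\,[z^k]\bigl(\frac{e^z-1}{z}\bigr)^{d+2}e^{-z}$ precisely for $k\le d+1$. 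The $O(z^{d+2})$ remainder is the real source of the hypothesis $d\ge k-1$.

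This would be a shorter, genuinely different route from the paper's. The paper expands $\mathcal{X}_d^k$ into products of descent indicators and counts contiguous blocks of descent positions (probability $\prod_j 1/(n_j+1)!$, with $\binom{d-t+1}{m}$ placements). It then resums using Stirling numbers and the substitution $u=e^x-1$. Your degree-$k$ polynomiality remark, and your deduction of the nice CMRVS property from multiplicativity of $\bigl(\frac{\sinh(z/2)}{z/2}\bigr)^{d+2}$, are correct and agree with the paper.
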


\begin{proof}
Write $\mathcal{X}_d=\sum_{i=1}^d I_i$, where $I_i$ is the indicator of a descent at position $i$. Then
\[
\mathbb{E}\!\big[\mathcal{X}_d^k\big]
\;= \;\sum_{i_1,\ldots,i_k}
\Pr(I_{i_1}\;=\;\cdots=I_{i_k}=1).
\]
Suppose that the incidences $\{i_1,\dots,i_k\}$ form $m$ contiguous blocks of lengths $n_1,\dots,n_m$. The probability that all of them are descents is $\prod_{j=1}^m \frac{1}{(n_j+1)!}$, and the number of ways to place these blocks is $\binom{d-t+1}{m}$, where $t=n_1+\cdots+n_m$. Hence
\[
\mathbb{E}\!\big[\mathcal{X}_d^k\big]
\;=\; \sum_{\substack{n_1+\cdots+n_m=t\\ n_i>0,\, t\le k,\, t\le d}}
\left(t!\,S(k,t)\,\binom{d-t+1}{m}\prod_{j=1}^m\frac{1}{(n_j+1)!}\right).
\]

For $d\ge k-1$ the condition $t\le d$ can be removed, so the expression becomes a polynomial in $d$, denoted $E_k(d)$. We rewrite it as
\[
E_k(d)
\;=\; \sum_{t\le k} t!\,S(k,t) \,
   \sum_m \binom{d-t+1}{m} \,
   \sum_{\substack{n_1+\cdots+n_m=t\\ n_i>0}}\,
   \prod_{j=1}^m\frac{1}{(n_j+1)!}.
\]

The inner sums can be rewritten:
\[
\sum_{\substack{n_1+\cdots+n_m=t\\ n_i>0}}
\prod_{j=1}^m\frac{1}{(n_j+1)!}
\;=\; [x^t]\left(\frac{e^x-1-x}{x}\right)^m,
\]
and hence
\[
\sum_m \binom{d-t+1}{m} \left[x^t\right]\left(\frac{e^x-1-x}{x}\right)^m
=\; \left[x^t\right] \left(1+\frac{e^x-1-x}{x}\right)^{d-t+1}
=\; \left[x^t\right]\left(\frac{e^x-1}{x}\right)^{d-t+1}.
\]

Next,
\[
\left[x^t\right]\left(\frac{e^x-1}{x}\right)^{d-t+1}
= \;\operatorname{Res}_{x=0}\frac{(e^x-1)^{d-t+1}}{x^{d+2}}
\;=\; \left[u^t\right]\frac{u^{d+2}}{(\log(1+u))^{d+2}(1+u)},
\]
via the substitution $u=e^x-1$.

Thus,
\[
E_k(d)
\;=\; \sum_{t\le k} t!\,S(k,t)\,
   \left[x^t\right]\frac{x^{d+2}}{(\log(1+x))^{d+2}(1+x)}.
\]

Since the exponential generating function for Stirling numbers of the second kind satisfies
\[
\sum_{k\ge t} t!\,S(k,t)\frac{z^k}{k!}
\;=\; (e^z-1)^t,
\]
we obtain
\begin{align*}
\mathbb{E}\!\big[e^{z\overline{\mathcal{X}_d}}\big]
&\;=\; \sum_{k\ge0} \frac{E_k(d)}{k!}z^k \\
&\;=\; \sum_{t\ge0} \left[x^t\right]\frac{x^{d+2}}{(\log(1+x))^{d+2}(1+x)} (e^z-1)^t \\
&\;=\; \left(\frac{e^z-1}{z}\right)^{d+2} e^{-z}.
\end{align*}
Therefore, 
\begin{align*}
\mathbb{E}\!\left[e^{z(\overline{\mathcal{X}_d}-\frac d2)}\right]
&\;=\;\left(\frac{e^z-1}{z}\right)^{d+2} e^{-z}e^{\frac{-dz}{2}} \\
&\;=\;\Big(\frac{e^{z/2}-e^{-z/2}}{z}\Big)^{d+2}, 
\end{align*}
and $\mathbb{E}[e^{z(\overline{\mathcal{X}_{a+b+2}}-\frac {a+b+2}2)}] = \mathbb{E}[e^{z(\overline{\mathcal{X}_{a}}-\frac {a}2)}]\mathbb{E}[e^{z(\overline{\mathcal{X}_{b}}-\frac {b}2)}].$

As a result, $\overline{\mathcal{X}_{a+b+2}}-\frac {a+b+2}2=\overline{\mathcal{X}_{a}}-\frac {a}2+\overline{\mathcal{X}_{b}}-\frac {b}2$ in all moments, and $\{\overline{\mathcal{X}_d} - \frac{d}{2}\}_{d \geq 0}$ form a nice CMRVS.
\end{proof}
We present a combinatorial proof that $\{\overline{\mathcal{X}_d} - \frac{d}{2}\}_{d \geq 0}$ form a nice CMRVS in Appendix~\ref{sec:combproof}, and another proof in Corollary~\ref{cor:chiyproofCMRVS} being the consequence of the Hirzebruch $\chi_y$-genus introduced in Section~\ref{sec:Chern}.
\begin{remark}
If we keep the condition $t \leq d$, we find that 
\[
\mathbb{E}\!\left[e^{z\mathcal{X}_d}\right]
\;=\;\sum_{k\ge0}\frac{\mathbb{E}\!\left[\mathcal{X}_d^k\right]}{k!}z^k
\;=\; \sum_{0\leq t \leq d}\left[x^t\right]\frac{x^{d+2}}{(\log(1+x))^{d+2}(1+x)} (e^z-1)^t
\] is the truncated sum of the first $d+1$ terms.
\end{remark}

\begin{definition} \label{def:Ekprimepoly}
For integers $d,k\ge0$ set
\[
E'_k(d)\;:=\;\mathbb{E}\!\Big[\big(\overline{\mathcal{X}_d}-\tfrac d2\big)^{k}\Big].
\]
\end{definition}
\begin{corollary} \label{cor:equalwhendgeqk-1}
The functions $E'_k(d)$ have the following properties.
\begin{enumerate}
    \item The functions $\{E'_k(d)\}_{k\geq0}$ form a nice CMFS.
    \item We have the exponential generating function $$\sum_{k\ge0}E'_k(d)\frac{z^{k}}{k!} \;=\;\Bigg(\sum_{t\ge0}\frac{z^{2t}}{4^t(2t+1)!}\Bigg)^{d+2}.$$
    \item When $d \geq k-1$, we have $E'_k(d) = \mathbb{E}\![(\mathcal{X}_d-\tfrac d2)^{k}]$.
\end{enumerate} 
\end{corollary}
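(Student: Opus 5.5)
All three parts follow from Theorem~\ref{thm:Ekpoly} together with the identity $E'_k(d)=\mathbb{E}[(\overline{\mathcal{X}_d}-\tfrac d2)^k]$ from Definition~\ref{def:Ekprimepoly}. The plan is to prove (2) first, since (1) is an immediate consequence of it, and to handle (3) separately by a binomial expansion.

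\textbf{Part (2).} From the last display in the proof of Theorem~\ref{thm:Ekpoly},
\[
\sum_{k\ge0}E'_k(d)\frac{z^k}{k!}=\mathbb{E}\!\left[e^{z(\overline{\mathcal{X}_d}-\frac d2)}\right]=\Big(\frac{e^{z/2}-e^{-z/2}}{z}\Big)^{d+2}.
\]
Here the formal expectation is interpreted termwise. This is legitimate because $E'_k(d)$ is, by definition, obtained by expanding $(\overline{\mathcal{X}_d}-\tfrac d2)^k$ binomially and replacing each $\overline{\mathcal{X}_d}^j$ by $E_j(d)$. Multiplying the exponential generating function $\sum_k E_k(d)z^k/k!$ by $e^{-dz/2}$ therefore realizes exactly this binomial convolution. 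Next,
\[
\frac{e^{z/2}-e^{-z/2}}{z}=\frac{2\sinh(z/2)}{z}=\sum_{t\ge0}\frac{2(z/2)^{2t+1}}{z\,(2t+1)!}=\sum_{t\ge0}\frac{z^{2t}}{4^t(2t+1)!},
\]
which gives (2).

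\textbf{Part (1).} Set $G(z)=\sum_{t\ge0}\frac{z^{2t}}{4^t(2t+1)!}$. By (2), the exponential generating function of $\{E'_k(d)\}_k$ is $G(z)^{d+2}$, so
\[
G(z)^{a+b+2+2}=G(z)^{a+2}\,G(z)^{b+2}.
\]
Comparing coefficients of $z^k/k!$ yields
\[
E'_k(a+b+2)=\sum_{i+j=k}\binom{k}{i}E'_i(a)E'_j(b),
\]
which is the nice CMFS condition. We also have $E'_0(d)=G(0)^{d+2}=1$. This is essentially the last line of the proof of Theorem~\ref{thm:Ekpoly} restated in terms of functions.

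\textbf{Part (3).} Expand
\[
\mathbb{E}\!\left[\big(\mathcal{X}_d-\tfrac d2\big)^k\right]=\sum_{j=0}^k\binom{k}{j}\Big(-\frac d2\Big)^{k-j}\mathbb{E}[\mathcal{X}_d^j].
\]
Every index satisfies $j\le k$, so the hypothesis $d\ge k-1$ implies $d\ge j-1$. By Theorem~\ref{thm:Ekpoly} this gives $\mathbb{E}[\mathcal{X}_d^j]=E_j(d)$ for each $j$. The same binomial expansion with $E_j(d)$ in place of $\mathbb{E}[\mathcal{X}_d^j]$ is precisely $E'_k(d)$, so the two sides agree.

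\textbf{Main obstacle.} The only delicate step is confirming that the formal variable $\overline{\mathcal{X}_d}$ behaves correctly under this manipulation. Concretely, $E'_k$ must be defined through the moments $E_j$ via the binomial expansion, so that the generating-function identity and the monotone threshold $j\le k$ apply. Once this is checked, the rest is routine.
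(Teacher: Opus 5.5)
Your proposal is correct and matches the paper's proof. Part (2) is read off from the generating function in Theorem~\ref{thm:Ekpoly} together with the $\sinh$ expansion. Part (1) is the factorization $G^{a+b+4}=G^{a+2}G^{b+2}$, which the paper uses at the end of that theorem's proof and then cites. Part (3) is the binomial expansion, using $\mathbb{E}[\mathcal{X}_d^j]=E_j(d)$ for all $j\le k\le d+1$.
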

\begin{proof}

(1) By Definition \ref{def:central_sequence}, the condition that the sequence of random variables $\{\overline{\mathcal{X}_d} - \frac{d}{2}\}_{d \geq 0}$ forms a nice CMRVS is equivalent to the condition that the sequence of functions $\{E'_k(d)\}_{k\geq0}$ forms a nice CMFS. 

(2) Theorem \ref{thm:Ekpoly} tells us that
$$\sum_{k\ge0}E'_k(d)\frac{z^{k}}{k!}
\;=\;\mathbb{E}\!\big[e^{z(\overline{\mathcal{X}_d}-\frac d2)}\big]
\;=\;\Big(\frac{e^{z/2}-e^{-z/2}}{z}\Big)^{d+2}
=\;\Bigg(\sum_{t\ge0}\frac{z^{2t}}{4^t(2t+1)!}\Bigg)^{d+2}.$$

(3) When $t \leq k \leq d+1$, $\mathbb E[\mathcal{X}_d^t] =\mathbb E[\overline{\mathcal{X}_d}^t]$, and $E'_k(d) = \mathbb{E}\![(\overline{\mathcal{X}_d}-\tfrac d2)^{k}]=\mathbb{E}\![(\mathcal{X}_d-\tfrac d2)^{k}]$.
\end{proof}

In particular, $E'_{2k+1} \equiv 0$. For the even terms, $E'_{2k}(d)$ is a polynomial in $d$ of degree $k$. Its leading coefficient equals $(1/12)^k(2k-1)!!$, and we have the following special values (with the convention $0^0=1$).
\[
E'_{2k}(-2)=0^k,\qquad E'_{2k}(-1)=\frac{1}{(2k+1)4^k},\qquad E'_{2k}(0)=\frac{1}{(2k+1)(k+1)}.
\]

The first few polynomials are
\[
E'_0(d)=1,\;
E'_2(d)=\frac{d+2}{12},\;
E'_4(d)=\frac{(d+2)(5d+8)}{240},\;
E'_6(d)=\frac{(d+2)(35d^2+98d+72)}{4032},\dots
\]

One might wish to apply Theorem~\ref{thm:generalineq} directly to the whole nice CMFS $\{E'_k(d)\}$. Unfortunately, $\mathbb{E}[\overline{\mathcal{X}_d}^k]=\mathbb{E}[\mathcal{X}_d^k]$ need not hold when $d<k-1$. For example,
\[
\mathbb{E}\!\Big[\big(\mathcal{X}_2-\tfrac{2}{2}\big)^{4}\Big]\;=\;\tfrac13
\;>\; E'_4(2)\;=\;\tfrac{3}{10},
\]
so the first deviation occurs already at $k=4$ when $d=2$. This is why in Corollary \ref{cor:ineq_moments} we can only apply the functions $\{E'_0(d) = 1, E'_1(d) = 0, E'_2(d) = \frac{d+2}{12}\}$ up to $k=2$.

This motivates the following conjecture.

\begin{conjecture} \label{conj:bestboundisboolean}
Let $\mathrm{M}$ be a matroid of rank $d+1$, and let $k\ge0$ be an integer. Then the central moments of $\mathcal{X}_{\mathrm{M}}$ are bounded by the corresponding Boolean matroids: $$\mathbb{E}\!\Big[\big(\mathcal{X}_{\mathrm{M}}-\tfrac{d}{2}\big)^k\Big]\;\le\; \mathbb{E}\!\Big[\big(\mathcal{X}_{d}-\tfrac{d}{2}\big)^k\Big].$$
\end{conjecture}
\subsection{An inductive polynomial bound}\label{sec:sharpness}

Though Conjecture~\ref{conj:bestboundisboolean} remains unsolved, we can salvage the inequality for the fourth central moment by adding a correction term. Concretely, if $f:\mathbb{Z}_{\ge0}\to\mathbb{R}$ satisfies $f(a+b+2)\ge f(a)+f(b)$ and $f(d)+E'_4(d)\ge\mathbb{E}[(\mathcal{X}_d-\frac{d}{2})^4]$, then Theorem~\ref{thm:generalineq} applies to
\[
\{E'_0(d),\;E'_1(d),\;E'_2(d),\;E'_3(d),\;E'_4(d)+f(d)\}.
\]
The function $f$ has to satisfy $$f(0) \;\geq\; -\tfrac{1}{16}, \quad f(1) \;\geq\; -\tfrac{1}{10}, \quad f(2) \;\geq\; \tfrac{1}{30}, \quad f(d) \;\geq\; 0 \; \text{ for } d> 2.$$ One can find the ``best'' polynomial bound $f(d)=(d+2)/120$ in the sense that it is minimized when $d \rightarrow \infty$, which yields \[
\mathbb{E}\!\Big[\big(\mathcal{X}_{\mathrm{M}}-\tfrac{d}{2}\big)^4\Big]\;\le\; \mathbb{E}\!\Big[\big(\mathcal{X}_{d}-\tfrac{d}{2}\big)^4\Big]\;+\;\frac{d+2}{120}.
\]

We can iterate the method to inductively construct a CMFS of arbitrary length. The algorithm is stated as follows.

\begin{algorithm} \label{alg:findthebestpoly}
\caption{Constructing the CMFS}
\begin{algorithmic}[1] 
\State Start with the sequence of polynomials consisting of a single term $\{f_0(x)\equiv 1\}$, and a polynomial $h(z) = 1$.
\State Assume inductively that $k$ is an even integer (starting with base case $k=0$), and that we have a sequence of polynomials $\{f_0(x), \ldots, f_{k}(x)\}$ such that for all nonnegative integers $t \leq k$ and rank $d+1$ matroids we have $\mathbb{E}[(\mathcal{X}_{\mathrm{M}} - \frac{d}{2})^t] \leq f_t(d)$. Furthermore, we have an even polynomial $h(z)$ of degree $k$ such that $$h(z)^{x+2} \;= \;\sum_{i=0}^k \frac{f_i(x)}{i!}z^i \;+\; (\text{higher order terms of $z$})$$ for all positive integers $x$. In particular, $f_t(x) = 0$ if $t$ is odd, and $f_t(x)$ is a polynomial of degree $t$ if $t$ is even.
\State We can find a specific polynomial $g(x)$ such that $$g(a+b+2) \;=\; g(a) \;+\; g(b) \;+\; 
\sum_{i=2}^{k}\binom{k+2}{i}\, f_i(a)\, f_{k+2-i}(b),$$ where $g(x)$ is a degree $k+2$ polynomial.
\State Find the smallest rational number $C$ such that $\mathbb{E}[(\mathcal{X}_{d} - \frac{d}{2})^{k+2}] \leq g(d) + C(d+2)$ for all $d \geq 0$. 
\State The polynomials $f_{k+1}(x) = 0$, $f_{k+2}(x) = g(x) + C(x+2)$, and $\tilde{h}(z) = h(z) + \frac{C}{(k+2)!}z^{k+2}$ satisfy the induction hypothesis. Return to Step 2 with $k\leftarrow k+2$, $h(z) \leftarrow \tilde{h}(z)$, and the extended CMFS. 
\end{algorithmic}
\end{algorithm}

\begin{proposition}
Algorithm \ref{alg:findthebestpoly} is well-defined. The algorithm produces the smallest power series $h(z) \in \mathbb{Q}[[z]]$ in terms of lexicographical order from the lowest degree, such that if we write $$h(z)^{x+2} \;= \;\sum_{i=0}^\infty \frac{f_i(x)}{i!}z^i,$$ then for any matroid $\mathrm{M}$ of rank $d+1$, we have the inequality $$\mathbb{E}\!\Big[\big(\mathcal{X}_{\mathrm{M}}-\frac{d}{2}\big)^k\Big] \; \leq \; f_k(d).$$
\end{proposition}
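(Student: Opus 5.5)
The plan is to prove two things by induction on the even integer $k$. First, every step of Algorithm~\ref{alg:findthebestpoly} can actually be carried out. Second, the output is valid and lexicographically minimal. The organizing principle is multiplicativity: for any power series $h$ with $h(0)=1$,
$$h(z)^{a+2}\,h(z)^{b+2}\;=\;h(z)^{(a+b+2)+2}.$$
Hence the functions $f_i(x):=i!\,[z^i]\,h(z)^{x+2}$, which are polynomials in $x$, always form a \emph{nice} CMFS. Validity then comes from Theorem~\ref{thm:generalineq} applied to each truncation $\{f_0,\dots,f_k\}$. Since a nice CMFS is in particular a CMFS, we only need the Boolean inequality $\mathbb{E}[(\mathcal{X}_d-\frac d2)^t]\le f_t(d)$ for $t\le k$, and Step~4 guarantees exactly this.

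\textbf{Well-definedness of Step 3.} Let $h$ be the current polynomial. Set $g(x):=(k+2)!\,[z^{k+2}]\,h(z)^{x+2}$, which is a polynomial in $x$ of degree $k+2$, with leading term coming from $\bigl(\tfrac{1}{24}\bigr)^{(k+2)/2}$. Compare the $z^{k+2}$-coefficients in the multiplicativity identity. The terms $i=0$ and $i=k+2$ produce $g(a)+g(b)$, and the terms $i=1$ and $i=k+1$ vanish because odd $f_i$ are zero. What remains is exactly the required relation $g(a+b+2)=g(a)+g(b)+\sum_{i=2}^{k}\binom{k+2}{i}f_i(a)f_{k+2-i}(b)$.

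Conversely, a polynomial $p$ solving the homogeneous equation $p(a+b+2)=p(a)+p(b)$ satisfies $q(a+b)=q(a)+q(b)$ for $q(x)=p(x-2)$, so $q$ is linear with $q(0)=0$, i.e.\ $p(x)=c(x+2)$. Thus $g$ is unique up to adding $c(x+2)$. Replacing $h$ by $h+\frac{C}{(k+2)!}z^{k+2}$ changes $(k+2)![z^{k+2}]h^{x+2}$ by exactly $C(x+2)$ and leaves lower coefficients unchanged. This is why Step~5 preserves the induction hypothesis, including that $h$ is even.

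\textbf{Well-definedness of Step 4: the main obstacle.} We need a finite minimal $C$ with $\mathbb{E}[(\mathcal{X}_d-\frac d2)^{k+2}]\le g(d)+C(d+2)$ for all $d\ge0$. The finitely many values $d\le k$ impose only finitely many lower bounds on $C$. For $d\ge k+1$, Corollary~\ref{cor:equalwhendgeqk-1}(3) identifies the left side with $E'_{k+2}(d)$. So the issue is that the polynomial $D(d):=E'_{k+2}(d)-g(d)$ must grow at most linearly from above.

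Write $\Phi(z)=\sum_{t\ge0} z^{2t}/(4^t(2t+1)!)$ and $\delta:=\log h-\log\Phi$. Then $\Phi^{x+2}-h^{x+2}=\Phi^{x+2}\bigl(1-e^{(x+2)\delta}\bigr)$. I would first try to prove inductively that $\delta$ has nonnegative coefficients. This holds if each added constant $C$ is nonnegative, which in turn should follow since the Boolean data force it at large $d$. Granting that, every contribution to $[z^{k+2}]$ containing at least one factor of $\delta$ is negative in its top $x$-power, and the contribution with no $\delta$ cancels. So $D$ is a polynomial that is either of degree $\le 1$ or has negative leading coefficient, and $D(d)\le C(d+2)$ holds for all large $d$ with a finite $C$. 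If the sign control on $\delta$ fails, the fallback is to bound $D$ directly using the explicit generating function in Theorem~\ref{thm:Ekpoly}.

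\textbf{Minimality.} Suppose $h'$ is another power series whose associated $f'_i$ bound the central moments of all matroids. From $h'(0)^{x+2}=1$ for all $x$ we get $h'(0)=1$. Let $m$ be the lowest degree at which $h'$ and the algorithm's $h$ differ. If $m$ is odd, then $f'_m(x)=m!\,(x+2)\,(h'_m-h_m)$, since all lower odd coefficients of both series vanish. Because odd central moments of $\mathcal{X}_d$ are $0$, Boolean matroids force $h'_m\ge h_m=0$, so $h'$ is not lexicographically smaller. If $m=k+2$ is even, then $f'_{k+2}=g+(k+2)!\,(h'_{k+2}-h^{\mathrm{old}}_{k+2})(x+2)$ with the same $g$ as in Step~3. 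Applying the bound to Boolean matroids (which are matroids) forces this constant to be at least the minimal $C$, i.e.\ $h'_{k+2}\ge h_{k+2}$. Hence $h'$ is never lexicographically smaller than $h$, which proves the claim.
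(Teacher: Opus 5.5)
Your Steps~3 and~5, the observation that $f_i(x)=i!\,[z^i]\,h(z)^{x+2}$ is automatically a nice CMFS, the validity argument via Theorem~\ref{thm:generalineq}, and the minimality argument are all sound. The minimality argument is in fact more explicit than the paper's ``immediate from the construction''.

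The gap is in Step~4, which you correctly single out as the crux: the lemma you propose there is false. By the Remark at the end of Section~\ref{sec:normaldist}, the algorithm outputs $h(z)=1+\frac{z^2}{24}+\frac{z^4}{1152}-\frac{z^6}{46080}+\cdots$. So the constant added at the step producing $f_6$ is $C=6!\cdot\bigl(-\frac{1}{46080}\bigr)=-\frac{1}{64}<0$. This value is forced by $d=3$, where $\mathbb{E}[(\mathcal{X}_3-\frac32)^6]=\frac{185}{192}=f_6(3)$, not by large $d$. Correspondingly, $\log h=\frac{z^2}{24}+0\cdot z^4-\frac{7}{207360}z^6+\cdots$ while $\log\Phi=\frac{z^2}{24}-\frac{z^4}{2880}+\frac{z^6}{181440}-\cdots$, so $[z^6]\,\delta<0$. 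In any case, nonnegative $C$'s would only make the coefficients of $h$ nonnegative, which says nothing about the coefficients of $\log h-\log\Phi$. Your fallback is not an argument, so the existence of $C$ is left unproved.

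The missing idea is that only the lowest coefficient of $\delta$ matters. Since $h$ and $\Phi$ agree through $z^2$, we have $\delta=\delta_4z^4+\delta_6z^6+\cdots$ with $\delta_4=h_4-\Phi_4$. Set $N=d+2$ and $2m=k+2$, and expand $[z^{2m}]\,e^{N\log\Phi}$ and $[z^{2m}]\,e^{N\log h}$ as $\sum_r\frac{N^r}{r!}[z^{2m}](\cdot)^r$. The $N^m$ terms coincide, and the $N^{m-1}$ terms involve only the $z^2$ and $z^4$ coefficients, so
\[
E'_{2m}(d)-g(d)\;=\;-\frac{(2m)!}{(m-2)!\,24^{m-2}}\,\delta_4\,N^{m-1}\;+\;O(N^{m-2})\qquad(m\ge3),
\]
whatever $\delta_6,\delta_8,\dots$ are. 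For $m\le2$ the difference has degree at most $1$, so there is nothing to prove.

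What you need is therefore exactly $\delta_4>0$, i.e.\ $h_4>\frac{1}{1920}$. This is again a small-$d$ fact. Large $d$ only force $C\ge\frac{1}{80}$ at the fourth-moment step, i.e.\ $h_4\ge\frac1{1920}$. It is $d=2$, where $\mathbb{E}[(\mathcal{X}_2-1)^4]=\frac13>E'_4(2)=\frac{3}{10}$, that forces $C=\frac1{48}$ and $h_4=\frac1{1152}$. This is the input behind the paper's assertion that the $z^{k+2}$-coefficient of $h(z)^{d+2}$ dominates $E'_{k+2}(d)$ for large $d$ ``regardless of the higher-order terms.''
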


\begin{proof} We split the proof into the following steps.

\medskip\noindent\textbf{The existence of $g(x)$ in Step 3.}
We define $g(x)$ to be the coefficient of $z^{k+2}$ in $h(z)^{x+2}(k+2)!$, which is a polynomial of degree $k+2$. More concretely, $$h(z)^{x+2} \;= \;\sum_{i=0}^k \frac{f_i(x)}{i!}z^i \;+\; \frac{g(x)}{(k+2)!}z^{k+2}\;+\;(\text{higher order terms of $z$}).$$ 

Comparing the coefficients of $z^{k+2}$ on both sides of $h(z)^{a+2} h(z)^{b+2} = h(z)^{a+b+2}$ yields the desired identity.

\medskip\noindent\textbf{The existence of $C$ in Step 4.}
By Corollary~\ref{cor:equalwhendgeqk-1}, when $d \geq k +1$, $\mathbb{E}[(\mathcal{X}_{d} - \frac{d}{2})^{k+2}]=E'_{k+2}(d)$ is the coefficient of $z^{k+2}$ in $$\Bigg(\sum_{t\ge0}\frac{z^{2t}}{4^t(2t+1)!}\Bigg)^{x+2}(k+2)! \;=\; \left(1+\frac{1}{24}z^2 + \frac{1}{1920}z^4+\cdots\right)^{x+2} (k+2)!.$$

The polynomial $h(z)$ takes the form $1 + \frac{1}{24}z^2 + \frac{1}{1152}z^4 + \cdots$. Therefore, regardless of the higher-order terms, the coefficient of $z^{k+2}$ in $h(z)^{d+2}(k+2)!$ strictly dominates $E'_{k+2}(d)$ for sufficiently large $d$. Consequently, $\mathbb{E}[(\mathcal{X}_{d} - \frac{d}{2})^{k+2}] \leq g(d)$ for sufficiently large $d$, and there is a smallest $C\in\mathbb{Q}$ such that $\mathbb{E}[(\mathcal{X}_{d} - \frac{d}{2})^{k+2}] \leq g(d) + C(d+2)$ for all $d$.

\medskip\noindent\textbf{The induction hypothesis in Step 5.}
Theorem \ref{thm:generalineq} then tells us $\mathbb{E}[(\mathcal{X}_{\mathrm{M}} - \frac{d}{2})^{t}] \leq f_t(d)$ for all rank $d$ matroids $\mathrm{M}$. Furthermore, by expanding $$\left(h(z) + \frac{C}{(k+2)!}z^{k+2}\right)^{x+2} \,= \;\sum_{i=0}^k \frac{f_i(x)}{i!}z^i \;+\; \frac{g(x)}{(k+2)!}z^{k+2}\;+\;\frac{C(x+2)}{(k+2)!}z^{k+2} \;+\;(\text{H.O.T}),$$ we conclude the result by setting $f_{k+2}(x)=g(x) + C(x+2)$. It is immediate from the construction that the final result $h(z)\in\mathbb{Q}[[z]]$ is the smallest in terms of lexicographical order from the lowest degree.
\end{proof}

An analogous argument gives bounds for the augmented Chow polynomial.

\begin{remark} \label{rem:boundsfordifferential}
Similarly, for a rank $d+1$ matroid $\mathrm{M}$, we can obtain upper bounds for the ratio between the $k$-th derivative and the original Chow polynomial by evaluating $\mathbb{E}[\binom{\mathcal{X}_{\mathrm{M}}}{k}]$. However, for the third derivative, the polynomial bound from Boolean matroids: $$\chowpoly_{\mathrm{M}}^{(3)}(1) \leq \frac{(d-1)(d-2)^2}{8}\chowpoly_{\mathrm{M}}(1)$$ already fails at $d=0$, and we need to add correction terms earlier. Furthermore, when $d \geq 2k-2$, the expectation $\mathbb{E}[\binom{\mathcal{X}_{\mathrm{M}}}{k}]$ is a positive sum of the central moments $\mathbb{E}[(\mathcal{X}_{\mathrm{M}}-\frac{d}{2})^t]$ with $t \leq k$, and the inequalities of this form are generally weaker. Nonetheless, it provides new information starting from $k=4$ and $d=5$.
\end{remark}
\subsection{Bounding central moments via the normal distribution}\label{sec:normaldist}

As shown in Corollary \ref{cor:ineq_chow_coefficient}, the bound for the second moment is sharp, and we can derive bounds for higher central moments. However, obtaining a precise bound for the $k$-th moment for general $k$ via this approach becomes intractable. In this subsection, we prove that the central moments of a matroid are bounded by a normal distribution, as stated in Theorem~\ref{thm:main-theorem}.
\begin{lemma}
The sequence of normal distributions $\{\mathcal{N}(0, \frac{d+2}{12})\}_{d\ge0}$ is a nice CMRVS.
\end{lemma}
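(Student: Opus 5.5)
The plan is to check the defining identity of a nice CMFS directly from Definition~\ref{def:central_sequence}. Set $f_k(d)=\mathbb{E}[\mathcal{N}(0,\tfrac{d+2}{12})^k]$. We need
\[
f_k(a+b+2)\;=\;\sum_{i+j=k}\binom{k}{i} f_i(a)\,f_j(b)
\]
for all integers $a,b\ge 0$ and all $k\ge 0$. The right-hand side is exactly $\mathbb{E}[(\mathcal{Y}_a+\mathcal{Y}_b)^k]$, where $\mathcal{Y}_a\sim\mathcal{N}(0,\tfrac{a+2}{12})$ and $\mathcal{Y}_b\sim\mathcal{N}(0,\tfrac{b+2}{12})$ are independent. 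So it suffices to identify the distribution of $\mathcal{Y}_a+\mathcal{Y}_b$.

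The cleanest route is through moment generating functions, mirroring the proof of Theorem~\ref{thm:Ekpoly}. The exponential generating function of the moments of $\mathcal{N}(0,\sigma^2)$ is $e^{\sigma^2 z^2/2}$. This gives
\[
\sum_{k\ge0} f_k(d)\frac{z^k}{k!}\;=\;\exp\!\Big(\frac{(d+2)z^2}{24}\Big)\;=\;\big(e^{z^2/24}\big)^{d+2}.
\]
The sequence therefore has exactly the shape $h(z)^{d+2}$ seen in Corollary~\ref{cor:equalwhendgeqk-1} and in Algorithm~\ref{alg:findthebestpoly}, now with $h(z)=e^{z^2/24}$.

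Since $(a+2)+(b+2)=(a+b+2)+2$, we get $h(z)^{a+2}h(z)^{b+2}=h(z)^{(a+b+2)+2}$. Comparing coefficients of $z^k/k!$ on both sides yields precisely the binomial convolution identity above. Equivalently, the sum of independent normals $\mathcal{N}(0,\tfrac{a+2}{12})$ and $\mathcal{N}(0,\tfrac{b+2}{12})$ is $\mathcal{N}(0,\tfrac{a+b+4}{12})=\mathcal{N}(0,\tfrac{(a+b+2)+2}{12})$. Finally, $f_0\equiv 1$ holds trivially.

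There is no real obstacle here; the argument is bookkeeping. The only point needing care is the index shift: the variance is $\tfrac{d+2}{12}$ rather than $\tfrac{d}{12}$, and it is exactly this $+2$ that matches the $a+b+2$ in the CMFS definition. For a purely algebraic check without probability, one can expand $f_{2m}(d)=(\tfrac{d+2}{12})^m(2m-1)!!$ and verify the identity via $\exp(uz^2)\exp(vz^2)=\exp((u+v)z^2)$.
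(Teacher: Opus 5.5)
Your proposal is correct and is essentially the paper's argument: the paper simply invokes additivity of variances for independent normals, $\mathcal{N}(0,\tfrac{a+2}{12})+\mathcal{N}(0,\tfrac{b+2}{12})=\mathcal{N}(0,\tfrac{(a+b+2)+2}{12})$. Your factorization of the moment generating function as $\big(e^{z^2/24}\big)^{d+2}$ is the standard proof of that fact, and it has the small advantage of making the binomial moment identity in the nice-CMFS definition explicit.
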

\begin{proof}
By the additivity of variances for independent normal distributions,
\[
\mathcal{N}\left(0, \,\frac{a+2}{12}\right)\;+\;\mathcal{N}\left(0, \,\frac{b+2}{12}\right) \;=\;\mathcal{N}\left(0, \,\frac{(a+b+2)+2}{12}\right).
\]
\end{proof}

The rest of Section \ref{sec:normaldist} is devoted to proving the following theorem.
\begin{theorem}\label{thm:main-theorem}
Let $\mathrm{M}$ be a matroid of rank $d+1$, and let $k\ge0$ be an integer. Then the central moments of $\mathcal{X}_{\mathrm{M}}$ are bounded by the corresponding central moments of the normal distribution: 
\begin{equation} \label{eq:normalineq}
    \mathbb{E}\!\Big[\big(\mathcal{X}_{\mathrm{M}}-\tfrac{d}{2}\big)^k\Big]\;\le\; \mathbb{E}\!\Big[\mathcal{N}\big(0, \tfrac{d+2}{12}\big)^k\Big].
\end{equation}
\end{theorem}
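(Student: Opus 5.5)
Because the normal family $\{\mathcal{N}(0,\tfrac{d+2}{12})\}_{d\ge0}$ is a nice CMRVS (previous lemma), Corollary~\ref{cor:generalineq} reduces Theorem~\ref{thm:main-theorem} to the Boolean case. Concretely, it suffices to show that for all $d,k\ge 0$,
\[
\mathbb{E}\!\Big[\big(\mathcal{X}_d-\tfrac d2\big)^k\Big]\;\le\;\mathbb{E}\!\Big[\mathcal{N}\big(0,\tfrac{d+2}{12}\big)^k\Big].
\]
Odd $k$ is trivial, since both sides vanish: $\mathcal{X}_d$ is symmetric about $d/2$. So fix $k=2m$. The plan is to treat the range $d\ge k-1$ using the formal variable $\overline{\mathcal{X}_d}$, and to handle the finitely-many-per-$k$ small values $d<k-1$ by a separate direct bound.

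\textbf{Case $d\ge k-1$.} By Corollary~\ref{cor:equalwhendgeqk-1}, the left side equals $E'_k(d)$, the $z^k$-coefficient (times $k!$) of $\phi(z)^{d+2}$, where $\phi(z)=\sinh(z/2)/(z/2)=\sum_t z^{2t}/(4^t(2t+1)!)$. The right side is the corresponding coefficient of $\psi(z)^{d+2}$ with $\psi(z)=e^{z^2/24}$. Since $\phi(z)=\prod_{n\ge1}\big(1+\tfrac{z^2}{4\pi^2n^2}\big)$ and $1+u\le e^u$ coefficientwise ($e^u-1-u$ has nonnegative coefficients), we get $\phi\preceq e^{\sum_n z^2/(4\pi^2n^2)}=e^{z^2/24}$ coefficientwise. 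Powers of series with nonnegative coefficients preserve coefficientwise domination, so $\phi^{d+2}\preceq\psi^{d+2}$. This gives $E'_k(d)\le \mathbb{E}[\mathcal{N}(0,\tfrac{d+2}{12})^k]$ for every $d\ge 0$; in particular it settles the range $d\ge k-1$.

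\textbf{Case $d<k-1$.} This is the main obstacle, because $E'_k$ no longer agrees with the Boolean moment (e.g.\ $d=2,k=4$). Here I would use the crude bound $|\mathcal{X}_d-\tfrac d2|\le \tfrac d2$ together with the exact variance $\tfrac{d+2}{12}$ for $d\ge1$. These give
\[
\mathbb{E}\big[(\mathcal{X}_d-\tfrac d2)^{2m}\big]\le (\tfrac d2)^{2m-2}\cdot\tfrac{d+2}{12}.
\]
It then suffices to show $(d/2)^{2m-2}\le (2m-1)!!\,\big(\tfrac{d+2}{12}\big)^{m-1}$, i.e.
\[
\Big(\tfrac{3d^2}{d+2}\Big)^{m-1}\le (2m-1)!! \quad\text{whenever } d\le 2m-2.
\]
Since $3d^2/(d+2)< 3d\le 6(m-1)$, this follows from $(6(m-1))^{m-1}\le(2m-1)!!$, but that fails for large $m$. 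So I would refine it: $3d^2/(d+2)<3d$, and I would instead compare $(3d)^{m-1}$ with $(2m-1)!!\ge (2m-1)(2m-3)\cdots$. This needs $d$ small relative to $m$, so I would split further.

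\textbf{Fallback for the remaining small cases.} If the crude bound is insufficient near $d\approx 2m-2$, I would use the binomial/$\gamma$-positivity comparison (Corollary~\ref{cor:binomial_bound}) or a sub-Gaussian argument. The descent count is a sum of $d$ indicators with a $2$-dependent structure, and Hoeffding-type moment bounds for $\mathcal{X}_d$ compare directly with Gaussian moments. The cleanest route is a tail estimate $\Pr(|\mathcal{X}_d-\tfrac d2|\ge s)\le 2e^{-6s^2/(d+2)}$ (proved via $\mathbb{E}e^{\lambda(\mathcal{X}_d-d/2)}\le e^{\lambda^2(d+2)/24}$, using that the true moment generating function is dominated by the formal one). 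Integrating this tail bound gives the moment comparison directly, with the finitely many residual $(d,m)$ pairs checked numerically.

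The nontrivial step throughout is the regime $d<k-1$.
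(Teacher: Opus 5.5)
Your reduction to the Boolean case and your treatment of $d\ge k-1$ are fine and agree with the paper's Case (1). Deriving $\frac{\sinh(z/2)}{z/2}\preceq e^{z^2/24}$ from the product formula and $\sum_{n\ge1}\frac{1}{4\pi^2n^2}=\frac1{24}$ is a valid substitute for the direct termwise check $6^t\,t!\le(2t+1)!$.

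The genuine gap is the regime $d<k-1$. It contains \emph{infinitely} many pairs $(d,k)$, and none of your tools closes it.

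\textbf{Crude and binomial bounds.} The bound $(\tfrac d2)^{2m-2}\cdot\tfrac{d+2}{12}$ and the binomial comparison from $\gamma$-positivity only succeed when $d$ is a small fraction of $k$. For $d$ near $k-2$, both exceed the target by a factor exponential in $k$. Already for $(d,k)=(4,6)$ they give $8$ and $\tfrac{17}{2}$, against a target of $\tfrac{15}{8}$ and a true value of $\tfrac32$.

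\textbf{Sub-Gaussian fallback.} This fails twice over.

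First, its justification is false: the true moment generating function is not dominated by the formal one. For $d=2$ one has $\mathbb{E}[e^{\lambda(\mathcal{X}_2-1)}]=\frac{4+2\cosh\lambda}{6}=1+\frac{\lambda^2}{6}+\frac{\lambda^4}{72}+\cdots$, while $\bigl(\frac{\sinh(\lambda/2)}{\lambda/2}\bigr)^4=1+\frac{\lambda^2}{6}+\frac{\lambda^4}{80}+\cdots$. This is exactly the paper's deviation $\tfrac13>\tfrac3{10}$.

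Second, even granting $\mathbb{E}[e^{\lambda(\mathcal{X}_d-d/2)}]\le e^{\lambda^2(d+2)/24}$, integrating the resulting tail bound only yields
\[
\mathbb{E}\Big[\big(\mathcal{X}_d-\tfrac d2\big)^{2m}\Big]\;\le\;2\,m!\Big(\frac{d+2}{6}\Big)^m\;=\;\frac{2\cdot 4^m}{\binom{2m}{m}}\Big(\frac{d+2}{12}\Big)^m(2m-1)!!,
\]
where the factor $2\cdot4^m/\binom{2m}{m}$ is at least $4$ and grows like $2\sqrt{\pi m}$. A sub-Gaussian bound with variance proxy exactly $\frac{d+2}{12}$ does not imply the exact Gaussian moments. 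Chernoff optimization also loses a factor $m!\,e^m/m^m\sim\sqrt{2\pi m}$. Truncating at the support $|\mathcal{X}_d-\frac d2|\le\frac d2$ does not recover the loss when $d$ is comparable to $k$: for $d$ beyond roughly $k/3$, the peak of the integrand lies inside the support.

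So your ``finitely many residual pairs'' are in fact infinitely many, and a numerical check cannot finish the argument.

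\textbf{What is missing.} You need an estimate showing that for $d<k-1$ the Boolean central moments lie \emph{exponentially} below the Gaussian ones, uniformly in the ratio $(d+1)/k$; only such slack can absorb polynomial losses. The paper gets this directly from the Eulerian numbers. For $k=2t$ and $d+1=2ct$ with $0<c<1$, it combines four ingredients:
\begin{itemize}
\item the bound $A(d+1,i)\le(i+1)^{d+1}$;
\item the symmetry of the distribution;
\item AM--GM;
\item Stirling's bounds.
\end{itemize}
These reduce the claim to an inequality whose left side is $O(\sqrt t)$ and whose right side is at least $(1.046)^t$, valid for all $t\ge100$ and all $0<c<1$. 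Only after that are the finitely many remaining pairs with $2t<200$ checked by computer.
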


By Corollary~\ref{cor:generalineq}, it suffices to prove the inequality for the Boolean case. When $k$ is odd, both sides are $0$, so we only need to consider the case where $k$ is even. Set $k= 2t$, and we want to show that for $d,t\ge0$,
\begin{equation}\label{eq:centralmomentnormal}
\mathbb{E}\!\Big[\big(\mathcal{X}_d-\tfrac d2\big)^{2t}\Big]
\;\le\; \mathbb{E}\!\Big[\mathcal{N}\big(0, \tfrac{d+2}{12}\big)^{2t}\Big]
\;=\; \Big(\frac{d+2}{12}\Big)^t(2t-1)!! .
\end{equation}

\begin{example}
For $t=1$ the right-hand side equals $\frac{d+2}{12}$, which matches $E'_2(d)$; this explains the chosen variance. For $t=2$ one has
\[
\Big(\frac{d+2}{12}\Big)^2\cdot 3\; =\; \frac{(d+2)(5d+10)}{240} \;=\; E'_4(d)\;+\;\frac{d+2}{120},
\]
agreeing with the best bound given in Section~\ref{sec:sharpness}.
\end{example}
\begin{proof}[Proof of Theorem \ref{thm:main-theorem}]
We prove \eqref{eq:centralmomentnormal} in three cases.

\medskip\noindent\textbf{Case (1). $d\ge 2t-1$.} 
When $d\ge 2t-1$, the $(2t)$-th central moment of $\mathcal{X}_d$ equals that of the formal random variable $\overline{\mathcal{X}_d}$. Hence it suffices to compare the even part of the two moment generating functions:
\[
\mathbb{E}\!\big[e^{z\mathcal{N}(0, \frac{d+2}{12})}\big]
\;=\; \exp\!\Big(\frac{z^2}{24}\Big)^{d+2} \quad \text{and} \quad \mathbb{E}\big[e^{z(\overline{\mathcal{X}_d}-\frac d2)}\big]\;=\; \Big(\frac{e^{z/2}-e^{-z/2}}{z}\Big)^{d+2}.
\]
Both power series are even with nonnegative coefficients. A straightforward termwise comparison shows
\[
\exp\!\Big(\frac{z^2}{24}\Big)\;\succeq\;\frac{e^{z/2}-e^{-z/2}}{z},
\]
i.e., every coefficient of the left-hand series is at least the corresponding coefficient of the right-hand series. Convolving these nonnegative coefficient series $(d+2)$ times preserves the coefficient-wise inequality, and the desired inequality of even moments follows.

\medskip\noindent\textbf{Case (2). $d<2t-1$ and $2t<200$.}
This is a finite set of pairs $(d,t)$. We verify \eqref{eq:centralmomentnormal} by direct computational verification; this confirms that equality occurs only for $(d,t)=(2,2)$. The pseudocode for this procedure (Algorithm \ref{alg:verify}) is provided in Appendix~\ref{sec:tedious}.

\medskip\noindent\textbf{Case (3). $d<2t-1$ and $t\geq100$.}
We use a crude bound based on Eulerian numbers. First, we record the elementary bound.

\begin{lemma}
For all integers $n\ge1$ and $0\le i\le n-1$ one has $A(n,i)\le (i+1)^n$.
\end{lemma}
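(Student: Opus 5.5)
The plan is to use the standard combinatorial description of Eulerian numbers in terms of ``barred'' sequences. The identity I will rely on is Worpitzky's identity, or more directly the classical inclusion–exclusion formula
\[
A(n,i)\;=\;\sum_{j=0}^{i}(-1)^j\binom{n+1}{j}(i+1-j)^n .
\]
This alternating sum is awkward to bound termwise. I will therefore argue injectively instead.

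The cleanest injective statement is the following: the number of permutations of $[n]$ with at most $i$ descents is at most $(i+1)^n$. Since $A(n,i)$ counts only those with exactly $i$ descents, this stronger bound suffices.

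To prove it, take a permutation $\sigma\in S_n$ with descent set $D$, where $|D|\le i$. The descents cut the one-line word of $\sigma$ into $|D|+1\le i+1$ maximal increasing runs. Label each letter $k\in[n]$ by the index $r(k)\in\{1,\dots,i+1\}$ of the run containing it. This gives a map $\phi\colon \sigma\mapsto (r(1),\dots,r(n))\in[i+1]^n$, and the whole proof reduces to showing that $\phi$ is injective.

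Injectivity holds because $\sigma$ can be recovered from the labelling. The set of letters with label $r$ is exactly the content of the $r$-th run. Each run is increasing, so its letters must appear in increasing order. Concatenating the runs in order of $r$ therefore reconstructs the one-line word of $\sigma$. Empty labels are harmless: if fewer than $i+1$ runs occur, some labels are simply unused.

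Consequently
\[
\sum_{j\le i}A(n,j)\;\le\;(i+1)^n,
\]
and in particular $A(n,i)\le (i+1)^n$ for $0\le i\le n-1$.

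I do not expect any serious obstacle. The only point requiring care is that the runs are determined by $\sigma$ itself, not by the labelling. This matters only for the direction $\sigma\mapsto$ labelling; injectivity needs nothing more than the reconstruction above, so the choice of runs causes no trouble. As an alternative route, one could use Worpitzky's identity $(i+1)^n=\sum_{j}A(n,j)\binom{i+1+j}{n}$. Taking the term $j=i$ contributes $A(n,i)\binom{2i+1}{n}$, but that binomial can vanish, so the run-labelling argument is the one I would actually write.
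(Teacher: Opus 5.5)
Your proposal is correct and is essentially the paper's argument: the paper also labels each of the $n$ entries by the increasing run containing it and counts the labellings in $[i+1]^n$. You additionally make injectivity explicit (sort each label class and concatenate the classes in order), which yields the slightly stronger cumulative bound $\sum_{j\le i}A(n,j)\le (i+1)^n$.

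Incidentally, your Worpitzky aside also works if you use the term $j=n-1-i$ instead of $j=i$: its contribution is $A(n,n-1-i)\binom{n}{n}=A(n,i)$, and all the other terms are nonnegative.
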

\begin{proof}
A permutation with $i$ descents splits into $i+1$ consecutive increasing runs; assigning each of the $n$ entries to one of these blocks gives at most $(i+1)^n$ possibilities.
\end{proof}

Using $A(d+1,i)\le (i+1)^{d+1}$ and symmetry of the summand we obtain
\[
\mathbb{E}\!\Big[\big(\mathcal{X}_d-\tfrac d2\big)^{2t}\Big]
\;=\;\frac{1}{(d+1)!}\,\sum_{i=0}^d \,A(d+1,i)\,\Big(\frac{d-2i}{2}\Big)^{2t}
\;\le\; \frac{2}{(d+1)!}\sum_{0\le 2i<d}(i+1)^{d+1}\Big(\frac{d-2i}{2}\Big)^{2t}.
\]
Write $d+1=2ct$ with $0<c<1$. By AM--GM, $(i+1)^c(2ct-2i-1) \leq \frac{2}{c}(\frac{c(i+1)+c^2t-ci-c/2}{c+1})^{c+1}$. Therefore, the central moment is bounded by
\[
\frac{2ct}{4^t(2ct)!}\Big(\big(\tfrac{c^2t+c/2}{c+1}\big)^{c+1}\cdot\frac{2}{c}\Big)^{2t} \;= \;\frac{2ct}{(2ct)!}\Big(\big(\tfrac{ct+1/2}{c+1}\big)^{c+1}\cdot c^c\Big)^{2t}.
\]
Consequently it suffices to prove the inequality
$$\frac{2ct}{(2ct)!}\Big(\big(\tfrac{ct+1/2}{c+1}\big)^{c+1}\cdot c^c\Big)^{2t}\; \leq \; \left(\frac{d+2}{12}\right)^{t}(2t-1)!! \;= \;\frac{(2ct+1)^t(2t)!}{24^tt!},$$ or equivalently,  $$\frac{2ct (t!)}{(2t)!(2ct)!}\;\leq\;\left(\frac{(c+1)^{2c+2}(2ct+1)}{24c^{2c}(ct+\frac{1}{2})^{2c+2}}\right)^{t}.$$

For any $n \geq 1$, $n!$ is between $\sqrt{2\pi n} (\frac{n}{e})^n$ and $\sqrt{2\pi n} (\frac{n}{e})^ne^{\frac{1}{12n}}$. So it is enough to prove $$\frac{2cte^{\frac{1}{12t}}}{\sqrt{8ct\pi}}\;\leq\;\left(\frac{(c+1)^{2c+2}(2ct+1)(\frac{2t}{e})^2(\frac{2ct}{e})^{2c}}{24c^{2c}(ct+\frac{1}{2})^{2c+2}(\frac{t}{e})}\right)^{t}\;=\;\left(\frac{2^{2c}(c+1)^{2c+2}}{3e^{2c+1}(c+\frac{1}{2t})^{2c+1}}\right)^{t}.$$

The numerical comparison is straightforward for all $t\geq100$ and $0<c<1$. We supply the full estimate and numeric bounds in Appendix~\ref{sec:tedious}, Lemma~\ref{lem:ugly}.

Combining the three cases proves \eqref{eq:centralmomentnormal} for all $d,t$, and hence Theorem~\ref{thm:main-theorem}.
\end{proof}
The inequality \eqref{eq:normalineq} is an equality when \(k=0\) or odd.  For small even \(k\), the equality cases are understood: \(k=2\) is treated in Corollary~\ref{cor:ineq_chow_coefficient}, and for \(k=4\) Lemma~\ref{lem:flatinSi} shows that equality occurs only when the simplification of \(\mathrm{M}\) is the Boolean matroid \(\mathrm{U}_3\).  For larger even \(k\) there are no equality cases.

\begin{remark}
The bounds provided by Theorem~\ref{thm:main-theorem} weaken as \(k\) grows.  For example, one can verify that the first couple of terms of $h(z)$ in Algorithm~\ref{alg:findthebestpoly} are
\[
\mathbb{E}\!\big[e^{z(\mathcal{X}_d-\tfrac d2)}\big]
\;\le\;\Big(1+\tfrac{1}{24}z^2+\tfrac{1}{1152}z^4-\tfrac{1}{46080}z^6+\cdots\Big)^{d+2},
\]
hence only the first three terms coincide, and for any matroid \(\mathrm{M}\) of rank \(d+1\) we have the improved bound
\[
\mathbb{E}\!\Big[\big(\mathcal{X}_{\mathrm M}-\tfrac d2\big)^6\Big]
\;\le\; \frac{(d+2)(5d^2+20d+6)}{576},
\]
which is better than the bound \(\tfrac{5(d+2)^3}{576}\) coming from Theorem~\ref{thm:main-theorem}.
\end{remark}
\subsection{The naive bound and the numbers of flags of flats} \label{sec:reversed}

So far, we have focused on the upper bounds of the $k$-th central moments for $\mathcal{X}_{\mathrm M}$. It turns out that lower bounds can be established via the following elementary observation.

\begin{lemma}[The naive bound] \label{lem:naiveineq}
Let $\mathcal X$ be an integer valued random variable supported on $\{0,\dots,d\}$ and assume it is symmetric:
\[
\Pr(\mathcal X=s)\;=\;\Pr(\mathcal X=d-s)\quad(0\le s\le d).
\]
Then for every even integer $k = 2t > 0$,
\[
\mathbb{E}\!\Big[\big(\mathcal X-\tfrac{d}{2}\big)^k\Big]\;\geq\;
\begin{cases}
0, & \text{if $d$ is even},\\[1pt]
\frac{1}{2^k}, & \text{if $d$ is odd}.
\end{cases}
\]
Furthermore, for every integer $k$ with $2k - 2 \geq d$,
\[
\mathbb{E}\!\left[\binom{\mathcal{X}}{k}\right]\;\geq\; 0.
\]
In particular, the inequality holds for $\mathcal X=\mathcal X_{\mathrm M}$ for any rank \(d+1\) matroid \(\mathrm M\).
\end{lemma}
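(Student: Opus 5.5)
The plan is to argue pointwise on the support and then take expectations; no induction or semi-small decomposition is needed. For the application to matroids, I would first check the hypotheses for $\mathcal X=\mathcal X_{\mathrm M}$. By Definition~\ref{def:rvmatroid}, $\mathcal X_{\mathrm M}$ is integer valued and supported on $\{0,\dots,d\}$. It is symmetric because $\chowpoly_{\mathrm M}$ is palindromic, so $a_s(\mathrm M)=a_{d-s}(\mathrm M)$. Symmetry is not actually used in the inequalities below beyond this setup.

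For the central moment bound with $k=2t$ even, every value $s$ in the support gives $(s-\tfrac d2)^{2t}\ge 0$. This already yields the bound $0$ when $d$ is even. When $d$ is odd, $s-\tfrac d2$ is a half-integer for every integer $s$, so $|s-\tfrac d2|\ge \tfrac12$ and hence $(s-\tfrac d2)^{2t}\ge 2^{-2t}=2^{-k}$. Taking the expectation against the probability mass function, whose total mass is $1$, gives $\mathbb E[(\mathcal X-\tfrac d2)^k]\ge 2^{-k}$.

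For the binomial moment, I would treat $\binom{x}{k}=x(x-1)\cdots(x-k+1)/k!$ as a polynomial in $x$ and evaluate it at each $s\in\{0,\dots,d\}$.
\begin{itemize}
\item If $s\ge k$, it is an ordinary binomial coefficient and hence positive.
\item If $0\le s\le k-1$, one factor $x-s$ vanishes, so $\binom{s}{k}=0$.
\end{itemize}
Thus $\binom{s}{k}\ge 0$ on the whole support, and $\mathbb E[\binom{\mathcal X}{k}]\ge 0$.

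The only point needing care is this second part. As far as I can see, the hypothesis $2k-2\ge d$ is not needed for nonnegativity. I expect it only marks the range in which the statement is informative, namely where Remark~\ref{rem:boundsfordifferential} would otherwise express $\mathbb E[\binom{\mathcal X_{\mathrm M}}{k}]$ through central moments. I would therefore prove the inequality for all $k\ge 0$ and remark that the stated restriction is harmless. There is no real obstacle beyond observing that the falling-factorial polynomial vanishes at the integers $0,\dots,k-1$.
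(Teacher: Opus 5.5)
Your argument is correct and is the same pointwise observation the paper relies on; the paper states the lemma as an ``elementary observation'' without writing out a proof. You bound $(s-\tfrac d2)^{2t}$ and $\binom{s}{k}$ at each integer $s\in\{0,\dots,d\}$ and then average, and you are right that neither symmetry nor the hypothesis $2k-2\ge d$ is needed for the inequalities themselves. The condition $2k-2\ge d$ only marks the range in which $0$ is the optimal constant: it is attained asymptotically by the $\mathrm{PG}(d,q)$ family concentrating at the middle degree(s), as recorded in Corollary~\ref{cor:naiveboundformatroids}.
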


We now show that this lower bound is asymptotically sharp. We do this by exhibiting an explicit family of rank-$(d+1)$ matroids whose Chow polynomials concentrate on the middle indices. The key computation is an exact formula for the number of flags of flats in the projective geometry \(\mathrm{PG}(d,q)\), and we compute the Chow polynomial from the number of flags of flats.

Let \(M_q:=\mathrm{PG}(d,q)\) denote the rank $d+1$ matroid realizing the arrangement consisting of all nonzero vectors of \((\mathbb F_q)^{d+1}\).  

\begin{proposition} \label{prop:degreeMq}
Let $J = \{j_1, \ldots, j_m\} \subseteq [d]$ be a set with $j_1 < \cdots < j_m$. $N_J(M_q)$ (cf.\ Definition~\ref{def:NJflagsofflats}) equals the product of Gaussian binomials
\[
\prod_{i=1}^m \binom{\,d+1-j_{i-1}\,}{\,j_i - j_{i-1}\,}_q,
\qquad j_0:=0,
\]
where we use the standard notation \[\binom{m}{r}_q \;=\;\frac {(1-q^{m})(1-q^{m-1})\cdots (1-q^{m-r+1})}{(1-q)(1-q^{2})\cdots (1-q^{r})} \] for the Gaussian binomial coefficient. Define $j_{m+1} = d+1$. The degree of $N_J(M_q)$ in $q$ is $$E(J) \;= \; \frac{(d+1)^2}{2} \;-\; \frac{1}{2} \sum_{i=1}^{m+1} (j_i - j_{i-1})^2.$$ 
\end{proposition}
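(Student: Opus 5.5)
The plan is to count flags of flats of $M_q=\mathrm{PG}(d,q)$ by identifying them with flags of linear subspaces of $V=\mathbb F_q^{d+1}$. The flats of $\mathrm{PG}(d,q)$ correspond bijectively to linear subspaces $W\subseteq V$: the flat is the set of nonzero vectors of $W$, up to the parallel-class identification of projective points. This correspondence preserves inclusion, and the rank of a flat equals $\dim W$. Hence $N_J(M_q)$ is the number of chains of subspaces $W_1\subsetneq\cdots\subsetneq W_m$ with $\dim W_i=j_i$, i.e.\ the number of partial flags of type $J$ in $V$.

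I will count these chains by choosing the subspaces successively. Once $W_{i-1}$ of dimension $j_{i-1}$ is fixed (with $W_0=0$), the subspaces $W_i\supseteq W_{i-1}$ of dimension $j_i$ correspond to $(j_i-j_{i-1})$-dimensional subspaces of the quotient $V/W_{i-1}$. The quotient has dimension $d+1-j_{i-1}$, so the number of such $W_i$ is the standard Gaussian binomial $\binom{d+1-j_{i-1}}{j_i-j_{i-1}}_q$. Multiplying over $i=1,\dots,m$ gives the stated product. The only fact needed is that the number of $r$-dimensional subspaces of $\mathbb F_q^n$ is $\binom{n}{r}_q$, which follows by counting ordered bases and dividing by $|\mathrm{GL}_r(\mathbb F_q)|$.

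For the degree, I will use that $\binom{n}{r}_q$ is a polynomial in $q$ with positive leading coefficient $1$ and degree $r(n-r)$: the numerator has degree $\sum_{s=0}^{r-1}(n-s)$ and the denominator has degree $\sum_{s=1}^{r}s$. Write $\ell_i=j_i-j_{i-1}$ for $i=1,\dots,m+1$, so that $\sum\ell_i=d+1$. The $i$-th factor then has degree $\ell_i(d+1-j_i)=\ell_i\sum_{s>i}\ell_s$. Summing over $i\le m$ (the term $i=m+1$ contributes zero anyway) gives $\sum_{i<s}\ell_i\ell_s$. Since all leading coefficients are positive, there is no cancellation, and this sum equals $\tfrac12\big((\sum\ell_i)^2-\sum\ell_i^2\big)=E(J)$.

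I expect no real obstacle. The only point needing care is the bookkeeping in the degree computation, namely the telescoping $d+1-j_{i-1}-\ell_i=d+1-j_i$ and the identity for $\sum_{i<s}\ell_i\ell_s$.
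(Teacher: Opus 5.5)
Your proposal is correct and follows the paper's proof: choosing each subspace in the quotient $V/W_{i-1}$ gives the product of Gaussian binomials, and adding the degrees $r(n-r)$ of the factors gives $E(J)$. Your rewriting of the degree as $\sum_{i<s}\ell_i\ell_s$ is a cleaner form of the paper's identity $\sum_{i=1}^m (d+1-j_i)(j_i-j_{i-1}) = \frac{(d+1)^2}{2}-\frac12\sum_{i=1}^{m+1}(j_i-j_{i-1})^2$; the ``no cancellation'' remark is unnecessary, since the degree of a product of nonzero polynomials is always the sum of the degrees.
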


\begin{proof}
Each step that increases dimension from \(j_{i-1}\) to \(j_i\) chooses a \((j_i-j_{i-1})\)-dimensional subspace of an ambient space of dimension \(d+1-j_{i-1}\); the number of choices equals \(\binom{d+1-j_{i-1}}{j_i-j_{i-1}}_q\). Multiplying over the steps gives the displayed product. 

The polynomial \(\binom{m}{r}_q\) has degree $r(m-r)$. The degree of $N_J$ then equals $$\sum_{i=1}^m (d+1-j_i) (j_i - j_{i-1})\; =\; j_m(d+1)\;-\;\sum_{i=1}^m j_i (j_i - j_{i-1}) \;=\; \frac{(d+1)^2}{2} \;-\; \frac{1}{2} \sum_{i=1}^{m+1} (j_i - j_{i-1})^2.$$
\end{proof}
The coefficients of the Chow polynomial can be computed by the number of flags of flats, and \cite[Corollary 3.3.3]{Simplicial} gives a precise basis for the Chow ring from flags of flats. In particular, 
\begin{proposition}[{\cite[page 13]{FY_Chow}}] 
\label{prop:chow_poly_FMSV}
For every matroid $\mathrm{M}$,
\[
\chowpoly_{\mathrm{M}}(x)\;=\sum_{\varnothing=F_0\subsetneq F_1\subsetneq\cdots\subsetneq F_m}
\prod_{i=1}^{m} \frac{x\big(1-x^{\operatorname{rk}(F_i)-\operatorname{rk}(F_{i-1})-1}\big)}{1-x}.
\]   
\end{proposition}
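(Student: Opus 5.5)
The plan is to read this formula off the Feichtner--Yuzvinsky monomial basis of the Chow ring. Their Gröbner basis computation is for the presentation with a generator $x_F$ for every nonempty flat $F$, including $F=E$. Under the linear relations in $J$, this presentation is isomorphic to the one from \cite{AHK18} recalled in Section~\ref{sec:pre}: the class $x_E$ is eliminated via $x_E=-\sum_{F\ni i,\,F\neq E}x_F$. From the Gröbner basis one gets a $\mathbb{Z}$-basis of $A^*(\mathrm M)$ (equivalently of $A^*(\mathrm M)\otimes\mathbb{Q}$) consisting of the monomials
\[
x_{F_1}^{a_1}x_{F_2}^{a_2}\cdots x_{F_m}^{a_m},\qquad \varnothing=F_0\subsetneq F_1\subsetneq\cdots\subsetneq F_m,\qquad 1\le a_i\le \operatorname{rk}(F_i)-\operatorname{rk}(F_{i-1})-1 .
\]
Here $m\ge 0$, and the top flat $F_m$ may equal $E$. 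The empty chain gives the monomial $1$.

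First, I will state this basis precisely, citing \cite{FY_Chow} (equivalently the basis in \cite[Corollary 3.3.3]{Simplicial}), and check that the indexing matches the statement: chains start at $\varnothing$ and the top flat is unrestricted. Second, I will note that such a monomial is homogeneous of degree $a_1+\cdots+a_m$. Hence
\[
\chowpoly_{\mathrm M}(x)=\sum_{\text{chains}}\ \prod_{i=1}^m\ \sum_{a_i=1}^{r_i-1}x^{a_i},\qquad r_i:=\operatorname{rk}(F_i)-\operatorname{rk}(F_{i-1}),
\]
because for a fixed chain the exponents range independently over their intervals. Third, I will sum the geometric series, $\sum_{a=1}^{r-1}x^a=x(1-x^{r-1})/(1-x)$, which gives exactly the displayed product. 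A step with $r_i=1$ contributes the empty sum $0$, consistent with the formula. So chains containing a rank-one jump automatically drop out, matching the fact that no admissible exponent exists for such a step.

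The main obstacle is the first step: making sure the basis is quoted in the correct normalization. In particular, one must confirm that the chains allowed to end at $E$ (with exponent bound $\operatorname{rk}(E)-\operatorname{rk}(F_{m-1})-1=d-\operatorname{rk}(F_{m-1})$) are the ones that appear, and that the top-degree count gives $a_d=1$. As a sanity check, I will verify the formula on the Boolean matroid $\mathrm U_{d+1}$ for small $d$. For $d=1$ the chains are $\varnothing$, contributing $1$, and $\varnothing\subsetneq E$, contributing $x$, giving $1+x$. Then I will confirm palindromicity in low rank against the Eulerian numbers $A(n,k)$. If the normalization needs adjusting, the fallback is to derive the basis directly: show that the Gröbner reduction of any monomial either vanishes by the incomparability relations in $I$ or is rewritten, using the linear relations, into these standard monomials. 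This is the standard argument of \cite{FY_Chow}.
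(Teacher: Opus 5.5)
Your proposal is correct and is essentially the paper's argument. The paper gives no independent proof; it reads the formula off the Feichtner--Yuzvinsky monomial basis (equivalently \cite[Corollary 3.3.3]{Simplicial}), indexed by chains $\varnothing=F_0\subsetneq\cdots\subsetneq F_m$ with $1\le a_i\le \operatorname{rk}(F_i)-\operatorname{rk}(F_{i-1})-1$, and sums the geometric series chain by chain. Your handling of the top flat (chains may end at $E$, corresponding to the generator $x_E=-\alpha$) matches how the paper uses the formula immediately afterward, where it groups each flag ending below $E$ with its extension by $E$.
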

Consider any flag of flats terminating in $F_m \subsetneq E$. This flag can be extended by optionally appending $F_{m+1} = E$. Grouping these pairs of flags together yields an extra factor of $$\frac{x\big(1-x^{(d+1)-\operatorname{rk}(F_{m}) - 1}\big)}{1-x}\; + \;1 \;= \;\frac{x\big(1-x^{(d+2)-\operatorname{rk}(F_{m}) - 1}\big)}{x(1-x)}.$$

\begin{notation}[Block decomposition associated to an index set] \label{notation:block}
Let $J=\{j_1<j_2<\cdots<j_m\}\subseteq [d]$ be a strictly increasing set of indices.  
We associate to $J$ a sequence of \emph{block sizes} $(n_1,\dots,n_{m+1})$ defined by
\[
n_1 = j_1,\quad n_2 = j_2-j_1,\quad \dots,\quad n_m = j_m-j_{m-1},\quad n_{m+1}=d+2-j_m,
\]
and for $J=\varnothing$ we set $m=0$ and $n_1=d+2$. 
Thus, in all cases, we have 
\[
\sum_{i=1}^{m+1} n_i\; =\; d+2.
\]
Whenever we refer to an index set $J$, we also implicitly refer to the associated block sizes $(n_1,\dots,n_{m+1})$ defined above.
\end{notation}

This gives rise to another version of the proposition. 
\begin{proposition}[Another version of Proposition \ref{prop:chow_poly_FMSV}]\label{prop:chowpolyfof}
For every matroid $\mathrm{M}$,
\[
\chowpoly_{\mathrm{M}}(x)\;=\;\frac{1}{x}\sum_{J \subseteq [d]} N_J
\prod_{i=1}^{m+1} \frac{x\big(1-x^{n_i-1}\big)}{1-x}. \quad\text{(cf.\ Notation convention \ref{notation:block})}
\]   
\end{proposition}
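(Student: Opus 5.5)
This is a regrouping of the sum in Proposition~\ref{prop:chow_poly_FMSV} according to the set of ranks of the flag. I first group the chains $\varnothing=F_0\subsetneq F_1\subsetneq\cdots\subsetneq F_m$ by their rank sets. The summand depends only on the ranks $\operatorname{rk}(F_i)$. For chains with $F_m\subsetneq E$, the rank set $J=\{\operatorname{rk}F_1<\cdots<\operatorname{rk}F_m\}$ is a subset of $[d]$, and by Definition~\ref{def:NJflagsofflats} exactly $N_J$ chains have rank set $J$. Chains ending in $F_m=E$ are obtained from a unique chain ending in a proper flat by appending $E$. So the whole sum is indexed by pairs consisting of a chain of proper flats and the choice of whether to append $E$.

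Next I take a fixed $J\subseteq[d]$ with $j_m=\max J$ (with $j_m=0$ if $J=\varnothing$). The product over the proper steps is $\prod_{i=1}^{m}\frac{x(1-x^{n_i-1})}{1-x}$, where $n_i=j_i-j_{i-1}$. Appending $E$ multiplies this by $\frac{x(1-x^{d-j_m})}{1-x}$, and not appending multiplies it by $1$. The grouping computation recorded just before Notation~\ref{notation:block} combines the two options:
\[
1+\frac{x(1-x^{d-j_m})}{1-x}=\frac{1-x^{d+1-j_m}}{1-x}=\frac{1}{x}\cdot\frac{x(1-x^{n_{m+1}-1})}{1-x},
\]
since $n_{m+1}=d+2-j_m$. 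This is a one-line geometric-series check. The case $J=\varnothing$ is covered by the same formula: $n_1=d+2$ gives $\frac1x\cdot\frac{x(1-x^{d+1})}{1-x}$, which accounts for the empty chain and the chain $\varnothing\subsetneq E$.

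Summing $N_J$ times the resulting product over all $J\subseteq[d]$ then gives exactly the displayed formula. There is no real obstacle. The only points needing care are the boundary conventions: $J=\varnothing$, and that $F_m=E$ has rank $d+1$, so the last step has exponent $d-j_m$. Both are absorbed by the definition $n_{m+1}=d+2-j_m$.
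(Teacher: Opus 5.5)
Your proposal is correct and follows the paper's own argument: it pairs each chain ending in a proper flat with its extension by $E$ to obtain the factor $\frac{1-x^{d+1-j_m}}{1-x}=\frac{1}{x}\cdot\frac{x(1-x^{n_{m+1}-1})}{1-x}$, then groups chains by rank set to get the multiplicity $N_J$. Your explicit check of the boundary case $J=\varnothing$ (the empty chain together with $\varnothing\subsetneq E$) is a detail the paper leaves implicit.
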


Note that if some $n_i = 1$, it would not contribute to the computations. Under the constraint that $n_i > 1$, $E(J)$ (cf.\ Proposition~\ref{prop:degreeMq}) reaches maximum when $J = \{2, 4, \ldots, 2\lfloor \tfrac{d}{2}\rfloor\}$. So, as $q \to \infty$, the Chow polynomial for $M_q$ is asymptotically given by $$\begin{cases}
    x^{\frac{d}{2}} &, \;d \text{ is even,}\\
    x^{\frac{d-1}{2}}(1+x) &, \;d \text{ is odd.}\\    
\end{cases}$$
Therefore, the naive bound given in Lemma~\ref{lem:naiveineq} is strict.
\begin{corollary} \label{cor:naiveboundformatroids}
For integers $d, k$ with $k > 0$, the optimal constant $C_{d, k}$ such that the inequality
\[
\mathbb{E}\!\Big[\big(\mathcal X_{\mathrm M}-\tfrac{d}{2}\big)^k\Big]\;\geq\;
C_{d,k}
\] holds for all matroids $\mathrm M$ of rank $d+1$ is
\[
C_{d, k} \;=\;
\begin{cases}
0, & \text{if $d$ is even or $k$ is odd},\\[1pt]
\frac{1}{2^k}, & \text{else}.
\end{cases}
\]
Furthermore, for $2k -2 \geq d$, $D_{d,k} = 0$ is the optimal constant such that the inequality
\[
\mathbb{E}\!\left[\binom{\mathcal{X}_{\mathrm{M}}}{k}\right]\;\geq\; D_{d,k}
\] holds for all matroids $\mathrm M$ of rank $d+1$.
\end{corollary}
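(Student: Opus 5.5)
Both claims have two halves: the inequality holds for every matroid, and no larger constant works. The inequality half follows from Lemma~\ref{lem:naiveineq}, since $\mathcal{X}_{\mathrm M}$ is supported on $\{0,\dots,d\}$ and is symmetric because $\chowpoly_{\mathrm M}$ is palindromic. For odd $k$, symmetry makes the central moment exactly $0$, so $C_{d,k}=0$ is attained and optimal. For even $k$ and even $d$ the bound $0$ holds. For even $k$ and odd $d$, every value $s$ satisfies $|s-\tfrac d2|\ge\tfrac12$, which gives the bound $2^{-k}$. For $\binom{\mathcal{X}_{\mathrm M}}{k}$ with $2k-2\ge d$, Lemma~\ref{lem:naiveineq} gives nonnegativity. (Directly: $\binom{s}{k}\ge 0$ for integers $s\ge 0$.)

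For optimality I will use the family $M_q=\mathrm{PG}(d,q)$ and let $q\to\infty$. Substituting Proposition~\ref{prop:degreeMq} into Proposition~\ref{prop:chowpolyfof} writes $\chowpoly_{M_q}(x)$ as a sum of terms $N_J(M_q)$ times fixed polynomials in $x$.

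\begin{itemize}
\item A term with some $n_i=1$ vanishes.
\item Every other term has $q$-degree $E(J)$, which is a strictly concave function of the block sizes.
\item Under $n_i\ge 2$ and $\sum n_i=d+2$, the maximum of $E(J)$ is attained uniquely at $J=\{2,4,\dots,2\lfloor d/2\rfloor\}$. For even $d$ all blocks have size $2$; for odd $d$ the last block has size $3$.
\end{itemize}

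I expect the main obstacle to be this uniqueness. I will handle it by an exchange argument: splitting any block of size $\ge4$ into $2$ and $n-2$, or rebalancing two blocks of size $3$ into sizes $2,2,2$ (which is permitted because the total is fixed), strictly increases $E$, since $E$ is $\tfrac{(d+1)^2}{2}$ minus half the sum of squares, up to the index bookkeeping for the final block.

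Since $N_J(M_q)$ is monic of degree $E(J)$, dividing by $\chowpoly_{M_q}(1)$ shows that the pmf of $\mathcal{X}_{M_q}$ converges to the distribution of the dominant term. For even $d$ each block of size $2$ contributes the factor $x$, so the limit is the point mass at $d/2$. For odd $d$ the block of size $3$ contributes $x(1+x)$, so the limit is uniform on $\{\tfrac{d-1}2,\tfrac{d+1}2\}$.

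The support is the fixed finite set $\{0,\dots,d\}$, so all moments converge. The $k$-th central moment therefore tends to $0$ for even $d$, and to $2^{-k}$ for odd $d$ with even $k$. This shows that $C_{d,k}$ cannot be increased.

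For $D_{d,k}$ with $2k-2\ge d$, we have $k>d/2$ when $d$ is even and $k\ge\tfrac{d+1}{2}$ when $d$ is odd. In the even case $\binom{d/2}{k}=0$, so the limit of $\mathbb{E}[\binom{\mathcal X_{M_q}}{k}]$ is $0$. In the odd case, equality $k=\tfrac{d+1}2$ gives the limit $\tfrac12\binom{(d+1)/2}{k}=\tfrac12>0$, so $M_q$ does not give a sharp limit there. In that case I would instead use the boundary case of Lemma~\ref{lem:naiveineq} directly, or a different family, and I would check this boundary case separately.
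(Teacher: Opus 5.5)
Your approach is the paper's. You use Lemma~\ref{lem:naiveineq} for the inequalities, and for sharpness the concentration of $\chowpoly_{M_q}$ for $M_q=\mathrm{PG}(d,q)$ as $q\to\infty$. Your exchange argument for the unique maximizer of $E(J)$ and your use of the monicity of the Gaussian binomials supply details that the paper only asserts.

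The one real flaw is in your final paragraph, where you misread the hypothesis. For odd $d$, the condition $2k-2\ge d$ says $2k\ge d+2$. Since $d+2$ is odd, this forces $k\ge \tfrac{d+3}{2}$, not just $k\ge\tfrac{d+1}{2}$. Hence $\binom{(d-1)/2}{k}=\binom{(d+1)/2}{k}=0$, and the limit of $\mathbb{E}\bigl[\binom{\mathcal X_{M_q}}{k}\bigr]$ is $0$ in every case the statement covers. So the $M_q$ family already settles everything, and no separate check or second family is needed.

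The case $k=\tfrac{d+1}{2}$ with $d$ odd lies outside the hypothesis, and no family could have handled it anyway. There, palindromicity gives $\Pr\bigl(\mathcal X_{\mathrm M}\ge \tfrac{d+1}{2}\bigr)=\tfrac12$, so $\mathbb{E}\bigl[\binom{\mathcal X_{\mathrm M}}{k}\bigr]\ge\tfrac12$ for every matroid $\mathrm M$ of rank $d+1$. With this correction your proof is complete.
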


This approach also suggests that central moment inequalities can be translated into linear inequalities on the flag counts $N_J$. Fix a single flag type \(J= \{j_1,\dots,j_m\}\) whose block sizes are \(n_1,\dots,n_{m+1}\). The contribution of one such flag to the Chow polynomial is
\[
P_J(x)\;=\;\frac{1}{x}\prod_{i=1}^{m+1} f_{n_i}(x),
\qquad\text{where } f_n(x)\;:=\;x+x^2+\cdots+x^{n-1}.
\]

Let $\mathcal F_J$ be the formal random variable whose probability generating function is \(P_J(x)/P_J(1)\). The equation $\chowpoly_{\mathrm M}(x) = \sum_J N_JP_J(x)$ implies
\[
\chowpoly_\mathrm{M}(1)\cdot \mathbb{E}\!\Big[\big(\mathcal X_{\mathrm M}-\tfrac d2\big)^k\Big]
\;=\; \sum_J N_J\,P_J(1)\,\mathbb{E}\!\Big[\big(\mathcal F_J-\tfrac d2\big)^k\Big],
\] where $\chowpoly_\mathrm{M}(1) = \sum_J N_JP_J(1) $. For example, applying Corollary \ref{cor:ineq_chow_coefficient} yields the following.
\begin{corollary} \label{cor:ineqfof}
Let $\mathrm M$ be a matroid of rank $d+1$. Then
\[
\sum_{J\subset\{1, \ldots, d\}}\prod_{i=1}^{m+1} (n_i-1)
           \left(\sum_{i=1}^{m+1}\frac{n_i(n_i-3)}{2}\right) N_J \;\leq\; 0,
\]
with equality precisely when $d=1$ or when the simplification of $\mathrm{M}$ is $\mathrm{U}_{d+1}$ with $d>0$.
\end{corollary}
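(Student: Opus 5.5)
We plan to substitute the flag-of-flats expansion $\chowpoly_{\mathrm M}(x)=\sum_J N_J P_J(x)$ of Proposition~\ref{prop:chowpolyfof} into the inequality of Corollary~\ref{cor:ineq_chow_coefficient}, written in the form
\[
\sum_J N_J\Big(\sum_{p}\big(p-\tfrac d2\big)^2[x^p]P_J(x)-\tfrac{d+2}{12}P_J(1)\Big)\;\le\;0 .
\]
The task then reduces to computing, for a single flag type $J$ with block sizes $(n_1,\dots,n_{m+1})$, the bracketed quantity. We then show it equals a fixed positive constant times $\prod_i(n_i-1)\sum_i n_i(n_i-3)/2$.

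For this computation we use the product structure of $P_J$. Write $P_J(x)=x^{-1}\prod_i f_{n_i}(x)$ with $f_n(x)=x+\cdots+x^{n-1}$, so $P_J(1)=\prod_i(n_i-1)$. The normalized $f_{n}(x)/(n-1)$ is the PGF of a uniform variable $U_n$ on $\{1,\dots,n-1\}$, with mean $n/2$ and variance $((n-1)^2-1)/12=n(n-2)/12$. Hence $\mathcal F_J=\sum_i U_{n_i}-1$ is a sum of independent uniforms. Its mean is $\sum n_i/2-1=(d+2)/2-1=d/2$, so it is correctly centered. Its variance is $\sum_i n_i(n_i-2)/12$.

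The bracket therefore equals
\[
\prod_i(n_i-1)\Big(\sum_i\tfrac{n_i(n_i-2)}{12}-\tfrac{d+2}{12}\Big).
\]
Using $\sum_i n_i=d+2$, this becomes
\[
\tfrac1{12}\prod_i(n_i-1)\sum_i n_i(n_i-3)=\tfrac16\prod_i(n_i-1)\sum_i\tfrac{n_i(n_i-3)}{2}.
\]
Multiplying the whole inequality by $6$ gives exactly the stated inequality. Terms with some $n_i=1$ vanish on both sides, consistent with the convention.

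The equality statement transfers verbatim from Corollary~\ref{cor:ineq_chow_coefficient}, since the left side is $6\chowpoly_{\mathrm M}(1)$ times the difference of the two sides there and $\chowpoly_{\mathrm M}(1)>0$.

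The only delicate point is bookkeeping. We must check that the expansion of Proposition~\ref{prop:chowpolyfof}, including the $1/x$ shift and the last block $n_{m+1}=d+2-j_m$, yields the mean $d/2$ exactly. The variance formula for the discrete uniform distribution is standard.
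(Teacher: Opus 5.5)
Your proposal is correct and follows the paper's proof. You substitute $\chowpoly_{\mathrm M}=\sum_J N_J P_J$ into the variance bound of Corollary~\ref{cor:ineq_chow_coefficient}, identify the coefficient of $N_J$ as $P_J(1)$ times the variance $\sum_i n_i(n_i-2)/12$ of a sum of independent uniform blocks minus $\frac{d+2}{12}$, and transfer the equality cases. Your bookkeeping is in fact slightly more careful than the paper's: the paper drops the positive factor $\frac16$ when it identifies this coefficient with $\prod_i(n_i-1)\sum_i n_i(n_i-3)/2$, which is harmless for both the sign and the equality characterization.
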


\begin{proof}
Compare the coefficient of \(N_J\) on both sides of
\[
\sum_JN_J\, P_J(1) \cdot \frac{d+2}{12} \;\geq\; \sum_JN_J\, P_J(1)\cdot \mathbb{E}\!\Big[\big(\mathcal X_{\mathrm M}-\tfrac d2\big)^2\Big]
\;=\; \sum_J N_J\,P_J(1)\,\mathbb{E}\!\Big[\big(\mathcal F_J-\tfrac d2\big)^2\Big].
\]
$\mathbb{E}[(\mathcal F_J-\tfrac d2)^2] = \operatorname{Var}(\mathcal F_J)$ is the variance, which, by linearity of expectation, equals the sum of variances of each block: $$\operatorname{Var}(\mathcal F_J)\;=\;\sum_{i=1}^{m+1} \frac{1}{n_i-1}\sum_{j=1}^{n_i-1}\left(j - \frac{n_i}{2}\right)^2 \;= \;\sum_{i=1}^{m+1} \frac{n_i(n_i-2)}{12}.$$ 

Therefore, the coefficient equals
$$P_J(1)\left(\sum_{i=1}^{m+1} \frac{n_i(n_i-2)}{12} - \frac{d+2}{12}\right) \;=\; \prod_{i=1}^{m+1} (n_i-1)
           \left(\sum_{i=1}^{m+1}\frac{n_i(n_i-3)}{2}\right),$$
and we conclude the inequality.
\end{proof}
Fix a $d > 0$. For a set $J = \{j_1,\dots,j_m\} \subseteq \{1, \ldots, d\}$ define the number $U_J = N_J(\mathrm{U}_{d+1})$, which is equal to
\[
U_J \;:=\; \frac{n_{m+1}(d+1)!}{\prod_{i=1}^{m+1} n_i!}.
\]

Corollary~\ref{cor:ineqfof} implies the nontrivial equality $$\sum_{J}\prod_{i=1}^{m+1} (n_i-1)\left(\sum_{i=1}^{m+1}\frac{n_i(n_i-3)}{2}\right) U_J \;=\; 0,$$ which can also be shown directly by a generating-function argument. The rest of Section~\ref{sec:reversed} records an attempt to prove the inequality from the combinatorial side.

\begin{lemma} \label{lem:elemonoineq}
Let $\mathrm{M}$ be a matroid of rank $d+1$. For nested index sets \(J\subseteq J'\), we have the monotonicity bound:
\[
\frac{N_J}{U_J}\;\leq\; \frac{N_{J'}}{U_{J'}}.
\]
\end{lemma}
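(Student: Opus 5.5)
It suffices to treat the case $J'=J\cup\{r\}$ with $r\notin J$. The general case then follows by adding the elements of $J'\setminus J$ one at a time and chaining the resulting inequalities. The plan is a double count: compare how flags of type $J$ extend to flags of type $J'$ in $\mathrm M$ and in the Boolean matroid $\mathrm U_{d+1}$.

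\textbf{Setup.} Write $J=\{j_1<\cdots<j_m\}$, set $j_0=0$ and $j_{m+1}=d+1$, and let $j_{i-1}<r<j_i$ for some $1\le i\le m+1$. Every flag $\mathcal G\colon G_1\subsetneq\cdots\subsetneq G_{m+1}$ of type $J'$ is obtained uniquely from the flag of type $J$ given by deleting its rank-$r$ member $G$. The inserted flat $G$ is a flat of rank $r$ with $G_{i-1}\subsetneq G\subsetneq G_i$, where $G_0=\varnothing$ and $G_{m+1}=E$. Such flats correspond bijectively to the flats of rank $s:=r-j_{i-1}$ in the minor $(\mathrm M|G_i)/G_{i-1}$. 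This minor is loopless, because $G_{i-1}$ is a flat, and it has rank $\ell:=j_i-j_{i-1}$. Summing over flags of type $J$ gives
\[
N_{J'}(\mathrm M)\;=\;\sum_{\mathcal G\text{ of type }J}\#\{\text{rank-}s\text{ flats of }(\mathrm M|G_i)/G_{i-1}\}.
\]

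\textbf{Key lemma.} A loopless matroid of rank $\ell$ has at least $\binom{\ell}{s}$ flats of rank $s$. To see this, fix a basis $B$ and send each $s$-subset $S\subseteq B$ to $\operatorname{cl}(S)$. This map is injective: if $S\neq S'$, pick $e\in S'\setminus S$; since $S\cup\{e\}$ is independent, $e\notin\operatorname{cl}(S)$, while $e\in\operatorname{cl}(S')$. For the Boolean minor $\mathrm U_\ell$ the count is exactly $\binom{\ell}{s}$. The lemma therefore gives
\[
N_{J'}(\mathrm M)\;\ge\;\binom{\ell}{s}N_J(\mathrm M),\qquad U_{J'}\;=\;\binom{\ell}{s}U_J.
\]
Dividing the first relation by the second yields $N_J/U_J\le N_{J'}/U_{J'}$.

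\textbf{Consistency check.} It remains to confirm that the equality $U_{J'}/U_J=\binom{\ell}{s}$ agrees with the closed formula $U_J=n_{m+1}(d+1)!/\prod n_i!$. For an interior block ($i\le m$), the block $n_i=\ell$ splits into blocks $s$ and $\ell-s$, and the ratio is $\ell!/(s!(\ell-s)!)$, as required. For the last block ($i=m+1$), $n_{m+1}=d+2-j_m=\ell+1$ splits into $s$ and $n'=\ell+1-s$. The ratio is then
\[
\frac{n'}{n_{m+1}}\cdot\frac{n_{m+1}!}{s!\,n'!}\;=\;\frac{(n_{m+1}-1)!}{s!\,(n'-1)!}\;=\;\binom{\ell}{s}.
\]

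I expect this last-block bookkeeping to be the only delicate point. It is the reason the offset $d+2$ and the factor $n_{m+1}$ appear in the formula for $U_J$. The combinatorial core, the injectivity of $S\mapsto\operatorname{cl}(S)$, is routine.
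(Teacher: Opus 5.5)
Your proposal is correct and takes essentially the paper's approach: both show that every flag of type $J$ admits at least $U_{J'}/U_J$ refinements of type $J'$, by bounding the refinements inside each interval between consecutive flats from below by the Boolean count. Your one-element-at-a-time reduction, the explicit closure injection, and the last-block check (where the rank jump is $n_{m+1}-1$, not $n_{m+1}$) make the paper's sketch rigorous: its per-interval claims of ``at least $n_i!$ maximal chains'' and at least $n_i!/\prod_k m_{i,k}!$ refinements are off by one for the final interval, even though its final count $U_{J'}/U_J$ is right.
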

\begin{proof}
Write the rank-jumps associated to $J$ as $n_1,\dots,n_{m+1}$, and those for $J'$ as a refinement
\[
n_i\; = \;m_{i,1}\;+\;\cdots\;+\;m_{i,\ell_i}.
\]

Fix a flag of flats $\mathcal{F}$ of type $J$.  
Each interval $F_{i-1}<F_i$ contains at least $n_i!$ maximal chains refining that interval.  
Among these, the number of chains whose rank-jumps occur in blocks of sizes $m_{i,1},\dots,m_{i,\ell_i}$ is at least
\[
\frac{n_i!}{m_{i,1}!\cdots m_{i,\ell_i}!}.
\]

Doing this independently in each interval shows that every $J$-flag admits at least $\frac{U_{J'}}{U_J}$ refinements of type $J'$.  Therefore, $N_{J'} \ge N_J\cdot\frac{U_{J'}}{U_J}.$
\end{proof}
\begin{conjecture} \label{conj:coneofineq}
We conjecture that the inequality in Corollary \ref{cor:ineqfof}
\[
\sum_{J\subset\{1, \ldots, d\}}\prod_{i=1}^{m+1} (n_i-1)
           \left(\sum_{i=1}^{m+1}\frac{n_i(n_i-3)}{2}\right) N_J \;\leq\; 0
\]
is a nonnegative linear combination of the monotonicity inequalities established in Lemma~\ref{lem:elemonoineq}.
\end{conjecture}

\begin{remark}
We verified Conjecture \ref{conj:coneofineq} by computer for all ranks \(d+1\) with \(d\le 19\). 
\end{remark}
\subsection{Connection to $\gamma$-coefficients of the Chow polynomial}
\label{sec:Rlabelings} 
By \cite[Theorem 1.8]{Chowpoly}, the Chow polynomial of any matroid \(\mathrm M\) of rank \(d+1\) is \emph{\(\gamma\)-positive}: there exist nonnegative integers
\(\gamma_0,\dots,\gamma_{\lfloor d/2\rfloor}\) (with \(\gamma_0=1\)) such that
\[
\chowpoly_{\mathrm M}(x)
\;=\;\sum_{t=0}^{\lfloor d/2\rfloor}\gamma_t\,x^{t}(x+1)^{d-2t}.
\]
In particular \(\chowpoly_{\mathrm M}(1)=\sum_t\gamma_t\,2^{\,d-2t}\), and the leading term \((x+1)^d\) provides the trivial bound used below.

\begin{corollary}
\label{cor:binomial_bound}
Let \(\mathrm M\) be a matroid of rank \(d+1\). For every integer \(k\ge0\),
\[
\mathbb{E}\!\Big[\big(\mathcal X_{\mathrm M}-\tfrac d2\big)^k\Big]
\;\le\;
\mathbb{E}\!\Big[\big(\mathcal{B}(d,\tfrac12)-\tfrac d2\big)^k\Big],
\]
where \(\mathcal{B}(d,\tfrac12)\) denotes a \(\mathrm{Binomial}(d,\tfrac12)\) variable.
\end{corollary}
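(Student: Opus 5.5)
The plan is to use the $\gamma$-expansion directly: it exhibits $\mathcal X_{\mathrm M}-\tfrac d2$ as a mixture of centered binomial-type variables, each of which has smaller even central moments than $\mathcal{B}(d,\tfrac12)-\tfrac d2$.

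Write $\chowpoly_{\mathrm M}(x)=\sum_t\gamma_t x^t(x+1)^{d-2t}$ with $\gamma_t\ge0$. Each summand $x^t(x+1)^{d-2t}$, normalized by its value $2^{d-2t}$ at $x=1$, is the probability generating function of $t+\mathcal{B}(d-2t,\tfrac12)$. Hence $\mathcal X_{\mathrm M}$ is a mixture of these variables with nonnegative weights
\[
w_t \;=\; \gamma_t 2^{d-2t}/\chowpoly_{\mathrm M}(1).
\]
Centering, we have $t+\mathcal{B}(d-2t,\tfrac12)-\tfrac d2=\mathcal{B}(d-2t,\tfrac12)-\tfrac{d-2t}{2}$. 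Therefore
\[
\mathbb{E}\!\Big[\big(\mathcal X_{\mathrm M}-\tfrac d2\big)^k\Big]\;=\;\sum_t w_t\,\mathbb{E}\!\Big[\big(\mathcal{B}(d-2t,\tfrac12)-\tfrac{d-2t}{2}\big)^k\Big].
\]
Since the weights sum to $1$, it suffices to show that the $k$-th central moment $m_k(n):=\mathbb{E}[(\mathcal{B}(n,\tfrac12)-\tfrac n2)^k]$ is nondecreasing in $n$ along steps of $2$. That is, we need $m_k(n)\le m_k(n+2)$ for all $n\ge0$.

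For odd $k$ both sides vanish by symmetry, so assume $k$ is even. Write $\mathcal{B}(n,\tfrac12)-\tfrac n2=\sum_{j=1}^n\epsilon_j$ with independent $\epsilon_j=\pm\tfrac12$. Then $\mathcal{B}(n+2,\tfrac12)-\tfrac{n+2}{2}=Y+Z$, where $Y$ is the $n$-sum and $Z=\epsilon_{n+1}+\epsilon_{n+2}$ is independent of $Y$ and symmetric. Expanding binomially,
\[
\mathbb{E}[(Y+Z)^k]\;=\;\sum_{i}\binom{k}{i}\mathbb{E}[Y^{k-i}]\,\mathbb{E}[Z^i].
\]
The terms with odd $i$ vanish by the symmetry of $Z$. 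The terms with even $i$ are nonnegative, because both $k-i$ and $i$ are even. The $i=0$ term is exactly $\mathbb{E}[Y^k]=m_k(n)$. Hence $m_k(n+2)\ge m_k(n)$, and by induction $m_k(d-2t)\le m_k(d)$ for every $t$. Plugging this into the mixture gives the claimed bound.

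There is no serious obstacle here. The only points to check are the identification of the normalized summand with a shifted binomial and the sign bookkeeping in the expansion, where evenness of $k$ is used. Alternatively, the monotonicity follows from Jensen's inequality applied to the convex function $x^k$, conditioning on $Y$ and using $\mathbb{E}[Z]=0$.
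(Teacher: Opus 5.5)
Your proposal is correct and follows essentially the same route as the paper: use $\gamma$-positivity to write $\mathcal X_{\mathrm M}$ as a mixture of $t+\mathcal{B}(d-2t,\tfrac12)$ with weights $w_t$, all centered at $d/2$, and then apply monotonicity of binomial central moments in the number of trials. The paper only asserts that monotonicity; your independence-and-symmetry expansion of $\mathbb{E}[(Y+Z)^k]$ for even $k$ is a valid proof of the step-of-two version the argument needs.
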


\begin{proof}
Writing \(\chowpoly_{\mathrm M}\) as the combination
\[
\frac{\chowpoly_{\mathrm M}(x)}{\chowpoly_{\mathrm M}(1)}
\;=\;\sum_{t\ge0} w_t\,(x+1)^{d-2t}x^t,\qquad
w_t\;:=\;\frac{\gamma_t\,2^{\,d-2t}}{\sum_{s\ge0}\gamma_s\,2^{\,d-2s}},
\]
shows that \(\mathcal X_{\mathrm M}\) is a mixture of binomial distributions \(t+\mathcal{B}(d-2t,\tfrac12)\) with weights \(w_t\). Since they all have mean \(d/2\) and the central moment \(\mathbb{E}[(\mathcal{B}(n,\tfrac12)-n/2)^k]\) is nondecreasing in \(n\), we conclude the claim.
\end{proof}
\begin{remark} 
For a fixed $d$, the normal distribution bounds in Theorem \ref{thm:main-theorem} are stronger when $k$ is small,
while the binomial bounds in Corollary \ref{cor:binomial_bound} are stronger for larger $k$. This is because the binomial distribution has a bounded support and its $k$-th central moment can never exceed $(d/2)^k$. Meanwhile, the normal distribution's $k$-th moment grows super-exponentially with $(k-1)!!$ as stated in \eqref{eq:centralmomentnormal}. 
\end{remark}
Furthermore, each central moment can be written as
\[
\mathbb{E}\!\Big[\big(\mathcal X_{\mathrm M}-\tfrac d2\big)^k\Big]
=\frac{\sum_{t\ge0}\gamma_t\,2^{\,d-2t}\,
      \mathbb{E}\!\big[\big(\mathcal{B}(d-2t,\tfrac12)-\tfrac{d-2t}{2}\big)^k\big]}
     {\sum_{t\ge0}\gamma_t\,2^{\,d-2t}}.
\]

In particular, bounds on the central moments translate into constraints on the \(\gamma_t\).  For example, applying Corollary \ref{cor:ineq_chow_coefficient} yields the following.
\begin{corollary} \label{cor:ineqgamma}
Following the notation above, the variance bound implies the inequality $$
\frac{\sum_{t\ge0}2^{\,d-2t-2}\left(d-2t\right)\,\gamma_t}
{\sum_{t\ge0} 2^{\,d-2t}\,\gamma_t}\; \leq \;\frac{d+2}{12},$$
or equivalently,
$$
\sum_{t\ge0}2^{-2t}\left(d-3t-1\right)\,\gamma_t \;\leq\; 0,$$ with equality precisely when $d=1$ or when the simplification of $\mathrm{M}$ is $\mathrm{U}_{d+1}$ with $d>0$.
\end{corollary}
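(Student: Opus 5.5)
The plan is to substitute the $\gamma$-expansion into the variance identity displayed just before the statement, and then simplify with elementary algebra. The equality characterization will be inherited verbatim from Corollary~\ref{cor:ineq_chow_coefficient}.

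First, specialize the displayed formula for the central moments to $k=2$. A $\mathrm{Binomial}(n,\tfrac12)$ variable has variance $n/4$. Hence
\[
\mathbb{E}\!\Big[\big(\mathcal{B}(d-2t,\tfrac12)-\tfrac{d-2t}{2}\big)^2\Big]=\frac{d-2t}{4}.
\]
It follows that
\[
\mathbb{E}\!\Big[\big(\mathcal X_{\mathrm M}-\tfrac d2\big)^2\Big]=\frac{\sum_{t\ge0}2^{d-2t-2}(d-2t)\gamma_t}{\sum_{t\ge0}2^{d-2t}\gamma_t}.
\]
This is exactly the left-hand side of the first claimed inequality, so the first form is Corollary~\ref{cor:ineq_chow_coefficient} restated. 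The identity itself follows from writing $\chowpoly_{\mathrm M}(x)/\chowpoly_{\mathrm M}(1)$ as a mixture of the PGFs $x^t(1+x)^{d-2t}/2^{d-2t}$, exactly as in the proof of Corollary~\ref{cor:binomial_bound}. Every component of the mixture has mean $d/2$, so the central second moment is the weighted average of the component variances.

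Second, convert to the equivalent form. The denominator is positive because $\gamma_0=1$, so I clear it by multiplying both sides by $12\sum_t 2^{d-2t}\gamma_t$. This gives
\[
\sum_t 2^{d-2t}\big(3(d-2t)-(d+2)\big)\gamma_t\le 0.
\]
The bracket simplifies to $2d-6t-2=2(d-3t-1)$. Dividing by the positive constant $2^{d+1}$ yields $\sum_t 2^{-2t}(d-3t-1)\gamma_t\le0$.

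Each step is an equivalence, so equality in the $\gamma$-inequality holds exactly when equality holds in Corollary~\ref{cor:ineq_chow_coefficient}. By that corollary, this happens precisely when $d=1$ or the simplification of $\mathrm M$ is $\mathrm U_{d+1}$ with $d>0$. I expect no real obstacle here; the only care needed is checking the variance of the mixture components and the arithmetic constant.
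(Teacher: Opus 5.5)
Your proposal is correct and matches the paper's intended argument. You specialize the $\gamma$-mixture formula to $k=2$ using $\operatorname{Var}\mathcal{B}(n,\tfrac12)=n/4$, apply Corollary~\ref{cor:ineq_chow_coefficient}, and clear the positive denominator; your arithmetic $3(d-2t)-(d+2)=2(d-3t-1)$ and the division by $2^{d+1}$ both check out, and since every step is an equivalence the equality case transfers exactly.
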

\begin{remark}
\cite[Theorem 1.1]{Rlabel} gives a combinatorial interpretation of the coefficients~$\gamma_t$ in terms of maximal flags of flats with specific descent sets with respect to any R-labelling. We can also express the $\gamma$-coefficients in terms of the central moments, and translate them into Chern numbers using the techniques in Section~\ref{sec:Chern}.
\end{remark}
\subsection{Constraints on roots and the real-rootedness conjecture}
\label{sec:real_rootedness}
The following real-rootedness conjecture was independently proposed by Ferroni-Schr\"{o}ter \cite[Conjecture 8.18]{FS_valuative_24} and Huh-Stevens \cite[Conjecture 4.1.3 and 4.3.3]{Stevens_realrooted}.
\begin{conjecture} \label{conj:realroots}
    The Chow polynomial of a matroid is real-rooted.
\end{conjecture}
Suppose the Chow polynomial factors as $\chowpoly_{\mathrm M}(x)=\prod_{i=1}^d (x+z_i)$ with $z_i \in \mathbb{C}$. Set
\[
\mu_i\;:=\;\frac{1}{1+z_i}\qquad(1\le i\le d).
\]
For each \(i\), let \(\mathcal Z_i\) be the (formal) Bernoulli random variable with probability generating function
\((x+z_i)/(1+z_i)\), so \(\mathbb{E}[\mathcal Z_i]=\mu_i\). Then
\[
\mathcal X_{\mathrm M}\;=\;\sum_{i=1}^d \mathcal Z_i,
\]
and for every \(k\ge1\) the central moment
\[
\mathbb{E}\!\Big[\big(\mathcal X_{\mathrm M}-\tfrac d2\big)^k\Big]
\;=\;\mathbb{E}\!\Big[\big(\sum_{i=1}^d (\mathcal Z_i-\mu_i)\big)^k\Big]
\]
is a polynomial in the power sums $W_t:=\sum_{i=1}^d \mu_i^t$ with $1\le t\le k$.

Consequently, any upper (or lower) bound on the central moments yields linear or polynomial constraints on the \(W_t\)'s.
\begin{example}
For small \(t\) the relations are especially simple:
\begin{itemize}
 \item \(W_1=\sum_i\mu_i=\mathbb{E}[\mathcal X_{\mathrm M}]=d/2.\)
 \item The second central moment is $W_1 - W_2$, thus
   \[
   0\;\le \;W_1-W_2\;=\;\mathbb{E}\!\Big[\big(\mathcal X_{\mathrm M}-\frac d2\big)^2\Big]\;\le\; \frac{d+2}{12}, \quad \text{and }\;\frac{5d-2}{12}\;\le\; W_2\;\le\; \frac d2.
   \]
 \item The third central moment gives the linear relation
   \[
   0=\mathbb{E}\!\Big[\big(\mathcal X_{\mathrm M}-\frac d2\big)^3\Big]=W_1-3W_2+2W_3,
   \]
   hence
   \[
   W_3=\frac{1}{2}(3W_2-W_1),\quad \text{and }\;\frac{3d-2}{8}\le W_3\le \frac d2.
   \]
 \item The fourth central moment is a quadratic polynomial in \(W_1,\dots,W_4\). The inequality \(0\le\mathbb{E}[(\mathcal X_{\mathrm M}-\tfrac d2)^4]\le \frac{(d+2)^2}{48}\) 
   yields another constraint.
\end{itemize}    
\end{example}
Real-rootedness of $\chowpoly_{\mathrm M}$ is equivalent to real-rootedness of $f(x) =\prod_i (x+\mu_i)$. While calculating the power sums $W_t$ is generally insufficient to prove real-rootedness directly, our bounds on the central moments impose strict geometric constraints on the distribution of the roots $z_i$ in the complex plane. Should the real-rootedness conjecture hold, the inequalities in the example above provide necessary bounds that the roots of any matroid Chow polynomial must satisfy. The inequalities for the power sums $W_t$ could potentially provide information about the sign of $f(x)$ for certain values of $x$, allowing one to isolate real roots between points with alternating signs.

%% file: 4-c_top.tex
\section{Chern classes of matroids}
\label{sec:Chern}

\subsection{Interpretation as inequalities of Chern numbers}
The coefficients of the Chow polynomial of a matroid $\mathrm M$ and the total Chern class $c(T_{\mathrm M})$ are intimately linked. In this section, we establish the connection between them, allowing us to translate inequalities of Chow coefficients directly into inequalities of Chern numbers.

A compact K\"{a}hler manifold \(X\) is said to be of \emph{pure type} (or Hodge--Tate type) if \(h^{p,q}(X)=0\) for \(p\neq q\).
If a matroid \(\mathrm M\) of rank \(d+1\) is realizable over \(\mathbb C\) by a linear subspace \(L \subseteq \mathbb{C}^E\), the corresponding wonderful compactification \({W}_L\) is obtained from \(\mathbb P^d\) by an iterated sequence of blow-ups along linear centers. Since \(\mathbb P^d\) and the linear blow-up centers are of pure type by induction, so is \({W}_L\), and the cycle map is an isomorphism in this setting (cf.\ \cite[Theorem 7.31]{Voisin_Hodge_1} and \cite[Example 19.1.11]{Fulton_Intersection}). In particular,
\[
a_p(\mathrm M)\;=\;\dim_\mathbb{C} H^{2p}({W}_L,\mathbb{C})\;=\;h^{p,p}({W}_L)
\;=\;(-1)^p\chi\big({W}_L,\Omega^p_{{W}_L}\big).
\]

To relate these topological invariants to Chern numbers, we recall the Hirzebruch \(\chi_y\)-genus. For a smooth complex projective variety \(X\) of complex dimension \(d\), we set
\[
\chi^p(X)\;:=\;\chi\big(X,\Omega^p_X\big)\;=\;\sum_{q=0}^d(-1)^q h^{p,q}(X),\qquad
\chi_y(X)\;:=\;\sum_{p=0}^d \chi^p(X)\,y^p.
\]
Writing the Taylor expansion of \(\chi_y(X)\) at \(y=-1\) as
\begin{equation}\label{eq:expansion_chi_y}
\chi_y(X)\;=\;\sum_{p=0}^d \xi_p(X)\,(y+1)^p,
\end{equation}
the Hirzebruch--Riemann--Roch theorem expresses \(\chi_y(X)\) as an integral (equivalently, as a polynomial in the Chern roots \(x_i\)):
\begin{equation}\label{eq:chi_y_genus}
\chi_y(X)
\;=\;\int_X \prod_{i=1}^d \frac{x_i(1+ye^{-x_i})}{1-e^{-x_i}} .
\end{equation}

It is classical (see e.g.,\ \cite{NR_Prym_75, LW_Kahler_90, Salamon_Kahler_96}) that each coefficient \(\xi_p(X)\) can be written as a linear combination of Chern numbers. For a modern and systematic exposition of these expansions, we refer to \cite{LiPing}.

For $k \geq 0$, setting $0^0=1$, we write 
\[
h_k(X)\;:=\; \sum_{p=0}^d (-1)^p\,\chi^p(X) \left(p - \frac{d}{2}\right)^k.
\] 
Each $h_k(X)$ is a $\mathbb{Q}$-linear combination of $\xi_i(X)$, and we have the following explicit formulas.

\begin{lemma}[{\cite[Lemma 2.1]{LiPing}}]\label{lem:hirzeformula}
Let \(X\) be a compact K\"{a}hler manifold of complex dimension \(d\). When $k$ is odd, $h_k(X) = 0$; the first several explicit expressions of $h_k$ for even $k$ are given below (provided that $d \geq k$):
\[
\begin{aligned}
 h_0(X)\;&=\;c_d(X),\\[3pt]
 h_2(X)\;&=\;\frac{d}{12}c_d(X)\;+\;\frac{1}{6}c_1c_{d-1}(X),\\[3pt]
 h_4(X)\;&=\;\frac{d(5d-2)}{240}c_d(X)\,+\,\frac{5d-2}{60}c_1c_{d-1}(X) \,+\, \frac{c_1^2+3c_2}{30}c_{d-2}(X)\, -\, \frac{c_1^3-3c_1c_2+3c_3}{30}c_{d-3}(X),\\
 \dots
\end{aligned}
\]
\end{lemma}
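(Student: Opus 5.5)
We expect the formulas to drop out of a single generating function, built by evaluating $\chi_y$ at $y=-e^z$. Since $h_k(X)=\sum_p(-1)^p\chi^p(X)(p-\tfrac d2)^k$, we have
\[
\sum_{k\ge0}h_k(X)\frac{z^k}{k!}\;=\;e^{-dz/2}\sum_{p}\chi^p(X)(-e^{z})^p\;=\;e^{-dz/2}\,\chi_{-e^z}(X).
\]
Substituting $y=-e^z$ into \eqref{eq:chi_y_genus} and distributing $e^{-z/2}$ to each of the $d$ factors, we multiply numerator and denominator of each factor by $e^{x_i/2}$. This turns the integrand into
\[
\prod_{i=1}^d \frac{x_i\sinh\big(\tfrac{x_i-z}{2}\big)}{\sinh\big(\tfrac{x_i}{2}\big)}
\;=\;\prod_{i=1}^d x_i\Big(\cosh\tfrac z2-\coth\tfrac{x_i}{2}\,\sinh\tfrac z2\Big).
\]
So $h_k(X)=k!\,[z^k]\int_X F(x,z)$, where $F$ denotes this symmetric power series in the Chern roots. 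Only the part of $F$ of total degree $d$ in the $x_i$ contributes to the integral.

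\textbf{Odd vanishing.} Each factor $\phi(x,z)=x\sinh(\tfrac{x-z}{2})/\sinh(\tfrac x2)$ satisfies $\phi(-x,-z)=-\phi(x,z)$, so $F(-x,-z)=(-1)^dF(x,z)$. Let $F_d$ be the component of $F$ that is homogeneous of degree $d$ in the $x_i$. Then $F_d(-x,-z)=(-1)^dF_d(x,-z)$, and combining the two identities gives $F_d(x,-z)=F_d(x,z)$. Hence $\int_X F$ is even in $z$ and $h_k=0$ for odd $k$. Alternatively, Serre duality $\chi^p=(-1)^d\chi^{d-p}$ gives the same conclusion directly.

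\textbf{Even $k$.} We expand each factor to order $z^4$:
\[
\phi(x,z)=x-\tfrac z2\,x\coth\tfrac x2+\tfrac{z^2}{8}x-\tfrac{z^3}{48}x\coth\tfrac x2+\tfrac{z^4}{384}x+O(z^5).
\]
Here $x\coth(x/2)=2+\tfrac{x^2}{6}-\tfrac{x^4}{360}+\cdots$ is even, so each occurrence of $\coth$ contributes one factor of $z$ paired with a constant $2$ plus corrections of even degree in $x$.

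Taking the product over $i$, the coefficient of $z^k$ is a sum over ways to choose which factors take which $z$-terms. The unaffected factors contribute elementary symmetric products $x_{j_1}\cdots x_{j_r}$. For $k=0$ this gives $\int c_d=c_d$. For $k=2$ the contributions are:
\begin{itemize}
\item pairs of factors, each taking $-\tfrac z2\cdot 2$ plus its correction, which contribute $\binom d2 c_d$ together with $x^2$-correction terms;
\item single factors taking the $z^2$ term, which contribute $\tfrac d8 c_d$;
\item the $\tfrac{x_i^2}{6}$ correction terms, which after symmetrization produce sums $\sum_i x_i\prod_{j\ne i}x_j$ of degree $d$ and assemble into $c_1c_{d-1}-d\,c_d$ type expressions.
\end{itemize}
Rather than grind this by hand, I would use the multiplicative structure. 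Writing
\[
\log\phi=\log x+\log\Big(\cosh\tfrac z2-\coth\tfrac x2\sinh\tfrac z2\Big),
\]
one gets $\log F=\sum_i\log x_i+\sum_i\psi(x_i,z)$. The sums $\sum_i\psi(x_i,z)$ are power sums $p_j=\sum_i x_i^j$ of the Chern roots, here with negative powers of $x$ coming from $\coth$. Exponentiating, the degree-$d$ part becomes $c_d$ times a polynomial in the $p_j$ paired with lowered Chern classes $c_{d-j}$, via the identity that multiplying $\prod_i x_i$ by a symmetric expression in the $x_i^{-1}$ gives combinations of $c_{d-j}$. Newton's identities then convert the power sums to Chern classes, producing $c_1c_{d-1}$, $(c_1^2+3c_2)c_{d-2}$, and $(c_1^3-3c_1c_2+3c_3)c_{d-3}$; these are exactly the power-sum combinations $p_2$ and $p_3$ appearing in the stated $h_4$. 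The hypothesis $d\ge k$ is what ensures all lowered classes $c_{d-j}$ with $j\le k$ make sense without degenerate truncation.

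The main obstacle is this bookkeeping at order $z^4$: tracking the $\coth$ corrections and the cross terms from the exponential, and matching the result against the stated coefficients $\tfrac{d(5d-2)}{240}$, $\tfrac{5d-2}{60}$, $\tfrac1{30}$, $-\tfrac1{30}$. I would check the final coefficients against the Boolean case, where Corollary~\ref{cor:equalwhendgeqk-1} provides $E'_4(d)$ independently, and against $X=\mathbb P^d$.
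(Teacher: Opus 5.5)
Your generating function $\sum_k h_k\frac{z^k}{k!}=e^{-dz/2}\chi_{-e^z}(X)$, your rewriting of each factor as $\phi(x,z)=x\sinh(\frac{x-z}{2})/\sinh(\frac x2)$, and both of your arguments that $h_k=0$ for odd $k$ are correct. (The paper does not reprove this lemma; it quotes \cite[Lemma 2.1]{LiPing}, and Appendix~\ref{sec:Chernandchiy} only recovers the $c_d$-coefficient, via Cauchy's formula.)

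The gap is the even case, which is the real content of the statement. You never derive the stated coefficients beyond $h_0$, and the mechanism you sketch fails as stated.

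First, in your $z^2$ count, the constant parts $2\cdot 2$ of two $\coth$-factors give $z^2\prod_{l\ne i,j}x_l$, i.e.\ $z^2c_{d-2}$. This has degree $d-2$ and integrates to zero. It does not give $\binom d2 c_d$, which would put a $d^2$ term into $h_2$.

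Second, and decisively, the logarithmic approach breaks down. The $z^m$-coefficient of $\log\big(\cosh\frac z2-\coth\frac x2\sinh\frac z2\big)$ has a pole of order $m$ at $x=0$, so exponentiating produces products of negative power sums $p_{-j}$. You invoke the identity that $c_d$ times a symmetric function of the $x_i^{-1}$ is a combination of the $c_{d-j}$. That holds only for the elementary functions $e_j(x^{-1})$: already $c_d\,p_{-2}=\sum_i x_i^{-1}\prod_{l\ne i}x_l$ is not a polynomial in the Chern roots. Such terms cancel only in the total, and your plan has no way to organize that cancellation, so $h_4$ cannot be read off.

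Checking against the Boolean case and $\mathbb{P}^d$ does not close this gap. Two test families cannot determine the coefficients of the seven Chern numbers appearing in the formula, let alone rule out the other $c_\lambda$.

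The missing idea is to split off the singular part before taking logarithms. With $S(u)=\frac{\sinh(u/2)}{u/2}$ one has $\phi(x,z)=(x-z)\,S(x-z)/S(x)$, hence
\[
\prod_{i=1}^d\phi(x_i,z)\;=\;\Big(\sum_{j=0}^d(-z)^jc_{d-j}\Big)\exp\Big(\sum_{i=1}^d\big[\log S(x_i-z)-\log S(x_i)\big]\Big),\qquad \log S(u)=\frac{u^2}{24}-\frac{u^4}{2880}+\cdots .
\]
The exponent now involves only ordinary power sums. Its parts of joint degree $2$ and $4$ in $(x,z)$ are $L_2=\frac{dz^2-2zp_1}{24}$ and $L_4=\frac{4zp_3-6z^2p_2+4z^3p_1-dz^4}{2880}$.

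By homogeneity, the $z^k$-coefficient of the degree-$d$ part involves only the joint-degree-$k$ part of the exponential, with its $x$-degree-$j$ piece multiplied by $(-z)^jc_{d-j}$.

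For $k=2$ this gives $\frac d{24}c_d+\frac1{12}c_1c_{d-1}$. For $k=4$, from $L_4+\frac12L_2^2$ one gets
$\big(\frac{d^2}{1152}-\frac d{2880}\big)c_d+\big(\frac d{288}-\frac1{720}\big)c_1c_{d-1}+\big(\frac{p_1^2}{288}-\frac{p_2}{480}\big)c_{d-2}-\frac{p_3}{720}c_{d-3}$.
Multiplying by $4!$ and substituting $p_2=c_1^2-2c_2$ and $p_3=c_1^3-3c_1c_2+3c_3$ gives exactly the stated $h_4$. Note that $c_1^2+3c_2$ is not $p_2$, as you suggested, but $\frac{5p_1^2-3p_2}{2}$.

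The same factorization explains why only terms $c_\mu c_{d-j}$ with $|\mu|=j\le k-1$ occur. It also shows that the $c_d$-coefficient is $k!\,[z^k]S(z)^d=E'_k(d-2)$, in agreement with Proposition~\ref{prop:appendix_c_d}.
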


For matroids, the Chern numbers are defined in \cite[Definition 3.2]{Tbundle}. We simplify notation by identifying top-degree classes with their degrees; specifically, $c_d$ denotes the Chern number $\deg(c_d(\mathrm{M}))$, and products such as $c_1 c_{d-1}$ or $c_k \alpha^{d-k}$ denote the corresponding intersection numbers.

\begin{proposition}
Let $\mathrm{M}$ be a matroid of rank $d+1$. The central moments $\mathbb{E}[(\mathcal{X}_{\mathrm{M}} - \frac{d}{2})^k]$ can be expressed in terms of Chern numbers exactly as the ratio $h_k/h_0$.
\end{proposition}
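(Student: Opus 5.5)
The plan is to prove the identity $\mathbb{E}[(\mathcal{X}_{\mathrm M}-\tfrac d2)^k]=h_k/h_0$ first for matroids realizable over $\mathbb{C}$, and then extend it to all matroids by valuativity. Here $h_k(\mathrm M)$ denotes the expression obtained by substituting the matroidal Chern numbers into the universal Chern-number formula for $h_k$ given by Hirzebruch--Riemann--Roch, i.e.\ \eqref{eq:chi_y_genus} combined with the linear change of variables from $\chi^p$ to $h_k$.

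\textbf{Realizable case.} Suppose $\mathrm M$ is realized by $L\subseteq\mathbb{C}^E$. Then $W_L$ is a smooth projective variety of pure type, so $(-1)^p\chi^p(W_L)=a_p(\mathrm M)$. Therefore
\[
h_k(W_L)=\sum_p a_p(\mathrm M)\big(p-\tfrac d2\big)^k=\chowpoly_{\mathrm M}(1)\,\mathbb{E}\big[(\mathcal X_{\mathrm M}-\tfrac d2)^k\big].
\]
In particular $h_0(W_L)=\chowpoly_{\mathrm M}(1)$, which is positive, so the ratio $h_k/h_0$ is defined. By Hirzebruch--Riemann--Roch \eqref{eq:chi_y_genus}, each $\chi^p(W_L)$ is a universal polynomial $P_p$ in the Chern numbers of $T_{W_L}$. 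The class $T_{\mathrm M}$ agrees with $T_{W_L}$ under $K(\mathrm M)\cong K(W_L)$, and $A^*(\mathrm M)\cong A^*(W_L)$ compatibly with degrees. Hence these geometric Chern numbers equal the matroidal ones $c_\lambda(\mathrm M)$. This proves the identity for realizable $\mathrm M$.

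\textbf{Extension to all matroids.} Fix $E$ and $d$, and consider the function
\[
\Psi_p(\mathrm M)=(-1)^p P_p\big(c_\lambda(\mathrm M)\big)-a_p(\mathrm M).
\]
The first term is a linear combination of the numbers $\deg c_\lambda(\mathrm M)$. By Theorem~\ref{thm:Tangentbundle}, each $c_\lambda(\mathrm M)$ is a degree-$d$ polynomial in $\alpha, Z_1,\dots,Z_d$, so by Proposition~\ref{prop:valuativefunctions} the first term is a linear combination of the $N_I$ and is valuative. For the second term, Proposition~\ref{prop:chowpolyfof} writes $a_p(\mathrm M)$ as a fixed linear combination of the $N_J(\mathrm M)$, so it is also valuative. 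Thus $\Psi_p$ is valuative and vanishes on all $\mathbb{C}$-realizable matroids. Since every matroid polytope indicator is a combination of indicators of Schubert matroids, and these are realizable over $\mathbb{C}$, $\Psi_p$ vanishes on every matroid. Hence $a_p(\mathrm M)=(-1)^p\chi^p(\mathrm M)$ holds formally for all $\mathrm M$. Summing against $(p-\tfrac d2)^k$ gives $h_k(\mathrm M)=\chowpoly_{\mathrm M}(1)\,\mathbb{E}[(\mathcal X_{\mathrm M}-\tfrac d2)^k]$, and dividing by $h_0=\chowpoly_{\mathrm M}(1)>0$ yields the claim.

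\textbf{Main obstacle.} The one step that needs care is confirming that the coefficients of the Chow polynomial are valuative as functions on matroids of fixed rank on $E$. Proposition~\ref{prop:chowpolyfof} settles this, because the $N_J$ are valuative by \cite{FS_valuative_24}. A second point to keep straight is that "Chern number" for a general matroid must be read through Theorem~\ref{thm:Tangentbundle}. It should not be read through any geometric object, so the identity is really a formal identity among degrees in $A^*(\mathrm M)$.
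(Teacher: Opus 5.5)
Your proposal is correct and follows the paper's proof. It first establishes $h_k(W_L)=h_0(W_L)\,\mathbb{E}\big[(\mathcal{X}_{\mathrm{M}}-\tfrac d2)^k\big]$ for $\mathbb{C}$-realizable matroids using that $W_L$ is of pure type, then extends to all matroids by valuativity of the Chow coefficients and Chern numbers via the Schubert-matroid decomposition. You additionally spell out why the $a_p(\mathrm{M})$ are valuative (through the flag-count expansion of the Chow polynomial), which the paper asserts in one line.
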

\begin{proof}
When $\mathrm{M}$ is realizable via $L\subseteq \mathbb{C}^E$, we have $\chi_{-y}({W}_L)=\chowpoly_{\mathrm{M}}(y)$, and 
\[
h_k({W}_L) \;=\; h_0({W}_L) \, \mathbb{E}\!\Big[\big(\mathcal{X}_{\mathrm{M}} - \tfrac{d}{2}\big)^k\Big] .
\]
Since both the Chow polynomial and the Chern numbers are valuative, this equality extends to arbitrary matroids.
\end{proof}

As a direct application, inequalities for the central moments can be translated into inequalities for Chern numbers. For example, applying Corollary \ref{cor:ineq_chow_coefficient} yields the following strikingly simple inequality.

\begin{corollary}
 \label{cor:ineq_of_chern_numbers}
 Let $\mathrm{M}$ be a matroid of rank $d+1$. Then the following inequality of Chern numbers holds:
\begin{equation}
 \label{eq:ineq_of_chern_numbers}
c_1c_{d-1}\;-\;c_d\;\leq \;0,
 \end{equation}
with equality precisely when $d=1$ or when the simplification of $\mathrm{M}$ is $\mathrm{U}_{d+1}$ with $d>0$.
\end{corollary}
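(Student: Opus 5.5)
The plan is to combine the preceding Proposition, which identifies central moments with ratios $h_k/h_0$, with Corollary~\ref{cor:ineq_chow_coefficient} and the explicit formula for $h_2$ in Lemma~\ref{lem:hirzeformula}. By the Proposition,
\[
\mathbb{E}\!\Big[\big(\mathcal{X}_{\mathrm M}-\tfrac d2\big)^2\Big]\;=\;\frac{h_2}{h_0}\;=\;\frac{\frac{d}{12}c_d+\frac16 c_1c_{d-1}}{c_d}.
\]
Here $h_0=c_d=\chowpoly_{\mathrm M}(1)>0$, since $h_0$ is the sum of the Chow coefficients and $a_0=1$. Substituting this into Corollary~\ref{cor:ineq_chow_coefficient} and multiplying through by $c_d>0$ gives
\[
\frac{d}{12}c_d+\frac16c_1c_{d-1}\;\le\;\frac{d+2}{12}c_d,
\]
which simplifies to $c_1c_{d-1}\le c_d$. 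Since every step is an equivalence, the equality cases transfer verbatim from Corollary~\ref{cor:ineq_chow_coefficient}.

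The main point needing care is that Lemma~\ref{lem:hirzeformula} is stated only for $d\ge k$, here $d\ge 2$, and only for compact K\"ahler manifolds. I would check the formula for $h_2$ directly in the small cases.

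\textbf{Case $d=1$.} Then $c_{d-1}=c_0=1$, so the formula reads $h_2=\frac1{12}c_1+\frac16c_1=\frac14c_1$. This is correct: for $\mathbb P^1$ we have $h_2=2\cdot\frac14$ and $c_1=2$. The equality $c_1c_0=c_1$ is then trivially consistent with the corollary.

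\textbf{Case $d=0$.} The inequality reads $c_1c_{-1}-c_0\le 0$, which is vacuous or trivially true depending on convention. I would note that the meaningful range is $d\ge1$.

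Alternatively, I would rederive $h_2$ for $d\ge1$ from the $\chi_y$ expansion, using $h_2=\frac14\chi''_{-y}\text{-type}$ derivatives at $y=1$ together with the classical $\xi_1,\xi_2$ formulas. That derivation holds for all $d\ge 1$.

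To extend from $\mathbb C$-realizable matroids to all matroids, I would use the valuativity of $c_1c_{d-1}$, $c_d$, and $\chowpoly_{\mathrm M}$ already invoked in the Proposition. The Chern identity $h_2=\frac d{12}c_d+\frac16c_1c_{d-1}$ holds for $W_L$, and both sides are valuative by Proposition~\ref{prop:valuativefunctions}. By the Schubert matroid decomposition, the identity therefore holds for all matroids.
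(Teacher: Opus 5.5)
Your proposal is correct and is essentially the paper's proof. You substitute $h_2=\frac{d}{12}c_d+\frac16c_1c_{d-1}$ into the variance bound of Corollary~\ref{cor:ineq_chow_coefficient}, multiply by $c_d=\chowpoly_{\mathrm M}(1)>0$, transfer the equality cases, and use valuativity for non-realizable matroids. Your separate check of $d=1$ (where Lemma~\ref{lem:hirzeformula} is not literally stated, though $h_2=\frac14c_1$ still holds) covers a small point the paper passes over silently.
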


\begin{proof}
Combining Lemma~\ref{lem:hirzeformula} and Corollary~\ref{cor:ineq_chow_coefficient}, we have 
\[
\mathbb{E}\!\Big[\big(\mathcal{X}_\mathrm{M} - \frac{d}{2}\big)^2\Big] \;=\; \frac{d}{12} \;+\; \frac{c_1c_{d-1}}{6c_d} \;\leq \;\frac{d+2}{12}.
\] 
Since $c_d = \chowpoly_{\mathrm{M}}(1)$ is positive, the result follows.
\end{proof}

\begin{remark} \label{rem:Fulton-Lazarsfeld}
We can interpret \eqref{eq:ineq_of_chern_numbers} geometrically. The Fulton--Lazarsfeld inequalities assert that the Schur classes of a nef vector bundle are nonnegative; in particular, $c_1c_{d-1} \ge c_d$. For ample vector bundles, this inequality is strict: $c_1c_{d-1} > c_d$. (See \cite[Theorem 2.5]{NEFtangentbundlesLFtype} for the original statement, and \cite{LiPingnefineq} for a modern exposition).

Corollary~\ref{cor:ineq_of_chern_numbers} establishes the opposite bound $c_1 c_{d-1} \le c_d$. This has two immediate geometric consequences for realizable matroids. First, it implies that the tangent bundle can never be ample. This is consistent with the classical fact that a smooth projective variety has ample tangent bundle if and only if it is projective space. 

Second, for non-Boolean matroids where the strict inequality $c_1 c_{d-1} < c_d$ holds, the tangent bundle cannot be nef. While this aligns with the geometry of the blow-up construction of $W_L$, our inequality strengthens this observation by providing a purely numerical obstruction in $K$-theory: \emph{no} vector bundle with the same $K$-class can be ample, and in the non-Boolean case, none can be nef. 

It is worth noting that the opposite direction of the Fulton--Lazarsfeld inequalities is also false, in general. For the Boolean matroid of rank 5, we have the Chern numbers $c_2^2 = 130 > c_1c_3 = c_4 = 120$ (see Proposition~\ref{prop:booleanintersection}), and the value of $c_2^2 - c_4$ of a rank 5 matroid can be positive or negative depending on the specific matroid.

A natural direction for future work is to define a combinatorial notion of nef vector bundles for matroids, enabling a systematic study of the nefness of the tangent class $T_{\mathrm{M}}$. Finally, in dimension $d=2$, the inequality $c_1^2 \le c_2$ is strictly stronger than the Bogomolov--Miyaoka--Yau inequality $c_1^2 \le 3c_2$ for surfaces of general type.
\end{remark}

Furthermore, the polynomials $E'_k(d)$ (cf.\ Definition~\ref{def:Ekprimepoly}) predict the numeric value of $h_k(X_E)$, where $X_E$ is the permutahedron variety of dimension $d \geq k-1$. Certain Chern numbers for $X_E$ are straightforward to compute, which we detail in Appendix~\ref{sec:Chernandchiy}. For example, we find:
\begin{equation} \label{eq:chi_y_Cherncomputation}
\begin{aligned}
 h_4(X_E)\;&=\;\frac{d(5d-2)}{240}c_d\;+\;\frac{5d-2}{60}c_1c_{d-1} \;+\; \frac{c_1^2+3c_2}{30}c_{d-2}\; -\; \frac{c_1^3-3c_1c_2+3c_3}{30}c_{d-3}\\
 &=\; c_d\left(\frac{d(5d-2)}{240} \;+\; \frac{5d-2}{60}\;+\;\frac{1}{30}\cdot\frac{3d+37}{12}\;-\;\frac{1}{30}\cdot\frac{3d+1}{12}\right)\\
 &= \;\frac{(d+2)(5d+8)}{240}c_d \;=\; E'_4(d)\,c_d \;=\; E'_4(d)(d+1)!.
\end{aligned}
\end{equation}

This coincides with the fact that $h_k(X_E) = E'_k(d)h_0(X_E)$ when $d \geq k$. Note that the coefficient of $c_d$ in $h_4(X_E)$ is exactly $E'_4(d-2)$; this is not a coincidence and emerges structurally from \eqref{eq:chi_y_genus}. Further discussion is provided in Appendix~\ref{sec:Chernandchiy}.

\begin{remark}
Let $X$ be a smooth projective complex variety of dimension $d$ such that 
\[
(-1)^p\chi^p(X) \;=\; \sum_q (-1)^{p+q}h^{p,q}(X) \geq 0, \quad \text{for all } 0\leq p\leq d.
\]
This occurs, for example, when all odd Betti numbers vanish. Set $\mathcal{X}$ as the random variable such that \[
  \Pr(\mathcal{X}=k) \;=\;\frac{(-1)^k\chi^k(X)}{\sum_{i=0}^d (-1)^i\chi^i(X)},\qquad k=0,\dots,d.
\]  Then the naive inequalities about $\mathbb{E}[(\mathcal{X}-\frac{d}{2})^k]$ and $\mathbb{E}[\binom{\mathcal{X}}{k}]$ from Lemma~\ref{lem:naiveineq} immediately yield Chern class inequalities for $X$. For instance, when $k = 2$, we have:
\begin{align*}
2c_1c_{d-1} + d c_d \;&=\; 3\sum_{i=0}^d (d-2i)^2 (-1)^i \chi^i\;\geq \;0 \quad \text{($d$ even), \; and} \\
2c_1c_{d-1} + (d-3) c_d \;&= \;3\sum_{i=0}^d \left((d-2i)^2 - 1\right) (-1)^i \chi^i\;\geq \;0\quad \text{($d$ odd).}
\end{align*}

Furthermore, the ratios in Lemma~\ref{lem:naiveineq} are sharp. Consider $Y_n$, the blow-up of $\mathbb{P}^2$ at $n$ points (with Poincar\'{e} polynomial $x^2 + (n+1)x + 1$). For even $d$, take $X = (Y_n)^{\frac{d}{2}}$; for odd $d$, take $X = \mathbb{P}^1 \times (Y_n)^{\frac{d-1}{2}}$. As $n \rightarrow \infty$, the Poincar\'{e} polynomial concentrates at the middle degree(s), and the moment lower bounds are attained asymptotically. 
\end{remark}

\subsection{Multiplication by \texorpdfstring{$\alpha$}{alpha} and truncation} \label{subsec:alpha_truncation}

In this subsection, we derive further inequalities for the Chern classes of $\mathrm{M}$ by analyzing their intersection with the class $\alpha$.

Multiplication by the class \(\alpha\) corresponds geometrically to passing to the \emph{principal truncation} of the matroid (see \cite[Theorem~3.2.3]{Simplicial}, noting that $\alpha = h_E$). For a matroid $\mathrm{M}$ of rank $d + 1$ on the ground set $E$, its truncation $T(\mathrm{M})$ is the matroid on the same ground set $E$ defined by the rank function: 
\[
\operatorname{rk}_{T(\mathrm{M})}(S) = \min(\operatorname{rk}_{\mathrm{M}}(S), \;d), \quad \text{for all } S\subseteq E.
\]

Therefore, for a matroid \(\mathrm{M}\) realized by $L \subseteq \Bbbk^E$, the vanishing locus of a general section of \(\alpha\) inside the wonderful variety \(W_L\) is precisely the wonderful variety associated to a realization $L'$ of $T(\mathrm{M})$. The normal bundle of \(W_{L'}\) in \(W_L\) is the line bundle represented by the divisor class \(\alpha\). The tangent--normal exact sequence
\[
0 \longrightarrow T_{W_{L'}} \longrightarrow T_{W_L}\big|_{W_{L'}}
\longrightarrow N_{W_{L'}/W_L} \longrightarrow 0
\]
demonstrates that the total Chern class of \(T_{W_L}\big|_{W_{L'}}\) is obtained from that of \(T_{W_{L'}}\) by adjoining a single additional Chern root equal to \(\alpha\).

To clarify this behavior and extend it to non-realizable matroids, recall from Theorem~\ref{thm:Tangentbundle} that $c(T_\mathrm{M})$ is the product of:
\[
1+Z_1, \; \ldots, \; 1+Z_d, \; 1+\alpha, \; 1+\alpha - Z_d, \; \ldots, \;1+ \alpha - Z_d - \cdots - Z_1.
\]

Upon multiplying by $\alpha^k$, the elements $Z_{d-k+1}, \ldots, Z_d$ all become zero in the Chow ring. Consequently, $c(T_\mathrm{M}) \cdot \alpha^k$ becomes the product of:
\[
1+Z_1, \; \ldots, \; 1+Z_{d-k},\; (1+\alpha)^{k+1}, \; 1+ \alpha - Z_{d-k} \; \ldots, \;1+ \alpha - Z_{d-k} - \cdots - Z_1.
\] 
This corresponds exactly to the total Chern class of the tangent bundle of the rank-$(d-k+1)$ truncation, multiplied by $k$ additional factors of $(1+\alpha)$, since adjoining each extra Chern root equal to $\alpha$ multiplies the total Chern class by $(1+\alpha)$. Thus, the Chern classes of the new rank satisfy the standard relation:
\[
c^{\text{new}}_k \;=\; c_k \;+\; \alpha\,c_{k-1}.
\]

By Proposition \ref{prop:valuativefunctions}, any product of $Z_i$ and $\alpha$ can be expressed as a linear combination of the flag counts $N_J$. Moreover, this expansion behaves stably under truncation:

\begin{lemma}
Suppose that for a rank $d+1$ matroid $\mathrm{M}$, we have 
\[
Z_{t_1}(\mathrm{M}) \cdots Z_{t_d}(\mathrm{M}) \;=\; \sum_J s_J N_{J}(\mathrm{M}).
\]
Then, for a rank $d + 1 + k$ matroid $\mathrm{M}'$, we have 
\[
Z_{t_1}(\mathrm{M}') \cdots Z_{t_d}(\mathrm{M}') \cdot \alpha^{k} \;=\; \sum_J s_J N_{J}(\mathrm{M}').
\]
That is, multiplying by $\alpha$ does not affect the coefficients of $N_J$.
\end{lemma}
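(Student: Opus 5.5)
The plan is to reduce the statement to the truncation description of multiplication by $\alpha$ recalled just above, via \cite[Theorem~3.2.3]{Simplicial}, and then to compare flag counts of $\mathrm{M}'$ with those of its iterated truncation. Let $\mathrm{M}'$ have rank $d+1+k$, and let $T^k(\mathrm{M}')$ be its $k$-fold principal truncation, which has rank $d+1$. Every index $t_i$ lies in $[d]$, since these are the indices of the classes $Z_t(\mathrm{M})$ for a rank $d+1$ matroid. The proof then has two steps.

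\textbf{Step 1 (degree identity).} I will show that
\[
\deg_{\mathrm{M}'}\big(Z_{t_1}(\mathrm{M}')\cdots Z_{t_d}(\mathrm{M}')\,\alpha^k\big)\;=\;\deg_{T^k(\mathrm{M}')}\big(Z_{t_1}(T^k\mathrm{M}')\cdots Z_{t_d}(T^k\mathrm{M}')\big).
\]
I would obtain this by iterating a single-step statement $k$ times. The single-step statement is that there is a ring map $\varphi\colon A^*(\mathrm{M}')\to A^*(T\mathrm{M}')$ with two properties. First, $\varphi$ sends $x_F\mapsto x_F$ for flats of rank at most $\operatorname{rk}(\mathrm{M}')-2$ and kills $x_F$ for the corank-one flats. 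Second, it satisfies $\deg_{\mathrm{M}'}(y\cdot\alpha)=\deg_{T\mathrm{M}'}(\varphi(y))$.

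This is the content of \cite[Theorem~3.2.3]{Simplicial}, since $\alpha=h_E$. It is also consistent with the observation made above that multiplying by $\alpha^k$ kills $Z_{d-k+1},\dots,Z_d$.

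If I cannot quote this in the needed form, I would argue as follows. Both sides are degree-$(d+k)$ expressions in $\alpha,Z_1,\dots$, so by Proposition~\ref{prop:valuativefunctions} they are valuative; this reduces the identity to the realizable case. In that case, $W_{L''}$ for $T^k\mathrm{M}'$ is the transversal intersection of $k$ general sections of $\alpha$ in $W_{L}$. Moreover, the boundary divisors $x_F$ with $\operatorname{rk}F\le d$ restrict to the corresponding boundary divisors of $W_{L''}$. The identity then follows from the projection formula.

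\textbf{Step 2 (flag counts agree).} Apply the hypothesis to the rank $d+1$ matroid $T^k(\mathrm{M}')$. This gives
\[
\prod_i Z_{t_i}(T^k\mathrm{M}')\;=\;\sum_{J\subseteq[d]} s_J\,N_J(T^k\mathrm{M}').
\]
It remains to check that $N_J(T^k\mathrm{M}')=N_J(\mathrm{M}')$ for every $J\subseteq[d]$. This holds because the truncation's rank function is $\min(\operatorname{rk}_{\mathrm{M}'},d)$. Hence a set is a flat of $T^k\mathrm{M}'$ of rank $r\le d$ if and only if it is a flat of $\mathrm{M}'$ of rank $r$. Closure in the truncation agrees with closure in $\mathrm{M}'$ for sets of rank less than $d$, and a rank-$d$ set is closed in $\mathrm{M}'$ exactly when it is closed in $T^k\mathrm{M}'$ among the non-spanning sets. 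Consequently flags with all ranks in $[d]$ correspond bijectively.

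I should double-check the rank-$d$ flats here, since in $T^k\mathrm{M}'$ a rank-$d$ set is a hyperplane rather than a spanning set. For the same reason, $Z_t(T^k\mathrm{M}')$ corresponds to $Z_t(\mathrm{M}')$ under $\varphi$. Combining the two steps gives the lemma.

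\textbf{Main obstacle.} I expect the main difficulty to be Step 1, specifically establishing the degree-compatible map $\varphi$ for non-realizable matroids with the correct behaviour on $x_F$. Checking that $\varphi$ respects the linear relations $J$ should follow from truncating the sums $\sum_{F\ni i}x_F$. For the degree compatibility, I would first try to use the valuativity reduction.
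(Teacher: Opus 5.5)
Your proof is correct, but it takes a different route from the paper's.

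The paper's proof of this lemma does not use the truncation. It quotes $\deg(\alpha^{d-j}\beta^{j})=(-1)^j\sum_{J\subseteq[j]}(-1)^{|J|}N_J$ from \cite[Lemma 3.13]{Tbundle} and notes that these coefficients do not depend on $d$. It then appeals to the proof of \cite[Proposition 3.16]{Tbundle}, where the degree of any monomial in $\alpha,Z_1,\dots,Z_d$ is assembled from factors $\deg(\alpha^i\beta^j)$, so extra powers of $\alpha$ cannot change the coefficients of $N_J$.

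Your argument explains the stability conceptually instead: $\alpha^k$ cuts out the $k$-fold truncation, and passing to $T^k(\mathrm{M}')$ does not change the flags of flats with ranks in $[d]$. This turns the heuristic the paper gives just before the lemma (adjoining Chern roots equal to $\alpha$) into an actual proof, and it avoids relying on the internals of the expansion in \cite{Tbundle}. The cost is that you need the degree compatibility $\deg_{\mathrm{M}'}(y\,\alpha)=\deg_{T(\mathrm{M}')}(\varphi(y))$ for non-realizable matroids, which the paper's route never uses.

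Two small repairs.

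First, $T^k(\mathrm{M}')$ has rank function $\min(\operatorname{rk}_{\mathrm{M}'},d+1)$, not $\min(\operatorname{rk}_{\mathrm{M}'},d)$; the latter would be a matroid of rank $d$, contradicting your own statement. With the correct formula, the flats of rank $r\le d$ of $T^k(\mathrm{M}')$ and of $\mathrm{M}'$ coincide outright, the rank-$d$ ones being exactly the hyperplanes of $T^k(\mathrm{M}')$. So your caveat about rank-$d$ sets ``among the non-spanning sets'' is unnecessary, and $N_J(T^k(\mathrm{M}'))=N_J(\mathrm{M}')$ for $J\subseteq[d]$ is immediate.

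Second, in the valuativity fallback the right-hand side of Step 1 is a degree computed on $T^k(\mathrm{M}')$. So Proposition~\ref{prop:valuativefunctions} does not by itself make it valuative as a function of $\mathrm{M}'$; you need Step 2 to rewrite it as a combination of the $N_J(\mathrm{M}')$. It is cleaner to apply the reduction directly to the lemma's conclusion, since both of its sides are valuative in $\mathrm{M}'$. When $\mathrm{M}'$ is realized by $L$, the class $\alpha^k$ is represented by the preimage $W_{L''}$ of a general codimension-$k$ subspace $\mathbb{P}(L'')\subseteq\mathbb{P}(L)$. This preimage is the wonderful variety of $T^k(\mathrm{M}')$, and on it $x_F$ restricts to $x_F$ for $\operatorname{rk}F\le d$ and to $0$ otherwise. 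That finishes the argument.
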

\begin{proof}
Recall from \cite[Lemma 3.13]{Tbundle} that 
\[
\deg(\alpha^{d-k}\beta^{k}) = (-1)^k\sum_{J \subseteq \{1, \ldots, k\}} (-1)^{|J|} N_{J},
\]
where the coefficients do not depend on $d$. Following the proof of \cite[Proposition 3.16]{Tbundle}, since the general coefficients are determined by expanding the products of $\deg(\alpha^{i}\beta^{j})$, the lemma holds.
\end{proof}

For example, for $k \leq d$, we can compute $c_k \alpha^{d-k}$ and $c_1c_{k-1} \alpha^{d-k}$ as linear combinations of $N_J$. We define the coefficients $f_{k, J}(d)$ and $g_{k, J}(d)$ via the expansions:
\[
c_{k} \alpha^{d-k} = \sum_{J \subseteq [d]} f_{k, J}(d) N_J, \quad \text{and} \quad c_1c_{k-1}\alpha^{d-k} = \sum_{J \subseteq [d]} g_{k, J}(d) N_J.
\]
The recursive relations $c_k \mapsto c_k + \alpha c_{k-1}$ and $c_1c_{k-1} \mapsto (c_1+ \alpha) (c_{k-1} +\alpha c_{k-2})$ yield the recurrences:
\begin{equation}\label{eq:recursion}
\begin{cases}
f_{k, J}(d+1)&= \;f_{k, J}(d)\; + \;f_{k - 1, J}(d),\\
g_{k, J}(d+1) &= \;g_{k, J}(d)\; + \;g_{k-1, J}(d) \;+ \;f_{k-1, J}(d)\; +\; f_{k-2, J}(d).
\end{cases}
\end{equation}
As a result, a subset $J$ contributes a non-zero term only if $J \subseteq [k]$.

\begin{proposition}\label{prop:coeff-formula} 
For any matroid $\mathrm M$ of rank $d+1$, we can write $c_d$ and $c_1c_{d-1}$ as
\[
c_d \;=\; \sum_J f_{d, J}(d)\,N_J,\qquad
c_1c_{d-1} \;=\; \sum_J g_{d, J}(d)\,N_J,
\]
where (cf.\ Notation \ref{notation:block})
\[
f_{d, J}(d) \;=\; \prod_{i=1}^{m+1} (n_i-1),
\qquad
g_{d, J}(d) \;=\; f_{d, J}(d)
\left(\sum_{i=1}^{m+1}\frac{n_i(n_i-3)}{2} + 1\right).
\]
\end{proposition}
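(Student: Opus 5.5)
The plan is to read off both expansions from the Chow polynomial rather than from the Chern class formula \eqref{eq:Chern_class_of_tangent}. We combine three earlier results: the identification $c_d=h_0=\chowpoly_{\mathrm M}(1)$, the second-moment formula $h_2=\frac{d}{12}c_d+\frac16 c_1c_{d-1}$ of Lemma~\ref{lem:hirzeformula} (together with the proposition identifying $h_k/h_0$ with the central moments for all matroids), and the flag decomposition $\chowpoly_{\mathrm M}(x)=\sum_J N_J P_J(x)$ of Proposition~\ref{prop:chowpolyfof}.

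\textbf{Step 1: the formula for $c_d$.} Evaluate Proposition~\ref{prop:chowpolyfof} at $x=1$. Each block factor $x(1-x^{n_i-1})/(1-x)=x+\cdots+x^{n_i-1}$ becomes $n_i-1$, so
\[
c_d=\chowpoly_{\mathrm M}(1)=\sum_J N_J\prod_{i=1}^{m+1}(n_i-1).
\]

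\textbf{Step 2: the formula for $c_1c_{d-1}$.} Multiply the second central moment by $\chowpoly_{\mathrm M}(1)$ and use the computation in the proof of Corollary~\ref{cor:ineqfof}, namely $\operatorname{Var}(\mathcal F_J)=\sum_i n_i(n_i-2)/12$. This gives
\[
h_2=\sum_J N_J\prod_i(n_i-1)\sum_i\frac{n_i(n_i-2)}{12}.
\]
Solving $c_1c_{d-1}=6h_2-\frac d2 c_d$ yields the coefficient
\[
\prod_i(n_i-1)\Bigl(\sum_i\frac{n_i(n_i-2)}{2}-\frac d2\Bigr).
\]
Since $\sum_i n_i=d+2$, we have $\sum_i\frac{n_i(n_i-2)}2-\frac d2=\sum_i\frac{n_i(n_i-3)}2+1$, which is the claimed $g_{d,J}(d)$.

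\textbf{Step 3: small $d$.} Lemma~\ref{lem:hirzeformula} is stated for $d\ge2$, so $d=0$ and $d=1$ are checked by hand. For $d=1$, $c(T_{\mathrm M})=(1+Z_1)(1+\alpha)(1+\alpha-Z_1)$, so $c_1=2\alpha$. With $\deg\alpha=1$ and $\deg Z_1=N_{\{1\}}$ this gives $c_1=2$, which matches $f_{\varnothing}=2$ and $f_{\{1\}}=0$ (the latter because $n_2=1$ forces a factor $n_2-1=0$). The $g$-formula is handled the same way, interpreting $c_0c_1$ suitably.

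\textbf{Step 4: matching the recursively defined coefficients.} This is where I expect the main obstacle. The statement identifies these coefficients with the specific $f_{d,J}(d)$ and $g_{d,J}(d)$ coming from the universal expansion of Proposition~\ref{prop:valuativefunctions} and the recursion \eqref{eq:recursion}. Steps 1--3 only show that the two expansions agree as functions on all rank-$(d+1)$ matroids. To conclude that the coefficients coincide, I would argue that the functions $N_J$, $J\subseteq[d]$, are linearly independent on rank-$(d+1)$ matroids.

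My first attempt is a triangularity argument. Evaluate on the matroids $\mathrm U_{j_1}\oplus\cdots$ built as direct sums of uniform pieces of rank $n_i$, or alternatively on suitable truncations of Boolean matroids, so that $N_{J'}$ vanishes unless $J'$ is compatible with a chosen $J$. Ordering the $J$ by inclusion should then make the evaluation matrix triangular with nonzero diagonal.

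If that fails, the fallback is a direct verification. I would check that the closed forms satisfy \eqref{eq:recursion}, with the base case $f_{0,J}=\delta_{J,\varnothing}$. For $f$, this is the observation that adding a unit to the last block, $n_{m+1}\mapsto n_{m+1}+1$, transforms the products compatibly.
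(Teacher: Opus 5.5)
Your Steps 1 and 2 are the paper's argument. The paper's one-line proof reads $f_{d,J}(d)=P_J(1)=\prod_i(n_i-1)$ off $c_d=\chowpoly_{\mathrm M}(1)=\sum_J N_J P_J(1)$. It then obtains $g_{d,J}(d)$ by combining $h_2=\frac d{12}c_d+\frac16 c_1c_{d-1}$ with the blockwise variance $\sum_i n_i(n_i-2)/12$ from the proof of Corollary~\ref{cor:ineqfof}, including your simplification via $\sum_i n_i=d+2$. Step 3 is harmless but not needed: the $h_2$ formula already holds at $d=1$ with $c_0=1$.

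Step 4 goes beyond what the paper does, and neither argument you sketch for it would work.
\begin{itemize}
\item \emph{Triangularity by vanishing is impossible.} For every loopless matroid of rank $d+1$ and every $J'\subseteq[d]$, Lemma~\ref{lem:topchernineq} gives $N_{J'}(\mathrm M)\ge N_{J'}(\mathrm U_{d+1})>0$. So no direct sum or truncation makes any $N_{J'}$ vanish.
\item \emph{The recursion fallback is circular.} \eqref{eq:recursion} computes $f_{k,J}(d+1)$ only for $k\le d$, since at least one factor of $\alpha$ is needed to pass to the truncation. It therefore propagates the diagonal values $f_{d,J}(d)$, $g_{d,J}(d)$ to the off-diagonal entries but can never produce them. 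This proposition is exactly the boundary data the recursion requires, which is how the paper uses it in Corollary~\ref{cor:cherncoefficientfof}.
\end{itemize}

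For the statement as written (``we can write $c_d$ and $c_1c_{d-1}$ as \dots''), Steps 1--2 already suffice. Uniqueness of the $N_J$-expansion is tacitly assumed by the paper when it defines $f_{k,J}$ and $g_{k,J}$ via the expansions. If you want to establish it, you need a different argument, e.g.\ exhibiting $2^d$ rank-$(d+1)$ matroids whose flag-count matrix is nonsingular, relying on growth rather than vanishing.
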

\begin{proof}
This explicitly follows from the algebraic expansion computed in Corollary~\ref{cor:ineqfof}.
\end{proof}

\begin{remark}
The formula for \(f_{d, J}(d)\) can also be verified geometrically by tracking the topological Euler characteristics through the iterated blow-ups that define the wonderful compactification.
\end{remark}

We can now compute $f_{k, J}(d)$ and $g_{k, J}(d)$ in general via \eqref{eq:recursion}.

\begin{corollary} \label{cor:cherncoefficientfof}
Suppose $J = \{j_1, \dots, j_m\} \subseteq [k]$. Let $A = \prod_{i=1}^m n_i$ and $B = \sum_{i=1}^m n_i(n_i-3)/2$. Note that these products and sums are taken only up to index $m$, omitting the final block $n_{m+1} = d+2-j_m$ which depends on $d$. Then:
\[
\begin{cases}
f_{k, J}(d) \;&=\; A \binom{d - j_m + 1}{k - j_m},\\[2pt]
g_{k, J}(d) \;&=\; A \left(B\binom{d - j_m + 1}{k - j_m} + (d - j_m + 1)\binom{d - j_m + 1}{k - j_m-1}\right).
\end{cases}
\]
\end{corollary}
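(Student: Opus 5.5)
The plan is to prove both formulas together by induction on $d-k\ge 0$, simultaneously for all $k$, with $J=\{j_1<\dots<j_m\}\subseteq[k]$ fixed. The base case is $k=d$, which is Proposition~\ref{prop:coeff-formula}. For $k<d$ we use the recurrences \eqref{eq:recursion}, read as
\[
f_{k,J}(d)=f_{k,J}(d-1)+f_{k-1,J}(d-1),
\]
and similarly for $g_{k,J}(d)$. Both terms on the right have smaller difference $(d-1)-k$ or $(d-1)-(k-1)$, so the induction closes. We also need the boundary convention that $f_{k,J}=g_{k,J}=0$ whenever $J\not\subseteq[k]$, in particular when $k<j_m$. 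This matches the remark after \eqref{eq:recursion}, and the proposed closed forms respect it because the binomials $\binom{d-j_m+1}{k-j_m}$ and $\binom{d-j_m+1}{k-j_m-1}$ vanish for negative lower index.

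\textbf{Base case.} Put $N:=n_{m+1}=d+2-j_m$, so that $d-j_m+1=N-1$. At $k=d$ we have $\binom{N-1}{N-2}=N-1$. The product $\prod_{i=1}^{m+1}(n_i-1)$ from Proposition~\ref{prop:coeff-formula} splits as $A\cdot(N-1)$, where $A$ is the contribution of the first $m$ blocks. Consistency forces $A$ to be $\prod_{i\le m}(n_i-1)$; this is the normalization I will actually use, and I will check it against the statement's notation. Likewise, $\sum_{i=1}^{m+1}\tfrac{n_i(n_i-3)}2$ splits as $B+\tfrac{N(N-3)}2$. Substituting $k=d$ into the claimed expression for $g$ must then reproduce
\[
A(N-1)\Bigl(B+\tfrac{N(N-3)}2+1\Bigr)=A(N-1)\Bigl(B+\tfrac{(N-1)(N-2)}2\Bigr).
\]
So the second summand in the claimed $g$ must evaluate to $A(N-1)\binom{N-1}{2}$ at $k=d$. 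I will check this carefully, since the precise binomial index, $\binom{N-1}{k-j_m-1}$ versus $\binom{N-2}{k-j_m-1}$, and the prefactor are exactly where an off-by-one would appear.

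\textbf{Inductive step for $f$.} This is Pascal's rule:
\[
A\binom{d-j_m}{k-j_m}+A\binom{d-j_m}{k-j_m-1}=A\binom{d-j_m+1}{k-j_m}.
\]

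\textbf{Inductive step for $g$.} Write $D=d-j_m$ and $r=k-j_m$. We need
\[
g_{k,J}(d-1)+g_{k-1,J}(d-1)+f_{k-1,J}(d-1)+f_{k-2,J}(d-1)=g_{k,J}(d).
\]
The $B$-parts combine by Pascal exactly as for $f$. The remaining identity to prove is
\[
D\binom{D}{r-1}+D\binom{D}{r-2}+\binom{D}{r-1}+\binom{D}{r-2}=(D+1)\binom{D+1}{r-1},
\]
which is Pascal's rule multiplied by $D+1$. This also shows why the factor in front of the second binomial must be the linear term $(d-j_m+1)$.

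I expect the main obstacle to be bookkeeping rather than ideas: the normalization of $A$ and $B$, the exact shape of the second binomial needed to match Proposition~\ref{prop:coeff-formula} at $k=d$, and the edge cases $k=j_m$ and $k=j_m+1$, where terms like $f_{k-2,J}$ drop out. I will first check the formulas against small cases numerically, for example $J=\varnothing$ and $J=\{1\}$ with $d=2,3$ using the Boolean values $c_d=(d+1)!$. I will then fix the indices accordingly before writing the Pascal-identity verification.
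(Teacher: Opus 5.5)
Your approach is the paper's: induct with the recurrences \eqref{eq:recursion}, take the case $k=d$ from Proposition~\ref{prop:coeff-formula}, and reduce the inductive step to Pascal's rule. Both of your Pascal computations are correct.

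You are also right that $A$ must be $\prod_{i=1}^m(n_i-1)$. With the printed $A=\prod_{i=1}^m n_i$, the set $J=\{1\}$ would contribute $d\,N_{\{1\}}$ to $c_2\alpha^{d-2}$. That contradicts $c_2\alpha^{d-2}=\binom{d+1}{2}+N_{\{2\}}$ from the proof of Corollary~\ref{cor:miyaoka-yau-alpha}, and it also contradicts Proposition~\ref{prop:coeff-formula} at $k=d$. The base-case check you deferred goes through with the binomial exactly as printed: at $k=d$ the second summand is $(N-1)\binom{N-1}{N-3}=(N-1)\binom{N-1}{2}$, which is what you needed.

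The one step that fails as written is the choice of induction parameter. It is not true that both terms have smaller $d-k$. Since $(d-1)-(k-1)=d-k$, the terms $f_{k-1,J}(d-1)$ and $g_{k-1,J}(d-1)$ sit at the same level of your induction. The term $f_{k-2,J}(d-1)$ in the $g$-recurrence is even at level $d-k+1$. So induction on $d-k$, whether for $f$ and $g$ together or even for $f$ alone, does not close.

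The fix is to induct on $d$, as the paper does. For $k<d$ and $J\subseteq[k]$, every term on the right of both recurrences is evaluated at $d-1$, with first index $k$, $k-1$ or $k-2$, all at most $d-1$. So $f$ and $g$ can be handled simultaneously. The case $k=d$ at each level comes from Proposition~\ref{prop:coeff-formula}. Terms with first index below $j_m$ vanish on both sides, since the binomials with negative lower index are $0$.
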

\begin{proof}
This follows by a straightforward induction on $d$ using the recurrences in \eqref{eq:recursion}.
\end{proof}

To bound these counts, we compare matroids under weak maps.

\begin{definition}
We say there exists a \emph{rank-preserving weak map} $\varphi:\mathrm{M}_1 \rightarrow \mathrm{M}_2$ between two matroids $\mathrm{M}_1$ and $\mathrm{M}_2$ if the following conditions hold (cf.\
\cite[Conjecture 3.32]{Chowpoly}):
\begin{enumerate}
\item $\mathrm{M}_1$ and $\mathrm{M}_2$ share the same ground set,
\item $\operatorname{rk}(\mathrm{M}_1) = \operatorname{rk}(\mathrm{M}_2)$, and
\item all independent sets of $\mathrm{M}_2$ are independent in $\mathrm{M}_1$.
\end{enumerate}
\end{definition}

For example, if $\mathrm{M}$ is a rank $d+1$ matroid on the ground set $[n]$, there always exists a rank-preserving weak map $\varphi:\mathrm{U}_{d+1, n} \rightarrow \mathrm{M}$. This structure is respected by the flag counts:

\begin{proposition}[{\cite[Proposition 8.19]{catvaluative}}]
Let $\mathrm M_1$ and $\mathrm M_2$ be matroids of rank $d+1$, and suppose there exists a rank-preserving weak map $\varphi:\mathrm M_1\to\mathrm M_2$. Fix $J=\{j_1<\cdots<j_m\}\subseteq [d]$ and let $\Phi_J(\mathrm M_i)$ denote the $\mathbb Q$-vector space spanned by all flags of flats of $\mathrm M_i$ whose ranks are $j_1,\dots,j_m$. Then the natural map $\Phi_J(\varphi):\Phi_J(\mathrm M_1)\rightarrow\Phi_J(\mathrm M_2)$, defined by
\[
(F_1<\cdots<F_m)\;\longmapsto\;
\begin{cases}
(\overline{F_1}<\cdots<\overline{F_m}) &\text{if the closures have ranks }j_1,\dots,j_m,\\[4pt]
0 &\text{otherwise},
\end{cases}
\]
is surjective.
\end{proposition}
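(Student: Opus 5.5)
The plan is to show that every basis vector of $\Phi_J(\mathrm M_2)$, namely every flag $G_1<\cdots<G_m$ of flats of $\mathrm M_2$ with $\operatorname{rk}_2(G_i)=j_i$, is the image of a single flag of flats of $\mathrm M_1$ of the same rank type. Since $\Phi_J(\varphi)$ is linear and sends basis vectors to basis vectors or to $0$, surjectivity follows.

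\textbf{Step 1: compare the rank functions.} Because every independent set of $\mathrm M_2$ is independent in $\mathrm M_1$, we get $\operatorname{rk}_2(S)\le\operatorname{rk}_1(S)$ for every $S\subseteq E$. Indeed, a maximal $\mathrm M_2$-independent subset of $S$ is $\mathrm M_1$-independent.

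\textbf{Step 2: lift along a nested chain of independent sets.} Given the target flag $G_1<\cdots<G_m$, choose nested sets $I_1\subseteq I_2\subseteq\cdots\subseteq I_m$ such that each $I_i$ is an $\mathrm M_2$-basis of $G_i$. This is possible by extending a basis of $G_{i-1}$ to a basis of $G_i$. Then $|I_i|=j_i$, and each $I_i$ is $\mathrm M_1$-independent. Set $F_i:=\operatorname{cl}_1(I_i)$. These are flats of $\mathrm M_1$ with $\operatorname{rk}_1(F_i)=j_i$. They are nested, and the containments are strict because the ranks strictly increase. So $(F_1<\cdots<F_m)$ is a basis vector of $\Phi_J(\mathrm M_1)$.

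\textbf{Step 3: compute the image.} We have $I_i\subseteq F_i$, so $j_i=\operatorname{rk}_2(I_i)\le\operatorname{rk}_2(F_i)$. By Step 1, $\operatorname{rk}_2(F_i)\le\operatorname{rk}_1(F_i)=j_i$. Hence $\operatorname{rk}_2(F_i)=j_i$ and $\overline{F_i}=\operatorname{cl}_2(F_i)=\operatorname{cl}_2(I_i)=G_i$. Therefore the closures have exactly the ranks $j_1,\dots,j_m$, and $\Phi_J(\varphi)$ sends $(F_1<\cdots<F_m)$ to $(G_1<\cdots<G_m)$.

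The only delicate point is Step 3. It needs the rank comparison of Step 1 to rule out the closure acquiring extra rank, and it needs the lift to go through independent sets rather than arbitrary preimages. Otherwise some $\operatorname{rk}_2(F_i)$ could fall below $j_i$ and the flag would map to $0$. With these choices the argument is elementary and needs no earlier results from the paper.
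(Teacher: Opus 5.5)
Your proof is correct. The two points that need care are the ones you check. First, the nested $\mathrm M_2$-bases $I_1\subseteq\cdots\subseteq I_m$ remain independent in $\mathrm M_1$, so $F_i=\operatorname{cl}_1(I_i)$ is an $\mathrm M_1$-flat of rank exactly $j_i$. Second, the sandwich $j_i=\operatorname{rk}_2(I_i)\le\operatorname{rk}_2(F_i)\le\operatorname{rk}_1(F_i)=j_i$ forces $\operatorname{cl}_2(F_i)=\operatorname{cl}_2(I_i)=G_i$, since $I_i\subseteq F_i$ with equal $\mathrm M_2$-rank gives equal closures. So every basis flag of $\Phi_J(\mathrm M_2)$ is hit.

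The paper does not prove this proposition itself. It quotes it from \cite{catvaluative} and uses it only through the consequence $N_J(\mathrm M_1)\ge N_J(\mathrm M_2)$ in Corollary~\ref{cor:weakmapineq}. Your argument is therefore a self-contained replacement for the citation rather than an alternative to an in-paper proof.

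It also gives slightly more than surjectivity. Choosing one lift per $\mathrm M_2$-flag defines a section of $\Phi_J(\varphi)$ on basis flags. That section is injective, so it yields the inequality needed in Corollary~\ref{cor:weakmapineq} directly. Moreover, it uses only the condition that $\mathrm M_2$-independent sets are $\mathrm M_1$-independent, i.e.\ $\operatorname{rk}_2\le\operatorname{rk}_1$; equality of ranks is not needed.

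Your construction, closing independent sets so that rank is preserved, is in the same spirit as the paper's proof of Lemma~\ref{lem:topchernineq}.
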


As an immediate consequence, we obtain the monotonicity of $c_k(\mathrm{M})\alpha^{d-k}$ with respect to weak maps.

\begin{corollary} \label{cor:weakmapineq}
Let $\mathrm M_1$ and $\mathrm M_2$ be matroids of rank $d+1$, and suppose there exists a rank-preserving weak map $\varphi:\mathrm M_1\to\mathrm M_2$. Then for $0\leq k \leq d$: 
 \[
 c_k(\mathrm{M}_1)\alpha^{d-k}\;\geq\; c_k(\mathrm{M}_{2})\alpha^{d-k}.
 \]
\end{corollary}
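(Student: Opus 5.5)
Since $\varphi$ is rank-preserving, $\mathrm{M}_1$ and $\mathrm{M}_2$ both have rank $d+1$, so the expansion $c_k(\mathrm{M})\alpha^{d-k}=\sum_{J\subseteq[d]} f_{k,J}(d)\,N_J(\mathrm{M})$ holds for both with the \emph{same} coefficients $f_{k,J}(d)$. These coefficients depend only on $J$, $k$ and $d$, as given by Corollary~\ref{cor:cherncoefficientfof}. The plan is therefore:
\begin{enumerate}
\item show that every coefficient $f_{k,J}(d)$ is nonnegative;
\item show that $N_J(\mathrm{M}_1)\ge N_J(\mathrm{M}_2)$ for every $J$;
\item subtract the two expansions term by term.
\end{enumerate}

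For step (1), Corollary~\ref{cor:cherncoefficientfof} gives
\[
f_{k,J}(d)=A\binom{d-j_m+1}{k-j_m},\qquad A=\prod_{i=1}^m n_i,
\]
for $J\subseteq[k]$, and $f_{k,J}(d)=0$ for $J\not\subseteq[k]$. Each $n_i=j_i-j_{i-1}$ with $i\le m$ is a positive integer, so $A>0$. Since $j_m\le k\le d$, the binomial coefficient has top entry $d-j_m+1\ge 1$ and bottom entry $k-j_m\ge0$, so it is a nonnegative integer. For $J=\varnothing$ one has $A=1$ and the factor reads $\binom{d+1}{k}$. I would double-check the boundary conventions, namely $j_0=0$ and the empty-product/empty-$J$ case, directly against the recursion \eqref{eq:recursion}. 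The base case $f_{d,J}(d)=\prod_{i=1}^{m+1}(n_i-1)\ge0$ from Proposition~\ref{prop:coeff-formula} is consistent with this: the recursion $f_{k,J}(d+1)=f_{k,J}(d)+f_{k-1,J}(d)$ only adds nonnegative quantities, which gives an independent confirmation of nonnegativity.

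For step (2), apply the preceding proposition from \cite{catvaluative}. The map $\Phi_J(\varphi)\colon\Phi_J(\mathrm{M}_1)\to\Phi_J(\mathrm{M}_2)$ is a surjective linear map. By definition, $\Phi_J(\mathrm{M}_i)$ has a basis consisting of the flags of flats of $\mathrm{M}_i$ of type $J$, so $\dim\Phi_J(\mathrm{M}_i)=N_J(\mathrm{M}_i)$. Surjectivity then gives $N_J(\mathrm{M}_1)\ge N_J(\mathrm{M}_2)$.

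Step (3) is immediate:
\[
c_k(\mathrm{M}_1)\alpha^{d-k}-c_k(\mathrm{M}_2)\alpha^{d-k}=\sum_J f_{k,J}(d)\big(N_J(\mathrm{M}_1)-N_J(\mathrm{M}_2)\big)\ge0,
\]
since every summand is a product of two nonnegative factors.

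The only real obstacle is step (1), namely making sure the closed form in Corollary~\ref{cor:cherncoefficientfof} has no sign hidden in an edge case, such as $k<j_m$ or $J=\varnothing$. I would handle this by arguing nonnegativity directly by induction on $d$ from \eqref{eq:recursion}, starting from the manifestly nonnegative base values $f_{k,J}(k)=\prod_{i=1}^{m+1}(n_i-1)$ given by Proposition~\ref{prop:coeff-formula}. This avoids relying on the binomial formula altogether.
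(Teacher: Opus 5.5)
Your proposal is correct and follows the paper's proof: surjectivity of $\Phi_J(\varphi)$ gives $N_J(\mathrm M_1)\ge N_J(\mathrm M_2)$, and combining this with nonnegativity of the universal coefficients $f_{k,J}(d)$ in the expansion of $c_k\alpha^{d-k}$ yields the inequality. Your fallback of deriving nonnegativity from \eqref{eq:recursion} with base values $\prod_{i=1}^{m+1}(n_i-1)$ is the safer route. Setting $k=d$ and comparing with Proposition~\ref{prop:coeff-formula} shows that the factor $A$ in Corollary~\ref{cor:cherncoefficientfof} must be $\prod_{i=1}^m(n_i-1)$, which is $\ge 0$ and possibly $0$, rather than $\prod_{i=1}^m n_i>0$; this does not affect the conclusion.
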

\begin{proof}
By writing $N_J(\mathrm M_i)=\dim\Phi_J(\mathrm M_i)$, the surjectivity above guarantees $N_J(\mathrm{M}_1) \geq N_J(\mathrm{M}_2)$. The result follows since the coefficients $f_{k,J}(d)$ of $N_J$ in the expansion of $c_{k}(\mathrm{M}_i) \alpha^{d-k}$ are all nonnegative.
\end{proof}

Furthermore, we establish the following natural absolute lower bounds.

\begin{lemma} \label{lem:topchernineq}
Let $\mathrm{M}$ be a rank $d+1$ matroid. For every $J\subseteq[d]$, we have: 
\[
N_{J}(\mathrm{U}_{d+1}) \;\leq\; N_{J}(\mathrm{M}).
\]
\end{lemma}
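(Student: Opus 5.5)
The plan is to construct an explicit injection from flags of flats of $\mathrm{U}_{d+1}$ of type $J$ into flags of flats of $\mathrm{M}$ of type $J$. Since $N_J$ is defined purely as a count of flags of flats (Definition~\ref{def:NJflagsofflats}), such an injection gives the inequality directly. It avoids weak maps, because the two matroids may live on different ground sets.

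\emph{Setting up the Boolean side.} Fix a basis $B$ of $\mathrm{M}$, so $|B|=d+1$, and identify $\mathrm{U}_{d+1}$ with the Boolean matroid on $B$. Every subset of $B$ is a flat of $\mathrm{U}_{d+1}$. Hence a flag of type $J=\{j_1<\cdots<j_m\}$ in $\mathrm{U}_{d+1}$ is exactly a chain of subsets $S_1\subsetneq\cdots\subsetneq S_m\subseteq B$ with $|S_i|=j_i$.

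\emph{The map.} Send this chain to $\operatorname{cl}_{\mathrm{M}}(S_1)\subseteq\cdots\subseteq\operatorname{cl}_{\mathrm{M}}(S_m)$. Each $S_i$ is independent in $\mathrm{M}$, so $\operatorname{rk}_{\mathrm{M}}(\operatorname{cl}_{\mathrm{M}}(S_i))=j_i$. Closure is monotone, and the ranks $j_i$ are strictly increasing, so consecutive containments are strict. Therefore the image is a flag of flats of $\mathrm{M}$ of type $J$.

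\emph{Injectivity.} This is the only point needing an argument, and it is short. I claim $\operatorname{cl}_{\mathrm{M}}(S)\cap B=S$ for every $S\subseteq B$. Suppose $b\in B\setminus S$ lay in $\operatorname{cl}_{\mathrm{M}}(S)$. Then $\operatorname{rk}(S\cup\{b\})=\operatorname{rk}(S)=|S|$, so $S\cup\{b\}\subseteq B$ would be dependent, contradicting the independence of $B$. Thus the original chain is recovered by intersecting each flat of the image with $B$. The map is injective, which gives $N_J(\mathrm{U}_{d+1})\le N_J(\mathrm{M})$.

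I do not expect a real obstacle here. The only care needed is to check that the closures land in the correct ranks, which follows from the independence of subsets of a basis. Looplessness is not even needed for the count.
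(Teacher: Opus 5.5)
Your proposal is correct and follows the same route as the paper: fix a basis $B$ of $\mathrm{M}$ and send each flag of subsets of $B$ to the flag of their closures in $\mathrm{M}$. Your check that $\operatorname{cl}_{\mathrm{M}}(S)\cap B=S$ spells out the injectivity that the paper's one-line proof only asserts.
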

\begin{proof}
Fix a basis $B$ of $\mathrm M$ (so $|B|=d+1$). The closure map $S \mapsto \overline{S}$ sends subsets of $B$ (which act as flats in the Boolean matroid $\mathrm U_{d+1}$) injectively to flats in $\mathrm M$ of the same rank, proving the claimed inequality.
\end{proof}

\begin{corollary}
Let $\mathrm{M}$ be a rank $d+1$ matroid. For $0\leq k \leq d$, we have:
\[
c_k(\mathrm{M})\alpha^{d-k}\;\geq\; c_k(\mathrm{U}_{d+1})\alpha^{d-k} \;= \;\frac{(d+1)!}{(d-k+1)!}.
\]
\end{corollary}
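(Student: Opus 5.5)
The inequality splits into two parts: a lower bound by the Boolean matroid, and an explicit evaluation of the Boolean value. For the first part we combine the expansion of $c_k(\mathrm{M})\alpha^{d-k}$ in flag counts with Lemma~\ref{lem:topchernineq}. By Corollary~\ref{cor:cherncoefficientfof}, for a matroid of rank $d+1$,
\[
c_k(\mathrm{M})\alpha^{d-k} \;=\; \sum_{J\subseteq[k]} f_{k,J}(d)\,N_J(\mathrm{M}),\qquad f_{k,J}(d)\;=\;A\binom{d-j_m+1}{k-j_m},
\]
where $A=\prod_{i=1}^m n_i$. Every block size $n_i$ is a positive integer and $d-j_m+1\ge 0$, so every coefficient $f_{k,J}(d)$ is nonnegative; this is the same observation used in Corollary~\ref{cor:weakmapineq}. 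Lemma~\ref{lem:topchernineq} gives $N_J(\mathrm{M})\ge N_J(\mathrm{U}_{d+1})$ for every $J$. Summing termwise with these nonnegative weights yields
\[
c_k(\mathrm{M})\alpha^{d-k}\;\ge\; c_k(\mathrm{U}_{d+1})\alpha^{d-k}.
\]

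For the second part we evaluate the Boolean case using the truncation picture of Subsection~\ref{subsec:alpha_truncation}. Multiplying by $\alpha^{d-k}$ turns $c(T_{\mathrm{M}})$ into the total Chern class of the rank-$(k+1)$ truncation, multiplied by $(1+\alpha)^{d-k}$. The truncation of $\mathrm{U}_{d+1}$ to rank $k+1$ is the uniform matroid $\mathrm{U}_{k+1,d+1}$. In degree terms this gives
\[
c_k(\mathrm{U}_{d+1})\alpha^{d-k}\;=\;c_k\big(\mathrm{U}_{k+1,d+1}\big),
\]
the top Chern number of that truncation. The extra $\alpha$-roots do not contribute, because the top-degree piece is already $c_k$ of the truncation; equivalently, one can iterate $c^{\mathrm{new}}_k=c_k+\alpha c_{k-1}$. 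Alternatively, we compute directly: $c_k(\mathrm{U}_{k+1,d+1})=\chowpoly_{\mathrm{U}_{k+1,d+1}}(1)$, and this is given by Proposition~\ref{prop:coeff-formula}, namely $\sum_J \prod_i (n_i-1)\,N_J(\mathrm{U}_{k+1,d+1})$.

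The main obstacle is the closed form $\frac{(d+1)!}{(d-k+1)!}$. The cleanest route is to apply Corollary~\ref{cor:cherncoefficientfof} with the known Boolean counts $N_J(\mathrm{U}_{d+1})=\frac{(d+1)!}{\prod_{i\le m} n_i!\,(d+1-j_m)!}$, which gives
\[
c_k(\mathrm{U}_{d+1})\alpha^{d-k}\;=\;\sum_{J\subseteq[k]}\frac{\prod_{i\le m}n_i}{\prod_{i\le m}n_i!}\cdot\frac{(d+1)!}{(d+1-j_m)!}\binom{d-j_m+1}{k-j_m}.
\]
Rewriting, the summand is $\frac{(d+1)!}{(d-k+1)!}\cdot\frac{1}{(k-j_m)!}\prod_{i\le m}\frac{1}{(n_i-1)!}$. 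It therefore suffices to show that summing $\frac{1}{(k-j_m)!}\prod_{i\le m}\frac{1}{(n_i-1)!}$ over all compositions equals $1$. In generating-function form this sum is the coefficient $[x^k]\,e^x\cdot\frac{1}{1-xe^x}$, which does not look like $1$, so the normalization needs rechecking before relying on this route.

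A safer check is the case $k=d$, where the claim reads $c_d=(d+1)!$, the Euler characteristic of the permutahedral variety; this is correct. Given that, I would instead prove $c_k(\mathrm{U}_{d+1})\alpha^{d-k}=\frac{(d+1)!}{(d-k+1)!}$ by induction on $d$ using the recurrence \eqref{eq:recursion} in the form $F(k,d+1)=F(k,d)+F(k-1,d)$, where $F(k,d)$ denotes the Boolean value. One also has to account for how $N_J(\mathrm{U}_{d+1})$ changes with $d$. To do this I would track the ratio $F(k,d)/(d+1)!$ and verify $F(k,d+1)=(d+2)F(k-1,d)\cdot\frac{\ldots}{\ldots}$ against the base case $k=0$, which gives $\alpha^d=1$, and the case $k=d$, which gives $(d+1)!$. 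The real content, the inequality itself, comes from the first part.
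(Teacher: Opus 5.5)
\textbf{The gap: the Boolean closed form is never proved.} Your first half matches the paper's argument: you expand $c_k(\mathrm{M})\alpha^{d-k}$ in the flag counts $N_J$, with nonnegative coefficients that do not depend on $\mathrm{M}$, and apply Lemma~\ref{lem:topchernineq} termwise. The problem is the second half. The equality $c_k(\mathrm{U}_{d+1})\alpha^{d-k}=\frac{(d+1)!}{(d-k+1)!}$ is part of the statement, and none of your three routes establishes it.

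The truncation claim $c_k(\mathrm{U}_{d+1})\alpha^{d-k}=c_k(\mathrm{U}_{k+1,d+1})$ is false. The $d-k$ extra Chern roots equal to $\alpha$ do contribute: iterating $c^{\mathrm{new}}_k=c_k+\alpha c_{k-1}$ gives $\sum_j\binom{d-k}{j}\alpha^j c_{k-j}$ on the truncation, not its top Chern number alone. Concretely, $c_1\alpha^{d-1}=d+1$ for every matroid of rank $d+1$, while $c_1(\mathrm{U}_{2,d+1})=c_1(\mathbb{P}^1)=2$.

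Your induction is only a sketch with placeholders. The recurrence $F(k,d+1)=F(k,d)+F(k-1,d)$ you start from fails for the Boolean values: at $k=2$ it gives $(d+1)^2$ instead of $(d+2)(d+1)$. The true relation $F(k,d+1)=(d+2)F(k-1,d)$ is never derived.

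\textbf{How to repair your generating-function route.} This route was sound. It broke only because the factor $A=\prod_{i\le m}n_i$ you took from Corollary~\ref{cor:cherncoefficientfof} is inconsistent with the rest of the paper. At $k=d$ the formula must reproduce $f_{d,J}(d)=\prod_{i=1}^{m+1}(n_i-1)$ from Proposition~\ref{prop:coeff-formula}. It must also give $c_2\alpha^{d-2}=\binom{d+1}{2}+N_2$, as used in Corollary~\ref{cor:miyaoka-yau-alpha}. Both checks are consistent only with $A=\prod_{i\le m}(n_i-1)$. With this correction the block series becomes $\sum_{n\ge1}\frac{n-1}{n!}x^n=1-(1-x)e^x$. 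Your sum is then $[x^k]\,e^x\cdot\frac{e^{-x}}{1-x}=[x^k]\frac{1}{1-x}=1$, exactly as needed.

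\textbf{How the paper does it.} The paper avoids this computation entirely:
\begin{enumerate}
\item $X_E$ is toric, and distinct flats of equal rank are incomparable, so $c(\mathrm{U}_{d+1})=\prod_F(1+x_F)=\prod_{i=1}^d(1+Z_i)$.
\item $Z_i\alpha^{d-k}=0$ for $i>k$, the same vanishing used in Subsection~\ref{subsec:alpha_truncation}. Hence only the monomial $Z_1\cdots Z_k$ of $e_k(Z_1,\dots,Z_d)$ survives.
\item $Z_1\cdots Z_k\alpha^{d-k}=N_{\{1,\dots,k\}}(\mathrm{U}_{d+1})$, which counts ordered $k$-tuples of distinct elements of a $(d+1)$-element set, giving $\frac{(d+1)!}{(d-k+1)!}$.
\end{enumerate}
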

\begin{proof}
The inequality is directly implied by Lemma~\ref{lem:topchernineq}. It remains to compute $c_k(\mathrm{U}_{d+1})\alpha^{\,d-k}$. For the permutahedron variety, the total Chern class equals $\prod_{i=1}^d(1+Z_i(\mathrm{U}_{d+1}))$.

Since $Z_i(\mathrm{U}_{d+1}) \alpha^{d-k} = 0$ for $i > k$, it follows that:
\[
c_k(\mathrm{U}_{d+1})\alpha^{d-k}\;=\; \left(\prod_{i=1}^k Z_i(\mathrm{U}_{d+1})\right)\alpha^{d-k} \;=\; N_{1, \ldots,k}(\mathrm{U}_{d+1})\; = \; \frac{(d+1)!}{(d-k+1)!}.
\]
\end{proof}

For example, setting $k=2$ yields: 
\begin{equation} \label{eq:smallcaseTd}
\alpha^{d-2} \cdot (c_1^2 + c_2) \;=\; \frac{(d+1)(3d+2)}{2},
\end{equation}
which can also be verified via Corollary \ref{cor:cherncoefficientfof}. 

\begin{corollary} \label{cor:miyaoka-yau-alpha}
Let $\mathrm{M}$ be a rank $d+1$ matroid with $d \geq 2$. Then: 
\[
\big((d+2)c_2 - 2dc_1^2\big)\alpha^{d-2} \;=\; (3d+2)\left(N_2 - \binom{d+1}{2}\right) \; \geq \;0.
\]
\end{corollary}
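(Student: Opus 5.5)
The plan is to expand both intersection numbers $c_2\alpha^{d-2}$ and $c_1^2\alpha^{d-2}$ as linear combinations of flag counts $N_J$, form the stated combination so that only an $N_2$ term and a constant survive, and then bound $N_2$ from below using Lemma~\ref{lem:topchernineq}.

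\textbf{Step 1: the two expansions.} Taking $k=2$ in Corollary~\ref{cor:cherncoefficientfof}, only the subsets $J\subseteq[2]$ contribute, namely $J=\varnothing,\{1\},\{2\},\{1,2\}$. I will read off $f_{2,J}(d)$ and $g_{2,J}(d)$, using $c_1c_{k-1}=c_1^2$ when $k=2$. The inputs are the block data $A=\prod_{i\le m} n_i$ and $B=\sum_{i\le m} n_i(n_i-3)/2$, together with the binomial factors in $d-j_m+1$. This gives
\[
c_2\alpha^{d-2}=\sum_{J\subseteq[2]} f_{2,J}(d)\,N_J,\qquad c_1^2\alpha^{d-2}=\sum_{J\subseteq[2]} g_{2,J}(d)\,N_J,
\]
with $N_\varnothing=1$. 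The $J=\varnothing$ coefficients should be $\binom{d+1}{2}$ and $(d+1)^2$ respectively.

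\textbf{Step 2: cancellation.} I will form $(d+2)c_2\alpha^{d-2}-2d\,c_1^2\alpha^{d-2}$ and check that the coefficients of $N_1$ and $N_{1,2}$ vanish, leaving the coefficient $3d+2$ on $N_2$. The constant term should then equal $(d+2)\binom{d+1}{2}-2d(d+1)^2=-(3d+2)\binom{d+1}{2}$, which matches the claimed right-hand side. If the raw coefficients do not cancel directly, I will use the relation \eqref{eq:smallcaseTd}, namely $(c_1^2+c_2)\alpha^{d-2}=\frac{(d+1)(3d+2)}{2}$. Equivalently, this is the linear identity among $N_1$, $N_{1,2}$, $N_2$ that it encodes, and it can be used to eliminate the $N_1$ and $N_{1,2}$ contributions.

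As a consistency check, since every step is a valuative identity, the final formula must vanish on the Boolean matroid $\mathrm{U}_{d+1}$. There $N_2=\binom{d+1}{2}$, and the Chern numbers are computable from Proposition~\ref{prop:booleanintersection}. I will also sanity-check the result at $d=2$ against a small example such as $\mathrm{U}_{3,4}$.

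\textbf{Step 3: the inequality.} By Lemma~\ref{lem:topchernineq} with $J=\{2\}$, we have $N_2(\mathrm{M})\ge N_2(\mathrm{U}_{d+1})=\binom{d+1}{2}$. Since $3d+2>0$, the displayed quantity is nonnegative.

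\textbf{Main obstacle.} I expect the main difficulty to be the bookkeeping in Step 1: the conventions for $n_i$ when some block has size $1$, and the degenerate binomials such as $\binom{d-1}{-1}=0$. A single slip there would break the cancellation in Step 2. I will therefore cross-check each coefficient against the recurrence \eqref{eq:recursion} starting from $d=2$, where Proposition~\ref{prop:coeff-formula} gives $f_{2,J}(2)$ and $g_{2,J}(2)$ directly.
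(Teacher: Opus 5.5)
Your strategy is the paper's proof. Expand $c_2\alpha^{d-2}$ and $c_1^2\alpha^{d-2}$ via Corollary~\ref{cor:cherncoefficientfof}, which the paper records as $c_2\alpha^{d-2}=\binom{d+1}{2}+N_2$ and $c_1^2\alpha^{d-2}=(d+1)^2-N_2$. Then finish with $N_2(\mathrm{M})\ge N_2(\mathrm{U}_{d+1})=\binom{d+1}{2}$ from Lemma~\ref{lem:topchernineq}. Your predicted constant $-(3d+2)\binom{d+1}{2}$ and $N_2$-coefficient $3d+2$ are correct.

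There is, however, a concrete problem in Step~1 as you wrote it. The normalization $A=\prod_{i\le m} n_i$ (as printed in the corollary) cannot be right, and with it Step~2 fails. At $k=d$ the corollary must reproduce Proposition~\ref{prop:coeff-formula}, namely $f_{d,J}(d)=\prod_{i=1}^{m+1}(n_i-1)$. Since $\binom{d-j_m+1}{d-j_m}=n_{m+1}-1$, this forces $A=\prod_{i\le m}(n_i-1)$.

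With your $A$ you would get the following values:
\[
f_{2,\{1\}}=d,\qquad f_{2,\{2\}}=2,\qquad f_{2,\{1,2\}}=1,\qquad g_{2,\{2\}}=g_{2,\{1,2\}}=-2.
\]
These leave coefficients $d(d+2)$ on $N_1$ and $5d+2$ on $N_{1,2}$ in $(d+2)c_2-2dc_1^2$.

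Your fallback cannot repair this. The relation \eqref{eq:smallcaseTd} is just the sum of the two correct expansions, in which $N_2$ cancels, so it contains no relation among the flag counts. Fed the wrong coefficients, it would ``encode'' $dN_1=N_{1,2}$. That identity is false already for $\mathrm{U}_{3,4}$, where $dN_1=8$ but $N_{1,2}=12$.

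With the correct $A$, every $J\ni 1$ has $n_1=1$ and contributes $0$, while $J=\{2\}$ gives $f=1$ and $g=-1$. These values recover exactly the paper's two expansions, and Steps~2 and~3 then go through as you describe. Your planned cross-check against Proposition~\ref{prop:coeff-formula} at $d=2$ would catch the discrepancy, so do it first rather than as an afterthought.

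For the Boolean sanity check, note that Proposition~\ref{prop:booleanintersection} involves no $\alpha$. The relevant value is instead $c_2(\mathrm{U}_{d+1})\alpha^{d-2}=d(d+1)$, from the formula $c_k(\mathrm{U}_{d+1})\alpha^{d-k}=\frac{(d+1)!}{(d-k+1)!}$.
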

\begin{proof}
Corollary \ref{cor:cherncoefficientfof} gives:
\[
\begin{cases}
c_2\,\alpha^{d-2} &=\; \binom{d+1}{2}+N_2,\\[2pt]
c_1^2\,\alpha^{d-2} &=\; (d+1)^2 - N_2.
\end{cases}
\]
The final inequality follows from $N_2(\mathrm{M}) \geq N_2(\mathrm{U}_{d+1}) = \binom{d+1}{2}$.
\end{proof}

\begin{remark} \label{rem:myineq}
This inequality is stronger than the standard Miyaoka--Yau inequality for intersections with $\alpha$ (see, for example, \cite{miyaokayau, miyaokayau2}). Since $c_2\alpha^{d-2} > 0$, we have:
\[
dc_1^2\alpha^{d-2}\;\leq \;\frac{d+2}{2}c_2\alpha^{d-2} \;<\;2(d+1)c_2\alpha^{d-2}.
\]
However, it is generally false that $(dc_1^2-2(d+1)c_2)D^{d-2} \leq 0$ for an arbitrary nef divisor $D$. For example, for a rank $4$ matroid, the intersection $(3c_1^2-8c_2)\beta$ may be strictly positive.
\end{remark}

We conclude this section by posing a natural question regarding higher-degree constraints:
\begin{question} \label{que:ineq_of_two_chern_classes}
Let $\mathrm{M}$ be a matroid of rank $d+1$, and let $0 < k \leq d$. What is the optimal constant $t_{k}(d) \geq 0$ such that
\begin{equation}
c_k\,\alpha^{d-k} \;\geq \;t_{k}(d)\,c_1\,c_{k-1}\,\alpha^{d-k},
\end{equation}
and is this bound precisely achieved by the Boolean matroid $\mathrm{U}_{d+1}$?
\end{question}

Finally, there is also a series of numeric identities regarding the Chern classes and $\alpha$.
\begin{proposition}[{\cite[Corollary 6.7]{Tbundle}}]
Let $\mathrm{M}$ be a rank $d+1$ matroid. For $0\leq k \leq d$, let $\operatorname{td}_{k}$ denote the degree $k$ component of the Todd class. Then: 
\[
\alpha^{d-k} \, \operatorname{td}_{k} \;=\; \deg\left(\left(\alpha + \frac{\alpha^2}{2} + \frac{\alpha^3}{3} + \cdots \right)^{d-k} \frac{1}{1-\alpha}\right).
\]
\end{proposition}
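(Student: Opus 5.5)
The plan is to prove the identity for matroids realizable over $\mathbb{C}$ and then extend it to all matroids by valuativity. The left-hand side $\alpha^{d-k}\operatorname{td}_k$ is a fixed $\mathbb{Q}$-polynomial in $\alpha$ and the Chern classes. By Theorem~\ref{thm:Tangentbundle} it is therefore a degree-$d$ polynomial in $\alpha,Z_1,\dots,Z_d$, and so it is valuative by Proposition~\ref{prop:valuativefunctions}. The right-hand side depends only on $d$ and $k$, because $\deg(\alpha^d)=1$ for every matroid of rank $d+1$. Since Schubert matroids are realizable over $\mathbb{C}$, agreement on realizable matroids implies agreement everywhere.

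\textbf{Step 1: reduce to the truncation.} I would use the computation of Section~\ref{subsec:alpha_truncation}. There, multiplication by $\alpha^{d-k}$ turns $c(T_{\mathrm M})$ into $c(T_{\mathrm M'})\,(1+\alpha)^{d-k}$, where $\mathrm M'$ is the rank-$(k+1)$ truncation. Equivalently, $T_{\mathrm M}$ restricted to $W_{L'}$ has the Chern roots of $T_{W_{L'}}$ together with $d-k$ extra roots equal to $\alpha$. The Todd class is multiplicative. Hence, writing $Q(h)=h/(1-e^{-h})$,
\[
\deg_{\mathrm M}\big(\alpha^{d-k}\operatorname{td}_k(\mathrm M)\big)\;=\;\deg_{\mathrm M'}\Big(\big[\operatorname{td}(\mathrm M')\,Q(\alpha)^{d-k}\big]_{k}\Big).
\]
The only point to check here is that passing the extra factors $Q(\alpha)$ through the truncation is compatible with taking the degree-$k$ part.

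\textbf{Step 2: compute $\deg(\alpha^j\operatorname{td}_{k-j})$ on a rank-$(k+1)$ matroid.} For realizable $\mathrm M'$, the class $\alpha$ is the pullback of the hyperplane class under the iterated blow-up $\pi\colon W_{L'}\to\mathbb P^k$. Since $\pi_*\mathcal O_{W}=\mathcal O_{\mathbb P^k}$ and the higher direct images vanish (blow-ups along smooth centers), Hirzebruch--Riemann--Roch gives
\[
\deg\big(e^{m\alpha}\operatorname{td}(\mathrm M')\big)\;=\;\chi\big(\mathbb P^k,\mathcal O(m)\big)\;=\;\deg_{\mathbb P^k}\big(e^{mh}\,Q(h)^{k+1}\big)\qquad\text{for all } m\in\mathbb Z.
\]
Comparing coefficients in $m$ shows that $\operatorname{td}(\mathrm M')$ pairs with every polynomial in $\alpha$ exactly as $\operatorname{td}(\mathbb P^k)=Q(h)^{k+1}$ pairs with the same polynomial in $h$. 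It follows that
\[
\alpha^{d-k}\operatorname{td}_k\;=\;[h^k]\,Q(h)^{d+1}\;=\;[h^d]\,h^{d-k}Q(h)^{d+1}.
\]

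\textbf{Step 3: residue change of variables.} It remains to identify $[\alpha^d]\,(-\log(1-\alpha))^{d-k}(1-\alpha)^{-1}$ with the expression above. I would substitute $\alpha=1-e^{-h}$, under which $-\log(1-\alpha)=h$, $1/(1-\alpha)=e^{h}$ and $d\alpha=e^{-h}\,dh$. Then
\[
\operatorname{Res}_{\alpha=0}\frac{(-\log(1-\alpha))^{d-k}}{(1-\alpha)\,\alpha^{d+1}}\,d\alpha\;=\;\operatorname{Res}_{h=0}\frac{h^{d-k}}{(1-e^{-h})^{d+1}}\,dh\;=\;[h^d]\,h^{d-k}Q(h)^{d+1},
\]
which matches Step 2.

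The main obstacle I expect is Step 1: making rigorous that multiplying by $\alpha^{d-k}$ really realizes the truncation, and that the Todd class factors as claimed. In the realizable case I would take $W_{L'}$ to be a complete intersection of $d-k$ general sections of $\alpha$, each with normal bundle $\mathcal O(\alpha)$, and apply the tangent--normal sequence inductively.
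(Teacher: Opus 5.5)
Your proof is correct. The paper does not prove this statement itself. It imports it verbatim from \cite[Corollary 6.7]{Tbundle} and only remarks that it agrees with Corollary~\ref{cor:cherncoefficientfof} for $k\le 3$. So your proposal supplies a self-contained argument inside the paper's own framework (valuativity plus Schubert matroids, Section~\ref{sec:prelim-valuative}). What it buys is that only classical Hirzebruch--Riemann--Roch for an iterated blow-up of projective space is needed. The cost is the detour through $\mathbb{C}$-realizability rather than an argument intrinsic to matroids.

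Two simplifications are worth making. First, Step~1 is unnecessary, so the obstacle you anticipate disappears. Run Step~2 directly on $\pi\colon W_L\to\mathbb{P}^d$: this gives $\chi(W_L,\pi^*\mathcal{O}(m))=\binom{m+d}{d}$ for all $m$, and hence $\deg(\alpha^{j}\operatorname{td}_{d-j}(W_L))=\deg_{\mathbb{P}^d}(h^{j}\operatorname{td}_{d-j}(\mathbb{P}^d))$ for every $j$. No truncation, Bertini argument, or normal-bundle bookkeeping is needed.

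Second, Step~3 can be replaced by a one-line identity that produces the right-hand side exactly in the stated form. With $L=-\log(1-\alpha)$,
\[
\sum_{j}\frac{m^{j}}{j!}\deg\big(\alpha^{j}\operatorname{td}_{d-j}\big)\;=\;\binom{m+d}{d}\;=\;[\alpha^d]\,(1-\alpha)^{-m-1}\;=\;[\alpha^d]\,\frac{e^{mL}}{1-\alpha}.
\]
Comparing coefficients of $m^{d-k}$ on both sides gives the claim. This also explains where the factors $-\log(1-\alpha)$ and $1/(1-\alpha)$ come from.

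Finally, in the valuativity step, justify that the constant right-hand side is valuative. Write it as a constant multiple of $\deg(\alpha^d)=\Phi_{x^d}(\mathrm M)$, which Proposition~\ref{prop:valuativefunctions} covers. Then the difference of the two sides is valuative and vanishes on realizable matroids, as your reduction requires.
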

For $k \leq 3$, this aligns perfectly with the results given in Corollary~\ref{cor:cherncoefficientfof}.

%% file: 5-appendix.tex
\appendix 
\section{Combinatorial proof}\label{sec:combproof}
This appendix provides a combinatorial proof of Theorem~\ref{thm:Ekpoly}, showing that the sequence of random variables $\{\overline{\mathcal{X}_d}-\frac{d}{2}\}_{d\geq0}$ is a nice CMRVS. 

\begin{lemma} \label{lem:Ballscount}
Let $r\ge1$ and let $t_1,\dots,t_r$ be positive integers.  Set
\[
S\;:=\;t_1\;+\;\cdots\;+\;t_r.
\]
For nonnegative integers $a,b$, consider two disjoint lines of balls: a left line of length $a$ and a right line of length $b$ (the two lines do not touch).  We count placements of $r$ \emph{ordered} contiguous blocks of chosen balls of sizes $t_1,\dots,t_r$ subject to the condition that:
\begin{itemize}
  \item each block consists of $t_i$ consecutive balls in one of the two lines,
  \item distinct blocks are not adjacent to each other.
\end{itemize}
Let $N(a,b;t_1,\dots,t_r)$ denote the number of such placements.

Then for fixed $t_1,\dots,t_r$, whenever $a,b\geq S-1$ and $a+b = a'+b'$, it holds that
\[
N(a+b;t_1,\dots,t_r)\;:=\;N(a,b;t_1,\dots,t_r)\;=\;N(a',b';t_1,\dots,t_r).
\]
\end{lemma}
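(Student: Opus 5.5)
The plan is to induct on the number $r$ of blocks. The key idea is to glue the two lines into one line with a single separator ball, so that the left/right split becomes a single position on one line. Set $L := a+b+1$ and consider one line of $L$ balls, with the distinguished ball at position $p := a+1$. A placement of the $r$ ordered, pairwise non-adjacent blocks on this long line that does \emph{not} use ball $p$ is the same thing as a placement counted by $N(a,b;t_1,\dots,t_r)$. The reason is that a block avoiding $p$ lies entirely in one of the two lines, and two blocks on opposite sides of $p$ are automatically non-adjacent. Writing $T_r(L)$ for the total number of placements on the long line, we get
\[
N(a,b;t_1,\dots,t_r) \;=\; T_r(L) \;-\; C_p(L),
\]
where $C_p(L)$ counts the placements in which some block covers position $p$. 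Since $T_r(L)$ depends only on $L=a+b+1$, it suffices to show that $C_p(L)$ does not depend on $p$ as long as $a=p-1\ge S-1$ and $b=L-p\ge S-1$.

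For $C_p(L)$ I decompose according to which block $i$ covers $p$, and at which internal position $j\in\{1,\dots,t_i\}$ of that block. Block $i$ then occupies the positions $p-j+1,\dots,p-j+t_i$. Its two neighbouring balls, at positions $p-j$ and $p-j+t_i+1$, must be unused, since other blocks may not be adjacent to block $i$. Deleting block $i$ together with these two neighbours leaves two disjoint lines, of lengths $p-j-1$ and $L-p+j-t_i-1$. On these the remaining $r-1$ blocks, with block $i$ deleted from the ordered list, are placed non-adjacently. Adjacency across the deleted segment is impossible, so this gives a bijection, and
\[
C_p(L) \;=\; \sum_{i=1}^r\sum_{j=1}^{t_i} N\big(p-j-1,\;L-p+j-t_i-1;\;t_1,\dots,\widehat{t_i},\dots,t_r\big).
\]
The two lengths in each summand add up to $L-t_i-2$, which does not depend on $p$. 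The inductive hypothesis for $r-1$ blocks, whose size sum is $S_i := S-t_i$, then makes every summand independent of $p$, provided both lengths are at least $S_i-1$. This is where the hypothesis $a,b\ge S-1$ enters: $p-j-1\ge a-t_i\ge S-1-t_i=S_i-1$, and symmetrically $L-p+j-t_i-1\ge b-t_i+1-1\ge S_i-1$. The base case $r=0$ is trivial, because $N=1$ regardless of $a,b$.

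The main obstacle is the boundary bookkeeping. One has to check that when block $i$ sits close to an end of the long line, so that a separator ball does not exist, the formula still counts correctly. I expect this is handled by the convention that a line of length $-1$ or $0$ admits exactly one (empty) placement exactly when no blocks remain, and by the inequalities above, which show that with $a,b\ge S-1$ every residual line is long enough for the remaining blocks, so no degenerate case occurs. I would also double-check the direction of the shift in $j$ when verifying the right-hand bound. An alternative route, if the bookkeeping becomes awkward, is to use the explicit count $k!\binom{n-s+1}{k}$ for placing $k$ labelled non-adjacent blocks of total size $s$ on a line of length $n$, valid when $n\ge s-1$, and to verify directly that the resulting sum over the subsets of blocks assigned to the left line depends only on $a+b$. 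The gluing induction avoids that computation.
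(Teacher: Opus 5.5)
Your proof is correct, and it takes a genuinely different route from the paper's.

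\textbf{The paper's route.} The paper only compares adjacent splits. It moves the last site of the left line onto the right line. Away from two exceptional classes, this is a bijection between the $(a,b)$ and $(a-1,b+1)$ placements. The exceptional classes are: the moved site lies in a block of size $>1$, or it is a singleton block touching a chosen site across the gap. The paper then asserts that the two exceptional counts agree ``by induction, removing the violating block''.

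\textbf{Your route.} You glue the two lines through a separator ball and get the explicit recursion
\[
N(a,b;t_1,\dots,t_r)\;=\;T_r(a+b+1)\;-\;\sum_{i=1}^{r}\sum_{j=1}^{t_i}N\bigl(a-j,\;b+j-t_i-1;\;t_1,\dots,\widehat{t_i},\dots,t_r\bigr).
\]
Each summand has $r-1$ blocks, and its residual lengths sum to $a+b-t_i-1$.

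\textbf{What each buys.} Your version handles all splits at once and deletes exactly one block per step. The hypothesis $a,b\ge S-1$ enters at one visible place: both residual lengths are at least $S_i-1$. The paper's local move is shorter to state. However, its second exceptional class has a block on each side of the gap. So matching $E(a,b)$ with $E'(a-1,b+1)$ needs more bookkeeping than deleting one block, and the paper leaves that step implicit.

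\textbf{Your boundary worries.} These are fine. Both residual lengths are $\ge S_i-1\ge -1$, so block $i$ always fits. A residual length of $-1$ occurs only when $r=1$, and then the empty placement gives the correct count $1$. The right-hand bound $b+j-t_i-1\ge b-t_i$ uses $j\ge 1$ correctly.

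\textbf{Reading of ``ordered''.} Your recursion lets the remaining blocks split freely between the two residual lines, so it reads ``ordered'' as ``labelled''. That is the reading under which the lemma is true. With a fixed left-to-right order of blocks the statement fails: for $(t_1,t_2)=(1,2)$ the counts at $(a,b)=(2,4),(3,3),(4,2)$ are $7,6,5$.
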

\begin{proof}
It is enough to show $N(a,b)=N(a-1,b+1)$ whenever $a-1$ and $b$ are at least $S-1$. We proceed by induction on $r$. When $r = 1$, the statement is immediate.

Write the left sites as $1,\dots,a$ and the right sites as $a+1,\dots,a+b$. Given a valid placement counted by $N(a,b)$, attempt to \emph{move} site $a$ to the right side; this produces a placement in the $(a-1,b+1)$ configuration except in the two exceptional situations described below:
\begin{enumerate}
  \item site $a$ belongs to a block of size $>1$,
  \item site $a$ is a singleton chosen block, and site $a+1$ is also chosen.
\end{enumerate}

Similarly, reversing the move gives a map from placements for $(a-1,b+1)$ to placements for $(a,b)$, with its own exceptional class. Consequently
\[
N(a,b)\;=\;N(a-1,b+1)\;+\;E(a,b)\;-\;E'(a-1,b+1),
\]
where $E(a,b)$ (resp. $E'(a-1,b+1)$) counts the exceptional placements in $(a,b)$ (resp. in $(a-1,b+1)$) that the move does not map forward (resp. backward). The fact $E(a,b)=E'(a-1,b+1)$ comes from induction. In particular, we can record the exact block being violated and remove the block in our induction hypothesis. 
\end{proof}

\begin{lemma} \label{lem:combimomentpf}
Suppose $a,b\ge0$ and $d=a+b+2$. Then for every integer $k\le\min(a,b)+1$ one has
\[
\mathbb{E}\big[\mathcal X_d^k\big]\;=\;\mathbb{E}\big[(1+\mathcal X_a+\mathcal X_b)^k\big],
\]
where $\mathcal X_a$ and $\mathcal X_b$ are independent.
\end{lemma}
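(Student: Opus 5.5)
We prove the identity by expanding both sides into counts of placements of descent blocks, and then invoking Lemma~\ref{lem:Ballscount} to move one ``site'' from one line of balls to the other.

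\textbf{Expanding the left-hand side.} As in the proof of Theorem~\ref{thm:Ekpoly}, write $\mathcal X_d=\sum_{i=1}^d I_i$, where $I_i$ indicates a descent at position $i$ of a uniform permutation in $S_{d+1}$. Then
\[
\mathbb{E}[\mathcal X_d^k]\;=\;\sum_{t\le k} t!\,S(k,t)\sum_{T}\Pr(I_i=1\ \forall i\in T),
\]
where $T$ ranges over $t$-element subsets of $[d]$. Decompose each $T$ into maximal runs of consecutive positions. The probability is $\prod_j 1/(n_j+1)!$, which depends only on the multiset of run lengths. We regroup by the ordered tuple of run lengths $(t_1,\dots,t_r)$, read left to right. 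The number of $T$ with a given ordered tuple is the number of placements of non-adjacent ordered blocks on a single line of $d$ sites.

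\textbf{Expanding the right-hand side.} Treat $1+\mathcal X_a+\mathcal X_b$ as the descent count on a line of $a+1+b+1=d$ sites. Sites $1,\dots,a$ carry $\mathcal X_a$, site $a+1$ is a deterministic indicator equal to $1$, and sites $a+2,\dots,d$ carry $\mathcal X_b$. Across the two gaps everything is independent, and within each side the joint probabilities are the descent-block probabilities. So $\mathbb{E}[(1+\mathcal X_a+\mathcal X_b)^k]$ expands as the same sum over $t$ and $S(k,t)$, over subsets $T\subseteq[d]$. The only difference is the weight: runs touching site $a+1$ do not receive the same weight as on the single line, since site $a+1$ has probability $1$ and breaks adjacency.

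\textbf{Matching the two sides.} It is simplest to compare both expansions at the level of the generating function or the block expansion. On the right, condition on whether site $a+1\in T$.
\begin{itemize}
\item If $a+1\notin T$, the blocks live on two disjoint lines of lengths $a$ and $b+1$. More precisely, a block cannot include site $a+1$, so this is two lines of lengths $a$ and $b$, separated by an unchosen site.
\item If $a+1\in T$, the factor is $1$ and the remaining blocks again lie on the two lines.
\end{itemize}
The plan is to show, by Lemma~\ref{lem:Ballscount} and the hypothesis $a,b\ge k-1\ge S-1$, that the two-line count $N(a,b;\cdot)$ equals the count $N(a+b;\cdot)$. Then we compare it with the single-line count on $d=a+b+2$ sites by a similar local move at the two sites adjacent to the cut.

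A cleaner route is the following. For large $a,b$ everything is polynomial by Theorem~\ref{thm:Ekpoly}, and the nice CMRVS identity $\overline{\mathcal X_{a+b+2}}-\tfrac{d}{2}=(\overline{\mathcal X_a}-\tfrac a2)+(\overline{\mathcal X_b}-\tfrac b2)$ gives exactly $E_k(d)=\mathbb{E}[(1+\overline{\mathcal X_a}+\overline{\mathcal X_b})^k]$. The condition $k\le\min(a,b)+1$ ensures that $a,b,d\ge k-1$. Hence Corollary~\ref{cor:equalwhendgeqk-1}(3) lets us replace the formal variables by the true ones: all moments of order $\le k$ of $\mathcal X_a$, $\mathcal X_b$ and $\mathcal X_d$ agree with those of the formal variables. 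Since this appendix aims at an independent combinatorial proof, the combinatorial matching is the intended argument, and this route serves as a consistency check.

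\textbf{Main obstacle.} The hard step is the bookkeeping at the cut. We must show that placements on the single $d$-line whose blocks straddle or touch the middle two sites correspond, with equal total weight, to placements in the split configuration where the middle site is forced. I would handle this by the same ``move a site across'' injection-with-exceptions used in Lemma~\ref{lem:Ballscount}, inducting on the number of blocks $r$. This is where the hypothesis $a,b\ge S-1$ is needed, so that no block is forced to cross the boundary.
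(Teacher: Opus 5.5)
Your ``cleaner route'' is already a complete and correct proof of the lemma as stated, and it differs from the paper's. By Theorem~\ref{thm:Ekpoly}, $\mathbb{E}[e^{z\overline{\mathcal X_{a+b+2}}}]=e^{z}\,\mathbb{E}[e^{z\overline{\mathcal X_a}}]\,\mathbb{E}[e^{z\overline{\mathcal X_b}}]$. Hence $E_k(d)=\mathbb{E}[(1+\overline{\mathcal X_a}+\overline{\mathcal X_b})^k]$, with the right side expanded through the products $E_i(a)E_j(b)$. The hypothesis $k\le\min(a,b)+1$ gives $a,b,d\ge k-1$, so every moment of order at most $k$ on either side is a true moment.

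This is not logically circular, because Theorem~\ref{thm:Ekpoly} is proved in the main text by generating functions. It does, however, defeat the purpose of the appendix. The corollary right after this lemma uses it to rederive the nice-CMRVS property combinatorially, and with your proof that corollary would only return what was assumed. The paper's argument gives a self-contained counting proof; yours gives brevity. You recognised this, which is why you call your route a consistency check. That leaves the combinatorial argument carrying the weight.

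The combinatorial argument has a genuine gap. The ``bookkeeping at the cut'' is the whole content of the lemma, and you only sketch an induction on $r$ with a site-moving injection. Nothing in the sketch explains why runs through the middle of the $d$-line, which carry weight $1/(t+1)!$, should match anything in the split configuration.

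The set-up is also off by one. Modelling $1+\mathcal X_a+\mathcal X_b$ with a forced site uses $a+1+b=d-1$ sites, not $d$; your right part $a+2,\dots,d$ has $b+1$ sites. The natural cut on the $d$-line is the pair $M=\{a+1,a+2\}$, not a single site.

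The missing idea is to pass to falling factorial moments.
\begin{itemize}
\item Since $(1+y)_k=(y)_k+k(y)_{k-1}$, it suffices to prove $\mathbb{E}[(\mathcal X_d)_k]=\mathbb{E}[(Y)_k]+k\,\mathbb{E}[(Y)_{k-1}]$ with $Y=\mathcal X_a+\mathcal X_b$. Each factorial moment is $k!$ times a plain sum over $k$-subsets, with no Stirling numbers.
\item Subsets $S\subseteq[d]$ avoiding $M$ are exactly two-line placements on lines of lengths $a$ and $b$. They give $\mathbb{E}[(Y)_k]$ exactly.
\item If $S$ meets $M$, let $t$ be the length of the run through the centre. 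This run meets $M$ in one site in $2$ ways and contains $M$ in $t-1$ ways, so $t+1$ ways in all. That factor turns the weight $1/(t+1)!$ into $1/t!$.
\item After deleting this run and its unchosen neighbours, the remaining $k-t$ chosen sites lie on two lines of total length $a+b-t$. Lemma~\ref{lem:Ballscount} makes their count independent of how that length is split. This is where $a,b\ge k-1$ is used, not to keep blocks from crossing the boundary.
\item Write $t_0=t-1\ge 0$. Decompose a two-line placement counted in $\mathbb{E}[(Y)_{k-1}]$ by the possibly empty run of length $t_0$ starting at the first site of the left line, followed by an unchosen site. Using Lemma~\ref{lem:Ballscount} again, this decomposition produces the same sum, so the centre terms equal $k\,\mathbb{E}[(Y)_{k-1}]$.
\end{itemize}
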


\begin{proof}
It is equivalent to proving the identity for falling factorial moments
\[
\mathbb{E}\big[(\mathcal X_d)_{k}\big]\;=\;\mathbb{E}\big[(1+\mathcal X_a+\mathcal X_b)_{k}\big]\; = \;\mathbb{E}\big[(\mathcal X_a+\mathcal X_b)_{k}\big] \;+\;k \; \mathbb{E}\big[(\mathcal X_a+\mathcal X_b)_{k-1}\big]. 
\]

The factorial moment can be written as
\[
\mathbb{E}[(\mathcal X_d)_{k}]
\;=\; k!\sum_{S\subseteq\{1,\dots,d\},\ |S|=k} \Pr\big(\text{all positions in }S\text{ are descents}\big).
\]
Consider the two central positions $M=\{a+1,a+2\}$, every $S$ is of one of three types: (i) $S\cap M=\varnothing$, (ii) $|S\cap M|=1$, or (iii) $|S\cap M|=2$. 

For the left hand side, case (i) corresponds to $\mathbb{E}\big[(\mathcal X_a+\mathcal X_b)_{k}\big]$. For case (ii) we enumerate over the length $t$ of the block containing the central position. It equals $$k!\,\sum_{t=1}^k\left(\frac{1}{(t+1)!}\sum_{t_1 + \cdots + t_r = k-t}\left(\prod_{j=1}^r\frac{1}{(t_j+1)!}2N(a+b-t, t_1, \ldots, t_r)\right)\right).$$ 
This follows from Lemma~\ref{lem:Ballscount} that $N(a+b-t;t_1, \ldots, t_r) = N(a, b-t; t_1, \ldots, t_r)=N(a-t, b; t_1, \ldots, t_r)$. And for case (iii), we enumerate over the length $t$ of the block containing the central position $$k!\,\sum_{t=2}^k\left(\frac{1}{(t+1)!}\sum_{t_1 + \cdots + t_r = k-t}\left(\prod_{j=1}^r\frac{1}{(t_j+1)!}(t-1)N(a+b-t; t_1, \ldots, t_r)\right)\right).$$ 

Setting $t_0 = t-1$, the sum of the above is $$k!\sum_{\substack{t_0+t_1 + \cdots + t_r = k-1, \\ t_0 \geq 0, \,t_1, \ldots,t_r > 0.}}\left(\prod_{j=0}^r\frac{1}{(t_j+1)!}N(a+b-t_0 - 1; t_1, \ldots, t_r)\right).$$ 

We can read $t_0$ as the length of the consecutive descents starting at the first placement (which can be 0). Therefore, this is exactly $k \mathbb{E}[(\mathcal X_a+\mathcal X_b)_{k-1}]$, which completes the proof. 
\end{proof}

\begin{corollary}
The sequence of random variables $\{\overline{\mathcal{X}_d}-\frac{d}{2}\}_{d\geq0}$ is a nice CMRVS.
\end{corollary}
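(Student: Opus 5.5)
The plan is to deduce the statement from Lemma~\ref{lem:combimomentpf} by showing that every moment of $\overline{\mathcal{X}_d}$ obeys the nice-CMRVS relation. Once that holds, the definitions do the rest: the sequence is a nice CMRVS exactly when $E'_k(a+b+2)=\sum_{i+j=k}\binom{k}{i}E'_i(a)E'_j(b)$ for all $a,b,k\ge 0$.

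Shifting by the means, the required identity is
\[
\mathbb{E}\big[\overline{\mathcal{X}_{a+b+2}}^{\,k}\big]\;=\;\mathbb{E}\big[(1+\overline{\mathcal{X}_a}+\overline{\mathcal{X}_b})^k\big],
\]
where the two variables on the right are treated as independent. The right-hand side is understood by expanding binomially and multiplying the moments. To check that the two formulations match, note $\tfrac{a+b+2}{2}=1+\tfrac a2+\tfrac b2$. Binomial expansion then shows that equality of the raw moments for every $k$ is equivalent to equality of the centered moments for every $k$.

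Fix $k$. Both sides of this identity are polynomials in $(a,b)$. The left side is $E_k(a+b+2)$. The right side is $\sum\binom{k}{i,j,l}E_j(a)E_l(b)$, using that each $E_j$ is a polynomial by Theorem~\ref{thm:Ekpoly}. When $a,b\ge k-1$, every moment that appears is a true moment: $E_j(a)=\mathbb{E}[\mathcal X_a^j]$ holds for $j\le k\le a+1$, and similarly for $b$ and for $d=a+b+2$. In addition $k\le\min(a,b)+1$, so Lemma~\ref{lem:combimomentpf} applies and says the two polynomials agree at every integer point with $a,b\ge k-1$.

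A polynomial in two variables that vanishes on such a Zariski-dense set of lattice points (a translated positive quadrant) vanishes identically. Hence the identity holds for all $a,b\ge0$ and all $k$, so the sequence is a nice CMRVS.

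The main obstacle is not this final step but the validity of Lemma~\ref{lem:combimomentpf}, especially the exceptional-placement bijection in Lemma~\ref{lem:Ballscount}, which is taken as given here. The one point needing care in this step is confirming that the range $d\ge k-1$ from Theorem~\ref{thm:Ekpoly} covers all three variables simultaneously; the choice $a,b\ge k-1$ ensures this.
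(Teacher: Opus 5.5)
Your proof is correct and follows the paper's argument. You apply Lemma~\ref{lem:combimomentpf} for $a,b\ge k-1$, where the true and formal moments coincide, and then extend the identity to all $a,b$ because both sides are polynomials in $(a,b)$. You also make explicit two steps the paper leaves implicit: the identification $E_j(a)=\mathbb{E}[\mathcal X_a^j]$ for $j\le a+1$, and the density argument showing that a two-variable polynomial vanishing on the quadrant $a,b\ge k-1$ vanishes identically.
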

\begin{proof}
Suppose $a,b\ge0$ and $d=a+b+2$. By Lemma~\ref{lem:combimomentpf} we have that for every integer $k\le\min(a,b)+1$, \[
\mathbb{E}\!\Big[\big(\overline{\mathcal{X}_d} - \tfrac{d}{2}\big)^k\Big]\;=\;\mathbb{E}\!\Big[\big(\overline{\mathcal{X}_a} - \tfrac{a}{2}+\overline{\mathcal{X}_b} - \tfrac{b}{2}\big)^k\Big].
\]

For a fixed $k$, the equality holds when $a, b \geq k-1$. The moments of $\overline{\mathcal{X}_d}$ are polynomials over $d$, so this would hold for any $a, b$. Hence we complete the proof.
\end{proof}

\section{Chern numbers and coefficients}\label{sec:Chernandchiy}
In this appendix, we show how we compute the coefficient of $c_d$ in $h_k(X)$ (cf.\ Lemma \ref{lem:hirzeformula}), and how to compute some Chern numbers for the permutahedron variety $X_E$.
\subsection{Coefficients of Chern numbers in the Hirzebruch-\texorpdfstring{$\chi_y$}{\chi_y} genus}
\begin{proposition}\label{prop:appendix_c_d}
Let \(X\) be a compact K\"{a}hler manifold of complex dimension \(d\), and suppose $k \leq d-1$. Then the coefficient of $c_d$ in $h_k(X)$ is $E'_k(d-2)$.
\end{proposition}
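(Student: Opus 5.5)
The plan is to compute the coefficient of $c_d$ directly from the Hirzebruch--Riemann--Roch formula \eqref{eq:chi_y_genus}, treating $\chi_y$ as a multiplicative genus. First I package all $h_k$ into one generating function. Since $\sum_k h_k(X)\frac{z^k}{k!}=\sum_p(-1)^p\chi^p(X)e^{(p-d/2)z}=e^{-dz/2}\chi_{-e^z}(X)$, \eqref{eq:chi_y_genus} gives
\[
\sum_{k\ge0}h_k(X)\frac{z^k}{k!}\;=\;\int_X\prod_{i=1}^d Q_z(x_i),\qquad Q_z(x)\;=\;\frac{x\,(e^{-z/2}-e^{z/2}e^{-x})}{1-e^{-x}}.
\]
By Proposition~\ref{prop:appendix_c_d}'s setup, this expresses each $h_k$ as a universal linear combination of Chern numbers. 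The claim is then that $[z^k/k!]$ of the $c_d$-coefficient equals $E'_k(d-2)$ for $k\le d-1$. By Corollary~\ref{cor:equalwhendgeqk-1}(2), $E'_k(d-2)$ has generating function $\bigl(2\sinh(z/2)/z\bigr)^{d}$.

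To isolate the coefficient of $c_d$ in the basis of Chern monomials, I use the logarithm trick. Write $\prod_i Q(x_i)=\exp\bigl(\sum_i\log Q(x_i)\bigr)=\exp\bigl(d\log Q(0)+\sum_{j\ge1}b_j p_j\bigr)$, where $\log Q(x)=\log Q(0)+\sum_j b_jx^j$ and $p_j=\sum_i x_i^j$. Newton's identities give $p_d=(-1)^{d-1}d\,c_d+(\text{products of at least two }c_i\text{'s})$. Every product $p_{j_1}\cdots p_{j_r}$ with $r\ge2$ expands into products of at least two Chern classes. Hence the coefficient of $c_d$ is $Q(0)^d\cdot(-1)^{d-1}d\,b_d$. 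Equivalently, after differentiating, it is $Q(0)^d(-1)^{d-1}[x^{d-1}]\frac{d}{dx}\log Q(x)$.

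Next I compute this coefficient. In $\log Q_z(x)=\log\frac{x}{1-e^{-x}}+\log(e^{-z/2}-e^{z/2}e^{-x})$, only the second summand depends on $z$. Its $x$-derivative is $f(x-z)$ with $f(u)=1/(e^u-1)$. So up to $z$-independent terms, which only affect the constant $k=0$ coefficient where $h_0=c_d$ is already known, the quantity to compute is
\[
(e^{-z/2}-e^{z/2})^d\,(-1)^{d-1}\frac{f^{(d-1)}(-z)}{(d-1)!}.
\]
Near $u=0$ we have $f(u)=u^{-1}+(\text{holomorphic})$, so $f^{(d-1)}(-z)/(d-1)!=(-1)^{d-1}(-z)^{-d}+R(z)$ with $R$ holomorphic at $0$. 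Multiplying by $(e^{-z/2}-e^{z/2})^d=(-1)^d(2\sinh(z/2))^d$, the singular part gives exactly $\bigl(2\sinh(z/2)/z\bigr)^d$. The holomorphic remainder is multiplied by $(2\sinh(z/2))^d=O(z^d)$, so it contributes only to $z^k$ with $k\ge d$. This explains the hypothesis $k\le d-1$, and reading off $[z^k/k!]$ gives $E'_k(d-2)$.

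The main obstacle is justifying this formal manipulation. At $z=0$ (i.e.\ $y=-1$) the constant term $Q(0)=1+y$ vanishes, so $\log Q$ is not a legitimate power series there. I would handle this by first doing the logarithm computation for generic $y$, where $Q(0)=1+y\neq0$. This shows that the coefficient of $c_d$ equals $(1+y)^d(-1)^{d-1}[x^{d-1}]\,\partial_x\log Q$, a polynomial in $y$ (the $(1+y)^d$ clears denominators). I would then substitute $y=-e^z$ and expand in $z$, checking that the Laurent manipulation above is valid as an identity of analytic functions in a punctured neighbourhood of $z=0$. Finally, I would sanity-check the constant term against $h_0=c_d$ and the $k=2$ term against Lemma~\ref{lem:hirzeformula}, where the coefficient $d/12$ should equal $E'_2(d-2)$.
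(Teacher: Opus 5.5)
Your argument is correct. It reaches the result by a different computation from the paper's.

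\textbf{Common starting point.} Both proofs begin with Hirzebruch--Riemann--Roch and Cauchy's formula expressing the coefficient of $c_d$ through the logarithmic derivative of the characteristic series. This is Lemma~\ref{lem:coe_b_d}, which you rederive via Newton's identities.

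\textbf{The paper's route.} It works with the normalized series $Q_y$ and computes $b_d(y)$ in closed form, using polylogarithms $\mathrm{Li}_{1-d}(-y)$ and the identity $\mathrm{Li}_{-s}(z)=zA_s(z)/(1-z)^{s+1}$. It then reads off the coefficient of $(y+1)^k$ as a binomial moment of the Eulerian distribution on $d-1$ letters. Only after that does it convert to central moments.

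\textbf{Your route.} You substitute $y=-e^z$ and absorb the factor $e^{-dz/2}$, so the generating function targets $h_k$ directly. A single Laurent expansion of $1/(e^u-1)$ at $u=0$ then gives $\bigl(2\sinh(z/2)/z\bigr)^d+O(z^d)$. This is shorter. It also explains the hypothesis $k\le d-1$ as the order of vanishing of $Q_z(0)^d$.

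\textbf{What each approach buys.} You identify the result with $E'_k(d-2)$ through the closed-form generating function of Corollary~\ref{cor:equalwhendgeqk-1}(2), and hence through Theorem~\ref{thm:Ekpoly}. With your proof, Corollary~\ref{cor:chiyproofCMRVS} would therefore no longer be an independent proof of the nice CMRVS property, since that property is already visible in the generating function. The paper's route compares with the actual Eulerian numbers and needs only part (3) of that corollary. So it genuinely supplies an alternative derivation.

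\textbf{One small slip.} The $z$-independent summand $[x^{d-1}]\,\partial_x\log\frac{x}{1-e^{-x}}$ is still multiplied by $Q_z(0)^d=(e^{-z/2}-e^{z/2})^d$. It therefore does not affect only the $k=0$ coefficient, as you claim. It contributes $O(z^d)$, which is harmless for $k\le d-1$ by exactly the argument you give for the holomorphic remainder $R$.
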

Recall $\xi_k(X)$ in the Taylor expansion of $\chi_y$-genus (cf.\ \eqref{eq:expansion_chi_y}). Since
\[
\xi_k(X)\;=\; \frac{(-1)^k}{k!}\sum_{p,q\geq0}^d (-1)^{p+q}h^{p,q}(X)p(p-1)\cdots(p-k+1)
\]
and
\[
E'_k(d)\;=\;\mathbb{E}\!\Big[\big(\overline{\mathcal{X}_d} - \tfrac{d}{2}\big)^k\Big]\;=\;\mathbb{E}\!\Big[\big(\mathcal{X}_d - \tfrac{d}{2}\big)^k\Big],\quad k\leq d,
\]
it suffices to show that the coefficient of $c_d$ in $\xi_k(X)$  is $(-1)^k\mathbb{E}[\binom{\mathcal{X}_{d-2}}{k}]$, where $\mathcal{X}_{d-2}$ is the random variable associated with the uniform matroid $\mathrm{U}_{d-1}$, see Section \ref{sec:moment}.

Recall that complex genera, defined as ring homomorphisms from the complex cobordism ring $\Omega^*_{\mathrm{U}}\otimes \mathbb{Q}$ to an arbitrary commutative ring $R$, correspond one-to-one to monic formal power series with coefficients in $R$, and are rational linear combinations of Chern numbers. For the $\chi_y$-genus (see, e.g.,\ \cite[Section 5.4]{Hirze1996Manifolds}), it takes values in $\mathbb{Z}[y]$ and the corresponding power series is
\[
Q_y(x)\;=\;\frac{x(1+ye^{-x(1+y)})}{1-e^{-x(1+y)}}.
\]
Suppose that $x_1,\ldots,x_d$ are Chern roots of $TX$, then
\[
\chi_y(X)\;=\;\int_X \prod_{i=1}^d Q_y(x_i)\;=\;\sum_{\lambda}b_\lambda(y)c_\lambda[X],
\]
where $\lambda=(\lambda_1\lambda_2\cdots)$ runs over all integer partitions of $d$ and $c_\lambda[X]=\int_Xc_{\lambda_1}(X)c_{\lambda_2}(X)\cdots$ is the Chern number of partition $\lambda$. We can read the coefficient of $c_d$ from the following fact, which is usually attributed to Cauchy (see \cite[\S 1.4]{Hir} or \cite[Lemma 4.1]{24complexgenera_PingLi}):
\begin{lemma}\label{lem:coe_b_d}
    The coefficient $b_d(y)=b_{(d)}(y)$ is determined by
    \[
        1\;+\;\sum_{i=1}^\infty (-1)^ib_i(y)\cdot x^i\; =\; 1\;-\;x\cdot \frac{Q_y'(x)}{Q_y(x)}.
    \]
\end{lemma}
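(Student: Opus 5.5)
The plan is to treat $\chi_y$ as a purely formal symmetric-function identity and isolate the $c_d$ coefficient through power sums. Since $Q_y(x)$ is a power series with constant term $1$ (it is monic), $\log Q_y(x)$ is a well-defined power series. I would write it as
\[
\log Q_y(x)\;=\;\sum_{j\ge1} a_j(y)\,x^j .
\]
The degree-$d$ part of $\prod_{i=1}^d Q_y(x_i)$ is a symmetric polynomial in the Chern roots. It therefore has a unique expansion $\sum_\lambda b_\lambda(y)\,e_\lambda(x_1,\dots,x_d)$, and $e_\lambda$ integrates to $c_\lambda[X]$. The coefficient $b_d=b_{(d)}$ is thus the coefficient of the elementary symmetric function $e_d$ in this expansion. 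We may take the number of variables large, since the expansion is stable.

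The key step is to change basis to power sums $p_\mu$. First, I would note that each $e_r$ ($r\ge1$) is a polynomial in $p_1,\dots,p_r$ with no constant term. Hence a product $e_\lambda$ with $\ell(\lambda)\ge2$ expands only into $p_\mu$ with $\ell(\mu)\ge2$. Second, by Newton's identities, $e_d=\frac{(-1)^{d-1}}{d}p_d+(\text{terms }p_\mu,\ \ell(\mu)\ge2)$. Consequently the coefficient of $p_d$ in the degree-$d$ part equals $\frac{(-1)^{d-1}}{d}\,b_d$. On the other hand,
\[
\prod_i Q_y(x_i)\;=\;\exp\Big(\sum_{j\ge1}a_j(y)\,p_j\Big).
\]
In this exponential, the only contribution to $p_d$ alone, i.e.\ with no products of power sums, is the linear term $a_d(y)\,p_d$. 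Comparing the two gives $b_d(y)=(-1)^{d-1}d\,a_d(y)$.

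Finally, I would compute the logarithmic derivative:
\[
x\,\frac{Q_y'(x)}{Q_y(x)}\;=\;x\frac{d}{dx}\log Q_y(x)\;=\;\sum_{d\ge1} d\,a_d(y)\,x^d .
\]
Therefore
\[
1-x\frac{Q_y'(x)}{Q_y(x)}\;=\;1-\sum_{d} d\,a_d\,x^d\;=\;1+\sum_d(-1)^d b_d\,x^d,
\]
which is the claimed identity. The only delicate point is the triangularity claim: $p_d$ appears only in $e_d$, and not in any $e_\lambda$ with two or more parts. This is the main thing to state carefully, but it follows immediately from the no-constant-term observation.
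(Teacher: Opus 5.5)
Your proof is correct. Writing $\prod_i Q_y(x_i)=\exp\bigl(\sum_j a_j(y)\,p_j\bigr)$ and working modulo decomposables (where $e_\lambda$ with $\ell(\lambda)\ge 2$ vanishes and $e_d\equiv \frac{(-1)^{d-1}}{d}\,p_d$ by Newton's identities) gives $b_d=(-1)^{d-1}d\,a_d$, which is exactly the stated identity.

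The paper gives no proof of its own and instead cites this as a classical fact (Hirzebruch, \S1.4; Li, Lemma 4.1). Your logarithm-plus-Newton-identity argument is essentially the standard proof found in those references.
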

Now we are ready to prove Proposition \ref{prop:appendix_c_d}.

\begin{proof}[Proof of Proposition \ref{prop:appendix_c_d}] On the right hand side of the expansion in Lemma \ref{lem:coe_b_d}, we have
\[
1\;-\;x\cdot \frac{Q_y'(x)}{Q_y(x)}\;=\; x(y+1)\cdot \left(\frac{1}{1-e^{-x(1+y)}}-\frac{1}{1+ye^{-x(1+y)}}\right). 
\]
Let $t=x(1+y)$. Then:
\[
\begin{aligned}
t\left(\frac{e^t}{e^t-1}-\frac{e^t}{e^t+y}\right)\;&=\;\sum_{d=0}^\infty\frac{(-1)^dB_d}{d!}t^d\;-\;t\sum_{k=0}^\infty{(-y)^ke^{-kt}}\\
\;&=\;\sum_{d=0}^\infty\frac{(-1)^dB_d}{d!}t^d\;-\;\sum_{d=1}^\infty\left(\sum_{k=0}^\infty{(-k)^{d-1}\frac{(-y)^k}{(d-1)!}}\right)t^d,
\end{aligned}
\]
where $B_d$ is the $d$-th Bernoulli number with $B_1=1/2$ and $B_{2k+1}=0$ for $k>1$. Let
\[
\mathrm{Li}_s(z)\;=\;\frac{z}{\Gamma(s)}\oint_C\frac{(-w)^{s-1}}{e^w-z}\mathrm{d}w,\quad s\neq1,
\]
be the analytic continuation of $\mathrm{Li}_s(z)=\sum_{i=1}^\infty z^i/i^s$, where $C$ is a path starting from $+\infty$, integrating along the upper half of the real axis up to near the origin, circling around the origin, and then integrating along the lower half of the real axis back to $+\infty$. In other words, $C$ is a contour around the branch cut segment $[1,+\infty]$. This $\mathrm{Li}_s(z)$ is the \emph{polylogarithm function}. In particular,
\[
\zeta(s)\;=\;\mathrm{Li}_s(1) \text{ when } s \neq 1. 
\]
When $s$ is a non-positive integer, $z=1$ is a pole of order $1-s$. Thus, we may evaluate $b_d(y)$ via the following formula:
\[
b_d(y)\;=\;  \frac{(1+y)^d}{(d-1)!}( \mathrm{Li}_{1-d}(-y)-\zeta(1-d)), \quad y \in \mathbb{C}.
\]
Suppose that $s$ is a nonnegative integer and we expand $\mathrm{Li}_{-s}(z)$ near $z=1$ as 
\[
\mathrm{Li}_{-s}(z)\;=\; \frac{(-1)^{s+1}}{(z-1)^{s+1}} \sum_{k=0}^\infty c_{k,s} (z-1)^k.
\]
Since (see, for example, \cite{weisstein_polylogarithm})
\[
\mathrm{Li}_{-s}(z)\;=\;\frac{z A_s(z)}{(1-z)^{s+1}},\quad \text{for } s\in\mathbb{Z}_{\geq0},
\]
we have
\[
c_{k,s}\;=\; \sum_{m=0}^{s-k} A(s,m) \binom{s-m}{k},
\]
where $A_s(z)$ is the Eulerian polynomial and $A(s,m)$ is the Eulerian number. 
Thus,
\[
b_d(y)\;=\; \frac{1}{(d-1)!}  \left(\sum_{k=0}^{d-1} (-1)^{k} c_{k,d-1}(y+1)^k\right)\;+\;\frac{(1+y)^d}{d!}B_d.
\]
and the coefficient in front of $(y+1)^k$ (which is exactly the coefficient of $c_d$ in $\xi_k(X)$) is 
\[
\left\{\begin{aligned}&\frac{(-1)^kc_{k,d-1}}{(d-1)!},\quad k\leq d-1\\ &\frac{B_d}{d!},\quad k=d,\\&0,\quad\text{else}.\end{aligned}\right.
\]
If we set $\Pr(\mathcal{Y}=m)=A(s,m)/s!$, then $c_{k,s}/s! = \mathbb{E}[\binom{s-\mathcal{Y}}{k}]=\mathbb{E}[\binom{\mathcal{Y}}{k}]$. When $s=d-1$, the random variable is $\mathcal{Y}=\mathcal{X}_{d-2}$. This completes the proof. 
\end{proof}
\begin{corollary} \label{cor:chiyproofCMRVS}
The sequence of random variables $\{\overline{\mathcal{X}_d}-\frac{d}{2}\}_{d\geq0}$ is a nice CMRVS.
\end{corollary}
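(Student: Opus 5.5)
The idea is to extract the nice CMRVS property from Proposition~\ref{prop:appendix_c_d}, using the multiplicativity of the $\chi_y$-genus on products. The statement amounts to $E'_k(a+b+2)=\sum_{i+j=k}\binom{k}{i}E'_i(a)E'_j(b)$ for all $a,b,k\ge 0$. Since each $E'_k$ is a polynomial in $d$ (Theorem~\ref{thm:Ekpoly}), it is enough to prove this identity for all sufficiently large $a,b$ with $k$ fixed; polynomiality then extends it to all $a,b$.

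To get the identity for large $a,b$, I would use a product of two permutahedral varieties. Take $X=X_{E_1}\times X_{E_2}$ with $\dim X_{E_1}=a+1$ and $\dim X_{E_2}=b+1$, so $\dim X=a+b+2$.

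\textbf{Step 1.} Multiplicativity of $\chi_y$ gives $\chi_{-y}(X)=\chowpoly_{\mathrm{U}_{a+2}}(y)\,\chowpoly_{\mathrm{U}_{b+2}}(y)$. Writing $\mathcal{X}$ for the random variable attached to $X$ as in the final remark of Section~\ref{sec:Chern}, this means $\mathcal{X}=\mathcal{X}_{a+1}+\mathcal{X}_{b+1}$ with independent summands. Hence $h_k(X)/h_0(X)=\mathbb{E}[(\mathcal{X}_{a+1}-\tfrac{a+1}{2}+\mathcal{X}_{b+1}-\tfrac{b+1}{2})^k]$, and for $a,b\ge k$ this equals $\sum\binom{k}{i}E'_i(a+1)E'_j(b+1)$ by Corollary~\ref{cor:equalwhendgeqk-1}(3).

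\textbf{Step 2.} Relate this to $E'_k(a)$, $E'_k(b)$ and $E'_k(a+b+2)$ using Proposition~\ref{prop:appendix_c_d}, which identifies the coefficient of $c_d$ in $h_k$ as $E'_k(d-2)$. For a product, the Chern numbers split: $c_{d}(X)=c_{a+1}(X_{E_1})\,c_{b+1}(X_{E_2})$, and more generally $h_k$ of a product decomposes by the binomial convolution of the $h$'s of the factors. Comparing the universal coefficient of the top Chern class on both sides should give $E'_k(d-2)=\sum_{i+j=k}\binom{k}{i}E'_i(d_1-2)E'_j(d_2-2)$ with $d_1+d_2=d$. That is, with $d_1-2=a$ and $d_2-2=b$, we get $E'_k(a+b+2)=\sum\binom{k}{i}E'_i(a)E'_j(b)$.

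\textbf{Main obstacle: justifying Step 2.} The plan is to work directly with the Hirzebruch generating series rather than with a specific $X$. By Lemma~\ref{lem:coe_b_d}, the coefficient of $c_d$ is read off from $-xQ_y'/Q_y=x\,\frac{d}{dx}\log(\cdots)$. The coefficient of $c_d$ in $\xi_k$ is the coefficient of $x^d(y+1)^k$ in $1-xQ_y'(x)/Q_y(x)$. After the substitution $t=x(1+y)$, the proof of Proposition~\ref{prop:appendix_c_d} expresses this as $\mathbb{E}[\binom{\mathcal X_{d-2}}{k}]$, equivalently through Eulerian polynomials $A_{d-1}$.

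With that in hand, I would form the exponential generating function over $k$ of the central versions. Shifting by $\tfrac{d}{2}$ turns $\binom{\cdot}{k}$ into powers, and this should reproduce $\left(\frac{e^{z/2}-e^{-z/2}}{z}\right)^{d}$ in the variable corresponding to $(d-2)+2$. That exponent is linear in $d$, and this linearity is exactly the nice CMRVS property. The delicate point is to check that the Eulerian identity $\mathrm{Li}_{-s}(z)=zA_s(z)/(1-z)^{s+1}$, expanded at $z=1$, yields the power $(\cdots)^{d}$ rather than a truncated sum. If direct matching proves messy, I would fall back to comparing moment polynomials in $d$, which agree for large $d$ and hence identically.
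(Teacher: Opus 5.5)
Your Step~2 is exactly the paper's proof. Multiplicativity of $\chi_y$ gives $h_k(X\times Y)=\sum_{i}\binom{k}{i}h_i(X)\,h_{k-i}(Y)$; you equate the coefficients of $c_{\dim X}(X)\,c_{\dim Y}(Y)$ on both sides via Proposition~\ref{prop:appendix_c_d} for $k$ small compared with both dimensions; and polynomiality of $E'_k$ then removes the size restriction.

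The step you call the main obstacle only needs $X$ and $Y$ to be arbitrary, rather than the fixed permutahedral factors of Step~1. The product formula is then an identity of universal Chern-number expressions, so coefficients may be equated. For such $k$, the top Chern number is the only Chern number occurring in $h_k(X\times Y)$ that contributes this monomial, so Step~1 and the generating-series fallback can be dropped.
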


\begin{proof}
Let $X$ and $Y$ be smooth projective varieties of dimensions $a+2$ and $b+2$, respectively. The $\chi_y$-genus is multiplicative under products, so
\[
\chi_y(X\times Y) = \chi_y(X)\chi_y(Y).
\]

Consider the coefficient of $c_{d+2}(X \times Y) = c_{a+2}(X)c_{b+2}(Y)$ in $h_k(X\times Y)$, where $d = a+b+2$ and $k < \min(a,b)$. Since
\[
h_k(X \times Y) = \sum_{i=0}^k \binom{k}{i}h_i(X)h_{k-i}(Y),
\]
Proposition \ref{prop:appendix_c_d} yields
\[
E'_k(d) = \sum_{i=0}^k \binom{k}{i}E'_i(a)E'_{k-i}(b).
\]
The function $E'_k$ is a polynomial, thus the identity extends to all integers $a,b$ with $d=a+b+2$. Therefore, the functions $\{E'_k(d)\}_{k\geq0}$ form a nice CMFS; and by definition, the random variables $\{\overline{\mathcal{X}_d}-\frac{d}{2}\}_{d\geq0}$ form a nice CMRVS.
\end{proof}

\begin{remark}
We can compute the coefficients of arbitrary Chern numbers in $h_k$ by considering products $X \times Y$, where $Y$ is chosen to be a variety with easily computable Chern classes. For example, taking $Y$ to be a product of projective spaces, the product $X \times \mathbb{P}^1$ determines the coefficient of $c_1c_{d-1}$ in $h_k$, while $X \times (\mathbb{P}^1)^2$ and $X \times \mathbb{P}^2$ determine the coefficients of $c_1^2c_{d-2}$ and $c_2c_{d-2}$, respectively, and so on.
\end{remark}

\subsection{Intersection Numbers on the Permutahedron}
Let $X_E$ be the permutahedron variety of dimension $d$. We denote $p_k$ as the $k$-th power sum of the Chern roots. We explain how we compute the Chern numbers for $X_E$ in \eqref{eq:chi_y_Cherncomputation} by the following proposition.
\begin{proposition} \label{prop:booleanintersection}
For any $1 \leq k < d$, the following intersection numbers hold:
\begin{enumerate}
    \item $$c_1^kc_{d-k} \;=\; \frac{\binom{2k+2}{k+1}}{(k+2)!} c_d.$$
    \item $$p_kc_{d-k} \;=\; (-1)^{k-1} \left[\frac{(d-k+1)\binom{2k}{k}}{(k+1)!}-  \frac{(d-k)\binom{2k+2}{k+1}}{(k+2)!} \right]c_d$$
    \item $$c_k c_{d-k} \;= \sum_{\substack{0\leq a \leq \min(k, d-k) \\ 0\leq j\leq a/2}}(-1)^a(\frac{1}{12})^j\binom{d-2a}{k-a}\binom{a-j}{d-2a}\binom{d-j-a}{a-j}\,c_d.$$
\end{enumerate}
\end{proposition}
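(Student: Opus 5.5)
We will work entirely with the Chern roots. By Theorem~\ref{thm:Tangentbundle} applied to the Boolean matroid, the total Chern class of $X_E$ is $\prod_{i=1}^d(1+Z_i)$ with $Z_i=Z_i(\mathrm{U}_{d+1})$. So the Chern roots are $Z_1,\dots,Z_d$, with $c_k=e_k(Z)$ and $p_k=\sum_i Z_i^k$. The key structural input is that for the Boolean matroid every monomial $Z_{i_1}\cdots Z_{i_d}$ of degree $d$ is a linear combination of flag counts $N_J(\mathrm{U}_{d+1})$. Because of the relation $Z_iZ_j$-type products on nested rank flats, the only nonzero monomials should be those whose index multiset corresponds to a flag of ranks with prescribed multiplicities. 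We then evaluate these multiplicities via self-intersection formulas: $Z_i^{m}$ restricted to a rank-$i$ flat is computed from $x_F^2$ in terms of $\alpha$ and $\beta$, using \cite[Lemma 3.13]{Tbundle} type formulas.

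In practice we will organize each computation as $q(Z)\cdot c_{d-k}$ for a symmetric polynomial $q$ of degree $k$ (namely $c_1^k$, $p_k$, or $c_k$). We expand $c_{d-k}=e_{d-k}(Z)$ and use the truncation principle from Subsection~\ref{subsec:alpha_truncation}. A product $Z_{i_1}\cdots Z_{i_{d-k}}$ of distinct indices localizes to a flag of flats whose complementary ranks form a set $I$ of size $k$. On each such stratum the remaining degree-$k$ class $q$ only sees the roots $Z_i$ with $i\in I$ (plus boundary contributions). This reduces each number to a weighted sum over $k$-subsets $I\subseteq[d]$. The weights depend only on the block structure of $I$ (maximal runs of consecutive indices), exactly as in the proof of Theorem~\ref{thm:Ekpoly}. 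Dividing by $c_d=(d+1)!$ then gives a generating function over block decompositions, and the claimed constants drop out from the block generating functions.

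\textbf{Item (1).} For $c_1^k c_{d-k}$ we expect a single block contribution. $c_1=\sum Z_i$, and the local computation for a run of length $m$ should give the Catalan-type number. We will verify that $\binom{2k+2}{k+1}/(k+2)!$ is $\mathrm{Cat}_{k+1}/(k+1)!$ and match it by an independent check: set $d=k+1$ and $d=k+2$, where the values can be computed from Corollary~\ref{cor:cherncoefficientfof} and Proposition~\ref{prop:coeff-formula}. We will then argue that the ratio is independent of $d$ because of the $\alpha$-stability lemma in Subsection~\ref{subsec:alpha_truncation}.

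\textbf{Item (2).} Use Newton's identity $p_k=\sum_{j}(-1)^{j-1}e_{j}\,p_{k-j}+(-1)^{k-1}k e_k$, or more efficiently the generating function $\sum_k p_k t^k$. This expresses $p_kc_{d-k}$ through products of the form $c_1^j c_{k-j}c_{d-k}$, which reduces to item (1) and a recursion in $k$. The two terms in the bracket should correspond to the contributions of runs touching versus not touching the boundary, which explains the factors $d-k+1$ and $d-k$.

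\textbf{Item (3).} Write $c_kc_{d-k}$ as a sum over pairs of $k$- and $(d-k)$-subsets. Let the overlap be $a$, and use that squares $Z_i^2$ evaluate via the degree-2 Todd/Bernoulli correction, which produces the $(1/12)^j$ factors.

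We expect item (3) to be the main obstacle. The triple binomial sum encodes how overlapping indices pair up, and matching it requires a careful inclusion–exclusion. As a safeguard we will check it against $h_4(X_E)$ in \eqref{eq:chi_y_Cherncomputation} and against the value $c_2^2=130$, $c_4=120$ at $d=4$.
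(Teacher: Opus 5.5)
Your overall picture (expand $c_{d-k}=e_{d-k}(Z_1,\dots,Z_d)$, localize each squarefree monomial $Z_S$ to the strata of flags with rank set $S$, evaluate the rest blockwise) is essentially the framework behind the paper's block computations in (2) and (3). However, none of the three constants is actually derived, and the concrete routes you give for (1) and (2) fail.

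For (1), Corollary~\ref{cor:cherncoefficientfof} and Proposition~\ref{prop:coeff-formula} only evaluate $c_k\alpha^{d-k}$ and $c_1c_{k-1}\alpha^{d-k}$. So they give no value of $c_1^kc_{d-k}$ once $k\ge 2$, even at $d=k+1,k+2$. The $\alpha$-stability lemma compares a fixed $Z$-monomial times $\alpha^m$ in rank $d+1+m$ with the same monomial in rank $d+1$. But $c_1^kc_{d-k}$ contains no power of $\alpha$, and $c_{d-k}$ itself changes with $d$, so the lemma says nothing about $c_1^kc_{d-k}/c_d$ being independent of $d$. Two sample values could not prove that anyway. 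The hypothesis $k<d$ must enter somewhere: for $k=d$ one has $c_1^d=\sum_i\binom{d}{i}^2=\binom{2d}{d}$, which differs from $\binom{2d+2}{d+1}/(d+2)$ for $d\ge 2$.

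The missing idea is the one behind the paper's computation. For the Boolean matroid $c_1=\sum_i Z_i=\alpha+\beta$. Multiplying by $\alpha^i$ kills $Z_{d-i+1},\dots,Z_d$, and by the symmetry $F\mapsto E\setminus F$, multiplying by $\beta^j$ kills $Z_1,\dots,Z_j$. Hence in $\alpha^i\beta^jc_{d-k}$ with $i+j=k$ exactly one monomial $Z_{j+1}\cdots Z_{d-i}$ survives; this is where $d-k\ge1$ is used. Its degree is $(d+1)!/((i+1)!(j+1)!)$, and Vandermonde gives the constant. So the contributions come from two end blocks $[1,j]$ and $[d-i+1,d]$, not from a single run.

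Your claim that on a stratum $q$ ``only sees $Z_i$ with $i\in I$'' is also wrong. For $r\in S$, $Z_r$ restricts to the normal class of the boundary divisor $D_{F_r}\cong X_{\mathrm{M}|F_r}\times X_{\mathrm{M}/F_r}$, namely $-\alpha_{\mathrm{M}|F_r}-\beta_{\mathrm{M}/F_r}\neq 0$. These terms carry much of the content: the $Z_i^{k+1}$ terms in (2) and the squares in (3).

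For (2), Newton's identities express $p_k$ through all $e_\lambda$ with $\lambda\vdash k$. So $p_kc_{d-k}$ becomes a combination of $c_kc_{d-k}$ (item (3)) and mixed numbers such as $c_1c_2c_{d-3}$ or $c_2^2c_{d-4}$. Item (1) only covers $c_1^kc_{d-k}$, so your recursion does not close.

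The paper instead evaluates $Z_i^kc_{d-k}$ directly. A surviving monomial is $Z_i^{\,j}$ times all $Z_r$ with $r$ outside a window of $j$ consecutive ranks containing $i$. Here $j=k$ if $i$ is not an index of the $c_{d-k}$-monomial, and $j=k+1$ if it is. Summing the explicit signed local degrees over the position of $i$ and over the $d-j+1$ window positions gives $(-1)^{j-1}(d-j+1)\binom{2j}{j}c_d/(j+1)!$. Thus the two bracket terms are the cases $j=k$ and $j=k+1$, not boundary versus interior runs.

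For (3) you give no argument. One must determine which monomials $\prod_i Z_i^{t_i}$ with $t_i\in\{0,1,2\}$ are nonzero: each gap between consecutive indices with $t_i=1$ must have even length with $t_{2y-1}+t_{2y}=2$. One must then show the normalized gap weights satisfy $a_m=a_{m-1}+\frac{1}{12}a_{m-2}$, $a_0=a_1=1$. That recursion is where $\frac1{12}$ comes from, and it yields $c_kc_{d-k}=\sum_{2a+b=d}(-1)^a\binom{b}{k-a}\,[x^a]\bigl(1-x-\tfrac{x^2}{12}\bigr)^{-(b+1)}\,c_d$. A Todd-class heuristic does not replace this.

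Numerical checks are not a proof, and yours would not even detect a misprint in the displayed formula (3). By the paper's $G_{a,b}$, the factor $\binom{a-j}{d-2a}$ should read $\binom{a-j}{j}$. Both versions give $c_2^2/c_4=13/12$ at $d=4$, but at $d=3$, $k=1$ the printed sum gives $-2$ instead of $c_1c_2/c_3=1$.
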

\begin{proof}[Sketch of proof]

(1) The class $c_1$ equals $\alpha + \beta$. Expanding $c_1^k$ yields a binomial sum:$$ c_1^k c_{d-k} \;=\; \sum_{i+j=k} \binom{k}{i} \frac{1}{(i+1)!(j+1)!}c_d.$$ Applying the identity $\sum_{i} \binom{k}{i} \binom{k+2}{k+1-i} = \binom{2k+2}{k+1}$, the result follows directly.

(2) Consider the power sum $p_k = \sum_{i=1}^d Z_i^k$. The non-vanishing terms in the product $Z_i^kc_{d-k}$ will be of the form $Z_1\cdots Z_x Z_i^j Z_y\cdots Z_d$, where we have $0\leq x < i < y \leq d+1$ with $y-x=j+1$. This $j$ will be either $k$ or $k+1$. The degree of the term is $(-1)^{j-1}\frac{1}{(i-x)!(y-i)!}\binom{y-x-2}{y-i-1}c_d$. Summing over the possible placements of $x, y$, and $i$ yields $(-1)^{j-1}\frac{(d-j+1)\binom{2j}{j}}{(j+1)!}c_d$, and the result follows from combining the cases $j = k$ and $j=k+1$.

(3) To compute the general intersection numbers $c_k c_{d-k}$, we analyze the degrees of monomials of the form $\prod_{i=1}^d Z_i^{t_i}$ where $t_i \in \{0, 1, 2\}$ and $\sum t_i = d$.

Any such non-vanishing monomial can be partitioned into blocks delimited by indices where $t_i = 1$. Consider an interval $[1, j-1]$ between two such indices (or the boundaries). For the monomial to be non-vanishing, the interval length must be even, and the exponents must satisfy the local condition $t_{2y-1} + t_{2y} = 2$ for all $1 \leq y \leq \frac{j-1}{2}$. Let $a_m$ denote the sum of the degrees of all valid exponent assignments in an interval of length $2m$, normalized by $(-1)^m(2m+1)!$. By conditioning on the first even index $2i$ where $t_{2i}=2$, we obtain the recursive relation: 
$$a_m \;=\; \frac{1}{2}a_{m-1} \;+\; \sum_{i=2}^{m}\frac{1}{2^{i-2}\cdot 3}a_{m-i} \;+\; \frac{1}{2^m}.$$ This recurrence simplifies significantly to $a_m = a_{m-1} + \frac{1}{12}a_{m-2}$ for $m > 1$, with initial conditions $a_0 = a_1 = 1$. The corresponding generating function for these blocks is:$$P(x) \;=\; \sum_{m=0}^\infty a_m x^m \;=\; \left(-\frac{1}{12}x^2 - x + 1 \right)^{-1}.$$

The general intersection number is a convolution of these local blocks. Let $a$ be the number of indices where $t_i = 2$ and $b$ be the number of indices where $t_i = 1$, such that $2a + b = d$. The total contribution of these terms is given by the coefficient $G_{a,b}$, defined as:
$$G_{a,b} \;=\; [x^a] \left(-\frac{1}{12}x^2 - x + 1 \right)^{-(b+1)} = \;\sum_{j=0}^{\lfloor a/2 \rfloor} \binom{a-j+b}{b} \binom{a-j}{j} \left(\frac{1}{12}\right)^j,$$ where the identity follows from expanding $P(x)^{b+1} \; = \; (1 - (x + \frac{1}{12}x^2))^{-(b+1)} \;=\; \sum \binom{n+b}{b} (x + \frac{1}{12}x^2)^n$ and extracting the coefficient of $x^a$.

By summing over the possible distributions of exponents that constitute the elementary symmetric polynomials $c_k$, we arrive at the general identity:$$c_k c_{d-k} \;= \;\sum_{2a+b=d} (-1)^a\binom{b}{k-a} G_{a,b}\, c_d.$$
\end{proof}
In \cite[Theorem 1.1]{Intersectionofpermu}, the author found another equivalent (and nicer) formula: $$c_kc_{d-k}\; =\; \sum_{0\leq 2j \leq \min(k, d-k)}(\frac{1}{12})^j \binom{k-j}{j}\binom{d-k-j}{j}.$$
\section{Supplement for Theorem~\ref{thm:main-theorem}}\label{sec:tedious}

In this appendix, we fill in the missing details for the proof of Theorem~\ref{thm:main-theorem}. Specifically, we establish the following numeric lemma claimed in the proof.
\begin{lemma} \label{lem:ugly}
For $t \geq 100$ and $0 < c < 1$, we have the inequality
$$\frac{2cte^{\frac{1}{12t}}}{\sqrt{8ct\pi}}\;\leq\;\left(\frac{2^{2c}(c+1)^{2c+2}}{3e^{2c+1}(c+\frac{1}{2t})^{2c+1}}\right)^{t}.$$
\end{lemma}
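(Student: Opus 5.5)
The plan is to take logarithms, divide by $t$, and reduce Lemma~\ref{lem:ugly} to a one-variable inequality in $c$ plus an error term that is negligible once $t\ge 100$.

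\textbf{Step 1: rewrite both sides.} The left-hand side simplifies to
\[
\frac{2cte^{1/(12t)}}{\sqrt{8ct\pi}}=\sqrt{\tfrac{ct}{2\pi}}\;e^{1/(12t)}.
\]
On the right I factor out the $t$-independent base
\[
R(c)=\frac{4^c(c+1)^{2c+2}}{3e^{2c+1}c^{2c+1}},
\]
which gives
\[
\mathrm{RHS}=R(c)^t\Big(\frac{c}{c+\frac{1}{2t}}\Big)^{(2c+1)t}=R(c)^t\Big(1+\frac{1}{2ct}\Big)^{-(2c+1)t}.
\]
Using $\log(1+x)\le x$, the correction factor is at least $\exp\!\big(-\tfrac{2c+1}{2c}\big)=e^{-1}e^{-1/(2c)}$. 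So it suffices to prove
\[
\tfrac12\log\tfrac{ct}{2\pi}+\tfrac{1}{12t}+1+\tfrac{1}{2c}\;\le\; t\log R(c). \qquad (\ast)
\]

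\textbf{Step 2: analyze $\log R$ on $(0,1]$.} A direct differentiation gives
\[
\frac{d}{dc}\log R(c)=2\log\frac{2(1+c)}{c}-2-\frac1c.
\]
I would show this is negative on $(0,1]$. Near $0$ the $-1/c$ term dominates. At $c=\tfrac12$ the value is $2\log 6-4<0$, and at $c=1$ it is $2\log4-3<0$. To make this rigorous, I would bound $2\log(2(1+c)/c)$ by its tangent line in $1/c$, or else check on a few subintervals using monotonicity of each piece. It follows that $\log R$ is decreasing, so
\[
\log R(c)\ge\log R(1)=\log\tfrac{64}{3e^3}\approx 0.06>0.
\]
For small $c$ I also need the growth $\log R(c)\ge \log\tfrac{1}{3ec}-O(c\log c)$, which comes from the $c^{-(2c+1)}$ factor.

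\textbf{Step 3: split into two ranges of $c$.} In the range $c\ge c_0$ (say $c_0=0.1$), the left side of $(\ast)$ is at most $\tfrac12\log\tfrac{t}{2\pi}+7$. The right side is at least $0.06\,t$, and $0.06\,t\ge 6$ already at $t=100$. Since the right side grows linearly while the left grows logarithmically, $(\ast)$ holds for all $t\ge100$. In the range $c<c_0$, the term $\tfrac1{2c}$ on the left is dominated by $t\log R(c)\ge t\log\tfrac{1}{3ec}-O(t)$, because $t\ge100$ and $\log(1/c)$ is large. The term $\tfrac12\log(ct)$ on the left is at most $\tfrac12\log t$. Only the constants need checking here.

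\textbf{Main obstacle.} The hard step is the narrow margin near $c=1$, where $\log R(1)\approx0.06$. There I must verify that $t\ge100$ really suffices, i.e.\ $6\ge \tfrac12\log\tfrac{100}{2\pi}+1.5+\tfrac1{1200}\approx 2.9$. This holds comfortably, but the monotonicity of $\log R$ must be established rigorously rather than numerically. If the tangent-line argument is awkward, I would fall back on evaluating the derivative at finitely many points combined with a Lipschitz bound for it on $[c_0,1]$.
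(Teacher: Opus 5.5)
\textbf{There is a genuine gap: the condition $(\ast)$ you reduce to is false for small $c$.} Steps 1 and 2 are correct. With $u=1/c$ the derivative is $2\log(2(u+1))-2-u$, which decreases for $u\ge1$, so $\log R(c)\ge\log R(1)=\log\frac{64}{3e^3}\approx0.060$.

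The failure of $(\ast)$ comes from the estimate
\[
\bigl(1+\tfrac{1}{2ct}\bigr)^{-(2c+1)t}\ge e^{-1-1/(2c)},
\]
which is far too lossy as $c\to0^+$. For small $c$ the true factor $\bigl(\frac{2ct}{2ct+1}\bigr)^{(2c+1)t}$ costs only about $t\log\frac{1}{2ct}$ on the logarithmic scale, whereas you charge $1+\frac{1}{2c}$. Meanwhile $t\log R(c)\approx t\log\frac{1}{3ec}$ grows only logarithmically in $1/c$. So for every fixed $t$ the term $\frac1{2c}$ eventually wins, and your claim that it is ``dominated by $t\log\frac{1}{3ec}$'' is wrong. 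No choice of constants in Step~3 can fix this for $c<c_0$.

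Concretely, at $t=100$, $c=10^{-3}$, the left side of $(\ast)$ is about $498.9$ while $t\log R(c)\approx482.4$. The original inequality holds there with huge room: its right side exceeds $20^{100}$.

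There is also a slip in the range $c\ge c_0=0.1$. Your uniform bound $\frac12\log\frac{t}{2\pi}+\frac1{12t}+6$ for the left side of $(\ast)$ is about $7.4$ at $t=100$, which exceeds $0.06\,t=6$. The linear term only takes over around $t\approx125$. So the delicate region is not the margin near $c=1$.

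\textbf{How to repair it.} The clean route, which is what the paper does, is not to split off $c^{2c+1}$ at all. For $t\ge100$ one has $c+\frac{1}{2t}\le c+\frac1{200}$, so the base on the right is at least the $t$-independent function
\[
f(c)=\frac{2^{2c}(c+1)^{2c+2}}{3e^{2c+1}\bigl(c+\frac{1}{200}\bigr)^{2c+1}} .
\]
Its logarithmic derivative $\log4+2\log\frac{c+1}{c+1/200}-\frac{2c+1}{c+1/200}$ is negative on $(0,1)$: it is concave in $1/(c+\frac1{200})$, with maximum about $-0.22$. Hence $f(c)\ge f(1)\approx1.046$. The left side then satisfies $\sqrt{ct/(2\pi)}\,e^{1/(12t)}<2\sqrt t<1.046^t$ for $t\ge100$.

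Alternatively, you can keep your reduction for $c\ge\frac1{2t}$. There $\frac1{2c}\le t$, and $(\ast)$ can be verified with a finer subdivision of the $c$-range. This is all that Case~(3) uses, since $c=\frac{d+1}{2t}$. You would then treat $c<\frac1{2t}$ separately: there $c+\frac1{2t}<\frac1t$, so the base exceeds $t/(3e^{2c+1})>10$. That gives a right side above $10^t$, far more than the left side's $2\sqrt t$.
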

\begin{proof}
Let $$f(c) \;=\; \frac{2^{2c}(c+1)^{2c+2}}{3e^{2c+1}(c+\frac{1}{200})^{2c+1}} \; \leq \; \frac{2^{2c}(c+1)^{2c+2}}{3e^{2c+1}(c+\frac{1}{2t})^{2c+1}}.$$

We have $$f'(c) \,=\, \frac{(2^{2c}(c+1)^{2c+2})(3e^{2c+1}(c+\frac{1}{200})^{2c+1})}{(3e^{2c+1}(c+\frac{1}{200})^{2c+1})^2}\left(\log 4 + 2 + 2\log(c+1)-2-2\log(c+\frac{1}{200})-\frac{2c+1}{c+\frac{1}{200}}\right).$$
We want to prove $f'(c) < 0$ for $0 < c < 1$, which is equivalent to $$\log 4 \;+\; 2\log(c+1)\;-\;2\log(c+\frac{1}{200})\;-\;\frac{2c+1}{c+\frac{1}{200}} \;\leq \;0,$$ and can be verified.

Therefore, $$\left(\frac{2^{2c}(c+1)^{2c+2}}{3e^{2c+1}(c+\frac{1}{200})^{2c+1}}\right)^{t} \;=\; f(c)^t$$ achieves minimum when $c = 1$, and $f(c) = 1.046\ldots$. Finally, $$\frac{2cte^{\frac{1}{12t}}}{\sqrt{8ct\pi}}\;<\;2\sqrt{t}\;<\;1.046^t\;<\;f(1)^t\;<\;\left(\frac{2^{2c}(c+1)^{2c+2}}{3e^{2c+1}(c+\frac{1}{2t})^{2c+1}}\right)^{t}$$ when $t\geq 100$.
\end{proof}

Finally, we provide the pseudocode used to verify the small cases for \eqref{eq:centralmomentnormal}.
\noindent

\begin{algorithm}[H] \label{alg:verify}
\DontPrintSemicolon
\KwIn{$d_{\max}$, $t_{\max}$}
\For{$d = 0$ \KwTo $d_{\max}$}{
    Precompute Eulerian numbers $A(d+1,j)$ for $0 \le j \le d$\ by the recurrence $A(n, m) = (n-m)A(n-1, m-1)+(m+1)A(n-1, m), \quad A(1,0) = 1$.\;
    
    \For{$t = 1$ \KwTo $t_{\max}$}{
        Compute the $2t$-th central moment $E_{2t}(d)$ of the Eulerian numbers:\;
        $E_{2t}(d) \gets \left(\sum_j (j - d/2)^{2t} \cdot A(d+1,j) \right)\, /\, (d+1)!$\;
        Check $E_{2t}(d) \leq \left(\frac{d+2}{12}\right)^t(2t-1)!!$ \tcp{Calculations can be done with integers.} 
    }
}
\caption{Pseudocode for verifying the moment inequality}
\end{algorithm}